\newcommand{\institute}[1]{\\{\scriptsize
   \begin{tabular}[t]{@{\footnotesize}c@{}}#1\end{tabular}}}
\newcommand{\email}[1]{\\{\scriptsize\tt #1}}
\title{The Weihrauch lattice at the level of $\PiCA$:\\ the Cantor-Bendixson theorem}
\author{
\small{Vittorio Cipriani$^{1,2}$}
\email{vittorio.cipriani17@gmail.com}
\and
\small{Alberto Marcone$^1$}
\email{alberto.marcone@uniud.it}
\and
\small{Manlio Valenti$^{1,3}$}
\email{manliovalenti@gmail.com}
\and
\vspace{-0.5cm}
 \institute{$^1$Dipartimento di Scienze Matematiche, Informatiche e Fisiche, Universit\`a di Udine, Italy\\
 $^2$Current address:  Institute of Discrete Mathematics and Geometry, Technische Universit{\"a}t Wien, Austria}
 \institute{$^3$Department of Mathematics, University of Wisconsin - Madison, USA\\
 Current address: Department of Computer Science, Swansea University, UK}
}
\date{}
\tikzstyle{tikzfig}=[baseline=-0.25em,scale=0.5]
\tikzstyle{none}=[inner sep=0mm]
\tikzstyle{every loop}=[]
\tikzset{>={latex[width=1mm,length=1mm]}}
\newcommand{\drawThm}[3]{ \draw #1 %
	\ifx&#2&%
	\else
		node[style=thmref, pos=0.01] {\resizebox{5mm}{3mm}{#2}} %
	\fi
	\ifx&#3&%
	\else
		node[style=thmref, pos=0.99] {\resizebox{5mm}{3mm}{#3}}
	\fi ;
}
\tikzstyle{box}=[fill={rgb,255: red,228; green,228; blue,228}, draw=black, shape=rectangle]
\tikzstyle{reducible}=[->, dashed]
\tikzstyle{strictreducible}=[->]
\tikzstyle{nonreducible}=[->, draw=red]
\tikzstyle{uncomp}=[draw=red, <->]
\tikzstyle{thmref}=[opacity=0,inner sep=2mm]
\newcommand{\defas}{:=}
\newcommand{\function}[3]{{#1}:{#2}\rightarrow {#3}}
\newcommand{\partialfunction}[3]{{{#1}} :\subseteq {#2}\rightarrow {#3}}
\newcommand{\multifunction}[3]{{#1}: {#2}\rightrightarrows{#3}}
\newcommand{\partialmultifunction}[3]{{{#1}} :\subseteq {#2} \rightrightarrows {#3}}
\newcommand{\st}{{}\,:\,{}}
\newcommand{\repmap}[1]{\delta_{#1}}
\newcommand{\str}[1]{\langle #1 \rangle}
\newcommand{\incomparable}{~|~}
\newcommand{\length}[1]{\left|{#1}\right|}
\newcommand{\body}[1]{\left[{#1}\right]}
\newcommand{\concat}{^\smallfrown}
\newcommand{\X}{\mathcal{X}}
\newcommand{\Y}{\mathcal{Y}}
\newcommand{\negrepr}[1]{\mathcal{A}({#1})}
\newcommand{\pairing}[1]{\langle #1 \rangle}
\newcommand{\completion}[1]{\overline{#1}}
\newcommand{\dom}{\operatorname{dom}}
\newcommand{\id}{\operatorname{id}}
\newcommand{\Baire}{\mathbb{N}^\mathbb{N}}
\newcommand{\baire}{\mathbb{N}^{<\mathbb{N}}}
\newcommand{\Cantor}{2^\mathbb{N}}
\newcommand{\cantor}{2^{<\mathbb{N}}}
\newcommand{\weireducible}{\le_{\mathrm{W}}}
\newcommand{\strictlyweireducible}{<_\mathrm{W}}
\newcommand{\weiequiv}{\equiv_{\mathrm{W}}}
\newcommand{\weiincomparable}{~|_{\mathrm{W}~}}
\newcommand{\illfounded}{\mathcal{IF}}
\newcommand{\wellfounded}{\mathcal{WF}}
\newcommand{\uniquebranch}{\mathcal{UB}}
\newcommand{\countable}{\mathcal{T}^{\leq\aleph_0}}
\newcommand{\uncountable}{\mathcal{T}^{>\aleph_0}}
\newcommand{\exploded}[1]{\mathsf{Expl}(#1)}
\newcommand{\strongweireducible}{\le_{\mathrm{sW}}}
\newcommand{\strictlystrongweireducible}{<_{\mathrm{sW}}}
\newcommand{\strongweiequiv}{\equiv_{\mathrm{sW}}}
\newcommand{\nats}{\mathbb{N}}
\newcommand{\parallelization}[1]{\widehat{#1}}
\newcommand{\totalization}[1]{\mathsf{T}#1}
\newcommand{\firstOrderPart}[1]{{}^1{#1}}
\newcommand{\ustar}[1]{#1^{u*}}
\newcommand{\boldfaceDelta}{\boldsymbol{\Delta}}
\newcommand{\boldfaceSigma}{\boldsymbol{\Sigma}}
\newcommand{\boldfacePi}{\boldsymbol{\Pi}}
\newcommand{\boldfaceGamma}{\boldsymbol{\Gamma}}
\newcommand{\boldfaceLambda}{\boldsymbol{\Lambda}}
\newcommand{\disjointunion}[2]{\bigsqcup_{{#1}}{#2}}
\newcommand*{\xV}{{\bigsqcup}\kern-0.60em\raisebox{0.6ex}{\tiny $\mathrm{b}$}\kern 0.25em}
\newcommand{\binarydisjointunion}[2]{\xV_{{#1}}{\, #2}}
\newcommand{\lpo}{ \mathsf{LPO} }
\newcommand{\range}{\mathsf{range}}
\newcommand{\mflim}{\mathsf{lim}}
\newcommand{\Choice}[1]{\mathsf{C}_{#1}}
\newcommand{\CBaire}{\mathsf{C}_{\Baire}}
\newcommand{\UCBaire}{\mathsf{UC}_{\Baire}}
\newcommand{\translateCantor}[1][\relax]{\ifx\relax#1 \rho_{\Cantor} \else \rho_{\Cantor}({#1})\fi}
\newcommand{\translateBaire}[1][\relax]{\ifx\relax#1 \rho_{\Baire} \else \rho_{\Baire}({#1})\fi}
\newcommand{\PST}[1][]{\mathsf{PST}_{#1}}
\newcommand{\PTT}[1][]{\mathsf{PTT}_{#1}}
\newcommand{\tree}{\mathbf{Tr}}
\newcommand{\PK}[1][]{\mathsf{PK}_{#1}}
\newcommand{\List}[1][]{\mathsf{List}_{#1}}
\newcommand{\wList}[1][]{\mathsf{wList}_{#1}}
\newcommand{\ScList}[1][]{\mathsf{ScList}_{#1}}
\newcommand{\wScList}[1][]{\mathsf{wScList}_{#1}}
\newcommand{\ScCount}[1][]{\mathsf{ScCount}_{#1}}
\newcommand{\deterministicPart}{\mathsf{Det}}
\newcommand{\CB}[1][]{\mathsf{CB}_{#1}}
\newcommand{\wCB}[1][]{\mathsf{wCB}_{#1}}
\newcommand{\wf}{\mathsf{WF}}
\newcommand{\PiCA}{\boldfacePi^1_1\mathsf{-CA}_0}
\newcommand{\sierpinski}{\mathbb{S}}
\newcommand{\wfsierpinski}{\mathsf{WF}_{\sierpinski}}
\newcommand{\codedChoice}[3]{\ifthenelse{\equal{#1}{}}{\mathsf{C}^{\mathsf{\vphantom{g}#2}}_{{#3}} }{ {#1}\text{-}\mathsf{C}^{\vphantom{g}\mathsf{#2}}_{{#3}} }}
\newcommand{\codedUChoice}[3]{\ifthenelse{\equal{#1}{}}{\mathsf{UC}^{\mathsf{\vphantom{g}#2}}_{{#3}} }{ {#1}\text{-}\mathsf{UC}^{\vphantom{g}\mathsf{#2}}_{{#3}} }}
\newcommand{\sbin}{s_{\mathsf{b}}}
\newtheorem{theorem}{Theorem}[section]
\newtheorem{proposition}[theorem]{Proposition}
\newtheorem{lemma}[theorem]{Lemma}
\newtheorem{corollary}[theorem]{Corollary}
\theoremstyle{definition}
\newtheorem{definition}[theorem]{Definition}
\newtheorem{open}[theorem]{Problem}
\newtheorem{remark}[theorem]{Remark}
\begin{document}
\maketitle
{\let\thefootnote\relax\footnote{{
2020 \emph{Mathematics Subject Classification 2020}. Primary: 03D78; Secondary: 03E15, 03B30, 03D30.\\
\emph{Keywords and phrases}. Weihrauch degrees, Descriptive set theory, Cantor-Bendixson theorem, Reverse mathematics, Computable analysis.

All three authors’ research was partially supported by the Italian PRIN 2017 Grant \lq\lq Mathematical Logic: models, sets, computability\rq\rq. 

The authors thank the anonymous referee for carefully reading the paper and especially for suggesting isolating the construction now encapsulated in \thref{prop:prefixes}.}}}
\begin{abstract}

    This paper continues the program connecting reverse mathematics and computable analysis via the framework of Weihrauch reducibility. In particular, we consider problems related to perfect subsets of Polish spaces, studying the perfect set theorem, the Cantor-Bendixson theorem and various problems arising from them. In the framework of reverse mathematics, these theorems are equivalent respectively to $\mathsf{ATR}_0$ and $\PiCA$, the two strongest subsystems of second order arithmetic among the so-called big five. As far as we know, this is the first systematic study of problems at the level of $\PiCA$ in the Weihrauch lattice.
    
    We show that the strength of some of the problems we study depends on the topological properties of the Polish space under consideration, while others have the same strength once the space is rich enough.
\end{abstract}
\tableofcontents
\section{Introduction}

In this work, we study the uniform computational strength of theorems arising in classical descriptive set theory related to perfect subsets of Polish spaces. In general, if $P$ is a subset of a topological space $\X$, a point $x \in P$ is a \emph{limit point of $P$}  if for every open set $U$ with $x \in U$ there is a distinct point $y \in P \cap U$: otherwise, we call $x$ \emph{isolated in $P$}. A subset of a topological space is \emph{perfect} if it is closed and has no isolated points. An equivalent formulation is that $P\subseteq \X$ is perfect if $P=P'$, where $P'$ is the Cantor-Bendixson derivative, i.e.\  the set of all limit points of $P$. Notice that every nonempty perfect subset of a Polish space has the cardinality of the continuum.

A classical theorem in this context is the \emph{Cantor-Bendixson theorem}.

\begin{theorem}[Cantor-Bendixson theorem]
\thlabel{Initialtheorem}
  Every closed subset $C$ of a Polish space $\X$ can be uniquely written as the disjoint union of a perfect set $P$ and a countable set $S$. We call $P$ the {perfect kernel} of $C$ and $S$ the {scattered part} of $C$.
\end{theorem}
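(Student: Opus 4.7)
The plan is to use the classical transfinite iteration of the Cantor-Bendixson derivative. For the given closed set $C\subseteq\X$, I would define $C^{(0)}\defas C$, $C^{(\alpha+1)}\defas (C^{(\alpha)})'$, and $C^{(\lambda)}\defas\bigcap_{\alpha<\lambda}C^{(\alpha)}$ at limit stages $\lambda$. Each $C^{(\alpha)}$ is closed and the resulting sequence is $\subseteq$-decreasing. The candidate decomposition will then be $P\defas C^{(\alpha_0)}$ and $S\defas C\setminus P$, where $\alpha_0$ is the first ordinal at which the sequence stabilizes.

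The key step is showing that stabilization happens at some countable ordinal. Since every Polish space is second countable, I would fix a countable base $(U_n)_{n\in\nats}$ for $\X$. Whenever $C^{(\alpha)}\supsetneq C^{(\alpha+1)}$, some basic open $U_n$ isolates a removed point in $C^{(\alpha)}$, i.e.\ $U_n\cap C^{(\alpha)}$ is a singleton. The indices chosen across successive strict drops must be pairwise distinct (because the sequence is decreasing), so only countably many strict drops occur and $\alpha_0<\omega_1$. The fixed-point property at $\alpha_0$ immediately gives $P'=P$, so $P$ is perfect. Furthermore, $S=\bigcup_{\alpha<\alpha_0}\bigl(C^{(\alpha)}\setminus C^{(\alpha+1)}\bigr)$ is a countable union of at-most-countable sets of isolated points of subspaces of $\X$, hence countable.

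For uniqueness, suppose $C=Q\sqcup T$ is another decomposition with $Q$ perfect and $T$ countable. By transfinite induction on $\alpha$ I would show $Q\subseteq C^{(\alpha)}$: at a successor step, every point of the perfect set $Q$ is a limit of points of $Q\subseteq C^{(\alpha)}$, hence lies in $(C^{(\alpha)})'=C^{(\alpha+1)}$; the limit case is immediate. Thus $Q\subseteq P$, and $P\setminus Q\subseteq T$ is countable. But $P\setminus Q$ is relatively open in the perfect Polish space $P$, and every nonempty open subset of a perfect Polish space is uncountable (by a standard Cantor scheme construction internal to $P$), forcing $P\setminus Q=\emptyset$. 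Hence $P=Q$ and $S=T$.

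The only genuinely delicate point I anticipate is the countable-stabilization argument, where second countability of Polish spaces is indispensable; without it the transfinite derivative could in principle run past $\omega_1$. Everything else is bookkeeping with transfinite induction plus the elementary fact that nonempty open subsets of perfect Polish spaces are uncountable.
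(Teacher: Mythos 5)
Your proof is correct, and it is the standard textbook argument: iterate the Cantor--Bendixson derivative, use second countability to bound the length of the iteration by a countable ordinal and to bound the cardinality of each layer of isolated points, and prove uniqueness by showing that any perfect subset survives every derivative stage and then invoking the uncountability of nonempty open subsets of a perfect Polish space. Note, however, that the paper states \thref{Initialtheorem} purely as classical background in the introduction and does \emph{not} supply a proof of it (it points to \cite[\S VI.1]{simpson_2009} for the reverse-mathematical treatment), so there is no in-paper argument to compare against; your transfinite-derivative proof is the expected one and all the steps you flag as delicate (the injective assignment of basic opens to successor stages, and the Cantor-scheme uncountability lemma applied to $P\setminus Q$) are handled correctly.
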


When $\X$ is either Cantor space $\Cantor$ or Baire space $\Baire$, there is a well-known correspondence between closed sets and sets of \emph{paths} through \emph{trees}. In these settings, the Cantor-Bendixson theorem states that all but countably many paths through a tree $T$ on $\nats$ belong to a perfect subtree $S$ of $T$ (i.e.\ a tree such that every node has two incomparable extensions). Again, $S$ (which is unique) is called the perfect kernel of $T$ and the set of missing paths is the scattered part of $T$.\smallskip

 Many theorems in \lq\lq classical\rq\rq\ mathematics, including \thref{Initialtheorem} and its tree version, can be written in the form 
 \[(\forall x \in X)(\varphi(x)\implies (\exists y \in Y)\psi(x,y)),\]
 and this formulation has a natural translation as a computational problem: given an instance $x \in X$ satisfying $\varphi(x)$, the task is to find a solution $y \in Y$ such that $\psi(x,y)$ (notice that such a $y$, in general, is not unique). A computational problem can be naturally rephrased as a \emph{(partial) multi-valued function} $\partialmultifunction{f}{X}{Y}$ where $f(x)\defas\{y \in Y:\psi(x,y)\}$, for every $x \in X$ such that $\varphi(x)$. The interpretation of theorems as multi-valued functions/problems, allows us to compare their uniform computational content using the framework of \emph{Weihrauch reducibility}.

This work continues the program initiated by Gherardi and the second author \cite{hanhbanach} that aims to provide a bridge between computable analysis and \emph{reverse mathematics}, the discipline which establishes equivalences between mathematical statements and the axioms required to prove them. A well-known empirical fact in this field is the so-called \emph{big five phenomenon}. Namely, many theorems of \lq\lq classical\rq\rq\ mathematics happen to be equivalent to one of five subsystems of second order arithmetic, namely $\mathsf{RCA}_0,\mathsf{WKL}_0,\mathsf{ACA}_0,\mathsf{ATR}_0$ and $\PiCA$. Some analogs of the big five have been identified in the Weihrauch context. For example, $\mathsf{RCA}_0$ roughly corresponds to computable problems, $\mathsf{WKL}_0$ to the task of choosing an element from a nonempty closed subset of the Cantor space (denoted by $\codedChoice{}{}{\Cantor}$) and $\mathsf{ACA}_0$ to iterations of $\mflim$, where $\mflim$ is the problem that takes in input a converging sequence in Baire space and outputs its limit.

The second author in \cite{daghstul} raised the question \lq\lq What do the Weihrauch hierarchies look like once we go to very high levels of reverse mathematics strength?\rq\rq. Here we continue to answer this question by focusing on theorems that, in reverse mathematics, are either equivalent to $\mathsf{ATR}_0$ or $\PiCA$. In this direction, it has been shown that statements that in reverse mathematics are equivalent to $\mathsf{ATR}_0$, when considered as problems, fall into different Weihrauch degrees (see for example \cite{kihara_marcone_pauly_2020, choiceprinciples, openRamsey, computabilitytheoretic, GOH2020102789}). On the other hand, $\PiCA$ has a natural correspondent in the Weihrauch lattice, namely the problem $\parallelization{\wf}$ that given in input a sequence of trees on $\nats$ outputs the sequence $p \in \Cantor$ such that $p(i)=1$ iff the $i$-th tree of the sequence is well-founded. This problem has been briefly considered by Jeff Hirst in \cite{leafmanaegement} and by Goh, Pauly, and the third author in \cite{goh_pauly_valenti_2021}, but ---to the best of our knowledge--- this is the first paper carrying out a systematic study of a theorem equivalent to $\PiCA$.

In \cite{leafmanaegement}, Hirst showed that $\parallelization{\wf}$ is Weihrauch equivalent to the problem of finding the perfect kernel of a tree. We show that for any uncountable computable Polish space $\X$, the problem of finding the perfect kernel of a closed set is strictly below $\parallelization{\wf}$ and its degree does not depend on $\X$. Notice that in reverse mathematics all these problems are equivalent to $\PiCA$ (see \cite[\S VI.1]{simpson_2009}). We consider the full Cantor-Bendixson problem, which, given in input a tree or a closed set, outputs its perfect kernel and its scattered part. Again, the tree version is strictly stronger than the closed set version; however, in this case, the topological properties of $\X$ affect the precise degree of the problem. We also study the problem of just listing the scattered part of a tree or a closed set.

In \cite{kihara_marcone_pauly_2020} Kihara, Pauly and the second author studied different problems related to the perfect tree theorem, i.e.\ the statement \lq\lq a tree on $\nats$ has countably many paths or contains a perfect subtree\rq\rq. The disjunctive nature of the theorem gives rise to two different groups of problems: either the problem takes as input a tree with uncountably many paths and outputs a perfect subtree of it, or it takes a tree with countably many paths and outputs a list of them. Here we study the same two kinds of problems for the perfect set theorem, i.e.\ the statement that a closed subset of a Polish space is either countable or contains a perfect subset. We obtain a number of results for different computable Polish spaces. 

Figures \ref{SummaryAtThebeginning} and \ref{Figureslist} summarize some of our results. The precise definitions of the various functions are given in due time.

\begin{figure}[H]
    \centering
 	\tikzstyle{every picture}=[tikzfig]
		\begin{tikzpicture}
	\begin{pgfonlayer}{nodelayer}
	\node [style=box] (UCBaire) at (-6.5,0) {$\UCBaire$};
	\node [style=box] (ScListCantor) at (-12,0) {$\ScList[\Cantor]$};
	\node [style=box] (PST) at (-6.5,3) {$\PST[\Cantor]\weiequiv\PST[\Baire]$};
	\node [style=box] (CBaire) at (2,6) {$\CBaire\weiequiv \PTT[\Baire]\weiequiv \PTT[\Cantor]$};
	\node [style=box] (PK) at (-12,7) {$\PK[\Baire]\weiequiv\wScList[\Baire]\weiequiv\wCB[\Baire]\weiequiv\CB[\Cantor]$};
	\node [style=box] (ScListBaire) at (-6.5,10) {$\ScList[\Baire]$};
	\node [style=box] (CBBaire) at (-6.5,13) {$\CB[\Baire]$};
	\node [style=box] (PKTree) at (-6.5,16) {$\parallelization{\wf}\weiequiv\PK[]\weiequiv\wCB[]\weiequiv\CB[]$};
	\end{pgfonlayer}
	\begin{pgfonlayer}{edgelayer}
		\draw [style=strictreducible] (UCBaire) to (PST);
		\draw [style=strictreducible] (PST.north) to (CBaire);
		\draw [style=strictreducible] (PST.north) to (PK);
		\draw [style=strictreducible] (PK) to (ScListBaire);
		\draw [style=strictreducible] (CBBaire) to (PKTree);
		\draw [style=strictreducible] (CBaire) to (PKTree);
		\draw [style=strictreducible] (ScListCantor) to (PK);
        \begin{scope}[transform canvas={xshift=-.5em}]
            \draw [->] (CBaire) -- (CBBaire.south east) node[midway,fill=white]{?};
            \draw [->] (CBaire) -- (ScListBaire) node[midway,fill=white]{?};
        \end{scope}
        \draw [->] (CBBaire.290) -- (ScListBaire.70) node[midway,fill=white]{?};
        \draw [strictreducible] (ScListBaire.110) to (CBBaire.250);
	\end{pgfonlayer}
\end{tikzpicture}

		\caption{Some multi-valued functions studied in this paper. The arrows represent Weihrauch reducibility in the direction of the arrow. The question marks indicate that the existence of a reduction is still open. If a function cannot be reached from another one following a path of arrows we know that there is no reduction between the two functions.}
		\label{SummaryAtThebeginning}
\end{figure}
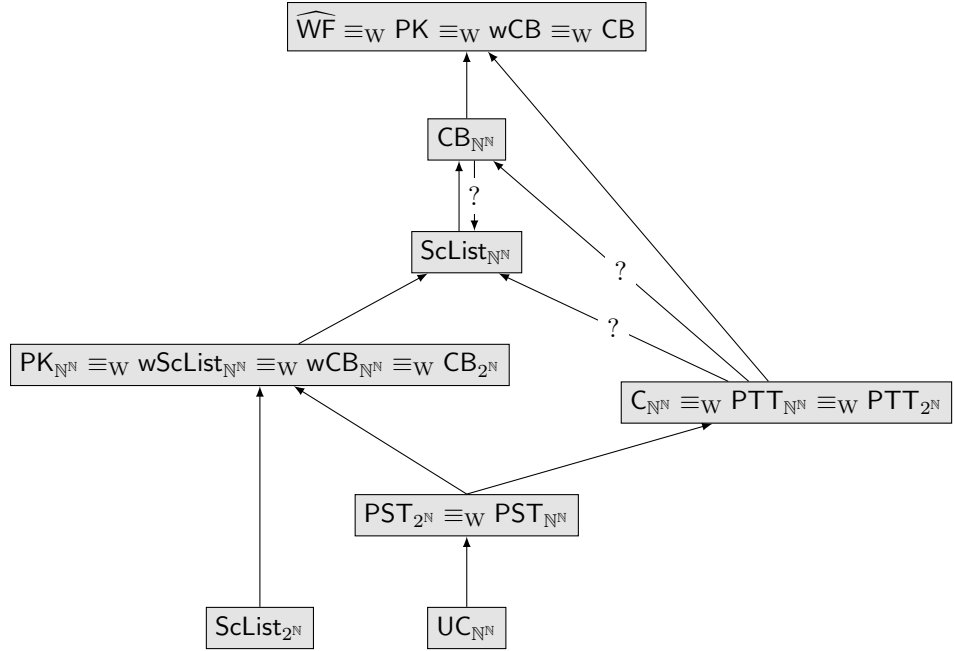

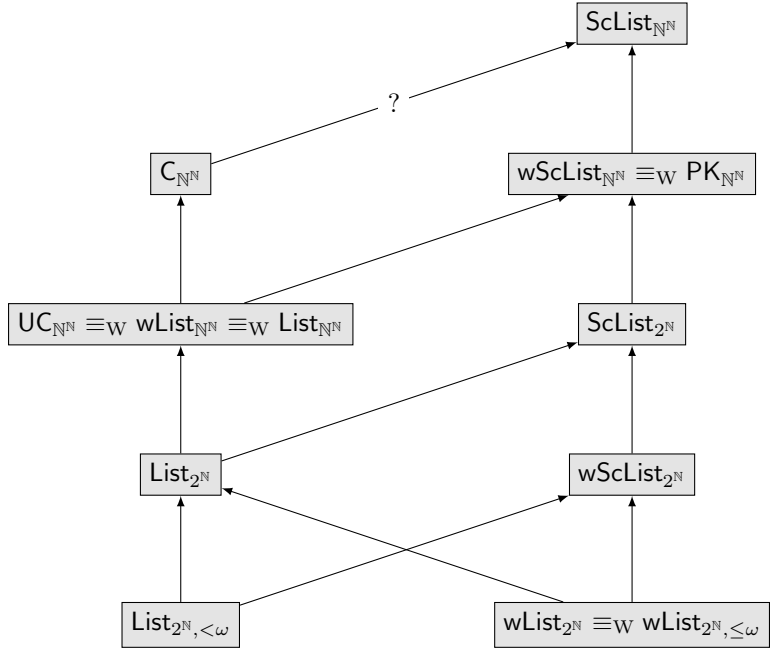
\begin{figure}[H]
	\begin{center}
		\tikzstyle{every picture}=[tikzfig]
\begin{tikzpicture}
	\begin{pgfonlayer}{nodelayer}
	\node [style=box] (1) at (0,0) {$\List[\Cantor,<\omega]$};
	\node [style=box] (2) at (12,0) {$\wList[\Cantor]\weiequiv\wList[\Cantor,\leq \omega]$};
	\node [style=box] (3) at (0,4) {$\List[\Cantor]$};
	\node [style=box] (4) at (12,4) {$\wScList[\Cantor]$};
	\node [style=box] (5) at (0,8) {$\UCBaire \weiequiv\wList[\Baire]\weiequiv\List[\Baire]$};
	\node [style=box] (6) at (12,8) {$\ScList[\Cantor]$};
	\node [style=box] (7) at (0,12) {$\CBaire$};
	\node [style=box] (8) at (12,12) {$\wScList[\Baire]\weiequiv\PK[\Baire]$};
	\node [style=box] (9) at (12,16) {$\ScList[\Baire]$};
	\end{pgfonlayer}
	    \begin{pgfonlayer}{edgelayer}
		\draw [style=strictreducible] (1) to (3);
		\draw [style=strictreducible] (3) to (5);
		\draw [style=strictreducible] (5) to (7);
		\draw [style=strictreducible] (2) to (3);
		\draw [style=strictreducible] (2) to (4);
		\draw [style=strictreducible] (4) to (6);
		\draw [style=strictreducible] (6) to (8);
		\draw [style=strictreducible] (8) to (9);
        \draw [style=strictreducible] (5) to (8);
        \draw [style=strictreducible] (1) to (4);
        \draw [style=strictreducible] (3) to (6);
        \draw [->] (7) -- (9) node[midway,fill=white]{?};
	\end{pgfonlayer}
\end{tikzpicture}
		\caption{Multi-valued functions related to listing problems in the Weihrauch lattice. The arrows have the same meaning of Figure \ref{SummaryAtThebeginning}.}
		\label{Figureslist}
	\end{center}
\end{figure}

The paper is organized as follows. In \S  \ref{Background} we give the necessary preliminaries: namely, in the first part, we provide definitions and notations about trees together with some useful lemmas, while in the second part we deal with represented spaces and Weihrauch reducibility. In \S \ref{perfectsetsgeneral} we study multi-valued functions related to the perfect set and perfect tree theorem in Baire and Cantor space, while in \S \ref{cantorbendixson} we consider problems related to the Cantor-Bendixson theorem in the same setting. In \S \ref{otherspaces} we study the problems considered in \S  \ref{perfectsetsgeneral} and \S \ref{cantorbendixson} for arbitrary computable metric spaces, while \S\ref{Openquestions} lists some open problems that remain to be solved.

\section{Background}
\label{Background}
\subsection{Sequences and trees}
\label{Sequencessandtrees}
Let $\nats^n$ denote the set of finite sequences of natural numbers of length $n$, where the length is denoted by $\length{\cdot}$. If $n=0$, $\nats^0=\{\str{}\}$, where $\str{}$ is the empty sequence: in general, given $i_0,\dots,i_{n-1} \in \nats$, we denote by $\str{i_0,\dots,i_{n-1}}$ the finite sequence in $\nats^n$ having digits $i_0,\dots,i_{n-1}$. The set of all finite sequences of natural numbers is denoted by $\baire$, while we write $\cantor$ for the set of all finite sequences of $0$ and $1$. For $\sigma \in \baire$ and $m\leq\length{\sigma}$, let $\sigma[m]\defas\str{\sigma(0),\dots,\sigma(m-1)}$. Given $\sigma, \tau \in \baire$, we use $\sigma \sqsubseteq \tau$ to say that $\sigma$ is an \emph{initial segment} of $\tau$ (equivalently, $\tau$ an \emph{extension} of $\sigma$), i.e. $\sigma=\tau[m]$ for some $m \leq \length{\sigma}$. We use the symbol $\sqsubset$ in case $\sigma \sqsubseteq \tau$ and $\length{\sigma}<\length{\tau}$, and in case $\sigma \not\sqsubseteq \tau$ and $\tau \not\sqsubseteq \sigma$ we say that $\sigma$ and $\tau$ are \emph{incomparable} ($\sigma \incomparable \tau$). 

The concatenation of two finite sequences $\sigma,\tau$ is denoted by $\sigma\concat\tau$, but often we just write $\sigma\tau$. The same symbol is also used for the concatenation of a finite and an infinite sequence. For $n,k \in \nats$, we denote by $n^k$ the sequence made of $k$ many $n$'s: in case $k=1$ we just write $n$ and we use $n^\nats$ to denote the infinite sequence with constant value $n$. For $\sigma \in \baire$ and $p \in \Baire$ we denote by $\sigma^-$ and $p^-$ the result of deleting the first digit of the sequence.

\begin{remark}
\thlabel{Bijections}
We fix a bijection between $\baire$ and $\nats$: to avoid too much notation we do not introduce a specific symbol for this bijection, but we identify a sequence with the number representing it: it should be clear from the context whether we are referring to a finite sequence or to the number representing it. We need the bijection to enjoy all the usual properties such as $\sigma \mapsto \length{\sigma}$ being computable: moreover, we require that if $\sigma \sqsubset \tau$ then $\sigma<\tau$. 
\end{remark}

A tree $T$ is a nonempty subset of $\baire$ closed under initial segments. In case the tree $T$ is a subset of $\cantor$, we  call $T$ a binary tree. We say that $f \in \Baire$ is a path through $T$ if for all $n \in \nats$, $f[n]\in T$ where, as for finite sequences, $f[n]=\str{f(0),\dots,f(n-1)}$. We denote by $\body{T}$ the \emph{body} of $T$, that is the set of paths through $T$. We say that a tree $T$ is \textit{ill-founded} iff there exists at least one path in $\body{T}$ and \textit{well-founded} otherwise. Given $\sigma \in T$ we define the tree of extensions of $\sigma$ in $T$ as $T_{\sigma}\defas\{\tau:\tau \sqsubseteq \sigma \lor \tau \sqsupseteq \sigma\}$. Notice that $T_\sigma$ is ill-founded iff there exists a path through $T$ that extends $\sigma$. We say that $T$ is \emph{perfect} if every element of $T$ has (at least) two incompatible extensions in $T$, that is, $(\forall \sigma \in T)(\exists \tau,\tau' \in T)(\sigma \sqsubset \tau \land \sigma \sqsubset \tau' \land \tau \incomparable \tau')$. It is straightforward that the body of a nonempty perfect tree has uncountably many paths. Given a tree $T$, the largest perfect subtree $S$ of $T$ is called the \emph{perfect kernel} of $T$ while $\body{T} \setminus \body{S} \subseteq \Baire$ is called the \emph{scattered part} of $T$. We call $T$ \emph{pruned} if every $\sigma \in T$ has a proper extension. Notice that every perfect tree is pruned. Moreover, if $\body{T}$ is perfect and $T$ is pruned then $T$ is a perfect tree.
\begin{remark}
\thlabel{Perfectnessincantor}
It is useful to notice that, for a binary tree $T$, if $\length{\body{T}}>\aleph_0$ then there must uncountably many paths with infinitely many ones. In other words, it can't be the case that all the paths in $\body{T}$ are eventually  zero paths, as it is straightforward to notice that they are only countably many.
\end{remark}

Given trees $T$ and $S$, we define the \textit{disjoint union} of $T$ and $S$ as $T\sqcup S=\{\str{}\} \cup \{\str{0} \tau: \tau \in  T \} \cup \{\str{1} \tau: \tau \in  S \}$. Of course, this is still a tree and it has the property that $T\sqcup S$ is ill-founded iff at least one of $T$ and $S$ is ill-founded. The construction can be easily generalized to countably many trees letting $\disjointunion{i\in \nats}{T^i}\defas\{\str{}\} \cup \{\str{i} \tau:\tau \in T^i \land i \in \nats\}$ and we still have that $\disjointunion{i\in \nats}{T^i}$ is ill-founded iff there exists $i$ such that $T^i$ is ill-founded. We also define the \emph{binary disjoint union} as  $\binarydisjointunion{i\in \nats}{T^i}\defas\{\str{}\} \cup \{0^i\str{1}\tau: \tau \in T^i \land i \in \nats\}$

\begin{remark}
\thlabel{Disjoint_union}
Notice that if all the $T^i$'s are binary trees, also $\binarydisjointunion{i\in \nats}{T^i}$ is and, regardless the ill-foundedness/well-foundedness of the $T^i$'s, $0^\nats\in\body{\binarydisjointunion{i\in \nats}{T^i}}$. Moreover $\length{\body{\binarydisjointunion{i\in \nats}{T^i}}}=1+\sum\limits_{i \in \nats} \length{\body{T^i}}$. In particular, $\length{\body{\binarydisjointunion{i\in \nats}{T^i}}}>1$  iff there exists an $i \in \nats$ such that $T^i$ is ill-founded. 
\end{remark}

We now turn our attention to another operation on trees, namely \emph{interleaving}. Given $\sigma,\tau \in \nats^{n}$, we define $\sigma*\tau\defas\str{ \sigma(0),\tau(0),\dots ,\sigma(n-1),\tau(n-1) }$. The same definition applies to infinite sequences. Then given trees $T$ and $S$, the {interleaving} between $T$ and $S$ is $T*S\defas\{ \sigma*\tau: \length{\sigma}=\length{\tau} \land \sigma \in T \land \tau \in S\}$. Clearly, $T*S$ is a tree and it is ill-founded iff both $T$ and $S$ are ill-founded. This construction can be generalized to countably many trees in a straightforward way and we use a notation such as $\underset{i \in \nats}{*}T^i$.

We often use the interleaving $\exploded{T}\defas T*\cantor$, which we call the \emph{explosion} of $T$.

Sometimes it is useful to be able to \lq\lq translate\rq\rq\ back and forth between sequences of natural numbers and binary sequences. 

\begin{definition}
\thlabel{Translationfinitesequences}
We define:
\begin{itemize}
    \item $\translateCantor\colon\baire \rightarrow \cantor$ by $\translateCantor(\sigma)\defas0^{\sigma(0)}10^{\sigma(1)}1\dots10^{\length{\sigma}-1}1$; in particular, $\translateCantor(\str{})\defas\str{}$;
    \item $\translateBaire\colon\cantor\rightarrow \baire$ by
    \[\translateBaire(\tau)\defas
    \begin{cases}
    \translateCantor^{-1}({\tau[n_\tau+1]})&\text{if } (\exists i)(\tau(i)=1) \text{ where } n_\tau\defas\max \{i:\tau(i)=1\};
    \\
    \str{} & \text{if } (\forall i)(\tau(i)=0).
    \end{cases}\]

\end{itemize} 
\end{definition}

The two functions defined above have the following properties:
\begin{itemize}
    \item $\translateCantor$ is injective;
    \item  $\translateBaire(\translateCantor(\sigma))=\sigma$;
    \item $\sigma \sqsubset \sigma'$ iff $\translateCantor(\sigma) \sqsubset \translateCantor(\sigma')$;
    \item if $\tau \sqsubset \tau'$ then  $\translateBaire(\tau) \sqsubseteq \translateBaire(\tau')$. 
\end{itemize}

We are now able to \lq\lq translate\rq\rq\ back and forth between trees on $\nats$ and binary trees. We use the same symbols $\translateCantor$ and $\translateBaire$ as the context explains which function we are using. 
\begin{definition}
\thlabel{Translatetrees}
Let $T\subseteq \cantor$ and $S\subseteq \baire$ be trees. We define:
\begin{itemize}
    \item    $\translateBaire(T)\defas\{\sigma \in \baire: \translateCantor(\sigma) \in T\}$;
     \item    $\translateCantor(S)\defas\{\tau \in \cantor:\translateBaire(\tau)\in S\}$.
\end{itemize}
\end{definition}

\begin{remark}
\thlabel{Translation}
Notice that, since $\translateCantor(\sigma0^n)=\translateCantor(\sigma)$ for every $n$, if $\sigma \in \translateBaire(T)$ then $\sigma0^\nats \in \body{\translateBaire(T)}$. It is straightforward to check that $\translateBaire(T)=\{\translateBaire(\tau) \in \baire: \tau \in T\}$. On the other hand, for most trees, $S\subseteq \baire$, $\translateCantor(S) \neq  \{\translateCantor(\tau) \in \cantor: \tau \in S\}$ as the latter is not even a tree.
\end{remark}

The back and forth translations between sequences in $\Baire$ and $\Cantor$ are also denoted by the same function symbols used for finite sequences and for trees: again the context clarifies which one we are using. 
\begin{definition}
\thlabel{translateinfinitesequence}

\begin{itemize}
    \item $\translateCantor\colon\Baire \rightarrow \Cantor$ is defined by $\translateCantor(p)\defas\underset{n \in \nats}{\bigcup}\translateCantor(p[n])=0^{p(0)}10^{p(1)}\dots 10^{p(n)}1\dots$;
        \item $\partialfunction{\translateBaire}{\Cantor}{\Baire}$ has domain  $\{q:(\exists^\infty i)(q(i)=1)\}$ and is defined by $\translateBaire(q)\defas\underset{n \in \nats}{\bigcup}\translateBaire(q[n])$.
\end{itemize}
In both definitions, the union makes sense because the finite sequences are comparable.
\end{definition}

Notice that all the functions $\translateBaire$ and $\translateCantor$ we defined are computable. For the functions on finite sequences, this means usual Turing computability. For the functions on trees and infinite sequences computability is to be intended in the sense of \S \ref{representedspaces}.
\begin{lemma}
\thlabel{AllPropertiesOfTranslation}
The following lemma summarizes the fundamental properties of $\translateCantor$ and $\translateBaire$ for infinite sequences and trees.
\begin{enumerate}
    \item The range of $\translateCantor$ is $\{q \in \Cantor:(\exists^\infty i)(q(i)=1)\}$;
    \item $\translateBaire(\translateCantor(p))=p$ for every $p \in \Baire$.
     \item $\translateCantor(\translateBaire(q))=q$ for every $q \in \dom(\translateBaire)$.
     \item If $S\subseteq \baire$, $p \in \body{S} \iff \translateCantor(p)\in \body{\translateCantor(S)}$ and hence $\body{\translateCantor(S)}\subseteq \{\translateCantor(p): p \in \body{S}\}\cup \{q:(\forall^{\infty} i)(q(i)=0)\}$ so that $\length{\body{\translateCantor(S)}}\leq \aleph_0 \iff \length{\body{S}} \leq \aleph_0$.
     \item If $T\subseteq \cantor$ and $q \in \dom(\translateBaire)$ we have that $q \in \body{T} \iff \translateBaire(q) \in \body{\translateBaire(T)}$.
     \item  If $T\subseteq \cantor$ and $p \in \Baire$ then $p \in \body{\translateBaire(T)} \iff \translateCantor(p) \in \body{T}$.
\end{enumerate}
\end{lemma}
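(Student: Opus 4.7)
The plan is to derive all six items from the block-by-block structure of the finite-sequence versions of $\translateCantor$ and $\translateBaire$, which have already been established.

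For item (1), I would observe that by the definition $\translateCantor(p)=0^{p(0)}10^{p(1)}1\cdots$, a $1$ appears at position $p(0)+p(1)+\cdots+p(k-1)+k$ for every $k\ge 1$, so $\translateCantor(p)$ always has infinitely many $1$'s. Conversely, given $q\in\Cantor$ with infinitely many $1$'s, I would define $p(k)$ to be the number of consecutive $0$'s between the $k$-th and $(k{+}1)$-st occurrence of $1$ in $q$ (counting from $0$); by construction $\translateCantor(p)=q$. Items (2) and (3) then follow directly from the analogous identities for finite sequences: for (2), for every $n$ we have $\translateCantor(p[n])=\translateCantor(p)[N_n]$ with $N_n\defas p(0)+\dots+p(n-1)+n$, and hence $\translateBaire(\translateCantor(p)[N_n])=p[n]$; taking the union over $n$ yields $p$. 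For (3), if $q\in\dom(\translateBaire)$ then there are arbitrarily long prefixes $q[m]$ ending in a $1$, and for such $m$ we have $\translateCantor(\translateBaire(q[m]))=q[m]$; taking the union recovers $q$.

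For item (4), I would chain equivalences using the defining property $\translateCantor(S)=\{\tau\in\cantor\st \translateBaire(\tau)\in S\}$. If $p\in\body{S}$, every prefix of $\translateCantor(p)$ has the form $\translateCantor(p[n])0^k$ for some $n,k$, and since $\translateBaire(\translateCantor(p[n])0^k)=p[n]\in S$, such a prefix lies in $\translateCantor(S)$; hence $\translateCantor(p)\in\body{\translateCantor(S)}$. Conversely, if $\translateCantor(p)\in\body{\translateCantor(S)}$, then $\translateCantor(p[n])$ is a prefix of $\translateCantor(p)$ for every $n$, so $p[n]=\translateBaire(\translateCantor(p[n]))\in S$ and $p\in\body{S}$. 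For the containment, I would take any $q\in\body{\translateCantor(S)}$ and split on whether $q$ has infinitely many $1$'s: if so, then $q=\translateCantor(\translateBaire(q))$ by (3), and the path $\translateBaire(q)$ lies in $\body{S}$ by the equivalence just proved together with (2); otherwise $q$ is eventually $0$. Since the set of eventually-$0$ binary sequences is countable, the cardinality statement follows immediately.

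Items (5) and (6) follow in the same spirit using the definition $\translateBaire(T)=\{\sigma\in\baire\st\translateCantor(\sigma)\in T\}$: for (5), $q\in\body{T}$ means every $q[m]\in T$, and by applying $\translateBaire$ (which preserves the initial-segment relation on elements of $\dom(\translateBaire)$) one sees every prefix of $\translateBaire(q)$ lies in $\translateBaire(T)$; the reverse uses that $\translateCantor(\translateBaire(q))=q$. For (6), $p\in\body{\translateBaire(T)}$ iff $p[n]\in\translateBaire(T)$ for all $n$ iff $\translateCantor(p[n])\in T$ for all $n$, which by (2) is equivalent to $\translateCantor(p)\in\body{T}$ since $\translateCantor(p[n])=\translateCantor(p)[N_n]$. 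The only mildly delicate point is the bookkeeping in (4) that handles the eventually-zero paths of $\translateCantor(S)$, which do not arise from paths of $S$ but contribute only countably many elements; once that case is isolated, the rest is a routine unfolding of definitions.
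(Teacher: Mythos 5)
Your proof is correct, and since the paper dismisses all six items as ``straightforward from the definitions,'' there is no alternative approach in the paper to compare against; what you have written is a careful unwinding of the definitions, which is exactly what the paper leaves to the reader.

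One small point worth making explicit in items (5) and (6): after establishing that the prefixes of $\translateBaire(q)$ lie in $\translateBaire(T)$ is equivalent to the prefixes of $q$ \emph{ending in a} $1$ lying in $T$ (resp.\ that $\translateCantor(p[n])\in T$ for all $n$), you still need to conclude that \emph{all} prefixes of $q$ (resp.\ of $\translateCantor(p)$) lie in $T$. This does hold, because $q\in\dom(\translateBaire)$ has infinitely many $1$'s, so the prefixes ending in $1$ (resp.\ the strings $\translateCantor(p[n])$) form a cofinal family of prefixes, and $T$, being a tree, is closed downward. You invoke $\translateCantor(\translateBaire(q))=q$ and $\translateCantor(p[n])=\translateCantor(p)[N_n]$ for this, which is the right ingredient, but the downward-closure step is worth naming since it is the only place where the tree hypothesis on $T$ is actually used. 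With that spelled out the argument is complete.
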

\begin{proof}
The proofs are straightforward from the definitions above.
\end{proof}

\begin{lemma}
\thlabel{Perfecttreesinbaire}
    If $T$ is a binary tree is such that $\body{T}$ is perfect then $\body{\translateBaire(T)}$ is perfect as well. Furthermore, if $T$ is a perfect tree then $\translateBaire(T)$ is a perfect tree as well.
\end{lemma}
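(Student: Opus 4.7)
My overall strategy is to exploit \thref{AllPropertiesOfTranslation}(6), namely $p \in \body{\translateBaire(T)} \iff \translateCantor(p) \in \body{T}$, to transfer between $\body{\translateBaire(T)}$ and the set of paths in $\body{T}$ that lie in $\range(\translateCantor)$, i.e.\ those containing infinitely many $1$s. The main subtlety is that $\body{T}$ may also contain eventually-zero paths outside $\range(\translateCantor)$, so at each step I must find witnesses of the required form. For this I will rely on \thref{Perfectnessincantor} together with the standard fact that the intersection of a perfect subset of $\Cantor$ with a basic open cylinder is, when nonempty, again perfect (hence uncountable).

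For the first part, given $p\in\body{\translateBaire(T)}$ I set $q\defas\translateCantor(p)$ and, fixing $n\in\nats$, aim to produce some $p'\neq p$ in $\body{\translateBaire(T)}$ with $p'[n]=p[n]$. I take any $k\geq n$ and look at $E_k\defas\{q'\in\body{T}:\translateCantor(p[k+1])\sqsubseteq q'\}$. Since $\body{T}$ is perfect and $q\in E_k$, $E_k$ is a nonempty perfect subset of $\Cantor$, hence uncountable; applying \thref{Perfectnessincantor} to the binary tree whose body is $E_k$ I find uncountably many elements of $E_k$ with infinitely many $1$s, so I can pick one such $q'\neq q$ and set $p'\defas\translateBaire(q')$. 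Then \thref{AllPropertiesOfTranslation}(6) gives $p'\in\body{\translateBaire(T)}$, the injectivity of $\translateCantor$ on $\Baire$ gives $p'\neq p$, and the initial-segment preservation of $\translateCantor$ gives $p'[k+1]=p[k+1]$, hence in particular $p'[n]=p[n]$.

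For the second part, suppose $T$ is perfect; then $T$ is pruned and $\body{T}$ is a perfect subset of $\Cantor$. Given $\sigma\in\translateBaire(T)$, i.e.\ $\translateCantor(\sigma)\in T$, I consider $E\defas\{q\in\body{T}:\translateCantor(\sigma)\sqsubseteq q\}$, which is again a nonempty perfect subset of $\Cantor$. By \thref{Perfectnessincantor} I pick two distinct $q_0,q_1\in E$ both with infinitely many $1$s and set $\pi_i\defas\translateBaire(q_i)$; by \thref{AllPropertiesOfTranslation}(6) these are distinct elements of $\body{\translateBaire(T)}$, and both extend $\sigma$ by the initial-segment preservation of $\translateCantor$. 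Taking any length $m$ past the first position where $\pi_0$ and $\pi_1$ disagree, $\pi_0[m]$ and $\pi_1[m]$ are incomparable extensions of $\sigma$ in $\translateBaire(T)$, witnessing that the tree $\translateBaire(T)$ is perfect.

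The main obstacle throughout is the mismatch between $\body{T}$ and $\range(\translateCantor)$: a naive lifting via $\translateBaire$ of an arbitrary sequence in $\body{T}$ converging to $q$ need not land in $\Baire$ at all. Replacing each relevant cylinder by the perfect set $E$ (or $E_k$) and invoking \thref{Perfectnessincantor} to extract paths with infinitely many $1$s is the technical heart of the argument; every other step is routine unwinding of the definitions and of the properties collected in \thref{AllPropertiesOfTranslation}.
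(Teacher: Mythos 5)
Your proof is correct and, for the first assertion, is essentially the paper's argument: both fix $\sigma=\translateCantor(p[n])$ (your $\translateCantor(p[k+1])$), observe via \thref{Perfectnessincantor} that $\body{T}$ contains a path $h\neq\translateCantor(p)$ with infinitely many ones extending $\sigma$, and map back with $\translateBaire$. For the second assertion you take a slightly different route: the paper only shows $\translateBaire(T)$ is pruned and then invokes the fact stated in \S\ref{Sequencessandtrees} that a pruned tree with perfect body is a perfect tree (thus reusing the first part), whereas you directly exhibit, for each $\sigma\in\translateBaire(T)$, two incomparable proper extensions by pulling back two distinct paths with infinitely many ones through $T_{\translateCantor(\sigma)}$. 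Both are correct; the paper's route is shorter since it leverages the already-proved perfectness of $\body{\translateBaire(T)}$, while yours is self-contained and avoids appealing to the auxiliary ``pruned + perfect body'' lemma.
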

\begin{proof}
Let $T$ be a binary tree such that $\body{T}$ is perfect. We show that no $f \in \body{\translateBaire(T)}$ is isolated, i.e.\ $(\forall n)(\exists g \in \body{\translateBaire(T)})(f[n] \sqsubset g \land f \neq g)$.
 Fix $n$: by \thref{AllPropertiesOfTranslation}(6) we get that $\translateCantor(f) \in \body{T}$ and, in particular, $\sigma \defas \translateCantor(f[n]) \in T$. Since $\body{T}$ is perfect, by \thref{Perfectnessincantor}, there exists $h \in \body{T}$ with infinitely many ones such that $\sigma \sqsubset h$ and $\translateCantor(f) \neq h$. By \thref{AllPropertiesOfTranslation}(5) $\translateBaire(h) \in \body{\translateBaire(T)}$ and letting $g\defas \translateBaire(h)$ we reach the conclusion.
 
In case $T$ is a perfect tree, it suffices to show that $\translateBaire(T)$ is pruned. Suppose there exists $\sigma \in \translateBaire(T)$ with no extensions in $\translateBaire(T)$. Then $\tau\defas \translateCantor(\sigma)$ belongs to $T$ and the only path in $T$ extending $\tau$ is of the form $\tau 0^\nats$, contradicting the perfectness of $T$.
\end{proof}
Notice that if $S\subseteq \baire$ is a perfect tree it may be the case that $\body{\translateCantor(S)}$ is not perfect, e.g.\ let $S=\{\sigma \in \baire:\sigma(0)=0\}$ and notice that $0^\nats$ is isolated in $\body{\translateCantor(S)}$.

\subsection{Represented spaces and Weihrauch reducibility}
\label{representedspaces}
We give a brief introduction to computable analysis and the theory of represented spaces. We refer the reader to \cite{Weihrauch} and \cite{brattka2021weihrauch} for more on these topics. In particular, we assume the notion of partial computable function from $\Baire$ to $\Baire$, as formulated in the so-called TTE theory of computation. We denote by $\Phi_e$ the partial function computed by the Turing machine of index $e \in \nats$.

A \emph{represented space} $\textbf{X}$ is a pair $(X,\repmap{X})$  where $X$ is a set and $\partialfunction{\repmap{X}}{\Baire}{X}$ is a (possibly partial) surjection. For each $x \in X$ we say that $p$ is a $\repmap{X}$-name for $x$ if $\repmap{X}(p)=x$.

A computational problem $f$ between represented spaces $\textbf{X}$ and $\textbf{Y}$ is formalized as a \emph{partial multi-valued function} $\partialmultifunction{f}{\textbf{X}}{\textbf{Y}}$. A (possibly partial) function $\partialfunction{F}{\Baire}{\Baire}$ is a \emph{realizer} for $\partialmultifunction{f}{\textbf{X}}{\textbf{Y}}$ if for every $p \in \dom(f\circ \delta_X)$, $\delta_Y(F(p)) \in f(\delta_X(p))$. The notion of realizer allows us to transfer properties of functions on the Baire space (such as computability or continuity) to multi-valued functions defined on represented spaces in general. Whenever we say that a multi-valued function between represented spaces is computable we mean that it has a computable realizer.

We fix a computable enumeration $(q_i)_{i \in \nats}$  of  $\mathbb{Q}$ and we represent the space $\mathbb{R}$ by the so-called Cauchy representation $\delta_\mathbb{R}$ where $\dom(\delta_\mathbb{R})\defas\{p \in \Baire:(\forall j)(\forall i>j)(|q_{p(i)}-q_{p(j)}|<2^{-j})\}$ and $\delta_\mathbb{R}(p) \defas \lim q_{p(n)}$. 

We can now represent any \emph{computable metric space} $\X=(X,d,\alpha)$, that is a separable metric space $(X,d)$ and a dense sequence $\function{\alpha}{\nats}{X}$ such that $\function{d\circ (\alpha \times \alpha)}{\nats^2}{\mathbb{R}}$ is a computable double sequence of real numbers. The Cauchy representation $\partialfunction{\delta_X}{\Baire}{X}$ of such a space has domain $\{p \in \Baire:(\forall j)(\forall i>j)(d(\alpha(p(i)),\alpha(p(j)))<2^{-j})\}$ and is defined by  $\delta_X(p)=\lim \alpha(p(n))$. We always assume that computable metric spaces are represented by this representation. For convenience, we fix a computable enumeration $(B_i)_{i \in \nats}$ of all basic open sets of $\X$,  where the ball $B_{\pairing{n,m}}$ is centered in $\alpha(n)$ and has radius $q_m$.

A computable Polish space is a computable metric space $\X=(X,d,\alpha)$ such that the metric $d$ is complete.

Given a computable metric space $\X$ and $k>0$, using the inductive definition of Borel sets, we can define the represented spaces $\boldfaceSigma_k^0(\X)$, $\boldfacePi_k^0(\X)$ and $\boldfaceDelta_k^0(\X)$: this shows that the Borel classes can be naturally considered as represented spaces.

\begin{definition}[{\cite[Definition 3.1]{effectiveborelmeasurability}}]
\thlabel{Borelrepspaces}
For any computable metric space $\X=(X,d,\alpha)$ and for any $k>0$,  we define the represented spaces $(\boldfaceSigma_k^0(\X),\delta_{\boldfaceSigma_k^0(\X)})$, $(\boldfacePi_k^0(\X),\delta_{\boldfacePi_k^0(\X)})$ and $(\boldfaceDelta_k^0(\X),\delta_{\boldfaceDelta_k^0(\X)})$ inductively as follows: 
\begin{itemize}
\item $\delta_{\boldfaceSigma_1^0(\X)}(p)\defas\underset{i \in \nats}{\bigcup}B_{p(i)}$;
\item $\delta_{\boldfacePi_k^0(\X)}\defas X\setminus\delta_{\boldfaceSigma_k^0(\X)}(p)$;
\item $\delta_{\boldfaceSigma_{k+1}^0(\X)}(p_0*p_1*\dots)\defas\underset{i \in \nats}{\bigcup}\delta_{\boldfacePi_k^0(\X)}(p_i)$.
\end{itemize}
\end{definition}

Notice that the representation of $\boldfacePi_1^0(\X)$ is the standard negative representation of closed sets of $\X$:  as usual in the literature, we denote this represented space by $\negrepr{\X}$. We denote by $\Pi_1^0(\X)$ the collection of closed sets of $\X$ having a computable name.
 
 Let $\tree$ and $\tree_2$ be the represented spaces of trees on $\nats$ numbers and binary trees respectively: for both the representation map is the characteristic function and hence we identify $\tree$ and $\tree_2$ with closed subsets of $\Cantor$. The surjective function $\function{[\cdot]}{\tree}{\negrepr{\Baire}}$ defined by $T\mapsto \body{T}$ is computable with multi-valued computable inverse and the same holds for its restriction to $\tree_2$, which is onto $\negrepr{\Cantor}$. This means that the negative representation of a closed subset $C$ of $\Baire$ (resp.\ $\Cantor$) is equivalent (in the sense of \cite[Definition 2.3.2]{Weihrauch}) to the one given by the characteristic function of a (resp.\ binary) tree $T$ such that $\body{T}=C$. We refer to the latter representation as the \emph{tree representation}.

We can represent the pointclass $\boldfaceSigma_1^1(\X)$ of analytic subsets of $\X$ by defining a name for $S$ as a name for a closed set $C\subseteq \X\times \Baire$ such that $S$ is the projection on the first coordinate of $C$. Then, a name for a coanalytic set $R  \in \boldfacePi_1^1(\X)$ is just a name for its complement.

The next theorem summarizes some well-known results about the level at which some subsets of trees are in the \emph{Kleene arithmetical and analytical hierarchy} (or lightface hierarchy), the effective counterpart of the Borel and projective hierarchy (or boldface hierarchy).  Here, completeness is defined with respect to effective Wadge reducibility. These notions can be found, sometimes with different terminology, for example in \cite{Moschovakis, kechris2012classical}.
\begin{theorem}
\thlabel{Complexityresults}
The following classification results hold:
\begin{enumerate}[(i)]
    \item The set $\illfounded\defas \{T \in \tree : T \text{ is ill-founded}\}$ is $\Sigma_1^1$-complete, while $\wellfounded\defas \{T \in \tree : T \text{ is well-founded}\}$ is $\Pi_1^1$-complete. In contrast, $\illfounded_2\defas \illfounded \cap \tree_2$ is $\Pi_1^0$-complete and $\wellfounded_2\defas \wellfounded \cap \tree_2$ is $\Sigma_1^0$-complete.
    \item The set $\uncountable\defas \{T \in \tree: \length{\body{T}} > \aleph_0\}$  is $\Sigma_1^1$-complete, while $\countable \defas \{T \in \tree: \length{\body{T}} \leq \aleph_0\}$ is $\Pi_1^1$-complete. In this case, $\uncountable_2 \defas \uncountable \cap \tree_2$ is $\Sigma_1^1$-complete as well and $\countable_2 \defas \countable \cap \tree_2$ is also  $\Pi_1^1$-complete.
    \item The set $\uniquebranch\defas \{T \in \tree : \length{\body{T}}=1\}$ is $\Pi_1^1$-complete. In contrast, $\uniquebranch_2 \defas \uniquebranch\cap\tree_2$ is $\Pi_2^0$-complete.
\end{enumerate}
\end{theorem}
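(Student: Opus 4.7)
The proof splits into six separate classifications. For each, we need (a) an upper bound, obtained by unwinding the defining formula and counting quantifier alternations, and (b) hardness, obtained via an explicit continuous reduction from a canonical complete set.

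For (i), $\illfounded\in\Sigma^1_1$ is immediate from $(\exists p\in\Baire)(\forall n)(p[n]\in T)$, and $\wellfounded\in\Pi^1_1$ by complementation. The $\Sigma^1_1$-hardness is the classical normal form theorem: for any $\Sigma^1_1$ set $A\subseteq\Baire$, there is a computable $x\mapsto T_x$ with $x\in A\iff T_x\in\illfounded$. In the binary case K\"onig's lemma yields $T\in\tree_2$ ill-founded iff $T$ infinite, so $(\forall n)(T$ has a node at level $n)$ is $\Pi^0_1$ because the inner existential is bounded by the finitely many sequences of length $n$. $\Pi^0_1$-hardness is a direct reduction from a standard $\Pi^0_1$-complete subset of $\Cantor$.

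For (ii), I would use that for any tree $T$, $\length{\body{T}}>\aleph_0$ iff $T$ admits a nonempty perfect subtree (Cantor-Bendixson for trees). This gives a $\Sigma^1_1$ definition of $\uncountable$ of the form $(\exists S\in\tree)(S\subseteq T\land S\text{ perfect})$, with perfectness arithmetical. For hardness, reduce $\illfounded$ to $\uncountable$ via the explosion $T\mapsto\exploded{T}=T*\cantor$: a single path of $T$ spawns continuum-many paths of $\exploded{T}$, while well-foundedness is preserved. The binary case follows by post-composing with $\translateCantor$ and invoking \thref{AllPropertiesOfTranslation}(4), which preserves the countable/uncountable dichotomy. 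The statements for $\countable$ and $\countable_2$ are obtained by complementation.

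For (iii), $\Pi^1_1$-hardness of $\uniquebranch$ is witnessed by the continuous reduction $T\mapsto T'\defas\{0^n:n\in\nats\}\cup\{0^n1\tau:\tau\in T\}$: if $T\in\wellfounded$ the only path of $T'$ is $0^\nats$, whereas any path $p$ of $T$ produces infinitely many distinct paths $0^n1p\in\body{T'}$, so $T\in\wellfounded\iff T'\in\uniquebranch$. In the binary case K\"onig's lemma allows ``$T_\sigma$ is finite'' to be expressed as a $\Sigma^0_1$ formula (bounded search for a level beyond which $\sigma$ has no extension in $T$). Then $\uniquebranch_2$ is the conjunction of ``$T$ is infinite'' ($\Pi^0_2$) and ``for every pair of distinct $\sigma,\tau\in T$ of the same length, at least one of $T_\sigma,T_\tau$ is finite'' ($\Pi^0_2$), hence $\Pi^0_2$; $\Pi^0_2$-hardness follows from a reduction from a standard complete set.

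The main obstacle is the $\Pi^1_1$ upper bound in the non-binary case of (iii): the naive formulation of ``$T$ has exactly one path'' is a conjunction of a $\Sigma^1_1$ existence condition with a $\Pi^1_1$ uniqueness condition, and these must be packaged into a single $\Pi^1_1$ formula. The approach I would take is to examine the set $L(T)\defas\{\sigma\in T:T_\sigma\in\illfounded\}$: since every live node has a live extension, $L(T)$ is either empty or contains elements of every length, so expressing ``$L(T)$ is a chain'' (which is $\Pi^1_1$ since it asks that no two incomparable $\sigma,\tau\in T$ both have ill-founded subtrees) together with a carefully chosen side condition that forces nonemptiness within $\Pi^1_1$ is the delicate consolidation step at the heart of the argument.
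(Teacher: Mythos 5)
Your proposal matches the paper on parts (i) and (ii), and on the hardness directions of (iii). For (i) you both invoke the classical $\Sigma^1_1$-completeness of $\illfounded$ (the paper cites Kechris 27.1), K\"{o}nig's lemma for the $\Pi^0_1$ bound on $\illfounded_2$, and complementation for the well-founded sets. For (ii) you both use Cantor--Bendixson for the $\Sigma^1_1$ upper bound, the explosion $T\mapsto\exploded{T}$ for $\Sigma^1_1$-hardness, and $\translateCantor$ (i.e.\ \thref{AllPropertiesOfTranslation}(4)) for the binary case. For the $\Pi^1_1$-hardness of $\uniquebranch$ your map $T\mapsto\{0^n:n\in\nats\}\cup\{0^n1\tau:\tau\in T\}$ is correct, though slightly different from the paper's $T\mapsto\{0^n:n\in\nats\}\sqcup T$; both work. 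Your $\Pi^0_2$ bound and hardness for $\uniquebranch_2$ also agree with the paper (minor quibble: you call ``$T$ is infinite'' $\Pi^0_2$ where it is in fact $\Pi^0_1$, as you yourself note in part (i); this is harmless).

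The genuine gap is the $\Pi^1_1$ upper bound for $\uniquebranch$, which you correctly flag as the main obstacle but do not resolve. You observe that the at-most-one-path condition (``$L(T)$ is a chain'', or equivalently $(\forall\tau,\tau')(\tau\incomparable\tau'\implies T_\tau\in\wellfounded\lor T_{\tau'}\in\wellfounded)$) is $\Pi^1_1$, and that the nonemptiness condition is naively $\Sigma^1_1$, and then appeal to ``a carefully chosen side condition that forces nonemptiness within $\Pi^1_1$.'' That side condition is precisely what has to be supplied, and it does not come for free. The paper's resolution is a real theorem: by the effective perfect set theorem \cite[Theorem 4F.1]{Moschovakis}, if $\body{T}$ is a singleton then its unique element is hyperarithmetical in $T$, so nonemptiness can be rewritten as $(\exists p\in\mathsf{HYP}(\Baire))(p\in\body{T})$; by Kleene's quantification theorem \cite[Theorem 4D.3]{Moschovakis}, existential quantification over $\mathsf{HYP}$ of a $\Pi^1_1$ matrix stays $\Pi^1_1$. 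Your proposal is missing both the effective perfect set theorem and Kleene's quantification theorem, which together constitute the entire content of this step; note also that restricting the existential to $\mathsf{HYP}$ does not alter the extension of the formula only under the hypothesis that the other conjunct holds, so the two conjuncts cannot simply be treated independently and the fix is not purely formal.
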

\begin{proof}
To show that $\illfounded$ is $\Sigma_1^1$-complete see \cite[Theorem 27.1]{kechris2012classical}: the theorem states the boldface case, but its proof works also in the lightface one. If $T\in \tree_2$, notice that, by K\"{o}nig's lemma, $T \in \illfounded_2$ iff $(\forall n)(\exists \tau \in 2^{n})(\tau \in T)$; hence $\illfounded_2$ is $\Pi_1^0$ and completeness is straightforward. It follows immediately that $\wellfounded$ is $\Pi_1^1$-complete and $\wellfounded_2$ is $\Sigma_1^0$-complete.

To prove that $\uncountable$ is $\Sigma_1^1$-complete notice that, by the Cantor-Bendixson theorem for trees, $T \in \uncountable $ iff $(\exists S \subseteq T)(S \text{ is nonempty and perfect})$: the latter is a $\Sigma_1^1$ formula and it remains to show that $\uncountable$ is complete for $\Sigma_1^1$ sets. This is immediate as $T \in \illfounded \iff \exploded{T} \in \uncountable$. The proof for $\uncountable_2$ is similar and it follows immediately that $\countable$ and $\countable_2$ are $\Pi_1^1$-complete.

To prove that $\uniquebranch$ is $\Pi_1^1$-complete notice that, by the effective perfect set theorem (see \cite[Theorem 4F.1]{Moschovakis}), $T \in \uniquebranch$ iff 
\[(\exists p \in \mathsf{HYP}(\Baire))(p \in \body{T}) \land (\forall \tau,\tau')(\tau \incomparable \tau' \implies T_\tau \in \wellfounded \lor T_{\tau'} \in \wellfounded),\]
where $\mathsf{HYP}(\Baire)$ is the set of hyperarithmetical elements in $\Baire$.
Notice that the formula is $\Pi_1^1$: indeed, the second conjunct is clearly $\Pi_1^1$ and by Kleene's quantification theorem (see \cite[Theorem 4D.3]{Moschovakis}), the first conjunct is $\Pi_1^1$ as well. It remains to show that $\uniquebranch$ is complete for $\Pi_1^1$ sets. To do so, it suffices to notice that $T \in \wellfounded$ iff $S \in \uniquebranch$, where $S\defas  \{0^n:n \in \nats\} \sqcup T$ (indeed, $\body{S}=\{0^\nats\} \cup \{1p:p \in \body{T}\}$). If $T \in \tree_2$, notice that $T \in \uniquebranch_2$ iff 
\[T \in \illfounded_2 \land (\forall \tau,\tau')(\tau \incomparable \tau' \implies T_\tau \in \wellfounded_2 \lor T_{\tau'} \in \wellfounded_2).\]
The formula is clearly $\Pi_2^0$ and proving completeness is straightforward.
\end{proof}

To compare the uniform computational content of different problems, we use the framework of \emph{Weihrauch reducibility}. We say that a problem $f$ is \emph{Weihrauch reducible} to a problem $g$, written $f \weireducible g$, if there are computable maps $\partialfunction{\Phi,\Psi}{\Baire}{\Baire}$ such that if $p$ is a name for some $x \in \dom(f)$, then:
\begin{enumerate}[(i)]
	\item $\Phi(p)$ is a name for some $y \in \dom(g)$;
	\item for every name $q$ for some element of $g(y)$, $\Psi(p*q)$ is a name for some element of $f(x)$.
\end{enumerate}
Informally, if $f\weireducible g$ we are claiming the existence of a procedure for solving $f$ which is computable modulo a single invocation of $g$ as an oracle (in other words, this procedure transforms realizers for $g$ into realizers for $f$). In case $\Phi$ is as above and $\Psi$ is not allowed to use $p$ in its computation, we say that $f$ is \emph{strongly Weihrauch reducible} to a problem $g$, written $f \strongweireducible g$.

Weihrauch reducibility and strong Weihrauch reducibility are reflexive and transitive hence they induce the equivalence relations
$\weiequiv$ and $\strongweiequiv$: that is $f\weiequiv g$ iff $f \weireducible g$ and $g\weireducible f$ (similarly for $\strongweireducible$).
The $\weiequiv$-equivalence classes are called \emph{Weihrauch degrees} (similarly the $\strongweiequiv$-equivalence classes are called  \emph{strong Weihrauch degrees}). Both the Weihrauch degrees and the strong Weihrauch degrees form lattices (see \cite[Theorem\ 3.9\ and\ Theorem\ 3.10]{brattka2021weihrauch}). 

There are several natural operations on problems which also lift to the $\weiequiv$-degrees and the $\strongweiequiv$-degrees: we mention below the ones we need.
 
\begin{itemize}
    \item The \emph{parallel product} $f \times g$ is defined by $(f \times g)(x,y) \defas f(x) \times g(y)$. 
    \item The \emph{finite parallelization} is defined as $f^*((x_i)_{i<n})\defas\{(y_i)_{i<n}:(\forall i<n)(y_i \in f(x_i))\}$.
    \item The \emph{infinite parallelization} is defined as $\parallelization{f}((x_i)_{i \in \nats})\defas\{(y_i)_{i \in \nats}:(\forall i)(y_i \in f(x_i)\}$.
    \end{itemize}
    Informally, the three operators defined above, capture respectively the idea of using $f$ and $g$ in parallel, using $f$ a finite (but given in the input) number of times in parallel, and using $f$ countably many times in parallel. 
    
The following definition, with a slightly different notation, was recently given by Sold\`a and the third author.
    \begin{definition}[\cite{valentisolda}]
    \thlabel{ustar}
    For every $\partialmultifunction{f}{\mathbf{X}}{\mathbf{Y}}$, define the finite unbounded parallelization $\partialmultifunction{\ustar{f}}{\nats \times \Baire \times \mathbf{X}}{(\Baire)^{<\nats}}$ as follows:
    \begin{itemize}
        \item instances are triples $(e,w,(x_n)_{n \in \nats})$ such that $(x_n)_{n \in \nats} \in \dom(\parallelization{f})$ and for each sequence $(q_n)_{n \in \nats}$ with $\repmap{Y}(q_n) \in f(x_n)$, there is a $k\in \nats$ such that $\Phi_e(w,q_0*\dots * q_{k-1})(0)\downarrow$ in $k$ steps;
        \item a solution for $(e,w,(x_n)_{n \in \nats})$ is a finite sequence $(q_n)_{n<k}$ such that for every $n <k$, $\repmap{Y}(q_n)\in f(x_n)$ and  $\Phi_e(w,q_0*\dots * q_{k-1})(0)\downarrow$ in $k$ steps.
    \end{itemize}
    \end{definition}
    Informally, $\ustar{f}$ takes an input a Turing functional with a parameter and an input for $\parallelization{f}$ and outputs \lq\lq sufficiently many\rq\rq\ names for solutions where \lq\lq sufficiently many\rq\rq\ is determined by the convergence of the Turing functional in input.
    
    We call $f$ a \emph{cylinder} if $f \strongweiequiv f \times \id$. If $f$ is a cylinder, then $g \weireducible f$ iff $g \strongweireducible f$ (\cite[Cor.\ 3.6]{BG09}). This is useful for establishing nonreductions because, if $f$ is a cylinder, then it suffices to diagonalize against all strong Weihrauch reductions from $g$ to $f$ in order to show that $g \not\weireducible f$. Cylinders are also useful when working with compositional products (discussed below). Observe that for every problem $f$, $f \times \id$ is a cylinder which is Weihrauch equivalent to $f$.

The \emph{compositional product} $f * g$ captures the idea of what can be achieved by first applying $g$, possibly followed by some computation, and then applying $f$. Formally, $f*g$ is any function satisfying
\[ f * g \weiequiv \max_{\weireducible} \{f_1 \circ g_1 \st f_1 \weireducible f \land g_1 \weireducible g\}. \]
This operator was first introduced in \cite{BolWei11}, and proven to be well-defined in \cite{BP16}. For each problem $f$, we denote by $f^{[n]}$ the $n$-fold iteration of the compositional product of $f$ with itself, i.e., $f^{[1]} = f$, $f^{[2]} = f * f$, and so on.

Many (non) reductions in this paper follow from the characterization of the \emph{first-order} part $\firstOrderPart{f}$ and the \emph{deterministic part} $\deterministicPart(f)$ of a problem $f$. The first was introduced in \cite{dzafarovsolomonyokoyama} and extensively studied in \cite{valentisolda}, and the second one was defined in \cite{goh_pauly_valenti_2021}.

We say that a computational problem $f$ is \emph{first-order} if its codomain is $\nats$. As we need only the following characterization of $\firstOrderPart{f}$ we omit the technical definition (see e.g.\ \cite[Definition 2.2]{goh_pauly_valenti_2021}). 

\begin{theorem}[\cite{dzafarovsolomonyokoyama}]
	For every problem $f$, $\firstOrderPart{f} \weiequiv \max_{\weireducible}\{ g\st g \text{ is first-order and } g \weireducible f\}$.
\end{theorem}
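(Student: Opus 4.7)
The plan is to verify the two inequalities separately, appealing to the technical definition of $\firstOrderPart{f}$ whose essential feature is the following: an instance of $\firstOrderPart{f}$ is (roughly) a triple consisting of an $f$-instance $x$, a Turing functional index $e$, and a parameter $w$, subject to the promise that for every realizer-output $q$ for some $y \in f(x)$ the computation $\Phi_e(w,q)(0)$ halts; the valid outputs are the natural numbers produced by such computations. By construction $\firstOrderPart{f}$ has codomain $\nats$, so it is first-order, and a trivial reduction (feed the $f$-instance unchanged to $f$, run $\Phi_e$ on the resulting realizer-output) witnesses $\firstOrderPart{f} \weireducible f$. Hence $\firstOrderPart{f}$ is a candidate on the right-hand side of the claimed equivalence, which immediately yields $\firstOrderPart{f} \weireducible \max_{\weireducible}\{g \st g \text{ is first-order and } g \weireducible f\}$ (assuming the maximum exists).

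The substantive direction is to show that every first-order $g$ with $g \weireducible f$ satisfies $g \weireducible \firstOrderPart{f}$; this simultaneously proves the maximum is attained and gives the remaining inequality. Fix computable maps $\Phi,\Psi$ witnessing $g \weireducible f$. Given a name $p$ of an instance $x$ of $g$, the forward functional would compute the triple $(e, w, \Phi(p))$ where $w \defas p$ and $e$ is an index for the Turing functional $(w,q)\mapsto \Psi(w*q)$. The point is that, because $g$ is first-order, for every name $q$ of a solution of $f(\Phi(p))$, $\Psi(p*q)$ produces a natural number, so $\Phi_e(p,q)(0)$ must halt. This is exactly the promise required for the triple to lie in $\dom(\firstOrderPart{f})$, and a solution $n$ of $\firstOrderPart{f}$ on this triple is by definition a natural number of the form $\Psi(p*q)(0)$ for some legitimate $q$, hence a $g$-solution for $x$.

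The key step, and the only place where care is really needed, is verifying that the triple built from $(\Phi,\Psi)$ genuinely satisfies the halting promise of $\firstOrderPart{f}$: one must check that every name $q$ of every $y \in f(\Phi(p))$ (not merely one produced by a specific realizer of $f$) makes $\Psi(p*q)(0)$ converge. This is immediate from the definition of a Weihrauch reduction, but it is where the \emph{uniformity} of $\weireducible$ is used in an essential way. The obstacle that needs to be handled carefully --- and which in the original paper motivates the rather delicate official definition of $\firstOrderPart{f}$ --- is ensuring that the promise is formulated in a way that is robust under composition and under passing to realizers, so that the backward functional can extract a genuine $g$-solution from any name of any valid $\firstOrderPart{f}$-output. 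Once this bookkeeping is in place the maximum in the statement exists and is realized by $\firstOrderPart{f}$ itself.
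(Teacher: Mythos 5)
The paper does not prove this statement: it is attributed to Dzhafarov--Solomon--Yokoyama, and the paper explicitly \emph{omits} the technical definition of $\firstOrderPart{f}$ (referring to \cite[Definition 2.2]{goh_pauly_valenti_2021} instead), so there is no in-paper proof to compare against. Your sketch is nevertheless the correct and standard argument for this characterization. The easy direction is as you say: $\firstOrderPart{f}$ has codomain $\nats$, and it reduces to $f$ by forwarding the $f$-instance and letting the backward functional evaluate $\Phi_e(w,q)(0)$ on the returned $f$-solution name $q$, which converges by the promise in $\dom(\firstOrderPart{f})$. For the other direction, given $g \weireducible f$ via $(\Phi,\Psi)$, sending a name $p$ of a $g$-instance to the triple $(e,p,\Phi(p))$ with $\Phi_e(w,q)\defas\Psi(w*q)$ does the job, and the halting promise is verified exactly because the definition of $\weireducible$ quantifies over \emph{all} names $q$ of \emph{all} solutions $y \in f(\Phi(p))$ -- the point you correctly identify as where uniformity is used. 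The one convention you leave implicit is that the representation of $\nats$ reads off the number from $q(0)$ (or, equivalently, constant names); with that in place, $\Psi(p*q)(0)$ really is a $g$-solution and the argument is complete.
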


The following theorem relates the first-order part with the unbounded finite parallelization.

\begin{theorem}[{\cite[Theorem 5.7]{valentisolda}}]
\thlabel{Summaryfopustar}
 For every first-order $f$, $\firstOrderPart{(\parallelization{f}) }\weiequiv \ustar{f}$.
\end{theorem}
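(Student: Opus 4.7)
The plan is to establish the two reductions separately, in each case exploiting that since $f$ is first-order, names of $f$-solutions encode natural numbers.

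For $\ustar{f} \weireducible \firstOrderPart{(\parallelization{f})}$, I would first observe that $\ustar{f}$ is Weihrauch equivalent to a first-order problem: since $f$ has codomain $\nats$, every solution $(q_n)_{n<k}$ of $\ustar{f}$ is a finite tuple whose components encode natural numbers, and so the whole tuple admits a canonical code in $\nats$. It therefore suffices to exhibit $\ustar{f} \weireducible \parallelization{f}$. Given an instance $(e, w, (x_n)_{n \in \nats})$, the forward reduction passes $(x_n)_{n \in \nats}$ to $\parallelization{f}$; from a realizer $(q_n)_{n \in \nats}$ of the output, the backward reduction effectively searches for the least $k$ such that $\Phi_e(w, q_0 * \dots * q_{k-1})(0) \downarrow$ in $k$ steps (which exists by admissibility of the instance) and returns the prefix $(q_n)_{n<k}$. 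Composing with the coding, the first-order version of $\ustar{f}$ is one of the problems whose supremum defines $\firstOrderPart{(\parallelization{f})}$.

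For the reverse direction, by the characterization of the first-order part it suffices to fix an arbitrary first-order $g$ with $g \weireducible \parallelization{f}$ and reduce it to $\ustar{f}$. Fix computable maps $\Phi, \Psi$ witnessing the reduction: given a name $p$ of a $g$-instance, $\Phi(p)$ names a $\parallelization{f}$-instance $(x_n)_{n \in \nats}$, and for any name $q$ (encoding some $(q_n)_{n \in \nats}$) of a realizer of $\parallelization{f}((x_n)_{n \in \nats})$ the value $\Psi(p * q)(0)$ codes the corresponding $g$-answer. Because the answer is a single natural number, by continuity $\Psi(p * \cdot)(0)$ commits after reading only finitely much of $q$ and hence consults only finitely many of the $q_n$. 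Using the $s$-$m$-$n$ theorem I would construct, computably in $p$, an index $e$ and a parameter $w$ so that $\Phi_e(w, q_0 * \dots * q_{k-1})(0)$ halts in exactly $k$ steps precisely when $k$ is large enough for the simulation of $\Psi$ (applied to $p$ together with an appropriate padding of $q_0 * \dots * q_{k-1}$) to commit, slowing the simulation so that one computation step consumes at most one new component. The triple $(e, w, (x_n)_{n \in \nats})$ is then an admissible $\ustar{f}$-instance, and from any returned $(q_n)_{n<k}$ one recovers the $g$-answer by re-running the same padded simulation of $\Psi$.

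The hard part is aligning the halting clock hard-coded in Definition \thref{ustar}---which couples the number $k$ of consulted components with the number of computation steps---with the more natural condition \lq\lq $\Psi$ has committed to an output\rq\rq. Making this alignment uniform in $p$ requires careful padding of the simulation of $\Psi$ so that each computation step reads at most one new input component, together with an $s$-$m$-$n$ application producing $e$ and $w$ computably in $p$; once this matching is in place, admissibility of the constructed $\ustar{f}$-instance is immediate from the first-orderness of $g$ and the continuity of $\Psi$.
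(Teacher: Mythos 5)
This statement appears in the paper only as a citation to {\cite[Theorem 5.7]{valentisolda}}; the paper gives no proof of its own, so there is nothing to compare against. I will therefore just assess your argument on its own merits.

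Your overall architecture is right: for $\ustar{f} \weireducible \firstOrderPart{(\parallelization{f})}$, exhibit a first-order problem $g$ with $\ustar{f}\weiequiv g$ and $g\weireducible\parallelization{f}$, then invoke the maximality characterization of the first-order part; for the converse, reduce an arbitrary first-order $g\weireducible\parallelization{f}$ to $\ustar{f}$ by wiring $\Psi$ into the Turing functional parameter.

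However, the first direction has a gap you pass over too quickly. You assert that ``every solution $(q_n)_{n<k}$ of $\ustar{f}$ is a finite tuple whose components encode natural numbers, and so the whole tuple admits a canonical code in $\nats$.'' But the solutions of $\ustar{f}$ live in $(\Baire)^{<\nats}$, not in $\nats^{<\nats}$: each $q_n$ is an \emph{infinite} name, and the halting predicate $\Phi_e(w,q_0*\dots*q_{k-1})(0)\downarrow$ in $k$ steps depends a priori on the whole sequences, not just on the numbers $q_n(0)$. To claim $\ustar{f}$ is Weihrauch equivalent to a first-order problem you must argue that from the code of $(q_0(0),\dots,q_{k-1}(0))\in\nats^{<\nats}$ one can recover a \emph{valid} $\ustar{f}$-solution. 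The point that makes this work — and that you need to state — is that running for $k$ steps, $\Phi_e$ consults at most $k$ cells of the interleaving $q_0*\dots*q_{k-1}$, and those are precisely $q_0(0),\dots,q_{k-1}(0)$; therefore replacing each $q_n$ by the canonical name $\str{q_n(0),0,0,\dots}$ does not change the halting behavior, and the canonical tuple is again a solution. Without this observation the ``canonical code'' claim is unjustified.

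Conversely, the clock-alignment issue you flag in the second direction is in fact much milder than you make it sound, and no special padding scheme is needed. Let the machine $\Phi_e$ on input $(w,r)$ (with $w$ encoding $p$) read cells of $r$ sequentially, interpreting $r(j)$ as $a_j=q_j(0)$, feed the canonical names $\str{a_j,0,0,\dots}$ into a simulation of $\Psi(p,\cdot)$, and halt as soon as $\Psi$ commits to its output in position $0$. If this computation halts at step $t^*$, it has read at most cells $0,\dots,t^*-1$. Now take $k=t^*$: the first $k$ cells of $q_0*\dots*q_{k-1}$ are exactly $a_0,\dots,a_{k-1}$, so $\Phi_e(w,q_0*\dots*q_{k-1})(0)$ halts in $k$ steps, matching both the step bound and the number of interleaved sequences — and this $k$ exists because $\Psi$ is a valid backward realizer for $g\weireducible\parallelization{f}$. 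The basic constraint that a machine reads at most $k$ cells in $k$ steps is all the ``alignment'' that is required; you do not need to artificially slow the simulation to one new component per step. With these two points repaired, your argument is a correct proof of the cited theorem.
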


Similarly, we only need the following characterization of $\deterministicPart(f)$ and we omit the formal definition (see \cite[Definition 3.1]{goh_pauly_valenti_2021}).

\begin{theorem}[{\cite[Theorem 3.2]{goh_pauly_valenti_2021}}]
	For every problem $f$, 
	\[ \deterministicPart(f) \weiequiv \max_{\weireducible}\{g: \partialfunction{g}{X}{\Baire} \land g \weireducible f\}.\]
\end{theorem}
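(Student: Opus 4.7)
The plan is to verify the two reductions separately. First, I would show that $\deterministicPart(f)$ itself lies (up to Weihrauch equivalence) in the collection $\mathcal{D}\defas\{g \st \partialfunction{g}{X}{\Baire} \land g \weireducible f\}$, giving $\deterministicPart(f) \weireducible \max_{\weireducible}\mathcal{D}$. By its definition in \cite{goh_pauly_valenti_2021}, $\deterministicPart(f)$ is by construction a single-valued problem with codomain $\Baire$: its admissible inputs are restricted to those in which post-processing an arbitrary $f$-solution yields a fixed output sequence. The reduction $\deterministicPart(f) \weireducible f$ is then immediate: one forwards the $f$-part of the instance to $f$ and applies the specified post-processor to the returned solution.

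For the reverse inclusion, consider any single-valued $\partialfunction{g}{\mathbf{X}}{\Baire}$ with $g \weireducible f$, witnessed by computable $\Phi,\Psi$. Given a name $p$ of $x\in\dom(g)$, $\Phi(p)$ is a name for some $f$-instance $y$, and for every name $q$ of an element of $f(y)$, $\Psi(p,q)$ is a name for $g(x)$. Since $\Baire$ carries the identity representation, $\Psi(p,q)$ is literally the sequence $g(x)$ itself; in particular, it does not depend on the choice of $q$. Therefore, from $p$ one can uniformly compute a valid instance of $\deterministicPart(f)$ by pairing $\Phi(p)$ with (an index of) the functional $q \mapsto \Psi(p,q)$, and its unique output is precisely $g(x)$. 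This yields $g \weireducible \deterministicPart(f)$, and hence $\max_{\weireducible}\mathcal{D} \weireducible \deterministicPart(f)$.

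The delicate point is that the post-processor $q \mapsto \Psi(p,q)$ depends on the full name $p$, not merely on the abstract $f$-instance $y=\delta_X(\Phi(p))$. This dependence is accommodated by the formal definition of $\deterministicPart(f)$ in \cite{goh_pauly_valenti_2021}, whose instances are allowed to carry a free parameter (playing a role analogous to the $w$ appearing in \thref{ustar}), so that $p$ itself can be supplied as auxiliary data to the Turing index specified in the instance. Once this flexibility is in place, the rest of the argument is a direct unpacking of the Weihrauch reduction together with the single-valuedness of $g$ and the identity representation of $\Baire$.
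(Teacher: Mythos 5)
The paper does not prove this theorem: it is imported verbatim from \cite[Theorem~3.2]{goh_pauly_valenti_2021}, and the surrounding text explicitly says the formal definition of $\deterministicPart$ is omitted in favour of this characterization. So there is no in-paper proof against which to compare. That said, your argument is correct and is the expected proof of such a maximality statement. The crucial point, which you state precisely, is that because $g$ is single-valued with codomain $\Baire$ under the identity representation, the backward functional is \emph{forced} to satisfy $\Psi(p*q)=g(\delta_X(p))$ for every admissible $q$; this $q$-independence is exactly the membership condition for $\dom(\deterministicPart(f))$, so $(\Phi(p),p)$ together with a fixed index for $\Psi$ is a legitimate instance whose unique output is $g(\delta_X(p))$. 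You also correctly identify and resolve the one delicate point, namely that the post-processor depends on the full name $p$ rather than only on $\delta_X(\Phi(p))$, which the definition of $\deterministicPart$ accommodates by allowing a parameter alongside the Turing index, exactly as in the definition of $\ustar{f}$ in \thref{ustar}.
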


Hence, $\deterministicPart(f)$ is the strongest single-valued function which Weihrauch reduces to $f$.

Other useful operations on problems do not lift to Weihrauch degrees (i.e.\ applying the operation to equivalent problems does not always produce equivalent problems).

The first such operation is the \emph{jump}. First, we define the jump of a represented space $\mathbf{X}=(X,\repmap{X})$. This is $\mathbf{X}'=(X,\repmap{X}')$ where $\repmap{X}'$ takes in input a sequence of elements of $\Baire$ converging to some $p \in \dom(\delta_X)$ and returns $\repmap{X}(p)$. Then for a problem $\partialmultifunction{f}{\textbf{X}}{\textbf{Y}}$ its jump $ \partialmultifunction{f'}{\textbf{X}'}{\textbf{Y}}$ is defined as $f'(x) \defas f(x)$. In other words, $f'$ is the following task: given a sequence that converges to a name for an instance of $f$, produces a solution for that instance. The jump lifts to strong Weihrauch degrees but not to Weihrauch degrees (see \cite[\S 5]{BolWei11}). We use $f^{(n)}$ to denote the $n$-th iterate of the jump applied to $f$.
Let $ \partialfunction{\mflim}{(\Baire)^{\mathbb{N}}}{\Baire}, \ (p_n)_{n \in \mathbb{N}}\mapsto \lim p_n$ be the single-valued function 
whose domain consists of all converging sequences in $\Baire$: we have $f^{(n)}\weireducible f* \mflim^{[n]}$, but the converse reduction does not hold in general. 

We now introduce the \emph{totalization of a problem} and the \emph{completion of a problem}. These two operators are different ways of making a partial multi-valued function total; neither of them lifts to Weihrauch degrees. Given a partial multi-valued function $\partialmultifunction{f}{\mathbf{X}}{\mathbf{Y}}$ the totalization of $f$ is the total multi-valued function $\totalization{f}$ defined as
 
\[\totalization{f}(x)\defas\begin{cases}
f(x) & \text{ if } x \in \dom(f),\\
Y & \text{otherwise.}
\end{cases}
\]
For more details on the totalization we refer the reader to \cite{CompletionOfChoice}.

To define the completion of a problem $f$ we need to first introduce the completion of a represented space. We adopt the following notation: given $p \in \Baire$ we define $\hat{p}_n$ to be $\str{}$ if $p(n)=0$, $\str{p(n)-1}$ otherwise; then $p-1$ is the concatenation of all the $\hat{p}_n$'s. For a represented space $\mathbf{X}=(X, \delta_X)$ we define its completion as $\overline{\mathbf{X}}=(\overline{X}, \delta_{\overline{X}})$ where $\overline{X}=X\cup \{\bot\}$ with $\bot \notin X$ and $\function{\delta_{\completion{X}}}{\Baire}{\completion{X}}$ is the total function defined by 
\[\delta_{\completion{X}}(p)\defas\begin{cases}
\delta_X(p-1)& \text{if } p-1 \in \dom(\delta_X)\\
\bot &\text{otherwise.}
\end{cases}\]

Let $\partialmultifunction{f}{\mathbf{X}}{\mathbf{Y}}$ be a multi-valued function. We define the completion of $f$ as the total multi-valued function $\multifunction{\completion{f}}{\completion{\mathbf{X}}}{\completion{\mathbf{Y}}}$ such that
\[\completion{f}(x)=
\begin{cases}
f(x) & \text{ if } x \in \dom(f),\\
\overline{Y} & \text{ otherwise}.
\end{cases}
\]

We now introduce some  major benchmarks in the Weihrauch lattice. 

The well-known problem $\function{\lpo}{\Cantor}{\{0,1\}}$ is defined as $\lpo(p)=1$ iff $(\forall n)(p(n)=0)$. It is convenient to think of $\lpo$ as the function answering yes or no to questions which are $\Pi_1^{0}$ or $\Sigma_1^{0}$ in the input. Similarly, $\lpo^{(n)}$ can be seen as the function answering yes or no to questions which are $\Pi_{n+1}^0$ or $\Sigma_{n+1}^0$ in the input. It is well-known that $\mflim \strongweiequiv \parallelization{\lpo}$. Moreover, using \cite[\S 5]{BolWei11}, we obtain that for every $n$, $\mflim^{(n)} \strongweiequiv \parallelization{\lpo^{(n)}}$.

We define also $\function{\wf}{\tree}{\{0,1\}}$ as $\wf(T)=1$ iff $T \in \wellfounded$. Analogously to $\lpo$, we can think of $\wf$ as the problem answering yes or not to questions which are $\Pi_1^{1}$ or $\Sigma_1^{1}$ in the input. In the literature, $\wf$ was introduced under different names: the same notation appears in \cite{leafmanaegement,valentisolda}, while \cite{CompletionOfChoice} uses $\mathsf{WFT}$ and \cite{kihara_marcone_pauly_2020,openRamsey} use $\chi_{\Pi_1^1}$.

We now move our attention to \emph{choice problems}, which have emerged as very significant milestones in the Weihrauch lattice. For a computable metric space $\X$ and a pointclass $\boldfaceGamma$ as the ones in \thref{Borelrepspaces}, let $\partialmultifunction{\codedChoice{\boldfaceGamma}{}{\X}}{\boldfaceGamma(\X)}{\X}$  be the problem that given in input a nonempty set $A \in \boldfaceGamma(\X)$ outputs a member of $A$. When $\boldfaceGamma=\boldfacePi_1^0$ we just write $\codedChoice{}{}{\X}$. The same problem with domain restricted to singletons is denoted by $\codedUChoice{\boldfaceGamma}{}{\X}$.  It is well-known that for every $n>0$, $(\codedChoice{\boldfacePi_n^0}{}{\nats})'\weiequiv \codedChoice{\boldfacePi_{n+1}^0}{}{\nats}$. 

 Using the tree representation of closed sets, $\CBaire$ can be formulated as the problem of computing a path through some $T \in \illfounded$; $\UCBaire$ is the same problem with domain restricted to $\uniquebranch$. Notice that both problems are closed under compositional product by \cite[Theorem 7.3]{closedChoice}.
As noticed in \cite{kihara_marcone_pauly_2020} and mentioned in the introduction, $\CBaire$ and $\UCBaire$ are among the problems that correspond to $\mathsf{ATR}_0$. We need the following proposition.

\begin{proposition}
\thlabel{UCbaireisparallelization}
    $\parallelization{\codedUChoice{\boldfacePi_1^1}{}{\nats}} \weiequiv \parallelization{ \codedChoice{\boldfacePi_1^1}{}{\nats}}\weiequiv\UCBaire\strictlyweireducible \parallelization{\codedChoice{\boldfaceSigma_1^1}{}{\nats}}\strictlyweireducible\CBaire$.
\end{proposition}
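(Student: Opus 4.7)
I would prove the proposition piece by piece, closing a triangle of reductions for the $\Pi_1^1$-equivalences and then handling the second half of the chain in a parallel fashion. For the equivalences $\parallelization{\codedUChoice{\boldfacePi_1^1}{}{\nats}} \weiequiv \parallelization{\codedChoice{\boldfacePi_1^1}{}{\nats}} \weiequiv \UCBaire$ three reductions suffice: the trivial $\parallelization{\codedUChoice{\boldfacePi_1^1}{}{\nats}} \weireducible \parallelization{\codedChoice{\boldfacePi_1^1}{}{\nats}}$ (a $\Pi_1^1$ singleton is automatically a nonempty $\Pi_1^1$ set), the chain-closer $\UCBaire \weireducible \parallelization{\codedUChoice{\boldfacePi_1^1}{}{\nats}}$, and the harder cross-reduction $\parallelization{\codedChoice{\boldfacePi_1^1}{}{\nats}} \weireducible \UCBaire$. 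For the easier non-trivial direction, given $T \in \uniquebranch$ with unique path $p$, I would observe that for each $n$ the singleton $\{p(n)\} = \{k \in \nats : (\forall f \in [T])\, f(n) = k\}$ is a uniformly $\Pi_1^1$ singleton, and invoke parallel $\codedUChoice{\boldfacePi_1^1}{}{\nats}$ coordinatewise to recover $p$.

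The hardest step is $\parallelization{\codedChoice{\boldfacePi_1^1}{}{\nats}} \weireducible \UCBaire$. Given a sequence of nonempty $\Pi_1^1$ subsets $(A_n)_n$ of $\nats$ coded by a family of trees $(S_{n,k})_{n,k}$ with $k \in A_n$ iff $S_{n,k} \in \wellfounded$, I would build a single tree $U$ on $\nats$ whose unique branch canonically encodes a choice $a_n \in A_n$ for each $n$ together with witnesses. The natural candidate is $a_n := \min A_n$; a node of $U$ is a finite approximation to the infinite object $(a_n, (f_{n,j})_{j < a_n})_{n \in \nats}$ where each $f_{n,j}$ is a path through $S_{n,j}$ witnessing $j \notin A_n$. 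Any guess $a_n$ too large forces some $S_{n,j}$ with $j < a_n$ to be well-founded yet to admit a path, so such branches die at a finite level. To enforce uniqueness of the surviving branch I would further require each $f_{n,j}$ to be the leftmost path of $S_{n,j}$; since being leftmost is itself $\Pi_1^1$, this cannot be demanded outright, but the standard level-by-level race construction keeps only the lexicographically smallest continuations consistent with the already-seen portion of $S_{n,j}$, so non-leftmost witnesses are eventually killed and the unique infinite branch delivers $(a_n)_n$.

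The second half of the chain mirrors the first. The reduction $\UCBaire \weireducible \parallelization{\codedChoice{\boldfaceSigma_1^1}{}{\nats}}$ uses the analogous $\Sigma_1^1$ description $\{p(n)\} = \{k : (\exists f \in [T])\, f(n) = k\}$ and parallel invocation. For $\parallelization{\codedChoice{\boldfaceSigma_1^1}{}{\nats}} \weireducible \CBaire$, given $A_n = \{k : T_{n,k} \in \illfounded\}$ nonempty, I would form the disjoint union $U_n := \disjointunion{k}{T_{n,k}}$, which is ill-founded iff $A_n \neq \emptyset$; applying $\CBaire$ in parallel (using that $\CBaire$ is parallelizable, so $\CBaire \weiequiv \parallelization{\CBaire}$) and reading off the first coordinate of each resulting path yields the sought $(k_n)$.

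For the two strict separations I would appeal to first-order and deterministic-part machinery. To see $\UCBaire \strictlyweireducible \parallelization{\codedChoice{\boldfaceSigma_1^1}{}{\nats}}$, I would exhibit a multi-valued first-order instance of parallel $\Sigma_1^1$-choice admitting no $\Pi_1^1$-singleton uniform selector, blocking any Weihrauch reduction to $\UCBaire$ (whose outputs are deterministically pinned down by its unique-branch hypothesis). For $\parallelization{\codedChoice{\boldfaceSigma_1^1}{}{\nats}} \strictlyweireducible \CBaire$, I would compare $\firstOrderPart{\CBaire}$ with $\parallelization{\codedChoice{\boldfaceSigma_1^1}{}{\nats}}$: $\CBaire$ produces coherent infinite sequences whose coordinates are entangled in a way that pointwise $\Sigma_1^1$-choice (making independent decisions across coordinates) cannot achieve, and the separation is witnessed by a specific closed subset of $\Baire$ with no such independent-choice description. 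The main obstacle throughout is the uniqueness-forcing construction in the second paragraph: reconciling a leftmost-path constraint (inherently $\Pi_1^1$) with the requirement that $U$ be computable from the input data.
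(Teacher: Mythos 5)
The overall architecture (close a triangle of reductions for the first block, mirror it for the second, separate via first-order invariants) matches the paper's, and some pieces are sound: the coordinatewise $\Pi^1_1$-singleton description of the unique branch gives a clean direct argument for $\UCBaire \weireducible \parallelization{\codedUChoice{\boldfacePi_1^1}{}{\nats}}$ (the paper instead routes through a citation to \cite[Theorem 3.11]{kihara_marcone_pauly_2020} plus an interleaving trick), and the disjoint-union argument for $\parallelization{\codedChoice{\boldfaceSigma_1^1}{}{\nats}} \weireducible \CBaire$ is fine.

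However, there is a genuine gap exactly where you flag it. Your reduction $\parallelization{\codedChoice{\boldfacePi_1^1}{}{\nats}} \weireducible \UCBaire$ hinges on a tree $U$ whose unique branch encodes $a_n = \min A_n$ together with, for each $j < a_n$, a canonical (``leftmost'') path through $S_{n,j}$. The ``level-by-level race'' you invoke cannot deliver this: whether a node of $S_{n,j}$ lies on its leftmost infinite path is a $\Sigma^1_1/\Pi^1_1$ fact, not determined by any finite amount of the tree. A node to the left may remain extendible through arbitrarily many levels before dying, so a computable stage-by-stage construction has no moment at which it can prune the non-leftmost branches of $U$; the tree you would actually write down either has many branches or prunes a correct one. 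What you are trying to reconstruct by hand is precisely the effective Novikov--Kondo--Addison uniformization theorem (i.e.\ the reduction property of $\Pi^1_1$, proved by comparing witness ordinals), and that is how the paper proceeds: it gets $\codedChoice{\boldfacePi_1^1}{}{\nats} \weireducible \codedUChoice{\boldfacePi_1^1}{}{\nats}$ from \cite[Theorem 4E.4]{Moschovakis}, parallelizes, and then closes the loop to $\UCBaire$ via $\parallelization{\codedUChoice{\boldfacePi_1^1}{}{2}}$. Your approach is not ``morally the same but unwritten''; it omits the ordinal-comparison content that makes the argument work.

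The two strictness claims are also not proven: you describe a strategy (``exhibit an instance admitting no $\Pi^1_1$-singleton selector'', ``exhibit a closed set with entangled coordinates'') but give no such instance or argument. Both separations are known and nontrivial; the paper cites \cite[Theorem 4.3]{kihara_marcone_pauly_2020} and \cite[Theorem 3.34]{choiceprinciples}. As written, your text asserts the conclusions rather than proving them.
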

\begin{proof}
The reduction $\codedUChoice{\boldfacePi_1^1}{}{\nats} \weireducible \codedChoice{\boldfacePi_1^1}{}{\nats}$ is trivial; by the effective version of the Novikov-Kondo-Addison uniformization theorem \cite[Theorem 4E.4]{Moschovakis}, we obtain  $\codedChoice{\boldfacePi_1^1}{}{\nats}\weireducible \codedUChoice{\boldfacePi_1^1}{}{\nats}$. Hence, $\parallelization{\codedChoice{\boldfacePi_1^1}{}{\nats}} \weiequiv \parallelization{\codedUChoice{\boldfacePi_1^1}{}{\nats}}$.

By \cite[Theorem 3.11]{kihara_marcone_pauly_2020} $\UCBaire\weiequiv\parallelization{\codedUChoice{\boldfacePi_1^1}{}{2}}$ (in \cite{kihara_marcone_pauly_2020}  $\parallelization{\codedUChoice{\boldfacePi_1^1}{}{2}}$ is denoted by $\boldfaceDelta_1^1\text{-}\mathsf{CA}$). Since  $\parallelization{\codedUChoice{\boldfacePi_1^1}{}{2}}\weireducible\parallelization{\codedUChoice{\boldfacePi_1^1}{}{\nats}}$ is trivial, we obtain $\UCBaire \weireducible \parallelization{\codedUChoice{\boldfacePi_1^1}{}{\nats}}$.
To complete the proof of the equivalence between the first three problems, it suffices to show that $\codedUChoice{\boldfacePi_1^1}{}{\nats} \weireducible \parallelization{\codedUChoice{\boldfacePi_1^1}{}{2}}$ and then notice that $\parallelization{\codedUChoice{\boldfacePi_1^1}{}{2}}$ is parallelizable. We can think of an input for $\codedUChoice{\boldfacePi_1^1}{}{\nats}$ as a sequence  $(T^i)_{i \in \nats} \in \tree^\nats$ such that exactly one $T^i$ belongs to $\wellfounded$. For every $i$, we compute the pair of trees $(S^i,R^i)$ where $S^i\defas\underset{j\leq i}{*}T^j$ and $R^i\defas\underset{j> i}{*}T^j$: notice that exactly one of $S^i$ and $R^i \in \wellfounded$. We can view the sequence $(S^i,R^i)_{i \in \nats}$ as an instance of $\parallelization{\codedUChoice{\boldfacePi_1^1}{}{2}}$. Finally, let $n\defas\min \big\{m: \parallelization{\codedUChoice{\boldfacePi_1^1}{}{2}}((S^i,R^i)_{i \in \nats}))(m)=0 \big\}$.   Clearly $T^n \in \wellfounded$.

The last two (strict) reductions are \cite[Theorem 4.3]{kihara_marcone_pauly_2020} and  \cite[Theorem 3.34]{choiceprinciples}.
\end{proof}

For further reference, we collect here some facts which are implicit in the literature.

\begin{proposition}
\thlabel{prop:prefixes}
    Let $\mathbf{X}$ and $\mathbf{Y}$ be represented spaces. Let $\partialmultifunction{h}{\mathbf{X}}{\mathbf{Y}}$ and $\boldfaceGamma$ is an arithmetic or projective pointclass.     
    For every $x\in\dom(h)$, let $\mathbf{Prefixes}(x):=\{ \sigma\in\baire \st (\exists y\in h(x))(\exists q\in \delta_Y^{-1}(y))(\sigma \sqsubset q) \}$.    
    Assume that there exists a computable $\partialfunction{U}{\mathbf{X}}{\boldfaceGamma(\baire)}$ such that for every $x \in \dom(h)$, $U(x) \subseteq \mathbf{Prefixes}(x)$ and there exists $q \in \delta_Y^{-1}(h(x))$ such that $(\forall n \in \nats )(\exists \sigma \in U(x))(q[n] \sqsubseteq \sigma)$.
    Then $\firstOrderPart{h}\weireducible \codedChoice{\boldfaceGamma}{}{\nats}$.
\end{proposition}
\begin{proof}
    Let $\partialmultifunction{f}{\Baire}{\nats}$ be a partial multi-valued function and assume that $f\weireducible h$ via $\Phi,\Psi$. 
    For every $p\in\dom(f)$ and let $\Phi(p)$ be a name for $x\in\dom(h)$. Consider the set 
    \[A(x):=U(x) \cap \{\sigma \in \baire: \Psi(p[\length{\sigma}],\sigma)(0)\downarrow \}. \]
    Observe that $A(x) \neq \emptyset$: indeed, by hypothesis, there is $q\in\delta_Y^{-1}(h(x))$ such that $(\forall n \in \nats )(\exists \sigma \in U(x))(q[n] \sqsubseteq \sigma)$. Since $\Phi,\Psi$ witness a Weihrauch reduction, there is $n$ such that $\Psi(p[n],q[n])(0)\downarrow$. Let $\sigma \in U$ such that $q[n]\sqsubseteq \sigma$. By continuity, $\Psi(p[\length{\sigma}],\sigma)(0)\downarrow$ and therefore $\sigma\in A(x)$.
    
    Notice that
    $A(x) \in \Lambda^p$ where $\Lambda$ is:    
    \begin{itemize}
        \item $\Gamma$ if $\Gamma$ contains all $\Sigma_1^0$ sets;
        \item $\Sigma_2^0$  if $\Gamma=\Pi_1^0$;
        \item and $\Sigma_1^0$ if $\Gamma=\Delta_1^0$.
    \end{itemize}

It is easy to observe that $\codedChoice{\boldfaceSigma_{n+1}^0}{}{\nats} \weiequiv \codedChoice{\boldfacePi_n^0}{}{\nats}$ for every $n$, and hence, in any case, $\codedChoice{\boldfaceGamma}{}{\nats}  \weiequiv \codedChoice{\boldfaceLambda}{}{\nats}$. Applying $\codedChoice{\boldfaceLambda}{}{\nats}$ to $A(x)$ produces a string $\sigma$ such that $\Psi(p[\length{\sigma}],\sigma)(0)\in f(p)$ as $\sigma \in U(x) \subseteq \mathbf{Prefixes}(x)$.
\end{proof}

\begin{proposition}
\thlabel{Fopcbaire}
If  $\boldfaceGamma \in \{\boldfaceSigma,\boldfacePi,\boldfaceDelta\}$ and $\boldfaceLambda\in \{\boldfacePi,\boldfaceDelta\}$ then
 \[\firstOrderPart{\UCBaire}\weiequiv \codedUChoice{\boldfaceGamma_1^1}{}{\nats}\weiequiv\ustar{(\codedUChoice{\boldfaceGamma_1^1}{}{\nats})}\weiequiv \codedChoice{\boldfaceLambda_1^1}{}{\nats}\weiequiv\ustar{(\codedChoice{\boldfaceLambda_1^1}{}{\nats})}\strictlyweireducible\firstOrderPart{\CBaire}\weiequiv \codedChoice{\boldfaceSigma_1^1}{}{\nats}.\]
\end{proposition}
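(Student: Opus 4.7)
I would prove this chain by combining \thref{UCbaireisparallelization}, \thref{Summaryfopustar}, Novikov-Kondo-Addison uniformization, and a duality between $\boldfaceSigma_1^1$ and $\boldfacePi_1^1$ descriptions of singletons in $\nats$. The duality is the main ingredient for $\boldfaceGamma$-independence: a $\boldfaceSigma_1^1$ singleton $\{n\st(\exists p)\,C(n,p)\}\subseteq\nats$ (with $C$ closed) coincides with the $\boldfacePi_1^1$ set $\{n\st(\forall m\neq n)(\forall p)\,\neg C(m,p)\}$, and symmetrically a $\boldfacePi_1^1$ singleton $\{n\st(\forall p)\,C(n,p)\}$ coincides with the $\boldfaceSigma_1^1$ set $\{n\st(\exists (p_m)_{m\neq n})(\forall m\neq n)\,\neg C(m,p_m)\}$; both rewritings are uniformly computable on names. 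This yields $\codedUChoice{\boldfaceSigma_1^1}{}{\nats}\weiequiv\codedUChoice{\boldfacePi_1^1}{}{\nats}\weiequiv\codedUChoice{\boldfaceDelta_1^1}{}{\nats}$, and combined with $\codedChoice{\boldfacePi_1^1}{}{\nats}\weiequiv\codedUChoice{\boldfacePi_1^1}{}{\nats}$ (obtained via Novikov-Kondo-Addison uniformization as in the proof of \thref{UCbaireisparallelization}) and the trivial $\codedChoice{\boldfaceDelta_1^1}{}{\nats}\weireducible\codedChoice{\boldfacePi_1^1}{}{\nats}$, it also gives $\codedChoice{\boldfacePi_1^1}{}{\nats}\weiequiv\codedChoice{\boldfaceDelta_1^1}{}{\nats}$ by first uniformizing the $\boldfacePi_1^1$ input and then dualizing.

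Next I would identify $\firstOrderPart{\UCBaire}$ by chaining $\UCBaire\weiequiv\parallelization{\codedUChoice{\boldfacePi_1^1}{}{\nats}}$ from \thref{UCbaireisparallelization} with \thref{Summaryfopustar}, obtaining $\firstOrderPart{\UCBaire}\weiequiv\ustar{(\codedUChoice{\boldfacePi_1^1}{}{\nats})}$. The central technical step is then $\ustar{(\codedUChoice{\boldfacePi_1^1}{}{\nats})}\weireducible\codedUChoice{\boldfacePi_1^1}{}{\nats}$, obtained by fusing the singletons arising in an instance into a single one. Given $(e,w,(x_i)_{i\in\nats})$ with each $x_i$ coding the $\boldfacePi_1^1$ singleton $\{n_i\}$ via a predicate $P_i$, consider the set of codes $\pairing{k,m_0,\ldots,m_{k-1}}\in\nats$ such that $P_i(m_i)$ holds for every $i<k$ (a conjunction of $\boldfacePi_1^1$ conditions), $\Phi_e(w,q_{m_0}*\cdots*q_{m_{k-1}})(0)\downarrow$ in $k$ steps (arithmetical), and no $k'<k$ already gives convergence in $k'$ steps (arithmetical). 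This is a $\boldfacePi_1^1$ singleton, solved by one call to $\codedUChoice{\boldfacePi_1^1}{}{\nats}$. The analogue $\ustar{(\codedChoice{\boldfacePi_1^1}{}{\nats})}\weireducible\codedChoice{\boldfacePi_1^1}{}{\nats}$ follows by dropping the uniqueness clause and invoking $\codedChoice{\boldfacePi_1^1}{}{\nats}$ on the resulting nonempty $\boldfacePi_1^1$ set.

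For $\firstOrderPart{\CBaire}\weiequiv\codedChoice{\boldfaceSigma_1^1}{}{\nats}$, the nontrivial direction is $\firstOrderPart{\CBaire}\weireducible\codedChoice{\boldfaceSigma_1^1}{}{\nats}$: if $f$ is first-order and $f\weireducible\CBaire$ via realizers $\Phi,\Psi$, then for any instance $x$ the set $\{\Psi(x,p)(0)\st p\in\body{\Phi(x)}\}$ is a nonempty $\boldfaceSigma_1^1$ subset of $f(x)$, so a single call to $\codedChoice{\boldfaceSigma_1^1}{}{\nats}$ returns a valid answer; the converse is immediate by passing through the tree representation. The strict separation $\codedChoice{\boldfacePi_1^1}{}{\nats}\strictlyweireducible\codedChoice{\boldfaceSigma_1^1}{}{\nats}$ is then inherited from \thref{UCbaireisparallelization}: an equivalence would yield $\UCBaire\weiequiv\parallelization{\codedChoice{\boldfacePi_1^1}{}{\nats}}\weiequiv\parallelization{\codedChoice{\boldfaceSigma_1^1}{}{\nats}}$, contradicting $\UCBaire\strictlyweireducible\parallelization{\codedChoice{\boldfaceSigma_1^1}{}{\nats}}$. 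The main delicate point I anticipate is the pointclass bookkeeping in the fusion argument, especially checking that the minimality clause on $k$ genuinely stays arithmetical once step-bounded convergence is unwound in the chosen representation of $\nats$.
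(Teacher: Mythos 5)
Your proposal is correct and follows the same overall architecture as the paper's proof: the singleton duality giving $\codedUChoice{\boldfaceSigma_1^1}{}{\nats}\weiequiv\codedUChoice{\boldfacePi_1^1}{}{\nats}\weiequiv\codedUChoice{\boldfaceDelta_1^1}{}{\nats}$, Novikov--Kondo--Addison for $\codedChoice{\boldfacePi_1^1}{}{\nats}\weiequiv\codedUChoice{\boldfacePi_1^1}{}{\nats}$, the chain through \thref{UCbaireisparallelization} and \thref{Summaryfopustar}, and the strictness argument inherited from $\UCBaire\strictlyweireducible\parallelization{\codedChoice{\boldfaceSigma_1^1}{}{\nats}}$. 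The one place where you take a genuinely different route is how the $\ustar$ collapse is obtained: the paper imports $\firstOrderPart{\CBaire}\weiequiv\codedChoice{\boldfaceSigma_1^1}{}{\nats}$ (and, by the same argument, $\firstOrderPart{\UCBaire}\weiequiv\codedUChoice{\boldfaceSigma_1^1}{}{\nats}$) from \cite[Proposition 2.4]{goh_pauly_valenti_2021} and then deduces $\ustar{(\codedUChoice{\boldfaceGamma_1^1}{}{\nats})}\weiequiv\codedUChoice{\boldfaceGamma_1^1}{}{\nats}$ as a corollary of that citation together with \thref{Summaryfopustar}; you instead prove $\ustar{(\codedUChoice{\boldfacePi_1^1}{}{\nats})}\weireducible\codedUChoice{\boldfacePi_1^1}{}{\nats}$ directly by fusing countably many $\boldfacePi_1^1$ singletons into one $\boldfacePi_1^1$ singleton of tuples and then derive the characterization of $\firstOrderPart{\UCBaire}$ from it. This buys self-containment at the cost of some bookkeeping: you do need to fix canonical names for naturals so that the step-bounded convergence and minimality-of-$k$ clauses are arithmetical (they are, since running $\Phi_e$ for a bounded number of steps on computably reconstructed canonical names is decidable), and you should note that minimality of $k$ is what makes the fused set a singleton for the $\codedUChoice{}{}{}$ case, whereas for the $\codedChoice{}{}{}$ analogue it is superfluous. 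Your reconstruction of $\firstOrderPart{\CBaire}\weireducible\codedChoice{\boldfaceSigma_1^1}{}{\nats}$ via the nonempty $\boldfaceSigma_1^1$ set $\{\Psi(x,p)(0)\st p\in\body{\Phi(x)}\}$ is essentially what the cited proposition proves.
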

\begin{proof}
The equivalence $\firstOrderPart{\CBaire}\weiequiv \codedChoice{\boldfaceSigma_1^1}{}{\nats}$ is proved in \cite[Proposition 2.4]{goh_pauly_valenti_2021}; essentially the same proof shows that $\firstOrderPart{\UCBaire}\weiequiv \codedUChoice{\boldfaceSigma_1^1}{}{\nats}$. 

 If $A\subseteq \nats$ is a singleton then $n \in A$ iff $(\forall m\neq n)(m \notin A)$. This implies that $A$ is $\Sigma_1^1$ iff $A$ is $\Pi_1^1$ iff $A$ is $\Delta_1^1$ and this  shows that $\codedUChoice{\boldfaceSigma_1^1}{}{\nats}\weiequiv \codedUChoice{\boldfacePi_1^1}{}{\nats}\weiequiv\codedUChoice{\boldfaceDelta_1^1}{}{\nats}$. These equivalences, together with \thref{UCbaireisparallelization} and \thref{Summaryfopustar} allow us to derive all the stated equivalent characterizations of $\firstOrderPart{\UCBaire}$.
    
By \thref{UCbaireisparallelization}, $\UCBaire \strictlyweireducible \parallelization{\codedChoice{\boldfaceSigma_1^1}{}{\nats}}$: since $\UCBaire$ is parallelizable, this implies $\codedChoice{\boldfaceSigma_1^1}{}{\nats} \not\weireducible \UCBaire$ and hence $\firstOrderPart{\UCBaire} \strictlyweireducible \firstOrderPart{\CBaire}$.
\end{proof}

\begin{proposition}
\thlabel{limdoesnotreachucbaire}
For every $n$,
\begin{enumerate}[(i)]
    \item $\firstOrderPart{(\mflim^{(n)})} \weiequiv \codedChoice{\boldfacePi_{n+1}^0}{}{\nats} \weiequiv \ustar{(\lpo^{(n)})} \strictlyweireducible \firstOrderPart{(\mflim^{(n+1)})}$;
    \item $ \lpo^{(n+1)} \not\weireducible \codedChoice{\boldfacePi_{n+1}^0}{}{\nats}$;
    \item $\firstOrderPart{(\mflim^{(n)})} \strictlyweireducible   \codedChoice{\boldfacePi_1^1}{}{\nats} \weiequiv \firstOrderPart{\UCBaire}$. 
\end{enumerate}
\end{proposition}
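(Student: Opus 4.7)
The plan is to combine the well-known equivalence $\mflim^{(n)} \strongweiequiv \parallelization{(\lpo^{(n)})}$ with \thref{Summaryfopustar} to deduce $\firstOrderPart{(\mflim^{(n)})} \weiequiv \ustar{(\lpo^{(n)})}$. To identify these with $\codedChoice{\boldfacePi_{n+1}^0}{}{\nats}$, I would use that membership of $k$ in a $\boldfacePi_{n+1}^0$-subset of $\nats$ is a $\Pi_{n+1}^0$-predicate uniformly in the name and thus decidable by $\lpo^{(n)}$: the standard $\ustar{(\lpo^{(n)})}$-search that tests $k=0,1,2,\dots$ in parallel and halts at the first positive answer returns an element; conversely the halting condition of a generic $\ustar{(\lpo^{(n)})}$-instance can be encoded (via pairing, to incorporate witnesses for the $\Sigma_{n+1}^0$-components of ``$y_i=\lpo^{(n)}(x_i)$'') as a $\boldfacePi_{n+1}^0$-subset of $\nats$ whose elements code sufficient tuples of correct answers.

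\textbf{Proof of (ii).} Assume for contradiction that $(\Phi, \Psi)$ is a computable reduction witnessing $\codedChoice{\boldfacePi_{n+1}^0}{}{\nats} \weireducible \lpo^{(n+1)}$. For each Turing index $e$, set
\[
    A_e \defas \nats \setminus \{k \in \nats : (\exists b \in \{0,1\})\,\Psi(\Phi_e, b)(0) \downarrow = k\}.
\]
Then $A_e$ is cofinite (omits at most two points) hence nonempty, and its complement is $\Sigma^0_1$ uniformly in $e$, so one can computably produce from $e$ an index $F(e)$ of a $\boldfacePi_{n+1}^0$-name for $A_e$. By Kleene's recursion theorem, fix $e^*$ with $\Phi_{e^*} = \Phi_{F(e^*)}$, so that $\Phi_{e^*}$ is a valid $\boldfacePi_{n+1}^0$-name for the nonempty set $A_{e^*}$. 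The output of the reduction on this instance is the natural number $c = \Psi(\Phi_{e^*}, \lpo^{(n+1)}(\Phi(\Phi_{e^*})))(0)$, which must lie in $A_{e^*}$ by correctness of the reduction but also in $\nats \setminus A_{e^*}$ by the definition of $A_{e^*}$, the desired contradiction.

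\textbf{Strict reduction in (i) and proof of (iii).} The forward reduction $\ustar{(\lpo^{(n)})} \weireducible \ustar{(\lpo^{(n+1)})} \weiequiv \firstOrderPart{(\mflim^{(n+1)})}$ is trivial. If the converse held, then $\lpo^{(n+1)} \weireducible \firstOrderPart{(\mflim^{(n+1)})} \weireducible \firstOrderPart{(\mflim^{(n)})} \weireducible \mflim^{(n)}$; but the output of a single call to $\mflim^{(n)}$ is $\emptyset^{(n+1)}$-computable in the input, while $\lpo^{(n+1)}$ uniformly decides arbitrary $\Sigma^0_{n+2}$-questions, contradicting the strictness of the arithmetical hierarchy. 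For (iii), \thref{Fopcbaire} provides $\firstOrderPart{\UCBaire} \weiequiv \codedChoice{\boldfacePi_1^1}{}{\nats}$, and $\codedChoice{\boldfacePi_{n+1}^0}{}{\nats} \weireducible \codedChoice{\boldfacePi_1^1}{}{\nats}$ is the trivial inclusion of pointclasses. For strictness, fix any hyperarithmetical non-arithmetic $B \subseteq \nats$ (e.g.\ $\emptyset^{(\omega)}$): for each $m$, the subset of $\nats$ equal to $\{0\}$ if $m \in B$ and to $\{1\}$ otherwise is a $\boldfacePi_1^1$-singleton (both halves of membership being $\boldfaceDelta^1_1 \subseteq \boldfacePi^1_1$), hence membership in $B$ Weihrauch-reduces to $\codedChoice{\boldfacePi_1^1}{}{\nats}$; however $\firstOrderPart{(\mflim^{(n)})} \weireducible \mflim^{(n)}$ only yields arithmetical outputs, so no reduction in the other direction can hold.

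The main obstacle is the diagonalization in the proof of (ii): one must carefully handle the partiality of $\Psi$ and verify that the recursion-theoretic fixed point $\Phi_{e^*}$ genuinely yields a valid $\boldfacePi_{n+1}^0$-name, which is guaranteed by the construction of $F$ producing a total computable index for every $e$.
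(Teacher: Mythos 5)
Your proof is correct, but it takes a noticeably more self-contained route than the paper, which handles this proposition almost entirely by citation. The paper obtains the equivalences in (i) directly from \cite[Theorem 7.2]{valentisolda}; derives the non-reductions in (i) and (ii) from $\parallelization{\ustar{(\lpo^{(n)})}} \weiequiv \parallelization{\lpo^{(n)}} \weiequiv \mflim^{(n)}$ (\cite[Theorem 5.10(5)]{valentisolda}) combined with $\mflim^{(n)} \strictlystrongweireducible \mflim^{(n+1)}$; and deduces the strictness in (iii) directly from (i), since $\firstOrderPart{(\mflim^{(n)})} \strictlyweireducible \firstOrderPart{(\mflim^{(n+1)})} \weiequiv \codedChoice{\boldfacePi_{n+2}^0}{}{\nats} \weireducible \codedChoice{\boldfacePi_1^1}{}{\nats}$ already gives the claim. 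Where you genuinely depart is in (ii): your Kleene-recursion-theorem diagonalization, building the cofinite set $A_{e^*}$ that evades both possible answers of $\lpo^{(n+1)}$, is a concrete and transparent argument that does not appear in the paper, and in fact makes the mechanism behind the failure of the reduction much more explicit than the paper's terse deduction. Your strictness argument for (iii), using a fixed $\Delta^1_1$ non-arithmetical set such as $\emptyset^{(\omega)}$, is likewise valid but more laborious than simply observing that equivalence would collapse $\firstOrderPart{(\mflim^{(n)})}$ with $\firstOrderPart{(\mflim^{(n+1)})}$, contradicting (i).

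Two small points worth tightening. First, in the definition of $A_e$ you write $\Psi(\Phi_e,b)$ with $b\in\{0,1\}$; since $\Psi$ acts on $\Baire$, this should be read as $\Psi$ applied to (a pairing of) $\Phi_e$ and a fixed canonical name for the bit $b$ (say the constant sequence). The argument then closes cleanly because in the final step you are free to feed the backward functional that same canonical name for $\lpo^{(n+1)}(\Phi(\Phi_{e^*}))$. Second, your sketch of $\ustar{(\lpo^{(n)})} \weireducible \codedChoice{\boldfacePi_{n+1}^0}{}{\nats}$ is a bit compressed: encoding the $\Sigma^0_{n+1}$ side of ``$y_i=\lpo^{(n)}(x_i)=0$'' requires packaging an existential witness into the chosen natural number so that the resulting set of ``good codes'' is genuinely $\boldfacePi^0_{n+1}$; you allude to this with ``via pairing'' and it does work, but it is precisely the kind of detail the paper sidesteps by invoking \cite[Theorem 7.2]{valentisolda}.
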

\begin{proof}
The equivalences in (i) are from {\cite[Theorem 7.2]{valentisolda}}.
It follows from \cite[Theorem 5.10(5)]{valentisolda} that $\parallelization{\lpo^{(n)^{u*}}} \weiequiv \parallelization{\lpo^{(n)}} \weiequiv \mflim^{(n)}$; since  $\mflim^{(n)} \strictlystrongweireducible \mflim^{(n+1)}$ this implies the non-reductions in (i) and (ii).

The equivalence in (iii) is from \thref{Fopcbaire}, while the strictness follows from (i).
\end{proof}

\section{The perfect set theorem in $\Baire$ and $\Cantor$}
\label{perfectsetsgeneral}

\subsection{Perfect sets}
\label{perfectsets}
The following multi-valued function was introduced and studied in \cite{kihara_marcone_pauly_2020}.

\begin{definition}
The multi-valued function
$\partialmultifunction{\PTT[1]}{\tree}{\tree}$ has domain $\{T  \in \tree: T \in \uncountable\}$ and is defined by
$$\PTT[1](T)\defas\{S\in \tree: S\subseteq T \land S \text{ is perfect}\}. $$
\end{definition}
We also study $\PTT[1]{\restriction \tree_2}$, the restriction of $\PTT[1]$ to $\tree_2$. We now define the same problem for closed sets.
\begin{definition} 
Let $\X$ be a computable Polish space. The multi-valued function $\partialmultifunction{\PST[\X]}{\negrepr{\X}}{\negrepr{\X}}$ has domain $\{A  \in \negrepr{\X}: \length{A}>\aleph_0\}$ and is defined  as
$$\PST[\X](A)\defas\{P \in \negrepr{\X}: P\subseteq A \land P \text{ is perfect}\}.$$
\end{definition}

Using the tree representation of closed sets in $\Baire$, we can think of a name for an input of $\PST[\Baire]$ as  $T \in \uncountable$ and a name for a solution of $\PST[\Baire]([T])$ as  $S\in\tree$ such that $\body{S}\subseteq \body{T}$ and $\body{S}$ is perfect. Notice that $\PST[\Baire](\body{T})$ contains $\PTT[1](T)$, but includes also every tree with perfect body contained in $\body{T}$.
\begin{theorem}
\thlabel{cantor_and_baire_same}
$\PTT[1]{\restriction}\tree_2 \strongweiequiv \PTT[1]$ and $\PST[\Cantor]\strongweiequiv \PST[\Baire]$.
\end{theorem}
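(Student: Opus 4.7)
My plan is to exploit the back-and-forth translations between trees on $\nats$ and binary trees provided by Definitions \thref{Translatetrees} and \thref{translateinfinitesequence}, whose properties are collected in Lemma \thref{AllPropertiesOfTranslation} and Lemma \thref{Perfecttreesinbaire}. All four required reductions will be strong Weihrauch reductions, so for each direction I must exhibit a computable forward map $\Phi$ and a computable backward map $\Psi$ that does not consult the input.

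For the easy direction $\PTT[1]{\restriction}\tree_2 \strongweireducible \PTT[1]$, both maps are the identity: given $T \in \tree_2$ with uncountable body, $T$ itself is a valid instance of $\PTT[1]$, and any perfect subtree $S \subseteq T$ is automatically binary since $T \subseteq \cantor$. For $\PST[\Cantor] \strongweireducible \PST[\Baire]$, the forward map is again the identity (using $\tree_2 \subseteq \tree$), while the backward map sends a solution $S \in \tree$ to $S \cap \cantor$; since $\body{S} \subseteq \body{T} \subseteq \Cantor$, intersecting with $\cantor$ preserves the body, so $S \cap \cantor$ is a binary tree with perfect body still contained in $\body{T}$.

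For the harder directions $\PTT[1] \strongweireducible \PTT[1]{\restriction}\tree_2$ and $\PST[\Baire] \strongweireducible \PST[\Cantor]$, I use the same pair of maps: the forward map sends $T \in \tree$ to $\translateCantor(T) \in \tree_2$, and the backward map sends a solution $S'$ to $\translateBaire(S')$. Validity of the forward map is immediate from Lemma \thref{AllPropertiesOfTranslation}(4), which guarantees that $\translateCantor(T)$ has uncountable body iff $T$ does. For the backward map I would first establish the identity $\translateBaire(\translateCantor(T)) = T$ (a direct unfolding of Definition \thref{Translatetrees} using that $\translateBaire \circ \translateCantor$ is the identity on finite sequences). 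Combined with Lemma \thref{AllPropertiesOfTranslation}(6), this shows that $S' \subseteq \translateCantor(T)$ implies $\translateBaire(S') \subseteq T$ (relevant for $\PTT[1]$) and that $\body{S'} \subseteq \body{\translateCantor(T)}$ implies $\body{\translateBaire(S')} \subseteq \body{T}$ (relevant for $\PST[\Baire]$). Perfectness is then delivered by Lemma \thref{Perfecttreesinbaire}: its first clause transfers perfect body to perfect body for the $\PST$ case, while its second clause transfers perfect tree to perfect tree for the $\PTT$ case.

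I do not anticipate any substantial obstacle: Lemma \thref{Perfecttreesinbaire} was tailored precisely for this kind of transfer, and the only care needed is to distinguish that $\PTT[1]$ requires a perfect subtree whereas $\PST[\Baire]$ only requires a closed set whose body is perfect—which are exactly the two cases covered by that lemma. The mild subtlety to keep in mind is that $\translateCantor$ on a tree $S \subseteq \baire$ can destroy perfectness of the body (as noted in the remark after Lemma \thref{Perfecttreesinbaire}), which is why the asymmetric choice $\Phi = \translateCantor$ (on the input side, where only uncountability matters) and $\Psi = \translateBaire$ (on the output side, where perfectness must be preserved) is essential.
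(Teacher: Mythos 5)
Your proposal is correct and follows essentially the same route as the paper: identity maps for the two easy reductions (with the backward map for $\PST[\Cantor]\strongweireducible\PST[\Baire]$ intersecting the output tree with $\cantor$), and the $\translateCantor$/$\translateBaire$ translations for the two harder ones, with perfectness delivered by \thref{Perfecttreesinbaire} in exactly the way the paper uses it. The only cosmetic difference is that for $\PTT[1]$ you derive $\translateBaire(S')\subseteq T$ directly from the identity $\translateBaire(\translateCantor(T))=T$ plus monotonicity of $\translateBaire$, while the paper reduces tree containment to body containment via \thref{Translation} and \thref{AllPropertiesOfTranslation}(4),(6); both are sound.
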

\begin{proof}
The reduction $\PTT[1]{\restriction}\tree_2\strongweireducible \PTT[1]$ is trivial.

We now prove that $\PST[\Cantor] \strongweireducible \PST[\Baire]$. Let $T \in \tree_2$ and let the forward functional be the identity. Let $P$ be a name for $\PST[\Baire]([T])$: even if $P\in\tree$, notice that $\body{P}$ is perfect and $\body{P}\subseteq \Cantor$. Let $\Psi(P)=\{\sigma \in P: \sigma \in \cantor\}$ and notice that $\Psi(P)$ is a name for an element $\PST[\Cantor](\body{T})$.  

For the other direction, the reduction $\PTT[1]\strongweireducible\PTT[1]{\restriction}\tree_2 $ is witnessed by the maps $\translateCantor$ (forward) and  $\translateBaire$ (backward) from \thref{Translatetrees}. Let $T\in \uncountable$ and let $P \in \PTT[1]\restriction \tree_2(\translateCantor(T))$. By \thref{Perfecttreesinbaire}, $\translateBaire(P)$ is a perfect tree. To show that $\translateBaire(P) \subseteq T$, it suffices to prove that $\body{\translateBaire(P)}\subseteq \body{T}$. Let $f \in \body{\translateBaire(P)}$: by \thref{AllPropertiesOfTranslation}(6) we have that $\translateCantor(f) \in \body{P}\subseteq \body{\translateCantor(T)}$ and by \thref{AllPropertiesOfTranslation}(4) we conclude that $f \in \body{T}$.

The proof that $\PST[\Baire] \strongweireducible \PST[\Cantor]$ is similar.
\end{proof}

\begin{lemma}
\thlabel{UCBaireReducesToPST}
$\UCBaire\strictlystrongweireducible\PST[\Baire]$ and $\PST[\Baire]\not\weireducible\UCBaire$.
\end{lemma}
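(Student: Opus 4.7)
The plan is to establish the two statements separately.

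For $\UCBaire \strongweireducible \PST[\Baire]$, my strategy uses an interleaving-based encoding. Given $T \in \uniquebranch$ with unique path $p$, the forward functional outputs the tree $T' \defas \exploded{T} = T * \cantor$, whose body $\{p * q : q \in \Cantor\}$ is uncountable (indeed perfect), hence a valid $\PST[\Baire]$-instance. The backward functional receives a tree $S$ with $\body{S}$ a nonempty perfect subset of $\{p * q : q \in \Cantor\}$; since every path of $\body{S}$ has even-indexed coordinates equal to $p$, exhibiting any element of $\body{S}$ allows us to read off $p$ from its even coordinates.

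To exhibit such an element computably, I plan to work through the equivalence $\PST[\Cantor] \strongweiequiv \PST[\Baire]$ from \thref{cantor_and_baire_same}, so that the returned tree $S$ is binary and $\body{S} \subseteq \Cantor$. The key technical step is to computably extract from $S$ a perfect subtree $R \subseteq S$: by iteratively searching, for each $\sigma \in R$, for two incompatible extensions in $S$ that both persist to progressively deeper levels, the perfectness of $\body{S}$ guarantees that the search succeeds at every stage. The leftmost path through the resulting $R$ is then computable and lies in $\body{S}$, yielding $p$ after decoding. The main obstacle is to handle arbitrary, possibly infinitely branching input trees $T$; an appropriate binary translation before invoking $\PST[\Cantor]$ (for instance through $\translateCantor$) should let us bypass the infinite-branching issues of a direct approach in $\Baire$.

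For $\PST[\Baire] \not\weireducible \UCBaire$, I plan to establish the reduction $\codedChoice{\boldfaceSigma_1^1}{}{\nats} \weireducible \PST[\Baire]$ and then conclude via \thref{Fopcbaire}. Given a sequence of trees $(T_n)_{n \in \nats}$ with $N \defas \{n : T_n \in \illfounded\} \neq \emptyset$, the forward functional outputs $V \defas \binarydisjointunion{n \in \nats}{V_n}$ in $\tree_2$, where $V_n \defas \translateCantor(\exploded{T_n})$. By \thref{AllPropertiesOfTranslation}(4), $\body{V_n}$ is uncountable iff $\body{\exploded{T_n}}$ is uncountable iff $T_n \in \illfounded$; then by \thref{Disjoint_union}, $\body{V}$ is uncountable, so $V$ is a valid $\PST[\Cantor]$-instance. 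The backward functional receives a tree $S$ with $\body{S}$ a nonempty perfect subset of $\body{V}$ and uses the perfect-subtree extraction from the first half to produce a perfect subtree $R \subseteq S$. Since $\{0^\nats\}$ alone cannot form a perfect body, the smallest $n$ with $0^n 1 \in R$ exists, is computable, and satisfies $T_n \in \illfounded$; this $n$ is the output.

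Granting the reduction $\codedChoice{\boldfaceSigma_1^1}{}{\nats} \weireducible \PST[\Baire]$, suppose toward contradiction that $\PST[\Baire] \weireducible \UCBaire$. Then by composition $\codedChoice{\boldfaceSigma_1^1}{}{\nats} \weireducible \UCBaire$, and since $\codedChoice{\boldfaceSigma_1^1}{}{\nats}$ is first-order, passing to first-order parts and applying \thref{Fopcbaire} yields $\codedChoice{\boldfaceSigma_1^1}{}{\nats} \weireducible \codedChoice{\boldfacePi_1^1}{}{\nats}$, which contradicts the strict inequality $\codedChoice{\boldfacePi_1^1}{}{\nats} \strictlyweireducible \codedChoice{\boldfaceSigma_1^1}{}{\nats}$ recorded in the same proposition.
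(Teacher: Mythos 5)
Your forward functional matches the paper's (both form $\exploded{T}$ and translate into $\Cantor$ via \thref{cantor_and_baire_same}), but the backward functional has a genuine gap that runs through both halves of your argument. You claim that from a binary tree $S$ whose \emph{body} is perfect you can "computably extract" a perfect \emph{subtree} $R\subseteq S$ and then take a leftmost path. This is not a computable operation: $S$ need not be pruned, the property "$\sigma$ has a path through it" is only $\Pi^0_1$ in $S$, and "persists to progressively deeper levels" describes a $\lim$-type search, not a $\Sigma^0_1$ one. Indeed, if such an extraction were computable you would obtain $\PTT[1]\restriction\tree_2\weireducible\PST[\Cantor]$ and hence $\CBaire\weireducible\PST[\Cantor]$, contradicting \thref{pstucbairesummary}. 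The paper's proof never computes a path or a perfect subtree from $P$: it first proves that \emph{no} eventually-zero path lies in $\body{P}$, and then recovers $p_0$ digit by digit through an unbounded search for the unique $n$ with $(\forall\tau\in 2^{n+1})(P_\tau\in\illfounded_2\implies\tau=0^n1)$. This is a $\Sigma^0_1$ search (since $\illfounded_2$ is $\Pi^0_1$), so it is genuinely computable; that step is the crux of the lemma and your plan has no replacement for it.

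Your second half has a further, independent problem: the intermediate claim $\codedChoice{\boldfaceSigma_1^1}{}{\nats}\weireducible\PST[\Baire]$ is actually false. By \thref{fop_pst}, $\firstOrderPart{\PST[\Baire]}\weiequiv\codedChoice{\boldfacePi_1^1}{}{\nats}$, and by \thref{Fopcbaire}, $\codedChoice{\boldfacePi_1^1}{}{\nats}\strictlyweireducible\codedChoice{\boldfaceSigma_1^1}{}{\nats}$. Since $\codedChoice{\boldfaceSigma_1^1}{}{\nats}$ is first-order, a reduction to $\PST[\Baire]$ would put it below $\codedChoice{\boldfacePi_1^1}{}{\nats}$, a contradiction. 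So even if the perfect-subtree extraction were computable (it isn't), this route cannot succeed. The paper instead proves $\CBaire\weireducible\mflim*\PST[\Cantor]$ — precisely because the missing "pruning" step is exactly what $\mflim$ provides (\cite{Nobrega2017GameCA}) — and concludes from $\mflim*\UCBaire\weiequiv\UCBaire$ together with $\CBaire\not\weireducible\UCBaire$.
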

\begin{proof}
Since  $\PST[\Baire]\strongweiequiv \PST[\Cantor]$ (\thref{cantor_and_baire_same}), we prove the lemma with $\PST[\Cantor]$ in place of $\PST[\Baire]$. 

To show $\UCBaire\strongweireducible\PST[\Cantor]$, fix $T \in \uniquebranch$ an let $p_0$ be the unique element of $\body{T}$. Let $S\defas \translateCantor(\exploded{T})$ and notice that, by definition of $\exploded{\cdot}$ and by \thref{AllPropertiesOfTranslation}(4), all paths in $ \body{S}$ are either eventually zero or are of the form $\translateCantor(p_0*q)$ for some $q \in \Cantor$.
 
 Fix a name $P$ for an element of $\PST[\Cantor](\body{S})$. We claim that all the paths in $\body{P}$ are of the form $\translateCantor(p_0*q)$ for some $q \in \Cantor$. To prove this, we need to rule out that some eventually zero path belongs to $\body{P}$. Let $r \in \Cantor$ be of the form $\sigma0^\nats$, where $\sigma=\str{}$ or  $\sigma(\length{\sigma}-1)=1$.  Notice that $\translateCantor(\translateBaire(\sigma))=\sigma$. Let 
 \[k\defas \begin{cases}
     p_0( {\length{\translateBaire(\sigma)}}/2) & \text{if } \length{\translateBaire(\sigma)} \text{ is even},\\
     1 & \text{otherwise,}
 \end{cases}\]
and set $m= \length{\sigma}+ k +1$. It suffices to prove that 
 $$(\forall q \in \Cantor)(r[m]\not\sqsubset \translateCantor(p_0*q)),$$
so that all paths in $\body{S}$ which extend $r[m]$ are eventually zero and $S_{r[m]} \in \countable$, which implies $r \notin \body{P}$.
Fix $q \in \Cantor$:
\begin{itemize}
    \item if $\sigma\not\sqsubset \translateCantor(p_0*q)$ then $r[m]\not\sqsubset \translateCantor(p_0*q)$;
\item if $\sigma\sqsubset \translateCantor(p_0*q)$ then either $\sigma 0^k 1$ or $\sigma 0^{k-1} 1$ (in case $\length{\translateBaire(\sigma)}$ is odd and $q((\length{\translateBaire(\sigma)} -1)/2) =0$) is a prefix of $\translateCantor(p_0*q)$ which is incomparable with $r[m] = \sigma 0^{k+1}$; hence $r[m] \not\sqsubset \translateCantor(p_0*q)$ also in this case.
\end{itemize}

We show how to computably retrieve $p_0$ from ${P}$. To find $p_0(0)$ we search for $n$ such that
 $$(\forall \tau \in 2^{n+1})( P_\tau \in \illfounded_2 \implies \tau = 0^{n}1 ).$$
Indeed, the previous claim implies that the unique $n$ satisfying this condition is $p_0(0)$. Since $\illfounded_2$  is a $\Pi_1^0$ set (\thref{Complexityresults}(i)), the above condition is $\Sigma_1^0$ and at some finite stage we find $p_0(0)$.

Suppose now that we computed the first $i$ coordinates of $p_0$, i.e. $p_0[i]$. We generalize the previous strategy to compute $p_0(i)$. Let  $$A_i\defas \{0^{p_0(0)} 1 \xi_0 0^{p_0(1)} 1 \xi_1 \dots \xi_{i-2} 0^{p_0(i-1)} 1 \xi_{i-1} \in P: (\forall j<i) (\xi_j \in \{1,01\})\}
$$
(recall that $\translateCantor(0)=1$ and $\translateCantor(1)=01$). Informally, the $\xi_j$'s come from the interleaving of sequences in $T$ with sequences in $\cantor$. Notice that $A_i$ is finite and nonempty; moreover, there exists $\sigma \in A_i$ which is the prefix of some path in $\body{P}$. We search for $n$ satisfying the $\Sigma_1^0$ property

  $$(\forall \sigma \in A_i)(\forall \tau \in 2^{n+1})(  P_{\sigma\tau} \in \illfounded_2 \implies  \tau = 0^{n}1  )$$
As before the claim implies that the unique $n$ satisfying this condition is $p_0(i)$. The main difference with the case $i=0$ is that it may be the case that different sequences in $A_i$ are prefixes of paths in $\body{P}$ that come from the interleaving of $p_0$ with different elements of $\Cantor$. However, any such $\sigma$ provides the correct $n = p_0(i)$.
This ends the proof of the reduction.\smallskip

 To show that $\PST[\Cantor]\not\strictlyweireducible\UCBaire$, recall from \S \ref{representedspaces} that $\mflim*\UCBaire\weiequiv \UCBaire$, so it  suffices to show that $\CBaire \weireducible \mflim*\PST[\Cantor]$. From \cite[Proposition 6.3]{kihara_marcone_pauly_2020} and \thref{cantor_and_baire_same}, it follows that  $\CBaire\weiequiv \PTT[1]\weiequiv \PTT[1]\restriction \tree_2$ and hence it is enough to show that $\PTT[1]\restriction \tree_2\weireducible \mflim*\PST[\Cantor]$. From \cite[Lemma 4.46]{Nobrega2017GameCA}, we know that $\mflim$ is equivalent to the function that prunes elements in $\tree_2$. So let $T\in \uncountable_2$ be the input of $\PTT[1]\restriction \tree_2$ and let $P$ be a name for an element of $\PST[\Cantor](\body{T})$:  pruning $P$ with $\mflim$ is enough to obtain a perfect subtree of $T$.
\end{proof}

 Recall that trees are represented via their characteristic functions and notice that $s \in \cantor$ is a prefix of a (name for a) tree iff $\{\tau: s(\tau)=1\}$ is a tree, which is a computable property.
\begin{lemma}
\thlabel{fop_pst}
$\firstOrderPart{\PST[\Baire]} \weiequiv \codedChoice{\boldfacePi^1_1}{}{\nats}$.
\end{lemma}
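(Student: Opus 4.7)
The plan is to prove the two reductions separately.

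For the lower bound $\codedChoice{\boldfacePi^1_1}{}{\nats} \weireducible \firstOrderPart{\PST[\Baire]}$, I would combine \thref{UCBaireReducesToPST}, which gives $\UCBaire \weireducible \PST[\Baire]$ and hence $\firstOrderPart{\UCBaire} \weireducible \firstOrderPart{\PST[\Baire]}$ by monotonicity of the first-order part, with the equivalence $\firstOrderPart{\UCBaire} \weiequiv \codedChoice{\boldfacePi^1_1}{}{\nats}$ from \thref{Fopcbaire}.

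For the upper bound I would fix an arbitrary first-order $g$ with $g \weireducible \PST[\Baire]$ via computable $\Phi,\Psi$ and show $g \weireducible \codedChoice{\boldfacePi^1_1}{}{\nats}$. Given input $x$, let $T \defas \Phi(x)$, which has uncountable body. Since $\Psi$ is continuous and produces only a natural number, on $\langle x, P\rangle$ it reads only a finite prefix $s \in \cantor$ of the characteristic function of the tree $P$ before committing to the value $n_s \defas \Psi(\langle x, s\rangle)(0)$; by the realizer property, $n_s \in g(x)$ whenever $s$ extends to the characteristic function of \emph{some} perfect subtree of $\body T$. The core observation is that the $\Sigma_1^1$-in-$T$ statement ``$s$ extends to the characteristic function of a perfect subtree of $T$'' reduces, via the perfect set theorem applied locally at each positive node $\tau$ declared by $s$ whose $s$-local structure fails to already provide two incomparable positive extensions, to the conjunction of some computable consistency conditions on $s$ (positives inside $T$, prefix-closure, no positives below negatives, etc.) together with a \emph{finite} list of $\Sigma_1^1$ statements, each asserting that an effectively given closed subset of $\body T$ (namely $T_\tau$ with the finitely many cones through $s$-negatives below $\tau$ removed) is uncountable.

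Each of these finitely many uncountability questions can be answered by a single $\wf$-query on a suitable computable auxiliary tree. I then enumerate the prefixes $s$ with $\Psi(\langle x, s\rangle)(0)\downarrow$ and, for each in turn, run the corresponding batch of $\wf$-queries, halting at the first valid $s$ and outputting $n_s$; such an $s$ must exist because the characteristic function of any actual perfect subtree of $\body T$ (which exists since $T$ is uncountable) has a long-enough prefix on which $\Psi$ converges and which is trivially extendable. This scheme fits the shape of \thref{ustar}: the sequence fed to $\ustar{\wf}$ lists the auxiliary $\wf$-instances in the order they would be consulted, and the Turing functional together with its parameter encodes the halting criterion, so $g \weireducible \ustar{\wf}$. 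Combining $\wf \weireducible \codedChoice{\boldfacePi^1_1}{}{\nats}$ with monotonicity of $\ustar$ and with $\ustar{(\codedChoice{\boldfacePi^1_1}{}{\nats})} \weiequiv \codedChoice{\boldfacePi^1_1}{}{\nats}$ from \thref{Fopcbaire}, this yields $g \weireducible \codedChoice{\boldfacePi^1_1}{}{\nats}$, completing the upper bound. The hard step is the frontier analysis: one must carefully identify, from $s$ alone, the exact closed subtrees of $T$ whose uncountability is equivalent to the extendability of $s$, and verify via the perfect set theorem that these finitely many $\Sigma_1^1$ questions really do certify extendability.
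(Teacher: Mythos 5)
Your lower bound is exactly the paper's: combine $\UCBaire \weireducible \PST[\Baire]$ (\thref{UCBaireReducesToPST}) with $\firstOrderPart{\UCBaire} \weiequiv \codedChoice{\boldfacePi^1_1}{}{\nats}$ (\thref{Fopcbaire}) and monotonicity of $\firstOrderPart{(\cdot)}$.

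Your upper bound, however, has a genuine gap in the final step, and the shape of your argument cannot be repaired without a different idea. You reduce $g$ to $\ustar{\wf}$, and then try to descend to $\codedChoice{\boldfacePi^1_1}{}{\nats}$ by writing $\wf \weireducible \codedChoice{\boldfacePi^1_1}{}{\nats}$. That reduction is false: by \thref{Fopcbaire}, $\codedChoice{\boldfacePi^1_1}{}{\nats}\weiequiv\firstOrderPart{\UCBaire}\weireducible\UCBaire\weireducible\CBaire$, whereas \thref{cbairepica} records that $\wf\not\weireducible\CBaire$. Indeed $\wf$ decides a $\boldsymbol\Delta^1_2$ alternative, and the set $\{\wf(T)\}$ is not in general $\boldfacePi^1_1(T)$, so it is not an admissible input for $\codedChoice{\boldfacePi^1_1}{}{\nats}$. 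More structurally, the paper later shows (\thref{fop_pi11}) that $\codedChoice{\boldfaceSigma^1_1}{}{\nats}\strictlyweireducible\ustar{\wf}$, so $\ustar{\wf}$ lies strictly above the target degree and the bound $g\weireducible\ustar{\wf}$ is simply not tight enough. The obstruction is baked into your formulation of ``valid prefix'': you certify that $s$ extends to the name of \emph{some} perfect subtree by checking finitely many $\Sigma^1_1$ uncountability facts $T_\tau\in\uncountable$, so the set of valid prefixes you produce is $\boldfaceSigma^1_1$, and choice for $\boldfaceSigma^1_1$ sets is strictly harder than choice for $\boldfacePi^1_1$ sets.

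The paper's proof sidesteps this by never checking that $s$ extends to \emph{some} solution. Instead it uses the set
\[\mathbf{Prefixes}\defas\{s: s \text{ is a prefix of a tree } \land \Psi(p[\length{s}],s)(0){\downarrow} \land (\forall \tau)(s(\tau)=0\implies T_\tau \in \countable)\},\]
which imposes a condition only on the \emph{negatively declared} $\tau$, namely $T_\tau\in\countable$. Since $\countable$ is $\Pi^1_1$ (\thref{Complexityresults}), this set is $\boldfacePi^1_1$ in the input. It is nonempty because the canonical name $q^*$ for the perfect kernel, with $q^*(\tau)=1$ iff $\tau\in T$ and $T_\tau\in\uncountable$, satisfies the negativity condition and $\Psi$ eventually converges on some prefix of it. And it is sound because any $s\in\mathbf{Prefixes}$ can be merged with $q^*$ beyond $\length{s}$ (taking the union of the two finite/infinite trees) to yield a name whose body is exactly the perfect kernel: the negativity condition ensures $s$ never disagrees with $q^*$ by declaring a $0$ where $q^*$ has a $1$, and spurious positives of $s$ only introduce dead ends, which do not change the body. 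The crucial point you miss is that it is fine for $s$ to wrongly declare positives; the paper's condition does not try to rule that out, precisely because ruling it out would require $\Sigma^1_1$ information. By only constraining the negatives, the paper keeps the whole set $\boldfacePi^1_1$ and lands directly on $\codedChoice{\boldfacePi^1_1}{}{\nats}$.
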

\begin{proof}
The fact that $\codedChoice{\boldfacePi_1^1}{}{\nats}\weireducible\firstOrderPart{\PST[\Baire]}$ follows from the fact that $\firstOrderPart{\UCBaire}\weiequiv\codedChoice{\boldfacePi^1_1}{}{\nats}$ by \thref{Fopcbaire} and $\UCBaire\strictlystrongweireducible\PST[\Baire]$ by \thref{UCBaireReducesToPST}.

For the opposite direction, with the goal of using \thref{prop:prefixes}, let $\partialfunction{U}{\tree}{\boldfacePi_1^1(\baire)}$ be defined by 
\[
U(T) :=\{s \in \baire: s \text{ is a prefix of a tree } \land (\forall \tau)(s(\tau)=0\implies T_\tau \in \countable)\}
\]
for any $T \in \uncountable$; $U(T)$ is $\boldfacePi_1^1$ as $\countable$ is $\Pi_1^1$ (see \thref{Complexityresults}(ii)).

The elements of $U(T)$ are the prefixes of the names for the perfect kernel of $T$, and hence belong to $\mathbf{Prefixes}(T)$.
Moreover if $q$ is any name for the perfect kernel of $T$ then $q[n] \in U(T)$ for all $n$.
Hence \thref{prop:prefixes} applies and we obtain $\firstOrderPart{\PST[\Baire]} \weireducible \codedChoice{\boldfacePi^1_1}{}{\nats}$.
\end{proof}
Our results about $\PST[\Cantor]$ and $\PST[\Baire]$ are summarized in the following theorem.
\begin{theorem}
\thlabel{pstucbairesummary}
$\UCBaire\strictlystrongweireducible \PST[\Cantor] \strongweiequiv \PST[\Baire] \strictlystrongweireducible \CBaire$
\end{theorem}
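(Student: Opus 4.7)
The proof amounts to assembling the preceding lemmas. The equivalence $\PST[\Cantor] \strongweiequiv \PST[\Baire]$ is \thref{cantor_and_baire_same}, and $\UCBaire \strictlystrongweireducible \PST[\Baire]$ is \thref{UCBaireReducesToPST}, so my plan is to establish the two remaining facts: (a) the strong reduction $\PST[\Baire] \strongweireducible \CBaire$, and (b) the non-reduction $\CBaire \not\weireducible \PST[\Baire]$, which is a fortiori stronger than the strict part of $\PST[\Baire] \strictlystrongweireducible \CBaire$.

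For (a), I would first observe a direct strong reduction $\PST[\Baire] \strongweireducible \PTT[1]$: a name for an uncountable closed set $A \in \negrepr{\Baire}$ under the tree representation is precisely a tree $T$ with $\body{T} = A$, and $T \in \uncountable$. Any perfect subtree $S$ returned by $\PTT[1](T)$ is, by the observations on pruned perfect trees in \S\ref{Sequencessandtrees}, a tree with perfect body $\body{S} \subseteq \body{T} = A$, hence a valid name for a solution of $\PST[\Baire](A)$. The backward functional is the identity on $S$ and does not use the original input, so the reduction is strong. Combined with $\PTT[1] \weiequiv \CBaire$ (used already in the proof of \thref{UCBaireReducesToPST} and originating from \cite[Prop.~6.3]{kihara_marcone_pauly_2020}) this yields $\PST[\Baire] \weireducible \CBaire$; since $\CBaire$ is a cylinder as a closed choice principle, \cite[Cor.~3.6]{BG09} (recalled in \S\ref{representedspaces}) upgrades this to $\PST[\Baire] \strongweireducible \CBaire$.

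For (b), I would invoke the monotonicity of the first-order part: the maximality characterization immediately gives that $f \weireducible g$ implies $\firstOrderPart{f} \weireducible \firstOrderPart{g}$. By \thref{fop_pst}, $\firstOrderPart{\PST[\Baire]} \weiequiv \codedChoice{\boldfacePi_1^1}{}{\nats}$, while by \thref{Fopcbaire} one has $\firstOrderPart{\CBaire} \weiequiv \codedChoice{\boldfaceSigma_1^1}{}{\nats}$ together with the strict reduction $\codedChoice{\boldfacePi_1^1}{}{\nats} \strictlyweireducible \codedChoice{\boldfaceSigma_1^1}{}{\nats}$. A putative reduction $\CBaire \weireducible \PST[\Baire]$ would therefore yield $\codedChoice{\boldfaceSigma_1^1}{}{\nats} \weireducible \codedChoice{\boldfacePi_1^1}{}{\nats}$, contradicting \thref{Fopcbaire}. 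This completes the proof.

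The only mildly subtle step is the upgrade to a \emph{strong} reduction in (a); my plan leans on the cylinder property of $\CBaire$ to avoid a direct construction. If one preferred not to invoke this, the alternative would be to inline the reduction $\PTT[1] \weireducible \CBaire$ and verify that a perfect subtree of the input can be packaged directly inside a single path through an auxiliary ill-founded tree, so that no post-processing depending on the instance is required; the cylinder route is cleaner and uses only results already cited in the background section.
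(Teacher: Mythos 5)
Your proof is correct and follows essentially the same route as the paper: the first strict reduction and the equivalence come from \thref{UCBaireReducesToPST} and \thref{cantor_and_baire_same}, the reduction $\PST[\Baire]\weireducible\CBaire$ goes through $\PTT[1]\weiequiv\CBaire$, and strictness is established by comparing first-order parts via \thref{fop_pst} and \thref{Fopcbaire}. The only difference is that you explicitly invoke the cylinder property of $\CBaire$ to upgrade the last reduction to a strong one, which the paper leaves implicit.
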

\begin{proof}
The first strict reduction and the first equivalence were proven in \thref{UCBaireReducesToPST} and \thref{cantor_and_baire_same} respectively. The last reduction follows from the fact $\PTT[1]\weiequiv \CBaire$ (\cite[Proposition 6.3]{kihara_marcone_pauly_2020}) and a solution for $\PTT[1]$ is also a solution for $\PST[\Baire]$. Strictness follows by \thref{fop_pst} as, by \thref{Fopcbaire}, $\codedChoice{\boldfacePi^1_1}{}{\nats}\strictlyweireducible\firstOrderPart{\CBaire}$.
\end{proof}
\subsection{Listing problems}
\label{listingproblems}
We now move our attention to the functions that, given in input a countable closed set of a computable metric space, output a list of all its elements. There are different possible meanings of the word \lq list\rq, and these correspond to different functions. For Baire and Cantor space, some of these functions were already introduced and studied in \cite{kihara_marcone_pauly_2020}. For trees and closed sets we made a distinction between the perfect tree and the perfect set theorem; on the other hand, if  $T \in \tree$ and  $A\in \negrepr{\Baire}$ are such that $A=\body{T}$ then listing the elements in $\body{T}$ and listing the elements in $A$ are the same problem.

We generalize \cite[Definition 6.1]{kihara_marcone_pauly_2020} from $\Baire$ to an arbitrary computable metric space.

\begin{definition}
\thlabel{definitionsList}
Let $\X$ be a computable metric space. The two multi-valued function $\partialmultifunction{\wList[\X]}{\negrepr{\X}}{(2\times \X)^\nats}$ and $\partialmultifunction{\List[\X]}{\negrepr{\X}}{\nats \times (2\times \X)^\nats}$ with the same domain $\{A \in \negrepr{\X}:\length{A}\leq \aleph_0\}$ are defined by
 $$\wList[\X](A)\defas\{(b_i,x_i)_{i \in \nats}: A=\{x_i:b_i=1\}\},$$
 $$\List[\X](A)\defas\{(n,(b_i,x_i)_{i \in \nats}): A=\{x_i:b_i=1\} \land ((n=0 \land \length{A}=\aleph_0) \lor (n>0 \land \length{A}=n-1)) \}.$$
\end{definition}


\begin{remark}
\thlabel{equivalentcersionslists}
Notice that there is a slight difference between $\List$ (there was no subscript there because only $\Baire$ was considered) in \cite[Definition 6.1]{kihara_marcone_pauly_2020} and our definition of $\List[\Baire]$. Indeed, $\partialmultifunction{\List}{\negrepr{\Baire}}{(\Baire)^\nats}$ is defined by stipulating that $(n,(p_i)_{i \in \nats}) \in \List(A)$ iff either $n=0$, $A=\{p_i :i \in \nats\}$ and $p_i \neq p_j$ for every $i \neq j$, or else $n>0$, $\length{A}=n-1$ and $A=\{p_i :i< n-1\}$.

In particular, the output of $\List$ is always injective: this version is apparently stronger than $\List[\Baire]$ because the latter allows repeating elements of the form  $(1,p_i)$ for $p_i \in A$. We briefly discuss why $\List \strongweiequiv \List[\Baire]$.

We claim that given $(n,(b_i,p_i)_{i \in \nats}) \in \List[\Baire](A)$ we can compute some $I$ such that $(n,I) \in \List[](A)$. At any finite stage $s$ we inspect the finite prefix of length $s$ of $(n,(b_i,p_i)_{i \in \nats})$, i.e., the prefix $n\rho$ such that $\rho$ is the dove-tailing of the strings $(b_i\sigma_i)_{i< m}$ for some $m \in \nats$. For any $i < m$ we list $p_i$ such that $\sigma_i \sqsubset p_i$ in $I$ if $b_i=1$ and $p_i \not\sqsubseteq p_j$ for any $j<i$ such that $\sigma_j \sqsubset p_j$ and $b_j=1$.


If $n>0$ (i.e.\ $A$ is finite) after we listed $n-1$ elements we can add to $I$ any element in $\Baire$. 
If $n=0$ (i.e.\ $A$ is infinite) then we always find new elements to list in $I$ and we continue forever.
Since we are listing each $p_i$ only if $p_i \neq p_j$ for every $j<i$ with $b_j=1$, we have that $I$ lists injectively all the elements of $A$.
\end{remark}

We now focus on listing problems in Cantor space, comparing $\wList[\Cantor]$ and $\List[\Cantor]$  with the analogous problems in Baire space and with the functions $\List[\Cantor,<\omega]$ and  $\wList[\Cantor, \leq \omega]$ considered in \cite[\S 6.1]{kihara_marcone_pauly_2020}. 
\begin{definition} The multi-valued functions $\partialmultifunction{\List[\Cantor,<\omega]}{\negrepr{\Cantor}}{(\Cantor)^{<\nats}}$ and $\partialmultifunction{\wList[\Cantor, \leq \omega]}{\negrepr{\Cantor}}{(\Cantor)^\nats}$ have domains $\{A \in \negrepr{\Cantor}:\length{A}<\aleph_0\}$ and $\{A \in \negrepr{\Cantor}:\length{A}\leq \aleph_0\land A\neq \emptyset\}$ respectively and are defined by
\begin{align*}
\List[\Cantor,<\omega](A) & \defas\{(p_i)_{i <n}: A=\{p_i:i<n\}\};\\    
\wList[\Cantor, \leq \omega](A) & \defas\{(p_i)_{i \in \nats}:A=\{p_i:i \in \nats\}\}.
\end{align*}
\end{definition}

The following theorem establishes the relations between the listing problems defined so far: the results stated in this theorem are collected with other ones in Figure \ref{Figureslist}.

\begin{theorem}
\thlabel{Summary_list_cantor}
$\List[\Cantor,< \omega] \weiincomparable  \wList[\Cantor, \leq \omega] \weiequiv\wList[\Cantor]$ and $\wList[\Cantor, \leq \omega],\List[\Cantor, < \omega]\strictlyweireducible \List[\Cantor] \strictlyweireducible \UCBaire \weiequiv \wList[\Baire] \weiequiv \List[\Baire]$.

Since all the problems involved are cylinders all the reductions mentioned above are strong. 
\end{theorem}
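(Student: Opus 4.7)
My plan is to establish the individual reductions and non-reductions one by one, and then appeal to the fact that each problem in the statement is a cylinder (each $f$ here satisfies $f \weiequiv f \times \id$, and $f \times \id$ is automatically a cylinder) in order to upgrade every Weihrauch reduction to a strong one. This lets me treat $\weireducible$ and $\strongweireducible$ interchangeably throughout.

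The heart of the argument is the equivalence $\wList[\Cantor] \weiequiv \wList[\Cantor, \leq \omega]$. To reduce $\wList[\Cantor]$ to $\wList[\Cantor, \leq \omega]$ I use a ``dummy element'' trick: given a (possibly empty) countable closed $A$, I compute a tree for $A' \defas \{0 p : p \in A\} \cup \{1^\nats\}$, which is countable closed and nonempty, and convert a weak enumeration $(q_j)$ of $A'$ into an indicator-list for $A$ by emitting $(1, q_j^-)$ when $q_j(0) = 0$ and $(0, 0^\nats)$ otherwise. The reverse direction is the subtler one, because a naive attempt to pick the $(j+1)$-st index with $b_i = 1$ from a $\wList[\Cantor]$-output breaks down when $A$ is nonempty but only finitely many indicator-$1$'s appear. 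I sidestep this by inflating $A$ to $A'' \defas \{0^n 1 p : n \in \nats,\ p \in A\} \cup \{0^\nats\}$, which is countable closed and always infinite whenever $A$ is nonempty. Any $\wList[\Cantor]$-output $(b_i, x_i)$ for $A''$ must therefore contain infinitely many indices $i$ with $b_i = 1$ and $x_i \neq 0^\nats$; since ``$x_i$ has at least one $1$'' is $\Sigma_1^0$, I can effectively enumerate those indices and return, for each, the suffix of $x_i$ after its first $1$, producing a weak enumeration of $A$.

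Next I handle the Cantor-space side. The reductions $\wList[\Cantor, \leq \omega], \List[\Cantor, <\omega] \weireducible \List[\Cantor]$ are treated uniformly, exploiting the cardinality output $n$: when $n = 0$ I reuse the enumeration trick from the previous paragraph, and when $n \geq 1$ I enumerate the $x_i$'s with $b_i = 1$ and certify pairwise distinctness via the $\Sigma_1^0$ witness of a disagreement coordinate, stopping after exactly $n-1$ distinct elements have been found. The incomparability $\List[\Cantor, <\omega] \weiincomparable \wList[\Cantor, \leq \omega]$ splits into two arguments. The direction $\wList[\Cantor, \leq \omega] \not\weireducible \List[\Cantor, <\omega]$ is a domain obstruction: the forward functional would have to send infinite countable closed sets to finite closed sets, and the finite data returned by $\List[\Cantor, <\omega]$ cannot suffice to weakly enumerate a sufficiently complex infinite $A$. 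The direction $\List[\Cantor, <\omega] \not\weireducible \wList[\Cantor, \leq \omega]$ uses a continuity argument to show that a weak enumeration does not continuously reveal the cardinality of the underlying finite set. The strict reductions $\wList[\Cantor, \leq \omega], \List[\Cantor, <\omega] \strictlyweireducible \List[\Cantor]$ then follow by applying the same two non-reduction arguments (cardinality and domain obstructions, respectively) to show $\List[\Cantor] \not\weireducible \wList[\Cantor, \leq \omega]$ and $\List[\Cantor] \not\weireducible \List[\Cantor, <\omega]$.

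Finally, the equivalences $\UCBaire \weiequiv \wList[\Baire] \weiequiv \List[\Baire]$ are available from \cite[Proposition 6.3]{kihara_marcone_pauly_2020} together with \thref{equivalentcersionslists}. For $\List[\Cantor] \weireducible \UCBaire$ I invoke the characterization $\UCBaire \weiequiv \parallelization{\codedUChoice{\boldfacePi_1^1}{}{\nats}}$ from \thref{UCbaireisparallelization}: running a $\boldfacePi_1^1$-choice query in parallel over every $\sigma \in \cantor$ determines whether $T_\sigma \in \uniquebranch$ and, if so, returns the unique path; by Cantor--Bendixson every element of a countable closed subset of $\Cantor$ is isolated at some level, so this yields a complete weak enumeration, and further parallel $\boldfacePi_1^1$-queries compute the cardinality. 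The step I expect to be the main obstacle is the strictness $\UCBaire \not\weireducible \List[\Cantor]$: here I would use $\UCBaire \weiequiv \wList[\Baire]$ and exhibit a countable closed subset of $\Baire$ whose weak enumeration cannot be extracted from $\List[\Cantor]$ applied to any countable closed subset of the compact space $\Cantor$, the intuition being that the non-compactness of $\Baire$ produces complexities that listing within $\Cantor$ cannot simulate.
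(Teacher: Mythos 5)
There are two genuine gaps, both clustered around the relationship between $\List[\Cantor]$ and $\UCBaire$, and one of them turns on a false claim.

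First, your argument for $\List[\Cantor] \weireducible \UCBaire$ rests on the assertion that ``by Cantor--Bendixson every element of a countable closed subset of $\Cantor$ is isolated at some level,'' which is not true. Take $A = \{0^\nats\} \cup \{0^n 1 0^\nats : n \in \nats\}$: this is countable and closed, every $0^n 1 0^\nats$ is isolated, but $0^\nats$ is a limit point, so there is \emph{no} $\sigma \sqsubset 0^\nats$ with $T_\sigma \in \uniquebranch$. Your parallel $\boldfacePi_1^1$-queries would therefore produce only the isolated points of $\body{T}$, which in general form a proper subset of $\body{T}$ (countable closed sets can have arbitrary countable Cantor--Bendixson rank). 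The paper avoids all of this by the much simpler observation that $\List[\Cantor] \weireducible \List[\Baire]$: a closed subset of $\Cantor$ is literally a closed subset of $\Baire$, so one feeds it to $\List[\Baire]$ and clamps the output points back into $\Cantor$; the equivalence $\List[\Baire] \weiequiv \UCBaire$ is then the hard content of \cite[Theorem 6.4]{kihara_marcone_pauly_2020} together with \thref{equivalentcersionslists}. (Minor: you cite Proposition 6.3 of that paper, but 6.3 concerns $\CBaire \weiequiv \PTT[1]$; the result you need is Theorem 6.4.)

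Second, and more seriously, you explicitly leave the strictness $\List[\Cantor] \strictlyweireducible \UCBaire$ as an ``expected obstacle'' with only the intuition that non-compactness of $\Baire$ creates complexities $\Cantor$ cannot simulate. That intuition does not become a proof, and it also doesn't match the mechanism the paper actually uses: compactness is not the point, complexity level is. The paper shows $\firstOrderPart{\List[\Cantor]} \weireducible \codedChoice{\boldfacePi_3^0}{}{\nats}$ via a $\mathbf{Prefixes}$ argument (build a $\Pi_3^{0,T}$ set of finite output-prefixes on which the backward functional already halts, using that $\illfounded_2$ is $\Pi_1^0$ to certify tagged points lie on paths, and a $\Sigma_2^0$ formula $\varphi(n,T)$ counting incomparable ill-founded subtrees to certify the cardinality component), and then concludes from \thref{limdoesnotreachucbaire}(i),(iii) and \thref{Fopcbaire} that $\codedChoice{\boldfacePi_3^0}{}{\nats} \strictlyweireducible \codedChoice{\boldfacePi_1^1}{}{\nats} \weiequiv \firstOrderPart{\UCBaire}$, hence $\UCBaire \not\weireducible \List[\Cantor]$. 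Without this first-order-part argument (or an equivalent), the central strict inequality of the theorem is unproved. Your treatment of $\wList[\Cantor] \weiequiv \wList[\Cantor, \leq \omega]$ via the inflated set $A'' = \{0^n 1 p : n \in \nats,\ p \in A\} \cup \{0^\nats\}$ is a nice variant that sidesteps the careful padding bookkeeping the paper leaves implicit, and the remaining incomparabilities you sketch are the content of \cite[Corollary 6.15]{kihara_marcone_pauly_2020} which the paper simply cites.
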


\begin{proof}

The fact that $\List[\Cantor,< \omega] \weiincomparable \wList[\Cantor, \leq \omega]$ is \cite[Corollary 6.15]{kihara_marcone_pauly_2020}.

To prove that $\wList[\Cantor, \leq \omega] \weireducible\wList[\Cantor]$, let $A \in \negrepr{\Cantor}$ be countable and nonempty and let $(b_i,p_i)_{i \in \nats} \in \wList[\Cantor](A)$. Then, $\{p_i:b_i=1\}$ can be easily rearranged, and possibly duplicated,  to produce an element of $\wList[\Cantor, \leq \omega](A)$.

For the opposite direction, let $A \in \negrepr{\Cantor}$ be countable and possibly empty. Define $A'\defas\{0^\nats\} \cup \{1 x: x \in A\}$: $A'$ is still countable but nonempty, i.e.\ a suitable input for $\wList[\Cantor, \leq \omega]$. Let $(p_i)_{i \in \nats}\in \wList[\Cantor, \leq \omega](A')$: then $(p_i(0),p_i^-)_{i \in \nats} \in \wList[\Cantor](A)$.

The reductions $\wList[\Cantor] \weireducible \List[\Cantor]$ and $\List[\Cantor,<\omega]\weireducible \List[\Cantor]$ are immediate. Strictness follows from incomparability of $\wList[\Cantor]$ and  $\List[\Cantor, < \omega]$.

By \thref{equivalentcersionslists} and \cite[Theorem 6.4]{kihara_marcone_pauly_2020} we obtain that $\UCBaire \weiequiv \wList[\Baire] \weiequiv \List[\Baire]$. 

The reduction $\List[\Cantor]\weireducible \List[\Baire]$ is obvious. 
To prove that $\List[\Baire]\not\weireducible \List[\Cantor]$ we recall that by \thref{limdoesnotreachucbaire}(i) and (iii) $\codedChoice{\boldfacePi_3^0}{}{\nats}\strictlyweireducible \codedChoice{\boldfacePi_1^1}{}{\nats}\weiequiv \firstOrderPart{\UCBaire}$. 
It thus suffices to show that $\firstOrderPart{\List[\Cantor]}\weireducible\codedChoice{\boldfacePi_3^0}{}{\nats}$ and to this end we use \thref{prop:prefixes}. 
Consider the formula 
\[
(\exists \sigma_0,\dots,\sigma_{n-1})(\forall i \neq j< n)(\sigma_i \incomparable \sigma_j \land T_{\sigma_i} \in \illfounded_2)
\]
that we denote by $\varphi(n,T)$. 
Since $\illfounded_2$ is $\Pi_1^0$ (see \thref{Complexityresults}(i)) $\varphi$ is $\Sigma_2^0$. 
Notice that in this case $\mathbf{Prefixes}(T)$ is the set of all $n\rho$ such that, if we read $\rho$ as the dove-tailing of the strings $(b_i\sigma_i)_{i< m}$ for some $m \in \nats$, then
\begin{itemize}
    \item $(\forall i < m)(b_i=1 \implies T_{\sigma_i} \in \illfounded_2)$;
    \item $(n=0 \land (\forall k)(\varphi(k,T))) \lor (n>0 \land \varphi(n-1,T) \land \lnot \varphi(n,T))$.
\end{itemize}
Since $\mathbf{Prefixes}(T)$ is $\Pi_3^{0,T}$ we can apply \thref{prop:prefixes} to $U:=T \mapsto \mathbf{Prefixes}(T)$, thereby concluding the proof.
\end{proof}
\begin{lemma}
\thlabel{wsclistcantorparallelizable}
$\wList[\Cantor]$ is parallelizable.
\end{lemma}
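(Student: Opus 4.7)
The plan is to establish $\parallelization{\wList[\Cantor]} \weireducible \wList[\Cantor]$, as the converse reduction is immediate. The idea is to bundle a sequence $(A_i)_{i \in \nats}$ of countable closed subsets of $\Cantor$ into a single countable closed subset of $\Cantor$ using the binary disjoint union of trees, and to recover the individual lists by a decidable prefix inspection.

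Concretely, if each $A_i$ is represented by the binary tree $T_i$, the forward functional outputs the binary tree $T \defas \binarydisjointunion{i \in \nats}{T_i}$. By \thref{Disjoint_union}, $T$ is binary and $\length{\body{T}} = 1 + \sum_{i \in \nats} \length{\body{T_i}} \leq \aleph_0$, so
\[ A \defas \body{T} = \{0^\nats\} \cup \bigcup_{i \in \nats} \{0^i 1 p : p \in A_i\} \]
is a valid input for $\wList[\Cantor]$.

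Given a solution $(b_j, q_j)_{j \in \nats} \in \wList[\Cantor](A)$, the backward functional extracts, for every $i$, a list $(b_j^{(i)}, q_j^{(i)})_{j \in \nats}$ as follows: if $b_j = 1$ and the first $i+1$ bits of $q_j$ coincide with $0^i 1$ (which is decidable in finite time), set $b_j^{(i)} \defas 1$ and let $q_j^{(i)}$ be the suffix of $q_j$ obtained by dropping its first $i+1$ bits; otherwise set $b_j^{(i)} \defas 0$ and $q_j^{(i)} \defas 0^\nats$. The $1$-marked pairs of the extracted list then correspond bijectively to those $1$-marked pairs $(1, q_j)$ of the given list for which $q_j \in \{0^i 1 p : p \in A_i\}$, so the associated tails range exactly over $A_i$, producing a valid element of $\wList[\Cantor](A_i)$; when $A_i = \emptyset$ no $1$-marked pair appears in the extracted list, which is again a valid output.

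There is no substantial obstacle: the construction relies on the fact that $\binarydisjointunion{}{}$ packages countably many closed sets into a single one in a uniformly computable way, and that membership in the clopen set $\{0^i 1 p : p \in \Cantor\}$ is decided by inspecting a bounded prefix, so no unbounded search is required in either direction of the reduction.
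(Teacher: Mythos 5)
Your proof is correct and follows essentially the same route as the paper: bundle the inputs via the binary disjoint union $\binarydisjointunion{i}{T_i}$ and recover each individual list by filtering the combined list on the clopen prefix $0^i1$ and stripping it. You are in fact slightly more explicit than the paper about the bookkeeping (tagging non-matching entries with $b_j^{(i)}=0$, replacing them by $0^\nats$, and dropping the prefix from matching entries), which the paper leaves as "straightforward to check."
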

\begin{proof}
To prove the claim it suffices to show that $\parallelization{\wList[\Cantor]}\weireducible \wList[\Cantor]$. Given $(T^n)_{n \in \nats}$, an input for $\parallelization{\wList[\Cantor]}$, i.e.\ a sequence of binary trees with countable body,  compute $T\defas\binarydisjointunion{n \in \nats}{T^n}$ and notice that $T \in \tree_2$ (see \thref{Disjoint_union}). Given $(b_i,p_i) \in \wList[\Cantor](T)$, it is straightforward to check that $\{(b_i,p_i): b_i=1 \land 0^n1\sqsubset p_i\} \in \wList[\Cantor](T^n)$.
\end{proof}
We conclude this section by characterizing the first-order part of $\wList[\Cantor]$.
\begin{lemma}
\thlabel{fop_wlist}
$\firstOrderPart{\wList[\Cantor]}\weiequiv \codedChoice{}{}{\nats}$.
\end{lemma}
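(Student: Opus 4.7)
My plan is to establish both reductions. For $\codedChoice{}{}{\nats}\weireducible \firstOrderPart{\wList[\Cantor]}$, since $\codedChoice{}{}{\nats}$ is first-order it suffices to show $\codedChoice{}{}{\nats}\weireducible \wList[\Cantor]$. Given a nonempty $\Pi_1^0$ set $A\subseteq\nats$, presented via a decreasing computable sequence $(A_s)_s$ of finite sets with $A=\bigcap_s A_s$, I build the computable binary tree $T$ defined by $\sigma\in T$ iff $\sigma=0^k$ for some $k$, or $\sigma=0^n1^k$ for some $n$ and $k\geq 1$ with $n\in A_{n+k}$. Then $\body{T}=\{0^\nats\}\cup\{0^n1^\nats : n\in A\}$ is countable and nonempty, so $T\in\dom(\wList[\Cantor])$. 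Given any $(b_i,x_i)_{i\in\nats}\in\wList[\Cantor](\body{T})$, since $A\neq\emptyset$ a $\Sigma_1^0$ search produces some $i$ with $b_i=1$ and $x_i\neq 0^\nats$, and the unique $n$ with $x_i=0^n1^\nats$ belongs to $A$.

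For the converse $\firstOrderPart{\wList[\Cantor]}\weireducible \codedChoice{}{}{\nats}$, I mimic the template of \thref{fop_pst} and the $\codedChoice{\boldfacePi_3^0}{}{\nats}$-argument inside the proof of \thref{Summary_list_cantor}. Let $f$ be a first-order problem with $f\weireducible \wList[\Cantor]$ as witnessed by computable maps $\Phi,\Psi$, and let $p$ be a name for some $x\in\dom(f)$, so that $\Phi(p)=T\in\countable_2$. Define $\mathbf{Prefixes}$ to be the set of pairs $(\tau,\sigma)\in\cantor\times\cantor$ with $|\tau|=\ell_\sigma$ satisfying
\[\Psi\bigl(p[\sigma],\str{\tau(i),\pi_i(\sigma)}_{i<\ell_\sigma}\bigr)(0){\downarrow}\ \text{and}\ (\forall i<\ell_\sigma)\bigl(\tau(i)=1 \implies T_{\pi_i(\sigma)}\in\illfounded_2\bigr).\]
After coding pairs as natural numbers and using that $\illfounded_2$ is $\Pi_1^0$ by \thref{Complexityresults}(i), $\mathbf{Prefixes}$ is a $\Pi_1^{0,T}$ subset of $\nats$.

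The set $\mathbf{Prefixes}$ is nonempty because any $L\in\wList[\Cantor](\body{T})$ produces a valid prefix at the first stage $t$ with $\Psi(p[t],L[t])(0){\downarrow}$. Conversely, every element of $\mathbf{Prefixes}$ can be (non-computably) extended to a full valid $\wList[\Cantor](\body{T})$-output: for each $i<\ell_\sigma$ with $\tau(i)=1$ pick a path of $T$ extending $\pi_i(\sigma)$, and use the remaining indices to list, with possible repetitions, all elements of $\body{T}$ not already covered. Since $\Psi$ depends only on the committed prefix and $f$ is first-order, $\Psi\bigl(p[\sigma],\str{\tau(i),\pi_i(\sigma)}_{i<\ell_\sigma}\bigr)(0)\in f(x)$ for every $(\tau,\sigma)\in\mathbf{Prefixes}$. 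Hence $f\weireducible \codedChoice{}{}{\nats}$, and taking the supremum over such $f$ yields the desired reduction. The main subtlety is verifying that the purely local $\Pi_1^0$-check on the committed components already suffices to guarantee extendability to a full valid listing — this is exactly where the binariness of $T$ and K\"onig's lemma enter in an essential way.
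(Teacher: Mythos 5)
Your overall approach — same $\mathbf{Prefixes}$ device — matches the paper's, and the easy direction (reducing $\codedChoice{}{}{\nats}$ to $\wList[\Cantor]$ via a countable tree coding $A$ as its non-$0^\nats$ paths) is fine. The gap is in the complexity of $\mathbf{Prefixes}$. As you define it, the condition
\[\Psi\bigl(p[\sigma],\str{\tau(i),\pi_i(\sigma)}_{i<\ell_\sigma}\bigr)(0){\downarrow}\]
is an unbounded halting condition, hence $\Sigma_1^{0,T}$. Conjoining a $\Sigma_1^{0,T}$ clause with the $\Pi_1^{0,T}$ clause about $\illfounded_2$ yields a set in the difference hierarchy over $\Sigma_1^{0,T}$, not a $\Pi_1^{0,T}$ set, so your claim that $\mathbf{Prefixes}$ is $\Pi_1^{0,T}$ — and hence a valid $\codedChoice{}{}{\nats}$-instance — does not follow. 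This contrasts with the $\firstOrderPart{\PST[\Baire]}$ argument you cite as a template: there the target was $\codedChoice{\boldfacePi_1^1}{}{\nats}$, and a $\Sigma_1^0$ clause sits comfortably inside $\Pi_1^1$; at the $\Pi_1^0$ level that slack disappears. The paper handles this by demanding that the halting occur \emph{within $\length{\sigma}$ steps}, which turns the clause into a decidable predicate of $(\tau,\sigma)$; the resulting $\mathbf{Prefixes}$ is then genuinely $\Pi_1^{0,T}$, and nonemptiness is obtained by choosing the least $n$ with $\Psi(p[n],L[n])(0)\downarrow$ in at most $n$ steps. Your extendability/soundness argument (via K\"onig's lemma on the binary subtrees $T_{\pi_i(\sigma)}$) is correct and unaffected, but you must add the step-bound to the halting condition for the complexity claim to hold.
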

\begin{proof} 
We first show that $\firstOrderPart{\wList[\Cantor]}\weireducible \codedChoice{}{}{\nats}$, and to do so, we use \thref{prop:prefixes}.

Let $T\in \countable_2$ and notice that $\mathbf{Prefixes}(T)$ is the set of all $\rho \in \cantor$ such that if we read $\rho$ as the dove-tailing of the strings $(b_i\sigma_i)_{i< m}$ for some $m \in \nats$ then
$(\forall i < m)(b_i=1 \implies T_{\sigma_i} \in \illfounded_2)$.
Since $\illfounded_2$ is a $\Pi_1^{0}$ set (\thref{Complexityresults}(i)), $\mathbf{Prefixes}(T)$ is $\Pi_1^{0,T}$. Thus, we can apply \thref{prop:prefixes} to $U:=T \mapsto \mathbf{Prefixes}(T)$, thereby concluding the proof of the left-to-right direction.

We now show that $\codedChoice{}{}{\mathbb{N}}\weireducible \wList[\Cantor]$. Let $A \in \negrepr{\nats}$ be nonempty, and let $A^c[s]$ denote the enumeration of the complement of $A$ up to stage $s$. We compute the tree 
$$T\defas \{\str{}\} \cup \{0^n10^{s}: n \notin A^c[s]\}.$$
Notice that for every $n$, $n \in A$ iff  $0^n10^\nats \in \body{T}$. Given $(b_i,p_i)_{i \in \nats} \in \wList[\Cantor](\body{T})$, we computably search for some $i$ such that $b_i=1$ and $0^n1\sqsubset p_i$ for some $n \in \nats$ (such an $i$ exists because $A$ is nonempty). By construction, $n \in \codedChoice{}{}{\mathbb{N}}(A)$.
\end{proof}

\section{Functions arising from the Cantor-Bendixson Theorem}
\label{cantorbendixson}

\subsection{Perfect kernels}
We now move to the study of functions related to the perfect kernel. 

\begin{definition}
\thlabel{pkdefinition}
Let $\function{\PK}{\tree}{\tree}$ be the total single-valued function defined as $\PK(T)\defas S$ where $S$ is the perfect kernel of $T$. We denote by $\PK{\restriction}\tree_2$ the restriction of $\PK$ to $\tree_2$. 

Similarly, for a computable Polish space $\X$, let $\function{\PK[\X]}{\negrepr{\X}}{\negrepr{\X}}$ be the total single-valued function defined as  $\PK[\X](A)\defas P$ where $P$ is the perfect kernel of $A$.
\end{definition}

Notice that $\PK$ was already introduced by Hirst in \cite{leafmanaegement}, where he also proved the following theorem.

\begin{theorem}
\thlabel{Chipistrongpktree}
    $\parallelization{\wf}\strongweiequiv \PK$.
\end{theorem}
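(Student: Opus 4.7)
The plan is to establish the two reductions $\PK\strongweireducible\parallelization{\wf}$ and $\parallelization{\wf}\strongweireducible\PK$ independently, in each case exhibiting a computable forward functional together with a computable backward functional that ignores the original input (so both reductions are strong).

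For $\PK\strongweireducible\parallelization{\wf}$, the starting observation is that $\sigma\in\PK(T)$ if and only if $T_\sigma$ contains a nonempty perfect subtree: one direction is witnessed by $\PK(T)_\sigma$, and for the other, any nonempty perfect $R\subseteq T_\sigma$ is itself a perfect subtree of $T$, hence included in $\PK(T)$, and must contain $\sigma$ because below $\sigma$ the tree $T_\sigma$ is just a chain (a nonempty perfect tree cannot be finite). Uniformly in $T$ and $\sigma$ I would then compute a tree $S^\sigma\in\tree$ whose level-$n$ nodes code partial labelings $\function{h}{2^{\leq n}}{\baire}$ satisfying $h(\str{})=\sigma$, $h(\tau)\in T$ for every $\tau\in 2^{\leq n}$, $h(\tau)\sqsubset h(\tau')$ whenever $\tau\sqsubset\tau'$, and $h(\tau)\incomparable h(\tau')$ whenever $\tau\incomparable\tau'$. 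A path through $S^\sigma$ glues these labelings into a total map $\function{H}{\cantor}{\baire}$ whose downward closure is a perfect subtree of $T_\sigma$ containing $\sigma$, and conversely any such $H$ yields a path in $S^\sigma$; hence $S^\sigma\in\illfounded$ iff $\sigma\in\PK(T)$. Identifying $\baire$ with $\nats$ via \thref{Bijections}, the sequence $(S^\sigma)_{\sigma\in\baire}$ is a legitimate input for $\parallelization{\wf}$, and its output $p\in\Cantor$ satisfies $p(\sigma)=1$ iff $\sigma\notin\PK(T)$. The backward functional takes $p$ and returns $\sigma\mapsto 1-p(\sigma)$, which is precisely the characteristic function of $\PK(T)$, hence a name of $\PK(T)$ in $\tree$.

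For $\parallelization{\wf}\strongweireducible\PK$, given $(T^i)_{i\in\nats}$ I would form $T\defas\disjointunion{i\in\nats}{\exploded{T^i}}$, whose nodes are $\str{}$ and $\str{i}\tau$ with $\tau\in T^i*\cantor$. Writing $\body{\exploded{T^i}}=\{p*q\st p\in\body{T^i},\,q\in\Cantor\}$, if $T^i\in\wellfounded$ then $\body{\exploded{T^i}}=\emptyset$, while if $T^i\in\illfounded$ then fixing any $p\in\body{T^i}$ produces the perfect subtree $\{p[n]*\tau:n\in\nats,\,\tau\in 2^n\}$ of $\exploded{T^i}$. Using the same characterization of membership in $\PK$ as above, this yields $\str{i}\in\PK(T)\iff T^i\in\illfounded$. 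The backward functional, reading the characteristic function of $\PK(T)$, outputs $p\in\Cantor$ with $p(i)\defas 1-\chi_{\PK(T)}(\str{i})$, which is the unique correct answer for $\parallelization{\wf}((T^i)_i)$. Since it uses only $\PK(T)$, the reduction is strong.

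I expect the main subtlety to lie in the first direction, where one needs $S^\sigma$ to be uniformly computable in $(T,\sigma)$ while its ill-foundedness captures \emph{exactly} the existence of a perfect subtree of $T_\sigma$. This is made possible by the observation that any nonempty perfect subtree of $T_\sigma$ must contain $\sigma$, together with a routine coding of finite partial labelings as natural numbers so that $S^\sigma$ literally becomes a tree in $\tree$. The second direction is mostly bookkeeping resting on \thref{Disjoint_union} and the essential property of $\exploded{\cdot}$: each surviving path of $T^i$ is exploded into continuum many paths of $\exploded{T^i}$ forming a perfect set.
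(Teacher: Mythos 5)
The paper does not give a proof of this theorem; it states the result and cites Hirst \cite{leafmanaegement}, so there is no in-paper argument to compare yours against. Your proof is correct, and the central observation --- that $\sigma\in\PK(T)$ iff $T_\sigma$ contains a nonempty perfect subtree (equivalently $T_\sigma\in\uncountable$), together with the fact that any nonempty perfect subtree of $T_\sigma$ must contain $\sigma$ --- is exactly the right handle, and it is justified carefully. Both backward functionals read only the oracle output, so the reductions are genuinely strong as claimed. Two small bookkeeping points you should tighten if writing this up: first, in the second direction the set $\{p[n]*\tau : n\in\nats,\ \tau\in 2^{n}\}$ has only even-length elements and so is not literally a tree; you want its downward closure, which is still contained in $\exploded{T^i}$ and is a perfect tree. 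Second, when coding $S^\sigma$ you should arrange that $\str{}$ is always a member (e.g.\ have length-$(n+1)$ nodes code labelings of $2^{\leq n}$), so that $S^\sigma\in\tree$ even when $\sigma\notin T$; in that degenerate case $S^\sigma=\{\str{}\}$ is well-founded, matching $\sigma\notin\PK(T)$, and likewise $\PK(T)$ can be empty as a set, which under the characteristic-function representation is still a valid name in $\tree$. Both are routine to repair.
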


The following proposition summarizes some well-known facts about the relationship between $\CBaire$ and (the parallelization of) $\wf$.

\begin{proposition}
\thlabel{cbairepica}
   $\wf\not\weireducible \CBaire$ and  $\CBaire\strictlystrongweireducible \parallelization{\wf}$.
\end{proposition}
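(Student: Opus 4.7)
The plan for $\wf \not\weireducible \CBaire$ is a descriptive-complexity argument. Suppose toward a contradiction that computable maps $\Phi,\Psi$ witness $\wf \weireducible \CBaire$. Then for every tree $T$ the output $\Phi(T)$ names an ill-founded tree $\tilde{T}$, and for every path $p \in \body{\tilde{T}}$ the functional $\Psi(T,p)$ outputs $\wf(T)$. In particular,
\[ T \in \wellfounded \iff (\exists p)(p \in \body{\tilde{T}} \land \Psi(T,p) \text{ outputs } 1). \]
The right-hand side is $\Sigma_1^1$ in $T$: $\tilde{T}$ is computable from $T$, the condition $p \in \body{\tilde T}$ is $\Pi_1^0$ in $(T,p)$, and \lq\lq$\Psi(T,p)$ outputs $1$\rq\rq\ is $\Sigma_1^0$ in $(T,p)$. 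This contradicts \thref{Complexityresults}(i), which says $\wellfounded$ is $\Pi_1^1$-complete (and in particular not $\Sigma_1^1$).

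For $\CBaire \strongweireducible \parallelization{\wf}$, given an ill-founded tree $T$ the forward functional computes the sequence $(T_\sigma)_{\sigma \in \baire}$, which is a valid input to $\parallelization{\wf}$. The output is the characteristic function of $\{\sigma \st T_\sigma \in \wellfounded\}$. The backward functional greedily builds a path: set $\sigma_0\defas\str{}$ and, given $\sigma_n$ with $T_{\sigma_n}$ ill-founded, let $\sigma_{n+1}\defas\sigma_n\concat\str{i}$ where $i$ is the least natural number with $\wf(T_{\sigma_n\concat\str{i}})=0$. Such an $i$ exists because any path of $T$ extending $\sigma_n$ gives one, and $\bigcup_n \sigma_n$ is a path through $T$. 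The reduction is strong since the backward functional never reads the original input $T$.

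Strictness, i.e.\ $\parallelization{\wf} \not\strongweireducible \CBaire$, is immediate from the first part: since $\wf \strongweireducible \parallelization{\wf}$ by the very definition of parallelization, a reduction $\parallelization{\wf} \weireducible \CBaire$ would yield $\wf \weireducible \CBaire$, contradicting what was just proved. The main obstacle is the first part, where care is needed to verify that the output condition \lq\lq$\Psi(T,p)$ outputs $1$\rq\rq\ is genuinely $\Sigma_1^0$ in the inputs; this uses that $\wf$ has codomain $\{0,1\}$ represented so that the event of outputting $1$ is semi-decidable from any name produced by $\Psi$.
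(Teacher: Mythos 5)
Your proof is correct, but it takes a more self-contained route than the paper, which establishes both facts by citation. For $\wf \not\weireducible \CBaire$ the paper simply refers to \cite[page 1033]{kihara_marcone_pauly_2020}, whereas you spell out the descriptive-complexity argument in full: a reduction would express $T \in \wellfounded$ as $(\exists p)(p \in \body{\tilde T} \land \Psi(T,p)(0){\downarrow}=1)$ with $\tilde T$ computable from $T$, a $\Sigma^1_1$ predicate in $T$, contradicting the $\Pi^1_1$-completeness of $\wellfounded$ from \thref{Complexityresults}(i). Your remark that the codomain $\{0,1\}$ is represented so that \lq\lq outputs $1$\rq\rq\ is $\Sigma^0_1$ is exactly what makes the complexity count go through. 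For $\CBaire \strongweireducible \parallelization{\wf}$, the paper argues via $\CBaire \weiequiv \PTT[1]$ (\cite[Proposition 6.3]{kihara_marcone_pauly_2020}) together with $\PK \strongweiequiv \parallelization{\wf}$ (\thref{Chipistrongpktree}) and the observation that $\PK$ computes $\PTT[1]$; you instead build the reduction directly, querying $\parallelization{\wf}$ on $(T_\sigma)_{\sigma\in\baire}$ and then greedily following ill-founded subtrees. Your route is more elementary, avoids the detour through perfect subtrees, and makes the strongness of the reduction transparent (the backward functional only reads the answer to $\parallelization{\wf}$); the paper's route is shorter given the machinery already in place. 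The strictness argument — that $\parallelization{\wf}\weireducible\CBaire$ would give $\wf\weireducible\CBaire$ — is the same in both.
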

\begin{proof}
The fact that $\wf\not\weireducible \CBaire$ was already noticed in \cite[page 1033]{kihara_marcone_pauly_2020}. 

To show that $\CBaire\weireducible \parallelization{\wf}$ it suffices to notice that $\PTT[1]\weiequiv \CBaire$ (\cite[Proposition 6.3]{kihara_marcone_pauly_2020}) and $\PK$ clearly computes $\PTT[1]$. The strictness of the reduction is immediate by the first part of this proposition and the fact that $\CBaire$ is parallelizable.
\end{proof}

Let $\function{\mathsf{J}}{\Baire}{\Baire}$, $p \mapsto p'$ denote the Turing jump operator. As $\mflim \strongweiequiv \mathsf{J}$ (\cite[Lemma 8.9]{closedChoice}), $\parallelization{\wf}$ is strongly Weihrauch equivalent to the function computing the \emph{hyperjump} of a set. 
The hyperjump of $A \subseteq \nats$ can be defined, following \cite[Definition 4.12]{Rogers}, as
\[ \mathsf{HJ}(A)\defas\{z:\varphi_z^A\text{ is the characteristic function of a well-founded tree}\}.\]
The well-known fact that $\mathsf{HJ}(A)$ is a $\Pi_1^{1,A}$-complete subset of the natural numbers allows us to prove the following proposition.

\begin{proposition}
\thlabel{Wfcompositionalproduct}
$\mflim*\parallelization{\wf} \not\weireducible \parallelization{\wf}$ and hence  $\parallelization{\wf}$ is not closed under compositional product.
\end{proposition}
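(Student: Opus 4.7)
The plan is to produce a single-valued function that Weihrauch-reduces to $\mflim * \parallelization{\wf}$ but not to $\parallelization{\wf}$, using the identification recalled just before the statement: $\parallelization{\wf}$ is strongly Weihrauch equivalent to the hyperjump, so for any $p\in\Baire$ we may compute from $p$ a sequence of trees on which $\parallelization{\wf}$ returns (something Turing equivalent to) $\mathsf{HJ}(p)$. The natural witness is the function $f\colon p\mapsto \mathsf{HJ}(p)'$, since the Turing jump of the hyperjump is strictly above the hyperjump in Turing degree.

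The positive reduction $f \weireducible \mflim * \parallelization{\wf}$ is straightforward: from $p$, use $\parallelization{\wf}$ to obtain $\mathsf{HJ}(p)$; compute, $\mathsf{HJ}(p)$-effectively, a sequence in $\Baire$ converging to $\mathsf{HJ}(p)'$ (possible because $\mathsf{HJ}(p)'$ is c.e.\ in $\mathsf{HJ}(p)$ and $\mflim \strongweiequiv \mathsf{J}$); then apply $\mflim$. This matches the characterization of the compositional product recalled in \S\ref{representedspaces}. For the nonreduction $f \not\weireducible \parallelization{\wf}$, suppose toward a contradiction that computable maps $\Phi,\Psi$ witnessed such a reduction. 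Then $\Phi(p)$ is a $p$-computable sequence of trees, so $\parallelization{\wf}(\Phi(p))$ is the characteristic function of a $\Pi^{1,p}_1$ subset of $\nats$, hence many-one reducible to $\mathsf{HJ}(p)$. Therefore $\Psi(p,\parallelization{\wf}(\Phi(p))) \leq_T p \oplus \mathsf{HJ}(p) \equiv_T \mathsf{HJ}(p)$, since $\mathsf{HJ}(p)$ computes $p$. But this output equals $\mathsf{HJ}(p)'$, giving $\mathsf{HJ}(p)' \leq_T \mathsf{HJ}(p)$ and contradicting the strictness of the Turing jump.

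The additional \emph{hence} clause follows immediately: since $\mflim \weireducible \mathsf{HJ} \weiequiv \parallelization{\wf}$ (one instance of the hyperjump of a $p$-computable input computes the Turing jump of $p$), closure of $\parallelization{\wf}$ under the compositional product would yield $\mflim * \parallelization{\wf} \weireducible \parallelization{\wf} * \parallelization{\wf} \weiequiv \parallelization{\wf}$, contradicting what was just proved. I expect the main obstacle to be the bookkeeping needed to realize the positive reduction strictly within the formal definition of the compositional product; the nonreduction itself is a short Turing-degree calculation once the roles of the computable maps $\Phi$ and $\Psi$ are cleanly separated.
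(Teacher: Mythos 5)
Your proof is correct and takes essentially the same approach as the paper: both arguments exploit that $\parallelization{\wf}$ computes the hyperjump, that the image of a (relatively) computable input under $\parallelization{\wf}$ stays Turing-below $\mathsf{HJ}(p)$, and that the Turing jump of the hyperjump escapes this bound. The only difference is cosmetic — you phrase it as a non-reduction for the uniform function $p\mapsto\mathsf{HJ}(p)'$, whereas the paper fixes the single diagonalizing input given by a computable list of trees with $\parallelization{\wf}$-output $\equiv_T \mathsf{HJ}(\emptyset)$.
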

\begin{proof}
Towards a contradiction, suppose that $\mflim*\parallelization{\wf} \weireducible \parallelization{\wf}$. By the definition of compositional product (see \S \ref{representedspaces}) and the facts that $\mflim \strongweiequiv \mathsf{J}$ and $\mathsf{J} \circ \parallelization{\wf}$ is defined, let $\Phi$ and $\Psi$ witness  $\mathsf{J}\circ \parallelization{\wf} \weireducible \parallelization{\wf}$. 

Let $(T^i)_{i \in \nats}$ be a computable list of all computable elements $\tree$ and notice that $\parallelization{\wf}((T^i)_{i \in \nats}) \equiv_T \mathsf{HJ}(\emptyset)$. Then, $\Phi((T^i)_{i \in \nats})$ is a computable list of trees and $\parallelization{\wf}(\Phi((T^i)_{i \in \nats}))$ is Turing reducible to $\mathsf{HJ}(\emptyset) \equiv_T \mathsf{HJ}({\Phi((T^i)_{i \in \nats})})$. Therefore, $\Psi\big((T^i)_{i \in \nats}, \parallelization{\wf}(\Phi((T^i)_{i \in \nats}))\big) \leq_T \mathsf{HJ}(\emptyset)$ as well. On the other hand, $(\mathsf{J}\circ \parallelization{\wf}) ((T^i)_{i \in \nats})$ computes the Turing jump of $\mathsf{HJ}(\emptyset)$, which is not Turing reducible  $\mathsf{HJ}(\emptyset)$.
\end{proof}

As we did when dealing with $\PST[\Baire]$ and $\PST[\Cantor]$, to study $\PK[\Baire]$ and $\PK[\Cantor]$ we use the tree representation of $\negrepr{\Baire}$ and $\negrepr{\Cantor}$. The following is the analog of \thref{cantor_and_baire_same}.

\begin{proposition}
\thlabel{Cantor_and_baire_same2}
$\PK{\restriction}\tree_2\strongweiequiv\PK$ and $\PK[\Cantor]\strongweiequiv \PK[\Baire]$.
\end{proposition}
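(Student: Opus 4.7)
The plan is to mirror the structure of \thref{cantor_and_baire_same}, splitting the statement into four reductions. Two of them are essentially immediate: $\PK{\restriction}\tree_2 \strongweireducible \PK$ is just a restriction, and $\PK[\Cantor] \strongweireducible \PK[\Baire]$ follows from the observation that any binary tree $T$ is also an element of $\tree$, that $\PK(T) \subseteq T \subseteq \cantor$, and that perfectness is a topological property, so the perfect kernel of $\body{T}$ is the same whether computed inside $\Cantor$ or inside $\Baire$. The forward map is the identity and the backward map just restricts the output tree to $\cantor$ (which is a no-op since it is already there).

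The main work is in the two reductions translating from $\tree$ to $\tree_2$. For $\PK \strongweireducible \PK{\restriction}\tree_2$, I would use $\translateCantor$ as the forward map and $\translateBaire$ as the backward map, exactly as in the proof of \thref{cantor_and_baire_same}. Given $T \in \tree$, set $P \defas \PK{\restriction}\tree_2(\translateCantor(T))$; the claim is then $\translateBaire(P) = \PK(T)$. The inclusion $\translateBaire(P) \subseteq \PK(T)$ follows from \thref{Perfecttreesinbaire} (which says $\translateBaire$ of a perfect tree is perfect) together with $\translateBaire(P) \subseteq \translateBaire(\translateCantor(T)) = T$. For the reverse inclusion, given $\sigma \in \PK(T)$, I would exhibit a perfect binary subtree of $\translateCantor(T)$ that contains $\translateCantor(\sigma)$, namely
\[ U \defas \{\eta \in \cantor \st (\exists \tau \in \PK(T))(\eta \sqsubseteq \translateCantor(\tau))\}. \]
A short direct computation of $\translateBaire$ on prefixes shows $U \subseteq \translateCantor(T)$, and $U$ is perfect because $\PK(T)$ is perfect while $\translateCantor$ on finite sequences is injective and order-preserving, hence sends incomparable sequences to incomparable sequences. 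Therefore $\translateCantor(\sigma) \in U \subseteq P$, so $\sigma \in \translateBaire(P)$.

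Finally, for $\PK[\Baire] \strongweireducible \PK[\Cantor]$, the same translation strategy works in the tree representation of closed sets: send a tree $T$ representing $A \in \negrepr{\Baire}$ to $\translateCantor(T) \in \tree_2$, apply $\PK[\Cantor]$, and translate the output back via $\translateBaire$. Correctness reduces to essentially the same equality established above, now read at the body level using \thref{AllPropertiesOfTranslation}. The main obstacle throughout the argument is the verification in the previous paragraph: the construction of the auxiliary perfect tree $U$ is the only nontrivial step, and it is precisely what gets around the asymmetry noted in the remark right after \thref{Perfecttreesinbaire}, namely that $\translateCantor$ of a perfect tree need not have perfect body.
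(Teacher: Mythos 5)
Your proof is correct, but the justification of the substantive reduction $\PK \strongweireducible \PK{\restriction}\tree_2$ (and its closed-set analogue) follows a genuinely different route from the paper's. The paper reuses the argument from \thref{cantor_and_baire_same} to conclude that $\translateBaire(P)$ is a perfect subtree of $T$, and then shows that $\body{T} \setminus \body{\translateBaire(P)}$ is countable by proving it is contained in $\{\translateBaire(q) : q \in \body{\translateCantor(T)}\setminus\body{P} \land (\exists^\infty i)(q(i)=1)\}$, a subset of the scattered part of $\body{\translateCantor(T)}$; the equality $\translateBaire(P) = \PK(T)$ is then forced by the uniqueness of the Cantor--Bendixson decomposition. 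You instead prove the equality directly by double inclusion: the inclusion $\translateBaire(P) \subseteq \PK(T)$ is obtained as in the paper, while for the converse you build the auxiliary binary tree $U$, check that it is a perfect subtree of $\translateCantor(T)$ containing $\translateCantor(\sigma)$ for every $\sigma \in \PK(T)$, and invoke the maximality of the perfect kernel of $\translateCantor(T)$ to get $U \subseteq P$. Your route buys a small conceptual gain: it relies only on the characterization of $\PK(T)$ as the largest perfect subtree and never appeals to countability of the scattered part, so the Cantor--Bendixson theorem itself is not needed as an ingredient. The paper's route, in exchange, is marginally shorter because the perfect-subtree fact is already in hand from the preceding proof, and only the countability of the missing paths remains to be checked.
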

\begin{proof}
We follow the pattern of the proof of \thref{cantor_and_baire_same}. 

$\PK{\restriction}\tree_2\strongweireducible\PK$ is trivial and $\PK[\Cantor] \strongweireducible \PK[\Baire]$ is witnessed by the same functionals of the proof of $\PST[\Cantor]\strongweireducible\PST[\Baire]$ in \thref{cantor_and_baire_same}. Given $T \in \tree_2$, and a name $P$ for $\PK[\Baire](\body{T})$ we get $\body{\Psi(P)}=\body{P}$. Hence, $\Psi(P)$ is a name for $\PK[\Cantor](\body{T})$.

For the opposite directions, we only deal with $\PK \strongweireducible \PK{\restriction}\tree_2$, as the proof of $\PK[\Baire] \strongweireducible \PK[\Cantor]$ follows the same pattern. Again, the reduction is witnessed by the same functionals of the analogous proof in \thref{cantor_and_baire_same}. Fix $T\in \tree$ and let $P\defas \PK(\translateCantor(\body{T}))$. As before, set $\Psi(P)\defas\translateBaire(P)$. To prove the reduction, it suffices to show that $\length{\body{T}\setminus \body{\translateBaire(P)}}\leq \aleph_0$. We claim that $\body{T}\setminus \body{\translateBaire(P)}\subseteq \{\translateBaire(q): q \in \body{\translateCantor(T)}\setminus \body{P}\land (\exists^\infty i)(q(i)=1)\}$, which completes the proof as the set on the right-hand side is countable. If $p \in \body{T}\setminus \body{\translateBaire(P)}$, then by \thref{AllPropertiesOfTranslation}(4) and (6) we have that $\translateCantor(p) \in \body{\translateCantor(T)} \setminus \body{P}$. Moreover, $q\defas\translateCantor(p)$ has infinitely many ones and, by \thref{AllPropertiesOfTranslation}(2), $p=\translateBaire(q)$.
\end{proof}

\begin{proposition}
\thlabel{Pkbaire_parallelizable}
$\PK[\Cantor]$ and $\PK[\Baire]$ are (strongly) parallelizable.
\end{proposition}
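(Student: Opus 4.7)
The plan is to prove that $\PK[\Cantor]$ is strongly parallelizable, and then deduce the same for $\PK[\Baire]$ via the equivalence $\PK[\Cantor] \strongweiequiv \PK[\Baire]$ from \thref{Cantor_and_baire_same2} (which lifts to parallelizations since strong Weihrauch equivalence is preserved under $\parallelization{\cdot}$). Given a sequence $(T^n)_{n \in \nats}$ of binary trees as an instance of $\parallelization{\PK[\Cantor]}$, I would form the single binary tree $T \defas \binarydisjointunion{n \in \nats}{T^n}$ (which is indeed binary by \thref{Disjoint_union}) and invoke the $\PK[\Cantor]$-oracle on $T$ to obtain a tree $S \subseteq \cantor$ with $\body{S}$ equal to the perfect kernel of $\body{T}$.

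The key observation is the following condensation correspondence: for every $n \in \nats$ and every $p \in \Cantor$, the sequence $0^n 1 p$ is a condensation point of $\body{T}$ if and only if $p$ is a condensation point of $\body{T^n}$. The forward direction is immediate because every basic open neighborhood $[0^n 1 \tau]$ of $0^n 1 p$, with $\tau \sqsubseteq p$, intersects $\body{T}$ in precisely the set $\{0^n 1 q \st q \in \body{T^n},\, \tau \sqsubseteq q\}$, which is in bijection with $[\tau] \cap \body{T^n}$. For the converse, open neighborhoods of $0^n 1 p$ of the broader form $[0^m]$ with $m \leq n$ contain $[0^n 1]$ entirely, and so are uncountable whenever $\body{T^n}$ is uncountable (which is automatic once $p$ is a condensation point of $\body{T^n}$), while neighborhoods of the narrower form $[0^n 1 \tau]$ are handled exactly as above.

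The backward functional would then output, for each $n$, the tree
\[
R^n \defas \{\str{}\} \cup \{\tau \in \cantor \st 0^n 1 \tau \in S\},
\]
which is easily seen to be closed under initial segments. A direct check gives $\body{R^n} = \{p \in \Cantor \st 0^n 1 p \in \body{S}\}$, and by the condensation correspondence this set is exactly the perfect kernel of $\body{T^n}$. Hence $(R^n)_{n \in \nats}$ is a tree-representation name for $\parallelization{\PK[\Cantor]}((T^n)_{n \in \nats})$. Since this output is computed solely from $S$ and not from the original input, the reduction $\parallelization{\PK[\Cantor]} \strongweireducible \PK[\Cantor]$ is strong. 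The main point to check carefully is the condensation correspondence, making sure that broader neighborhoods of the form $[0^m]$ with $m < n$ and the path $0^\nats \in \body{T}$ do not interfere with the extraction for each fixed $n$; once this is pinned down, the rest of the argument is routine.
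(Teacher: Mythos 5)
Your proof is correct and uses the same combine-and-project strategy as the paper; the only difference is that you work in $\Cantor$ via the binary disjoint union $\binarydisjointunion{n\in\nats}{T^n}$, whereas the paper works in $\Baire$ via $\disjointunion{i\in\nats}{T^i}$ and then transfers to $\Cantor$ by \thref{Cantor_and_baire_same2}. The paper's choice makes the projection step immediate, since the sets $\{i\concat p : p \in \body{T^i}\}$ are clopen in $\body{\disjointunion{i\in\nats}{T^i}}$; your binary disjoint union forces you to argue (which you do correctly) that the extra path $0^\nats$ and the broader neighborhoods $[0^m]$ with $m \le n$ do not disturb the condensation correspondence.
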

\begin{proof}
To prove the statement, by \thref{Cantor_and_baire_same2}, it is enough to show that $\parallelization{\PK[\Baire]}\strongweireducible\PK[\Baire]$. Given $(T^i)_{i \in \nats}$ an input for $\parallelization{\PK[\Baire]}$, let  $P$ be a name for  $\PK[\Baire](\body{\disjointunion{i \in \nats}{T^i}})$. By the definitions of perfect kernel and disjoint union of trees we have that for every $i$, $\{\sigma : i\concat \sigma \in P\}$ is a name for $\PK[\Baire](\body{T^i})$.
\end{proof}

\begin{definition} 
Let $\sierpinski$ be the Sierpi\'nski space, which is the space $\{0_{\sierpinski},1_{\sierpinski}\}$ with the representation
\[\repmap{\sierpinski}(p)\defas
\begin{cases}
1_{\sierpinski} & \text{if } (\exists i)(p(i)\neq 0),\\
0_{\sierpinski} & \text{if } p=0^\nats.
\end{cases}\]

We define the multi-valued function $\function{\wfsierpinski}{\tree}{\sierpinski}$ as
\[\wfsierpinski(T)\defas
\begin{cases}
1_{\sierpinski} &\text{if } T \in \wellfounded,\\
0_{\sierpinski} &\text{if } T \in \illfounded.
\end{cases}
\]
\end{definition}

The next proposition shows that the main functions we consider in this section are cylinders, which implies that most  reductions we obtain in this section are strong.

\begin{proposition}
$\parallelization{\wfsierpinski}$, $\parallelization{\wf}$, $\PK$, $\PK{\restriction}\tree_2$, $\PK[\Cantor]$ and $\PK[\Baire]$ are cylinders.
\end{proposition}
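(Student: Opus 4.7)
My plan is to verify $f \times \id \strongweireducible f$ for each $f$ in the list, since the opposite reduction is immediate via projection. Cylinderhood is preserved under $\strongweiequiv$ — if $f \strongweiequiv g$ and $f$ is a cylinder, then $g \times \id \strongweireducible f \times \id \strongweireducible f \strongweireducible g$ — and \thref{Chipistrongpktree} gives $\PK \strongweiequiv \parallelization{\wf}$, while \thref{Cantor_and_baire_same2} gives $\PK{\restriction}\tree_2 \strongweiequiv \PK$ and $\PK[\Cantor] \strongweiequiv \PK[\Baire]$. So it suffices to treat three representatives: $\parallelization{\wf}$, $\parallelization{\wfsierpinski}$, and $\PK[\Cantor]$.

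For $\parallelization{\wf}$, given $((T^i)_{i \in \nats}, p)$, I would produce $(U^j)_{j \in \nats}$ with $U^{2i} = T^i$ and $U^{2\pairing{n,k}+1}$ a tree uniformly computable from $p$ that is well-founded iff $p(n) = k$ (for instance $\{\str{}\}$ in the affirmative case and $\{0^m : m \in \nats\}$ otherwise). From the output bit-sequence, the backward functional reads $\wf(T^i)$ off the even coordinates and recovers $p(n)$ as the unique $k$ with $\wf(U^{2\pairing{n,k}+1}) = 1$. The same construction handles $\parallelization{\wfsierpinski}$: the $\sierpinski$-name of the output still carries the parallel information in the even coordinates, and since for each $n$ exactly one of the odd-indexed trees is well-founded, exactly one of the corresponding $\sierpinski$-names will eventually display a non-zero digit; dovetailing across $k$ thus identifies $p(n)$ in finite time.

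The main step, and the main obstacle, is to establish the cylinder property for $\PK[\Cantor]$ directly, because the tree representation of $\negrepr{\Cantor}$ allows output trees with arbitrary dead ends, so we cannot simply read the perfect kernel off the output tree. Given $(T, p)$ with $T \in \tree_2$ and $p \in \Baire$, I would build $T' \in \tree_2$ with
\[
  [T'] = \str{0} \cdot [T] \cup \bigcup_{(n,k)} A_{n,k,b_{n,k}} \cup \{\str{1} \cdot 0^\nats\},
\]
where $b_{n,k} = 0$ if $k < p(n)$ and $b_{n,k} = 1$ otherwise, and $A_{n,k,b} = \str{1} \cdot a_{n,k,b} \cdot \Cantor$ for pairwise incomparable binary strings $a_{n,k,b}$ (for instance $a_{n,k,b} = 0^{\pi(n,k,b)} 1$ for a computable injection $\pi\colon \nats \times \nats \times \{0,1\} \to \nats$). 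Each $A_{n,k,b_{n,k}}$ is already perfect and $\str{1} \cdot 0^\nats$ is the single limit point needed to close the union, so $\PK([T'])$ satisfies the same equation as above with $\PK([T])$ in place of $[T]$. For any $S \in \tree_2$ with $[S] = \PK([T'])$, the backward functional returns $\{\sigma \in \cantor : \str{0} \cdot \sigma \in S\}$ as a name for $\PK([T])$ and computes $p(n)$ as the least $k$ with $b_{n,k} = 1$. Each bit $b_{n,k}$ is decided in finite time by running in parallel the two $\Sigma^0_1$ searches for well-foundedness of the binary subtrees $S_{\str{1} \cdot a_{n,k,0}}$ and $S_{\str{1} \cdot a_{n,k,1}}$: by construction exactly one of them has empty body (hence is well-founded), so exactly one of the searches terminates and identifies the other as the ``alive'' side. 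This double-coding trick — pairing each bit value with its own semi-decidable witness — is what sidesteps the fact that ``a given clopen region meets $\PK([T'])$'' is only $\Pi^0_1$ in the output name.
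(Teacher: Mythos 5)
Your proof is correct, but it takes a genuinely different route from the paper's. The paper exploits the abstract fact that a strongly parallelizable problem $f$ is a cylinder as soon as $\id \strongweireducible f$: it invokes \thref{Pkbaire_parallelizable} and \thref{Chipistrongpktree} for parallelizability, observes $\parallelization{\wfsierpinski} \strongweireducible \parallelization{\wf} \strongweiequiv \PK \strongweiequiv \PK{\restriction}\tree_2$ and $\PK[\Cantor] \strongweiequiv \PK[\Baire]$, and is then left only with $\id \strongweireducible \parallelization{\wfsierpinski}$ (a one-line coding very close to your odd-coordinate gadget) and $\id \strongweireducible \PK[\Cantor]$, the latter obtained by chaining $\id \strongweireducible \UCBaire \strongweireducible \PST[\Cantor] \strongweireducible \PK[\Cantor]$ through the earlier, rather involved \thref{UCBaireReducesToPST}. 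You instead construct the reduction $f \times \id \strongweireducible f$ head-on, without invoking any parallelizability lemma, which makes your argument more self-contained but also longer. Your treatment of $\PK[\Cantor]$ is the most interesting divergence: rather than routing through $\UCBaire$ and $\PST[\Cantor]$, you attach a ``bit-encoding'' block to the input, and the double-coding trick — using a pair $A_{n,k,0},A_{n,k,1}$ with exactly one alive, so that the single $\Sigma^0_1$ event $\wellfounded_2$ suffices to decide $b_{n,k}$ — cleanly handles the fact that the output tree is not pruned and membership of a clopen in the kernel is only $\Pi^0_1$. One small remark: your reduction to three representatives uses that strong Weihrauch equivalence preserves cylinderhood, which is correct; the paper's route additionally uses the one-directional $\parallelization{\wfsierpinski} \strongweireducible \parallelization{\wf}$ and so gets away with only two base cases, but your choice to treat $\parallelization{\wf}$ separately is a perfectly sound alternative since no strong equivalence is available there.
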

\begin{proof}
All six functions are parallelizable (this is either obvious or follows from \thref{Pkbaire_parallelizable,Chipistrongpktree}) and hence it is enough to show that $\id$ strongly Weihrauch reduces to each of them. As $\parallelization{\wfsierpinski} \strongweireducible \parallelization{\wf} \strongweiequiv \PK \strongweiequiv \PK{\restriction}\tree_2$ (\thref{Cantor_and_baire_same2,Chipistrongpktree}) and $\PK[\Cantor] \strongweiequiv \PK[\Baire]$ (\thref{Cantor_and_baire_same2}), it suffices to show that $\id \strongweireducible \parallelization{\wfsierpinski}$ and $\id \strongweireducible \PK[\Cantor]$. 

For the first reduction let $p$ be an input for $\id$. For any $i,j \in \nats$ let 
\[
 T^{\pairing{i,j}}\defas\begin{cases}
\emptyset & \text{if } p(i)=j,\\
 \cantor & \text{if } p(i)\neq j.
 \end{cases}
\]
Let $\parallelization{\wfsierpinski}((T^{\pairing{i,j}})_{i,j \in \nats})=(a_{\pairing{i,j}})_{i,j \in \nats}$. To compute $p(i)$ we search for the unique $j$ such that $a_{\pairing{i,j}}=1_\sierpinski$ (recall that the set of names for $1_\sierpinski$ is $\boldfaceSigma_1^0$).
 
For the second reduction, recall that by \thref{UCBaireReducesToPST}, $\UCBaire\strongweireducible\PST[\Cantor]$, clearly $\PST[\Cantor]\strongweireducible\PK[\Cantor]$ and $\id \strongweireducible\UCBaire$.
\end{proof}

We now give a useful characterization of $\PK[\Baire]$.

\begin{theorem}
\thlabel{Pkcantor_idPiSigma}
$\PK[\Baire] \weiequiv \parallelization{\wfsierpinski}$.
\end{theorem}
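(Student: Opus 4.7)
The strategy is to prove both Weihrauch reductions directly. The unifying observation is that a basic open ball $[\sigma]$ in $\Baire$ is disjoint from $\PK[\Baire](\body{T})$ exactly when $[\sigma]\cap\body{T}$ is countable, a $\Pi^1_1$ property which is precisely what $\parallelization{\wfsierpinski}$ is designed to semi-decide in parallel over all $\sigma$.

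For $\PK[\Baire] \weireducible \parallelization{\wfsierpinski}$, I would output the perfect kernel in the negative representation of $\negrepr{\Baire}$. Given a tree $T$, the ball $[\sigma]$ is disjoint from $\PK[\Baire](\body{T})$ iff $\sigma \notin T$ (decidable) or $\body{T_\sigma}\in\countable$. Since both $\countable$ and $\wellfounded$ are $\Pi^1_1$-complete by \thref{Complexityresults}, there is a computable map $\sigma \mapsto U_\sigma$ (uniformly in $T$) such that $U_\sigma \in \wellfounded$ iff $\body{T_\sigma}\in\countable$; a concrete choice for $U_\sigma$ is the tree of finite order-preserving partial maps from $\cantor$ into $T_\sigma$ sending incomparable nodes to incomparable nodes, which is well-founded iff $T_\sigma$ admits no perfect subtree. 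Feeding $(U_\sigma)_{\sigma\in\baire}$ to $\parallelization{\wfsierpinski}$ yields sequences $p_\sigma$ that become nonzero exactly when $\body{T_\sigma}$ is countable; as soon as this happens (or $\sigma \notin T$ is observed), we enumerate $[\sigma]$ into the complement of the perfect kernel. This exhausts $\Baire \setminus \PK[\Baire](\body{T})$: any point outside the perfect kernel fails to be a condensation point of $\body{T}$, and hence has some basic neighborhood meeting $\body{T}$ in a countable set.

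For $\parallelization{\wfsierpinski} \weireducible \PK[\Baire]$, the plan is to combine all the $T^i$ into a single closed set so that the presence or absence of specific designated points in the perfect kernel encodes the Sierpi\'nski answers. Define $R^i \defas \{0^n : n \in \nats\} \cup \{0^n 1 \sigma : n \in \nats,\ \sigma \in \exploded{T^i}\}$ and $R \defas \disjointunion{i \in \nats}{R^i}$. Since $\body{\exploded{T^i}}$ is empty when $T^i$ is well-founded and uncountable otherwise, the point $i 0^\nats$ is always in $\body{R}$, and it is a condensation point of $\body{R}$ exactly when $T^i$ is ill-founded (the paths $i 0^n 1 q$ with $q \in \body{\exploded{T^i}}$ accumulate on it); when $T^i$ is well-founded, $i 0^\nats$ is the only path extending $\str{i}$ in $\body{R}$ and is therefore isolated. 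Thus $i 0^\nats \in \PK[\Baire](\body{R})$ iff $T^i \in \illfounded$. From a name for $\PK[\Baire](\body{R})$ in $\negrepr{\Baire}$, we monitor whether any enumerated basic ball $[\sigma_k]$ satisfies $\sigma_k \sqsubseteq i 0^\nats$; the resulting sequence $p_i \in \Baire$ is nonzero iff $i 0^\nats \notin \PK[\Baire](\body{R})$ iff $T^i \in \wellfounded$, providing the required $\sierpinski$-name.

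The main technical step is making the uniform map $(T,\sigma) \mapsto U_\sigma$ in the first direction explicit: while it follows abstractly from the effective $\Pi^1_1$-completeness of $\wellfounded$ applied to the $\Pi^1_1$ property $\countable$, one must spell out the tree of failed perfect-embedding attempts to confirm that everything is uniformly computable from $T$ and $\sigma$. The rest of the argument essentially unpacks the equivalence between being a condensation point of a closed set in a Polish space and belonging to its perfect kernel, combined with the semi-decidable nature of the Sierpi\'nski representation of $\Pi^1_1$ statements.
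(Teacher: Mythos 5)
Your proof is correct. The forward direction is essentially the paper's argument: both reductions exploit the fact that a basic ball $[\sigma]$ misses $\PK[\Baire](\body{T})$ exactly when $\body{T_\sigma}$ is countable, a $\Pi^1_1$ condition that $\parallelization{\wfsierpinski}$ can semi-decide in parallel over all $\sigma$. The only cosmetic difference is that the paper first passes to $\PK[\Cantor]$ (via \thref{Cantor_and_baire_same2}) and builds the output as a binary tree $U$, whereas you stay in $\Baire$ and enumerate the complementary balls directly; your concrete construction of $U_\sigma$ as a tree of order-preserving incomparability-preserving partial maps $\cantor \to T_\sigma$ is a valid explicit witness for the $\Pi^1_1$-completeness that the paper invokes abstractly via \thref{Complexityresults}.

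For the backward direction you take a mildly different route. The paper shows only $\wfsierpinski \weireducible \PK[\Cantor]$ (then parallelizes), using $\translateCantor(\exploded{T})$ and exploiting that $\wellfounded_2$ is $\Sigma^0_1$ to read off the Sierpi\'nski answer from whether the output tree is well-founded. You instead directly reduce $\parallelization{\wfsierpinski}$ to $\PK[\Baire]$ by attaching to each $\exploded{T^i}$ a spine $\{0^n : n \in \nats\}$ with copies of $\exploded{T^i}$ branching off at every level and forming a disjoint union $R$, so that $i0^\nats$ is always a path in $\body{R}$ and is a condensation point iff $T^i$ is ill-founded; you then detect $i0^\nats \notin \PK[\Baire](\body{R})$ from the negative-representation name. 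Both rely on the same core fact (the body of $\exploded{T}$ is uncountable iff $T$ is ill-founded), and your spine-and-designated-point construction is essentially the one the paper itself uses in the proof of \thref{Pkreduciblewsclist}. The paper's version is marginally cleaner because the $\Sigma^0_1$ detectability of emptiness is automatic in Cantor space, but both are correct and equally natural.
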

\begin{proof}
Recall that by \thref{Cantor_and_baire_same2}, $\PK[\Baire]\weiequiv \PK[\Cantor]$ so that it suffices to show that $\PK[\Cantor] \weiequiv \parallelization{\wfsierpinski}$.

Let $T\in \tree_2$ be a name for an input of $\PK[\Cantor]$. Notice that $\{\sigma: T_\sigma \in \countable_2\}$ is $\Pi_1^{1,T}$ (see \thref{Complexityresults}(ii)) and hence, using \thref{Complexityresults}(i), we can compute from $T$ a sequence $(S(\sigma))_{\sigma \in \cantor} \in \tree^\nats$  such that $S(\sigma) \in \wellfounded$ iff $T_\sigma \in \countable_2$. Let $A\defas\{p \in \Cantor: (\forall n)(\wfsierpinski(S(p[n]))=0_{\sierpinski})\}$: since the set of names for $0_\sierpinski$ is $\boldfacePi_1^0$, $A \in \negrepr{\Cantor}$ and we can compute $U \in \tree_2$ with $\body{U}=A$. Notice that for any $\tau \in U$, $\tau$ is a prefix of a path through $\body{U}$ iff
$T_\tau \in \uncountable$. Therefore, $U$ is a name for $\PK[\Cantor](\body{T})$.

To show that $\parallelization{\wfsierpinski} \weireducible \PK[\Cantor]$, as $\PK[\Cantor]$ is parallelizable it suffices to prove that $\wfsierpinski \weireducible \PK[\Cantor]$. Let $T \in \tree$ be an  input for $\wfsierpinski$ and notice that $T \in \wellfounded$ iff $\exploded{T} \in \wellfounded$ iff $\translateCantor(\exploded{T}) \in \countable$. Hence, if $S$ is a name for $ \PK[\Cantor](\body{\translateCantor(\exploded{T})})$, $T \in \wellfounded$ iff $S \in \wellfounded_2$.
Since $\wellfounded_2$ is a $\Sigma_1^0$ set (see \thref{Complexityresults}(i)), given $S$ we can uniformly compute a name for $\wfsierpinski(T)$.
\end{proof}

\begin{proposition}
\thlabel{fop_pi11}
$\firstOrderPart{\PK[\Baire]} \weiequiv \codedChoice{\boldfacePi_1^1}{}{\nats}\strictlyweireducible \firstOrderPart{\CBaire}\weiequiv\codedChoice{\boldfaceSigma_1^1}{}{\nats}\strictlyweireducible\firstOrderPart{\parallelization{\wf}}\weiequiv\ustar{\wf}$.
\end{proposition}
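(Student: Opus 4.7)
The plan is to establish the chain piece by piece, noting that most of the links are immediate corollaries of results already in this section. The central equivalence $\firstOrderPart{\CBaire} \weiequiv \codedChoice{\boldfaceSigma_1^1}{}{\nats}$ and the strict reduction $\codedChoice{\boldfacePi_1^1}{}{\nats} \strictlyweireducible \firstOrderPart{\CBaire}$ are precisely what \thref{Fopcbaire} gives. The equivalence $\firstOrderPart{\parallelization{\wf}} \weiequiv \ustar{\wf}$ is the instance of \thref{Summaryfopustar} with $f = \wf$, which is legitimate because $\wf$ is first-order. The non-strict reduction $\codedChoice{\boldfaceSigma_1^1}{}{\nats} \weireducible \firstOrderPart{\parallelization{\wf}}$ then follows by monotonicity of $\firstOrderPart{\cdot}$ from the reduction $\CBaire \weireducible \parallelization{\wf}$ proved in \thref{cbairepica}.

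The only piece requiring a new argument is $\firstOrderPart{\PK[\Baire]} \weiequiv \codedChoice{\boldfacePi_1^1}{}{\nats}$. For the lower bound, the perfect kernel of an uncountable closed set is itself a perfect subset, so $\PST[\Baire] \weireducible \PK[\Baire]$; combined with \thref{fop_pst} this gives $\codedChoice{\boldfacePi_1^1}{}{\nats} \weiequiv \firstOrderPart{\PST[\Baire]} \weireducible \firstOrderPart{\PK[\Baire]}$. For the upper bound I would essentially replay the $\mathbf{Prefixes}$ construction from \thref{fop_pst}: given a first-order $f \weireducible \PK[\Baire]$ witnessed by $\Phi$ and $\Psi$, on input with name $p$, let $T \defas \Phi(p)$ and
\[
\mathbf{Prefixes} \defas \{s : s \text{ is a prefix of a tree},\ \Psi(p[\length{s}], s)(0){\downarrow},\ (\forall \tau)(s(\tau) = 0 \implies T_\tau \in \countable)\},
\]
which is $\Pi_1^1$ by \thref{Complexityresults}(ii) and nonempty because a sufficiently long prefix of the canonical perfect-kernel name $\tau \mapsto [T_\tau \in \uncountable]$ lies in it. The only subtlety beyond the $\PST[\Baire]$ case is verifying that every $s \in \mathbf{Prefixes}$ extends to a name for the perfect kernel specifically: I would glue $s$ on its first $\length{s}$ positions with the canonical perfect-kernel tree on positions $\geq \length{s}$ and use the side-condition on $s$ to check that the result is a tree whose body equals the perfect kernel (an extra $1$ in $s$ at some $\tau$ with $T_\tau \in \countable$ becomes a dead node, contributing no paths). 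This delivers $f \weireducible \codedChoice{\boldfacePi_1^1}{}{\nats}$, as required.

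The remaining task is to prove the strictness in $\codedChoice{\boldfaceSigma_1^1}{}{\nats} \strictlyweireducible \firstOrderPart{\parallelization{\wf}}$. I would argue by contradiction: if $\firstOrderPart{\parallelization{\wf}} \weireducible \codedChoice{\boldfaceSigma_1^1}{}{\nats}$, then since $\wf \weireducible \ustar{\wf}$ (just supply the tree as $x_0$ together with a Turing functional that halts as soon as $q_0$ is read) and $\codedChoice{\boldfaceSigma_1^1}{}{\nats} \weiequiv \firstOrderPart{\CBaire} \weireducible \CBaire$, we would conclude $\wf \weireducible \CBaire$, contradicting \thref{cbairepica}. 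The only step that is not a one-line appeal to earlier results is the extension claim in the $\mathbf{Prefixes}$ argument for $\PK[\Baire]$; that is where the single-valued nature of $\PK[\Baire]$ forces a slightly more careful verification than in the $\PST[\Baire]$ case, and where I expect the main obstacle to lie.
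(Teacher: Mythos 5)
Your proof is correct and follows essentially the same route as the paper: the lower bound on $\firstOrderPart{\PK[\Baire]}$ via $\PST[\Baire] \weireducible \PK[\Baire]$, the upper bound by reusing the $\mathbf{Prefixes}$ construction from \thref{fop_pst}, and the remaining links via \thref{Fopcbaire}, \thref{Summaryfopustar}, and \thref{cbairepica}. The "glue and verify dead nodes" verification you flag as the main obstacle is indeed the right thing to check, though it is already implicit in the $\PST[\Baire]$ case as well; the paper simply asserts that the $\mathbf{Prefixes}$ argument "already considered only prefixes of names for the perfect kernel." Your derivation of the final strictness (deducing $\wf \weireducible \CBaire$ directly, then invoking \thref{cbairepica}) is a touch more direct than the paper's passage through parallelizations, but both arguments ultimately rest on $\CBaire \strictlyweireducible \parallelization{\wf}$.
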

\begin{proof}
Since clearly $\PST[\Baire]\weireducible\PK[\Baire]$,  by \thref{fop_pst} we obtain $\codedChoice{\boldfacePi_1^1}{}{\nats} \weiequiv \firstOrderPart{\PST[\Baire]} \weireducible \firstOrderPart{\PK[\Baire]}$. For the opposite direction, notice that the proof of $\firstOrderPart{\PST[\Baire]} \weireducible \codedChoice{\boldfacePi_1^1}{}{\nats}$ in \thref{fop_pst} actually shows that $\firstOrderPart{\PK[\Baire]} \weireducible \codedChoice{\boldfacePi_1^1}{}{\nats}$. 

\thref{Fopcbaire} tells us that $\codedChoice{\boldfacePi_1^1}{}{\nats}\strictlyweireducible\firstOrderPart{\CBaire}\weiequiv \codedChoice{\boldfaceSigma_1^1}{}{\nats}$. Recall that, by \thref{Summaryfopustar}, if $f$ is first-order then $\firstOrderPart{\parallelization{f}}\weiequiv \ustar{f}$. In particular, 
$\firstOrderPart{\parallelization{\wf}}\weiequiv \ustar{\wf}$. Besides, since $\CBaire$ is parallelizable, $\codedChoice{\boldfaceSigma_1^1}{}{\nats}\weiequiv \ustar{(\codedChoice{\boldfaceSigma_1^1}{}{\nats})}$. Since $\CBaire\strictlyweireducible \parallelization{\wf}$ (\thref{cbairepica}), we obtain
$ \codedChoice{\boldfaceSigma_1^1}{}{\nats}\strictlyweireducible\ustar{\wf}$.
\end{proof}

\begin{theorem}
\thlabel{Pk_below__chipi}
$\PK[\Baire]\strictlyweireducible\parallelization{\wf}\weiequiv \PK \weireducible \mflim * \PK[\Baire]$.
\end{theorem}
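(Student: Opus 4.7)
The statement decomposes into three claims: the middle equivalence $\parallelization{\wf} \weiequiv \PK$ is \thref{Chipistrongpktree}, so it remains to prove the reduction and non-reduction witnessing $\PK[\Baire] \strictlyweireducible \parallelization{\wf}$, together with $\PK \weireducible \mflim * \PK[\Baire]$.

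For the reduction $\PK[\Baire] \weireducible \parallelization{\wf}$, the plan is to invoke \thref{Pkcantor_idPiSigma}, which gives $\PK[\Baire] \weiequiv \parallelization{\wfsierpinski}$, and observe that $\wfsierpinski \weireducible \wf$ is trivial: from the bit $\wf(T) \in \{0,1\}$ one computably outputs $1^\nats$ or $0^\nats$ as a $\sierpinski$-name. Parallelization preserves Weihrauch reducibility and closes the gap. For strictness, I compare first-order parts: by \thref{fop_pi11}, $\firstOrderPart{\PK[\Baire]} \weiequiv \codedChoice{\boldfacePi_1^1}{}{\nats} \strictlyweireducible \ustar{\wf} \weiequiv \firstOrderPart{\parallelization{\wf}}$, and monotonicity of $\firstOrderPart{\cdot}$ rules out $\parallelization{\wf} \weireducible \PK[\Baire]$.

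The reduction $\PK \weireducible \mflim * \PK[\Baire]$ is the main new content. The main obstacle is that pruning a generic tree on $\nats$ is not $\mflim$-computable (the condition \lq\lq$T_\sigma$ is ill-founded\rq\rq{} is genuinely $\Sigma_1^1$), so one cannot simply apply $\PK[\Baire]$ and then prune with $\mflim$. I plan to bypass this by passing through the binary tree setting via \thref{Translatetrees}. Given $T \in \tree$, set $T' \defas \translateCantor(T) \in \tree_2$ and apply $\PK[\Baire]$ to $\body{T'}$ to obtain some $U \in \tree$ with $\body{U} = \PK[\Baire](\body{T'}) \subseteq \Cantor$. The binary refinement $U \cap \cantor \in \tree_2$ has the same body, and by \cite{Nobrega2017GameCA} (recalled in the proof of \thref{UCBaireReducesToPST}) pruning a binary tree is Weihrauch equivalent to $\mflim$, so one call to $\mflim$ produces the unique perfect binary tree $P$ with $\body{P} = \body{U}$.

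As the final computable step I output $\translateBaire(P)$. By \thref{Perfecttreesinbaire}, $\translateBaire(P)$ is a perfect tree; monotonicity of $\translateBaire$ together with the identity $\translateBaire(\translateCantor(T)) = T$ give $\translateBaire(P) \subseteq T$. The body calculation carried out in the proof of $\PK \strongweireducible \PK{\restriction}\tree_2$ in \thref{Cantor_and_baire_same2} shows $\body{T} \setminus \body{\translateBaire(P)}$ is countable, and uniqueness of the perfect kernel then forces $\translateBaire(P) = \PK(T)$. The whole computation consists of a single call to $\PK[\Baire]$ followed by computable processing, a single call to $\mflim$, and a computable post-processing step, matching the schema of $\mflim * \PK[\Baire]$.
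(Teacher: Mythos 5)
Your proposal is correct and follows essentially the same route as the paper: the key $\PK \weireducible \mflim * \PK[\Baire]$ reduction passes to binary trees via $\translateCantor$, applies $\PK[\Cantor]$, prunes with $\mflim$, and translates back with $\translateBaire$, which is exactly what the paper does by composing \thref{Cantor_and_baire_same2} with the pruning argument. The only cosmetic difference is that you obtain $\PK[\Baire]\weireducible\parallelization{\wf}$ via $\parallelization{\wfsierpinski}$ and \thref{Pkcantor_idPiSigma}, whereas the paper notes directly that $\PK(T)$ is already a name for $\PK[\Baire](\body{T})$.
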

\begin{proof}
Given $T \in \tree$, $\PK(T)$ is a name for  $\PK[\Baire](\body{T})$: therefore $\PK[\Baire]\weireducible\PK$; strictness follows from  \thref{fop_pi11}. By \thref{Chipistrongpktree}, $\parallelization{\wf}\weiequiv \PK$.

To prove the last reduction, by \thref{Cantor_and_baire_same2} and \thref{Chipistrongpktree} we have that $\PK{\restriction}\tree_2\weiequiv \parallelization{\wf}$ and $\PK[\Baire]\weiequiv \PK[\Cantor]$, hence, to finish the proof, it suffices to show that $\PK{\restriction}\tree_2 \weireducible \mflim*\PK[\Cantor]$. From \cite[Lemma 4.46]{Nobrega2017GameCA}, we know that $\mflim$ is equivalent to the function that prunes a binary tree. So let $T\in \tree_2$ and let $P$ be a name for $\PK[\Cantor](\body{T})$:  pruning $P$ with $\mflim$ is enough to obtain $\PK{\restriction}\tree_2(T)$.
\end{proof}
We do not know whether $\PK \weiequiv \mflim * \PK[\Baire]$ (see \thref{PKandLimQuestion}).

\begin{proposition}
\thlabel{Ucbaire_below_pk}
 $\PST[\Baire]\strictlyweireducible \PK[\Baire]\weiincomparable \CBaire $. 
\end{proposition}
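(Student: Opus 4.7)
The plan is to split the statement into three independent claims: the reduction $\PST[\Baire] \weireducible \PK[\Baire]$, the non-reduction $\PK[\Baire] \not\weireducible \CBaire$, and the non-reduction $\CBaire \not\weireducible \PK[\Baire]$. Once the first non-reduction is in hand, strictness in $\PST[\Baire] \strictlyweireducible \PK[\Baire]$ is immediate from $\PST[\Baire] \weireducible \CBaire$ (part of \thref{pstucbairesummary}). The reduction $\PST[\Baire] \weireducible \PK[\Baire]$ is essentially tautological in the tree representation: for any tree $T$ with $\length{\body{T}} > \aleph_0$, the perfect kernel $\PK[\Baire](\body{T})$ is itself a nonempty closed perfect subset of $\body{T}$, hence a valid output of $\PST[\Baire](\body{T})$, so the identity functionals witness the reduction.

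For $\PK[\Baire] \not\weireducible \CBaire$ the plan is to argue by contradiction, first establishing the intermediate reduction $\wf \weireducible \mflim * \PK[\Cantor]$ and then invoking $\wf \not\weireducible \CBaire$ (cited inside the proof of \thref{cbairepica}). Given $T \in \tree$, set $S \defas \translateCantor(\exploded{T})$: by the definition of explosion together with \thref{AllPropertiesOfTranslation}(4), $\body{S}$ is countable iff $T$ is well-founded, so $\PK[\Cantor](\body{S})$ is empty iff $T \in \wellfounded$. The output of $\PK[\Cantor]$ is a binary tree $P$ whose body equals this perfect kernel, and by K\"onig's lemma $\body{P} = \emptyset$ iff $P \in \wellfounded_2$; since $\wellfounded_2$ is $\Sigma_1^0$ by \thref{Complexityresults}(i), a single $\lpo$-style query, absorbed into $\mflim$, recovers $\wf(T)$ from $P$. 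Now assume for contradiction $\PK[\Baire] \weireducible \CBaire$; by \thref{Cantor_and_baire_same2} also $\PK[\Cantor] \weireducible \CBaire$, and since $\mflim \weireducible \CBaire$ and $\CBaire$ is closed under compositional product (recalled in the excerpt), we get $\mflim * \PK[\Cantor] \weireducible \mflim * \CBaire \weiequiv \CBaire$, whence $\wf \weireducible \CBaire$, a contradiction.

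For $\CBaire \not\weireducible \PK[\Baire]$ the plan is to compare first-order parts: \thref{Fopcbaire} gives $\firstOrderPart{\CBaire} \weiequiv \codedChoice{\boldfaceSigma_1^1}{}{\nats}$ together with the strict reduction $\codedChoice{\boldfacePi_1^1}{}{\nats} \strictlyweireducible \codedChoice{\boldfaceSigma_1^1}{}{\nats}$, while \thref{fop_pi11} yields $\firstOrderPart{\PK[\Baire]} \weiequiv \codedChoice{\boldfacePi_1^1}{}{\nats}$. Therefore $\firstOrderPart{\CBaire} \not\weireducible \firstOrderPart{\PK[\Baire]}$, and this lifts to $\CBaire \not\weireducible \PK[\Baire]$ by the universal property of the first-order part operator. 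The only step that needs genuine care is the bookkeeping in the translation $T \mapsto \translateCantor(\exploded{T})$ inside part (b); everything else slots together from tools already developed in the excerpt.
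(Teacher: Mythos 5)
Your proof is correct and mirrors the paper's argument: the same decomposition into the trivial reduction $\PST[\Baire]\weireducible\PK[\Baire]$, a compositional-product obstruction against $\PK[\Baire]\weireducible\CBaire$, and a first-order-part obstruction (via \thref{fop_pi11} and \thref{Fopcbaire}) against $\CBaire\weireducible\PK[\Baire]$. The only variation is in the middle step, where you re-derive $\wf\weireducible\mflim*\PK[\Cantor]$ inline (essentially reproducing the argument behind \thref{Pkcantor_idPiSigma}) and appeal to $\wf\not\weireducible\CBaire$, whereas the paper cites $\parallelization{\wf}\weireducible\mflim*\PK[\Baire]$ from \thref{Pk_below__chipi} and appeals to $\CBaire\strictlyweireducible\parallelization{\wf}$; both routes pass through \thref{cbairepica} and are equally valid.
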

\begin{proof}
   The fact that $\PST[\Baire]\weireducible \PK[\Baire]$ is trivial. By \thref{Pk_below__chipi}, $\parallelization{\wf}\weireducible\mflim*\PK[\Baire]$ while, by the closure of $\CBaire$ under compositional product, we get $\mflim*\CBaire \weiequiv \CBaire$: hence by \thref{cbairepica} $ \PK[\Baire]\not\weireducible\CBaire$ and a fortiori $\PK[\Baire]\not\weireducible\PST[\Baire]$. For the opposite non-reduction, just notice that by \thref{fop_pi11}  we have that $\firstOrderPart{\CBaire} \not\weireducible \firstOrderPart{\PK[\Baire]}$. 
\end{proof}

\begin{proposition}
\thlabel{Pktcbaire}
$\PK[\Baire]\strictlyweireducible\parallelization{\totalization{\CBaire}}$ and hence the reduction $\wfsierpinski\weireducible\totalization{\CBaire}$ in \cite[Proposition 11.4(1)]{CompletionOfChoice} is actually strict.
\end{proposition}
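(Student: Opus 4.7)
The plan is to reduce everything to the characterization $\PK[\Baire] \weiequiv \parallelization{\wfsierpinski}$ from \thref{Pkcantor_idPiSigma}, together with a first-order-part separation inherited from \thref{Fopcbaire} and \thref{fop_pi11}. The positive direction is immediate: since the parallelization operator is monotone with respect to $\weireducible$, applying $\parallelization{\cdot}$ to the assumed reduction $\wfsierpinski\weireducible\totalization{\CBaire}$ of \cite[Proposition 11.4(1)]{CompletionOfChoice} yields $\parallelization{\wfsierpinski}\weireducible\parallelization{\totalization{\CBaire}}$, and by \thref{Pkcantor_idPiSigma} this is exactly $\PK[\Baire]\weireducible\parallelization{\totalization{\CBaire}}$.

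For the strictness $\parallelization{\totalization{\CBaire}}\not\weireducible\PK[\Baire]$, I would compare the first-order parts. On the one hand, $\CBaire\weireducible\totalization{\CBaire}\weireducible\parallelization{\totalization{\CBaire}}$ holds trivially, so $\firstOrderPart{\CBaire}\weireducible\firstOrderPart{\parallelization{\totalization{\CBaire}}}$; by \thref{Fopcbaire}, $\firstOrderPart{\CBaire}\weiequiv \codedChoice{\boldfaceSigma_1^1}{}{\nats}$. On the other hand, \thref{fop_pi11} tells us $\firstOrderPart{\PK[\Baire]}\weiequiv \codedChoice{\boldfacePi_1^1}{}{\nats}$, and \thref{Fopcbaire} explicitly records $\codedChoice{\boldfacePi_1^1}{}{\nats}\strictlyweireducible \codedChoice{\boldfaceSigma_1^1}{}{\nats}$. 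A reduction $\parallelization{\totalization{\CBaire}}\weireducible\PK[\Baire]$ would give $\codedChoice{\boldfaceSigma_1^1}{}{\nats}\weireducible \firstOrderPart{\parallelization{\totalization{\CBaire}}}\weireducible \firstOrderPart{\PK[\Baire]}\weiequiv \codedChoice{\boldfacePi_1^1}{}{\nats}$, contradicting the strict separation.

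For the \emph{hence} clause, I would argue by contrapositive: if the reduction $\wfsierpinski\weireducible\totalization{\CBaire}$ were actually an equivalence, then parallelization (again by monotonicity) would give $\parallelization{\wfsierpinski}\weiequiv \parallelization{\totalization{\CBaire}}$, i.e.\ $\PK[\Baire]\weiequiv \parallelization{\totalization{\CBaire}}$ by \thref{Pkcantor_idPiSigma}, contradicting the strict reduction just established. Hence $\totalization{\CBaire}\not\weireducible \wfsierpinski$, which is precisely the strictness of the reduction from \cite{CompletionOfChoice}.

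There is no real obstacle here: both claims are bookkeeping over results already in place. The only thing worth double-checking is the trivial reduction $\CBaire\weireducible\totalization{\CBaire}$ (which is the identity on the common domain of instances) and that parallelization and the first-order part are monotone with respect to $\weireducible$, both of which are standard.
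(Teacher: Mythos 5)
Your proof is correct and follows essentially the same route as the paper: parallelize the cited reduction $\wfsierpinski\weireducible\totalization{\CBaire}$ and apply \thref{Pkcantor_idPiSigma} for the positive direction, then observe $\CBaire\weireducible\parallelization{\totalization{\CBaire}}$ and use the separation of first-order parts to rule out the converse. The only cosmetic difference is that the paper cites \thref{Ucbaire_below_pk} directly for $\CBaire\not\weireducible\PK[\Baire]$, whereas you unfold that fact into the underlying $\firstOrderPart{\CBaire}\not\weireducible\firstOrderPart{\PK[\Baire]}$ comparison, which is precisely the mechanism \thref{Ucbaire_below_pk} relies on anyway.
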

\begin{proof}
 From $\wfsierpinski\weireducible\totalization{\CBaire}$ using \thref{Pkcantor_idPiSigma}, we obtain $\PK[\Baire] \weireducible \parallelization{\totalization{\CBaire}}$. Strictness follows from \thref{Ucbaire_below_pk} as $\CBaire \not\weireducible \PK[\Baire]$ but clearly $\CBaire \weireducible \totalization{\CBaire}$.
\end{proof}

We end the subsection by characterizing the deterministic part of $\PK[\Baire]$.

\begin{proposition}
\thlabel{Detpart_pkbaire}
$\deterministicPart(\PK[\Baire])\weiequiv \UCBaire$.
\end{proposition}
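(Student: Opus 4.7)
I would prove both reductions $\UCBaire \weireducible \deterministicPart(\PK[\Baire])$ and $\deterministicPart(\PK[\Baire]) \weireducible \UCBaire$. The first is immediate: $\UCBaire$ is a single-valued partial function, and by \thref{UCBaireReducesToPST} combined with the trivial reduction $\PST[\Baire] \weireducible \PK[\Baire]$ we have $\UCBaire \weireducible \PK[\Baire]$. The characterization of $\deterministicPart$ as the maximum (under $\weireducible$) of the single-valued problems reducible to $\PK[\Baire]$ then yields $\UCBaire \weireducible \deterministicPart(\PK[\Baire])$.

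For the converse, let $g$ be any single-valued problem with codomain $\Baire$ such that $g \weireducible \PK[\Baire]$, witnessed by computable maps $\Phi,\Psi$. Given a name $p$ of an instance $x$, I set $T \defas \Phi(p) \in \tree$. The key observation is that the perfect kernel of $\body{T}$ has a \emph{canonical} tree representation, namely
\[ S \defas \{\sigma \in T : T_\sigma \in \uncountable\}. \]
$S$ is closed under prefixes (since $T_\sigma \supseteq T_\tau$ whenever $\sigma \sqsubseteq \tau$), and a standard argument shows that $\body{S}$ is precisely the set of condensation points of $\body{T}$, i.e.\ the perfect kernel. Thus $S$ is a valid name for $\PK[\Baire](\body{T})$, and since $g$ is single-valued, $\Psi(p,S)$ gives $g(x)$ irrespective of which name for the perfect kernel is plugged in.

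The task therefore reduces to computing $S$ (equivalently its characteristic function $\chi_S \in \Cantor$) from $T$ using $\UCBaire$. By \thref{Complexityresults}(ii), $T_\sigma \in \uncountable$ is $\Sigma_1^{1,T}$ while $T_\sigma \in \countable$ is $\Pi_1^{1,T}$, both uniformly in $\sigma$. Hence I can uniformly compute from $T$ two sequences of trees $(U_\sigma)_{\sigma \in \baire}$ and $(V_\sigma)_{\sigma \in \baire}$ with $U_\sigma \in \illfounded$ iff $\sigma \in S$, and $V_\sigma \in \wellfounded$ iff $\sigma \notin S$. This presents $S$ as a $\boldfaceDelta_1^{1,T}$ subset of $\baire$, which is precisely the input format for $\boldfaceDelta_1^1\text{-}\mathsf{CA} \weiequiv \UCBaire$ (as noted in the proof of \thref{UCbaireisparallelization} and established in \cite{kihara_marcone_pauly_2020}); its output is $\chi_S$. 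One last application of $\Psi$ then yields $\Psi(p,\chi_S) = g(x)$.

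The main obstacle I anticipate is formally connecting the data $(U_\sigma, V_\sigma)_\sigma$ to the presentation of $\boldfaceDelta_1^1\text{-}\mathsf{CA}$ as $\parallelization{\codedUChoice{\boldfacePi_1^1}{}{2}}$. For each $\sigma$, the membership question ``$\chi_S(\sigma)=1$?'' is naturally $\Sigma_1^{1,T}$ (witnessed by $U_\sigma$) while non-membership is naturally $\Pi_1^{1,T}$ (witnessed by $V_\sigma$); converting this mixed pair into a uniform $\Pi_1^1$ name for the singleton $\{\chi_S(\sigma)\} \subseteq \{0,1\}$ requires either invoking the singleton trick used in \thref{Fopcbaire} (singletons that are $\Sigma_1^1$ are automatically $\Pi_1^1$) or equivalently the effective Novikov-Kondo-Addison uniformization theorem already applied in \thref{UCbaireisparallelization}. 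Once this bookkeeping is in place, parallelizing over $\sigma$ yields the full $\chi_S$ via a single call to $\UCBaire$.
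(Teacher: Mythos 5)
Your proof of $\UCBaire \weireducible \deterministicPart(\PK[\Baire])$ is correct and matches the paper's argument. For the converse direction, however, the ``bookkeeping'' you flag at the end is not mere bookkeeping---it is where the argument breaks.

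The canonical tree $S = \{\sigma \in T : T_\sigma \in \uncountable\}$ is $\Sigma_1^{1,T}$ but in general \emph{not} $\Pi_1^{1,T}$ as a subset of $\baire$. By \thref{Complexityresults}(ii), ``$T_\sigma \in \uncountable$'' is $\Sigma_1^1$-complete and ``$T_\sigma \in \countable$'' is $\Pi_1^1$-complete. Consequently, for fixed $\sigma$ the two membership conditions for the singleton $\{\chi_S(\sigma)\} \subseteq \{0,1\}$ live in opposite classes: ``$1 \in \{\chi_S(\sigma)\}$'' is $\Sigma_1^1$, while ``$0 \in \{\chi_S(\sigma)\}$'' is $\Pi_1^1$. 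So the singleton does not come with a uniform $\Pi_1^1$ name, nor with a uniform $\Sigma_1^1$ name, and hence is not a valid input for $\codedUChoice{\boldfacePi_1^1}{}{2}$. The singleton trick you invoke converts a uniform $\Sigma_1^1$ name into a $\Pi_1^1$ one (or vice versa), but it requires a uniform name of one kind to start from, which you do not have. Indeed, if your plan went through, given any $T' \in \tree$ you could set $T = \exploded{T'}$ and read off $\chi_S(\str{})$, thereby deciding ill-foundedness of $T'$; this would yield $\wf \weireducible \UCBaire$, contradicting \thref{cbairepica} (since $\UCBaire \weireducible \CBaire$). Computing $\chi_S$ exactly requires the full strength of $\parallelization{\wf}$, which is strictly above $\UCBaire$.

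The paper's proof sidesteps the obstruction. It uses the general fact $\deterministicPart(f) \weireducible \parallelization{\firstOrderPart{f}}$ from \cite{goh_pauly_valenti_2021}, combined with $\firstOrderPart{\PK[\Baire]} \weiequiv \codedChoice{\boldfacePi_1^1}{}{\nats}$ (\thref{fop_pi11}). The latter works because the set $\mathbf{Prefixes}$ in the proof of \thref{fop_pst} imposes only the one-sided constraint $s(\tau) = 0 \implies T_\tau \in \countable$, which \emph{is} $\Pi_1^1$. That relaxation is sound: any such $s$ can be extended to a valid name for the perfect kernel (for instance, to $\{\tau : s(\tau) = 1\} \cup S$), so the already-computed finite answer $\Psi(p[\length{s}],s)(0)$ is correct. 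If you want a direct reduction to $\UCBaire$, you must similarly avoid the exact canonical $S$ and work with such one-sided prefix constraints.
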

\begin{proof}
For the right to left direction, notice that $\UCBaire$ is single-valued and, by \thref{UCBaireReducesToPST} and \thref{Ucbaire_below_pk} we have that $\UCBaire \strictlyweireducible\PK[\Baire]$. For the converse, observe that $\firstOrderPart{\PK[\Baire]}\weiequiv \codedChoice{\boldfacePi_1^1}{}{\nats}$ (\thref{fop_pi11}) and $\parallelization{\codedChoice{\boldfacePi_1^1}{}{\nats}}\weiequiv\UCBaire$
(\thref{UCbaireisparallelization}). This, together with $\deterministicPart(\PK[\Baire])\weireducible \parallelization{\firstOrderPart{\PK[\Baire]}}$ (\cite[Corollary 3.7]{goh_pauly_valenti_2021}), concludes the proof.
\end{proof}

\subsection{Scattered lists}
\label{scatteredlist}

We now introduce the problems of listing the scattered part of a closed subset of a computable Polish space. Their definition is similar to \thref{definitionsList}: the crucial difference is that the domain includes all closed sets $A$ of the computable Polish space $\X$ (not only the countable ones as in \thref{definitionsList}) and we ask for the list of the elements of the scattered part of $A$.

\begin{definition}
Let $\X$ be a computable Polish space. We define three multi-valued functions $\multifunction{\wScList[\X]}{\negrepr{\X}}{2\times \X^\nats}$, $\function{\ScCount[\X]}{\negrepr{\X}}{\nats}$ and $\multifunction{\ScList[\X]}{\negrepr{\X}}{\nats \times (2\times \X)^\nats}$ by
\begin{align*}
\wScList[\X](A) & \defas \{(b_i,x_i)_{i \in \nats}: A\setminus\PK[\X](A) = \{x_i:b_i=1\}\},\\
\ScCount[\X](A) & \defas
 \begin{cases}
 0 & \text{if } A\setminus \PK[\X](A) \text{ is infinite},\\
 \length{A\setminus \PK[\X]}+1 & \text{if } A\setminus \PK[\X](A) \text{ is finite}.
 \end{cases}\\
\ScList[\X](A) & \defas \wScList[\X](A) \times \ScCount[\X](A).
\end{align*}
\end{definition}

\begin{remark}
\thlabel{Isolated_paths}
Notice that if a closed set $A$ of some $T_1$ topological space has a finite set $F$ of isolated points, then $A \setminus F$ is perfect and hence the scattered part of $A$ is $F$. Equivalently, if the scattered part of $A$ is infinite then it contains infinitely many isolated points. Moreover, the set of isolated points is always dense in the scattered part.
\end{remark}

With a similar proof to that \thref{Pkbaire_parallelizable}, we obtain the following.
\begin{proposition}
\thlabel{Wsclist_parallelizable}
$\wScList[\Baire]$ and $\wScList[\Cantor]$ are parallelizable.
\end{proposition}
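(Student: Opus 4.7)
The plan is to mirror the proof of \thref{Pkbaire_parallelizable}: since parallelization is monotone and the identity is a reduction, it suffices to verify $\parallelization{\wScList[\X]} \weireducible \wScList[\X]$ for $\X \in \{\Baire, \Cantor\}$, and the only content is to show that the scattered part of a suitable disjoint union of trees decomposes (modulo one point in the Cantor case) into the shifted scattered parts of the factors.

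For $\wScList[\Baire]$, given $(T^i)_{i\in\nats}$ I form $T\defas \disjointunion{i\in\nats}T^i$, whose paths are exactly the sequences $i\concat q$ for $q\in\body{T^i}$. Since $q\mapsto i\concat q$ is a homeomorphism from $\body{T^i}$ onto the clopen piece $\{p\in\body{T}\st p(0)=i\}$ of $\body{T}$, a point is a condensation point of the piece iff its preimage is a condensation point of $\body{T^i}$; hence the scattered part of $\body{T}$ is precisely $\bigsqcup_i\{i\concat q\st q\in\body{T^i}\setminus\PK[\Baire](\body{T^i})\}$. Given any $(b_j,p_j)_{j\in\nats}\in\wScList[\Baire](\body{T})$, for each $i$ I output the sequence whose $j$-th entry is $(1,p_j^-)$ when $b_j=1$ and $p_j(0)=i$, and $(0,0^\nats)$ otherwise; this is a valid list for $\wScList[\Baire](\body{T^i})$.

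For $\wScList[\Cantor]$ I instead use $T\defas\binarydisjointunion{i\in\nats}T^i$. By \thref{Disjoint_union} this is a binary tree and $\body{T}=\{0^\nats\}\cup\{0^i1q\st q\in\body{T^i},\ i\in\nats\}$, and again the map $q\mapsto 0^i1q$ is a homeomorphism onto the clopen piece $\{p\in\body{T}\st 0^i1\sqsubset p\}$, so $0^i1q$ lies in the perfect kernel of $\body{T}$ iff $q$ lies in $\PK[\Cantor](\body{T^i})$. Consequently the scattered part of $\body{T}$ equals $\bigsqcup_i\{0^i1q\st q\in\body{T^i}\setminus\PK[\Cantor](\body{T^i})\}$, plus possibly the single additional point $0^\nats$. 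Given $(b_j,p_j)_{j\in\nats}\in\wScList[\Cantor](\body{T})$, for each $i$ I produce the $j$-th entry $(1,q)$ whenever $b_j=1$ and $p_j=0^i1q$ for some $q$, and $(0,0^\nats)$ otherwise; the possible anomalous point $0^\nats$ is automatically discarded by this filter since $0^\nats$ does not start with $0^i1$ for any $i$.

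There is no real obstacle: the delicate point to be careful about is precisely the decomposition of the perfect kernel under (binary) disjoint unions, which I handle by the clopen-homeomorphism argument above, together with noting that the stray point $0^\nats$ in the Cantor case never contaminates any of the extracted lists because it fails the prefix test.
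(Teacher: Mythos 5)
Your proof is correct and is exactly the ``similar proof'' the paper intends by referring back to \thref{Pkbaire_parallelizable}: show $\parallelization{\wScList[\X]}\weireducible\wScList[\X]$ via a disjoint-union decomposition, using the fact that the perfect kernel (hence the scattered part) localizes to clopen pieces. The one subtlety you handle well, and which distinguishes this from a literal copy of the $\PK$ proof, is that the $\PK$ proof dispatches the Cantor case by quoting $\PK[\Cantor]\strongweiequiv\PK[\Baire]$; no such equivalence is available here (indeed the paper later shows $\wScList[\Cantor]\strictlyweireducible\wScList[\Baire]$ in \thref{sclistcantor_summary}), so your choice to treat the Cantor case directly via the binary disjoint union $\binarydisjointunion{i\in\nats}{T^i}$ and to note that the stray path $0^\nats$ is filtered out by the prefix test is the right and necessary move.
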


One of the main results of this subsection is that $\wScList[\Baire]\weiequiv \PK[\Baire]$. We first prove the easier direction.
\begin{lemma}
\thlabel{Pkreduciblewsclist}
$\PK[\Baire]\weireducible\wScList[\Baire]$.
\end{lemma}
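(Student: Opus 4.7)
The plan is to construct a Weihrauch reduction whose forward functional $\Phi$ is essentially the identity on $\tree$: given an input $T \in \tree$ for $\PK[\Baire]$, we pass $T$ itself to $\wScList[\Baire]$, exploiting the computable equivalence between the tree representation and the negative representation of $\negrepr{\Baire}$ recorded in the preliminaries. From $\wScList[\Baire]$ we receive a list $L = (b_i, x_i)_{i \in \nats}$ with $\{x_i : b_i = 1\} = \body{T} \setminus \PK[\Baire](\body{T})$, so that the task reduces to producing a tree $U$ with $\body{U} = \body{T} \setminus \{x_i : b_i = 1\}$.

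The backward functional $\Psi$ constructs $U$ by retaining only those $\sigma \in T$ that admit extensions inside $T$ which escape every currently listed scattered path. Concretely, one declares $\sigma \in U$ iff $\sigma \in T$ and there is some $\tau \in T$ with $\sigma \sqsubseteq \tau$ such that $\tau \neq x_i[|\tau|]$ for every $i < |\tau|$ with $b_i = 1$. This set is automatically closed under initial segments, hence a tree.

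Verification of $\body{U} = \PK[\Baire](\body{T})$ splits into two inclusions. The inclusion $\PK[\Baire](\body{T}) \subseteq \body{U}$ will follow by a counting argument: if $p$ lies in the perfect kernel then $T_{p[s]}$ is uncountable for every $s$ and hence contains a perfect subtree, so the number of level-$N$ extensions of $p[s]$ in $T$ grows at least like $2^{N - s}$, eventually exceeding the at most $N$ forbidden prefixes $\{x_i[N] : i < N, b_i = 1\}$; this furnishes the required witness $\tau$. For the converse inclusion, if $p = x_j$ is scattered then beyond the level at which $[p[s]]$ has been isolated from $\PK[\Baire](\body{T})$, all paths through $T_{p[s]}$ are scattered, and as the index bound $i < |\tau|$ grows it must eventually cover the indices of those scattered paths in $L$, so every candidate $\tau$ coincides with some $x_i[|\tau|]$ and the defining condition fails.

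The main obstacle is ensuring the construction genuinely yields a valid name for $U$ in the representation of $\tree$: the defining condition is naturally $\Sigma^{0}_1$ in $(T,L)$, which is not directly decidable. This is handled by the equivalence between the tree representation and the negative representation of $\negrepr{\Baire}$ (since $T \mapsto \body{T}$ has multi-valued computable inverse): one instead enumerates, from $(T, L)$, the basic open cylinders $[\sigma]$ seen to be disjoint from $\body{U}$ — namely those $\sigma$ whose every extension in $T$ of moderate length is recognized as $x_i[|\tau|]$ for some $i < |\tau|$ with $b_i = 1$ — and outputs this enumeration as a negative name for the closed set $\body{U}$, which the equivalence then converts back to a tree name.
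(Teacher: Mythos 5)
Your proposal attempts a direct reduction with forward functional the identity, whereas the paper goes through the characterization $\PK[\Baire]\weiequiv\parallelization{\wfsierpinski}$ (\thref{Pkcantor_idPiSigma}), the parallelizability of $\wScList[\Baire]$, and the simple explosion/disjoint-union gadget to show $\wfsierpinski\weireducible\wScList[\Baire]$. Unfortunately your direct construction contains a genuine error, and the converse inclusion $\body{U}\subseteq\PK[\Baire](\body{T})$ fails.

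The bound $i<\length{\tau}$ in the defining condition for $U$ does not, in general, exclude scattered paths from $\body{U}$, because the solver of $\wScList[\Baire]$ is free to choose the enumeration order of $L$, and it can time the indices so that the defining digit of $x_i$ appears only around position $i$. Concretely, take $T$ with $\body{T}=\{0^\nats\}\cup\{0^n10^\nats : n\ge 0\}$, so that $\body{T}$ is countable and $\PK[\Baire](\body{T})=\emptyset$. Choose the valid list $L$ with all $b_i=1$, $x_0=0^\nats$, and $x_{n+1}=0^n10^\nats$. Then for every $s$ the string $\tau=0^s1\in T$ extends $\sigma=0^s$, has length $s+1$, and satisfies $\tau\neq x_i[s+1]$ for every $i<s+1$: indeed $x_0[s+1]=0^{s+1}\neq\tau$, while for $1\le k\le s$ the string $x_k[s+1]$ has its unique $1$ at position $k-1<s$ and hence differs from $\tau$. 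So $0^s\in U$ for every $s$ and $0^\nats\in\body{U}$, contradicting $\body{U}=\PK[\Baire](\body{T})=\emptyset$. Your informal justification (\lq\lq as the index bound $i<\length{\tau}$ grows it must eventually cover the indices of those scattered paths in $L$\rq\rq) is exactly where this breaks: longer $\tau$'s see more indices, but they are also prefixes of different, higher-indexed scattered paths, and the adversary can keep the relevant index one step ahead of $\length{\tau}$ forever.

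Two further, smaller issues. In the forward inclusion you claim the number of level-$N$ nodes of the perfect subtree grows like $2^{N-s}$; in $\Baire$ the splitting levels of a perfect subtree can be arbitrarily sparse, so this rate is wrong (the weaker fact that the node count is unbounded would suffice, but you should say that instead). And the final paragraph on extracting a negative name runs into a separate computability problem: the condition \lq\lq every extension of $\sigma$ in $T$ of a given length is covered\rq\rq\ quantifies over the possibly infinite set of level-$N$ extensions of $\sigma$ in a tree on $\nats$, so it is not semi-decidable from finite prefixes of $T$ and $L$. More fundamentally, a single application of $\wScList[\Baire]$ to $T$ itself does not seem to give enough information to enumerate the basic cylinders disjoint from the perfect kernel: deciding whether $\body{T_\sigma}\subseteq\{x_i:b_i=1\}$ from $T$ and $L$ is a $\Pi^1_1$-type question, which is why the paper instead applies $\wScList[\Baire]$ (via parallelizability) to a whole family of auxiliary trees built from $T$ using the explosion and disjoint-union operations.
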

\begin{proof}
 By \thref{Pkcantor_idPiSigma} we get $\PK[\Baire] \weiequiv \parallelization{\wfsierpinski}$ and, by \thref{Wsclist_parallelizable}, $\wScList[\Baire]$ is parallelizable. So it suffices to show that $\wfsierpinski\weireducible \wScList[\Baire]$. 
       
Given an input $T \in \tree$ for $\wfsierpinski$, let $S\defas\binarydisjointunion{i \in \nats}{\exploded{T}}\in\tree$. If $T \in \wellfounded$ then $\exploded{T} \in \wellfounded$, so that by \thref{Disjoint_union}, $\body{S}=\{0^\nats\}$. If instead $T \in \illfounded$, $\body{\exploded{T}}$ is perfect and therefore $0^\nats \in \PK[\Baire](\body{S})$ and $\body{S}$ is perfect, so that the scattered part of $\body{S}$ is empty. 
      
Hence, for every $(b_i,x_i)_{i \in \nats} \in \wScList[\Baire](\body{S})$ we obtain $\wfsierpinski(T)=1_\sierpinski$ iff $(\exists i)( b_i=1)$.  Hence, a name for  $\wfsierpinski(T)$ can be uniformly computed from  $(b_i,x_i)_{i \in \nats}$.
\end{proof}

We split the proof of $\wScList[\Baire]\weireducible \PK[\Baire]$ in several lemmas. Before stating them we discuss some results in \cite[\S 6.1]{kihara_marcone_pauly_2020} and correct an error there.

\begin{remark}
\thlabel{Remarknnmcb}
Theorem 6.4 of \cite{kihara_marcone_pauly_2020} states (in our notation) that $\UCBaire\weiequiv \wList[\Baire] \weiequiv \List[\Baire]$. The main ingredient of the proof of $\wList[\Baire] \weireducible \UCBaire$ is a variant of the Cantor-Bendixson derivative that allows to carry out the process in a Borel way for countable closed sets. A single step of this process is called \emph{one-step $\mathsf{mCB}$-certificate} (\cite[Definition 6.5]{kihara_marcone_pauly_2020})  and all steps are then \lq\lq collected\rq\rq\ in a \emph{global $\mathsf{mCB}$-certificate} (\cite[Definition 6.6]{kihara_marcone_pauly_2020}). 
\end{remark}

\begin{definition}[{\cite[Definition 6.5]{kihara_marcone_pauly_2020}}]
Let $A \in \negrepr{\Baire}$. A \emph{one-step $\mathsf{mCB}$-certificate} for $A$ is some $c=((\sigma_i^c)_{i \in \nats},(b_i^c)_{i \in \nats},(p_i^c)_{i \in \nats}) \in (\baire,2,\Baire)^\nats$ where
\begin{itemize}
 \item  for all $i\neq j$, $\sigma_i^c \not\sqsubset \sigma_j^c$ and if $i<j$ then $\sigma_i^c<\sigma_j^c$;
 \item there exists $i$ such that $b_i^c=1$,
  \item Let $\mathsf{HYP}(A)$ be the set of hyperarithmetical elements of  $A$. For all $i$:
   \begin{itemize}
      \item if $b_i^c=1$, then $p_i^c \in A$ and $\sigma_i^c \sqsubset p_i^c$,
      \item if $b_i^c=0$, then $(\forall p \in \mathsf{HYP}(A))(\sigma_i^c \not\sqsubset p)$ and $p_i^c=0^\nats$,
    \item $(\forall p,q \in \mathsf{HYP}(A))(p,q \in A \land \sigma_i^c \sqsubset p,q \implies p=q)$,
\end{itemize}
   \item for every $\sigma \in \baire$, if $(\forall i \in \nats)(\sigma_i^c \not\sqsubseteq \sigma)$ then $(\exists p,q \in A)(p \neq q \land \sigma \sqsubset p,q)$. 
\end{itemize}
For a one-step $\mathsf{mCB}$-certificate $c$ for $A$, the residue of $c$ is $\{p \in A : (\forall i \in \nats) (\sigma_i^c \not\sqsubseteq p)\}$.
\end{definition}

 By \cite[Lemma 6.8]{kihara_marcone_pauly_2020}, every nonempty non-perfect $A \in \negrepr{\Baire}$ has a one-step $\mathsf{mCB}$-certificate $c$: moreover the residue of $c$ is the Cantor-Bendixson derivative of $A$. Furthermore, if $\length{A} \leq \aleph_0$ then the one-step $\mathsf{mCB}$-certificate of $A$ is unique.
 
 \begin{definition}[{\cite[Definition 6.6]{kihara_marcone_pauly_2020}}, corrected]
 \thlabel{Globalmcb}
A \emph{global $\mathsf{mCB}$-certificate} for $A \in \negrepr{\Baire}$ is indexed by some initial $I \subseteq \nats$ and consists of a sequence $(c_n)_{n \in I}$ and a strict linear ordering $\lhd$ on $I$ with minimum $n_0$ (if nonempty) such that, denoting with $A_i$ the residue of $c_i$:
\begin{itemize}
    \item $c_{n_0}$ is a one-step $\mathsf{mCB}$-certificate for $A$;
    \item for every $n \in I \setminus \{n_0\}$,  $c_n$ is a one-step $\mathsf{mCB}$-certificate for $\underset{i \lhd n}{\bigcap} A_i$;  
    \item for all $p \in \mathsf{HYP}(A)$, if $p \in A$ then $(\exists i \in I)(p \in A_i)$;
    \item for every $n,m \in I$, if $n<m$ then $\sigma_{h(n)}^{c_n}< \sigma_{h(m)}^{c_m}$ where $h(n)\defas \min\{i: b_{i}^{c_n}=1\}$.
\end{itemize}
\end{definition}

Observe that \thref{Globalmcb} differs from \cite[Definition 6.6]{kihara_marcone_pauly_2020} by the addition of the last requirement, which ensures that \cite[Corollary 6.9]{kihara_marcone_pauly_2020} holds: if $\length{A}\leq \aleph_0$, then $A$ has a unique global $\mathsf{mCB}$-certificate. Indeed, the last condition forces a specific ordering (determined by the code of the first finite sequence that is the prefix of an isolated path) on the sequence of the one-step $\mathsf{mCB}$-certificates, avoiding the different codings of the sequence allowed in \cite[Definition 6.6]{kihara_marcone_pauly_2020} because it was possible to permute the ordering.

Notice that, assuming $\length{A}\leq \aleph_0$, the global $\mathsf{mCB}$-certificate of $A$ computes a list of the paths in $A$. Since the global $\mathsf{mCB}$-certificate of a countable closed set $A$ is a $\Sigma_1^{1,A}$ singleton, we obtain, as in \cite[Theorem 6.4]{kihara_marcone_pauly_2020}, that $\wList[\Baire]\weireducible\UCBaire$. 

The following lemmas involve the completion of a problem (see \S \ref{representedspaces} for its definition).

\begin{lemma}
\thlabel{Lemmacompletion}
The function $\function{F}{\nats\times(\Baire\cup \baire)}{\completion{\Baire}}$ such that for all $e \in \nats$ and $p \in \Baire\cup \baire$
$$F(e,p)=\begin{cases}
\Phi_e(p)& \text{ if } p \in \Baire \text{ and } p \in \dom(\Phi_e),\\
\bot & \text{ otherwise,}
\end{cases}$$
is computable.
\end{lemma}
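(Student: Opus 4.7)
The plan is to give an explicit algorithm that, given $e\in\nats$ and any name $q$ for $p\in\Baire\cup\baire$, uniformly produces a name for $F(e,p)\in\completion{\Baire}$. The key observation is about how $\repmap{\completion{\Baire}}$ works: each digit of a name $r$ either equals $0$ (a ``no progress yet'' marker) or equals $d+1$, contributing a single digit $d$ to the decoded sequence; the name $r$ decodes to an element of $\Baire$ precisely when infinitely many of its entries are nonzero, and to $\bot$ otherwise. The natural representation of $\Baire\cup\baire$ is by sequences in exactly the same style: a name for $p\in\Baire$ has infinitely many nonzero entries, a name for $p\in\baire$ is eventually $0$. This symmetry is what will make the stalling behaviour on both sides line up.

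The core construction proceeds stage by stage. At stage $s$, read $q[s]$, extract the prefix $\sigma_s\in\baire$ of $p$ that is certified so far, and simulate $\Phi_e$ for $s$ steps on input $\sigma_s$. Each time the simulation commits to a new output digit $d$, emit $d+1$; in every other stage, emit $0$. All operations used here are primitive recursive in the name $q$ and the index $e$, so the procedure clearly computes a function $\partialfunction{G}{\nats\times\Baire}{\Baire}$, and we only have to check that $\repmap{\completion{\Baire}}(G(e,q))=F(e,p)$.

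Correctness splits into two cases. If $p\in\Baire$ and $p\in\dom(\Phi_e)$, then for every $n$ enough of $p$ is eventually available and enough simulation steps are eventually run, so every digit of $\Phi_e(p)$ is produced at some stage; hence the emitted sequence has infinitely many nonzero entries, and the $\repmap{\completion{\Baire}}$-decoding spells out $\Phi_e(p)$ exactly. In every other case — either $p\in\baire$ (so $\sigma_s$ stabilises to $p$) or $p\in\Baire$ but $\Phi_e(p)\uparrow$ — the simulation produces only finitely many output digits in total, so the emitted sequence is eventually $0$ and decodes to $\bot$, matching $F(e,p)$. The only thing requiring care is that the representations on both sides are arranged so that the single clause ``emit $d+1$ or $0$'' handles finite inputs, divergent computations and convergent ones uniformly; once this bookkeeping is set up the argument is essentially a padding trick in $\completion{\Baire}$ and there is no real obstacle.
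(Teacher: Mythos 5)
Your proof is correct and matches the paper's: both build a name for $F(e,p)$ stage by stage, simulating $\Phi_e$ on the currently available prefix of $p$, writing $d+1$ whenever a new output digit $d$ is committed and padding with $0$ otherwise, so that the decoded output equals $\Phi_e(p)$ when $p\in\Baire\cap\dom(\Phi_e)$ and $\bot$ when the simulation eventually stalls. You spell out the bookkeeping (extracting the prefix from the name, using the stage number as the simulation time bound) that the paper encodes more tersely in its counter $t_s$, the number of nonzero output entries emitted so far.
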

\begin{proof}
A computable realizer $\Phi'$ for $F$ can be defined recursively as follows. Suppose we have defined $\Phi'(e,p)[s]$ and let $t_s=\length{\{t<s:\Phi_e(p)(t)>0\}}$. Then set
\[
\Phi'(e,p)(s)=
\begin{cases}
\Phi_e(p)(t_{s})+1 & \text{ if } \Phi_e(p)(t_{s})\downarrow \text{in less than } s \text{ steps},\\
0& \text{ otherwise. }
\end{cases}\]
It is easy to check that this works.
\end{proof}

\begin{lemma}
\thlabel{completionofwlist_below_pk}
$\completion{\wList[\Baire]}\weireducible \PK[\Baire]$.
\end{lemma}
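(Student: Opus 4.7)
The plan is to bootstrap from the reduction $\wList[\Baire]\weireducible\PK[\Baire]$, which follows by composing $\wList[\Baire]\weiequiv\UCBaire$ (\cite[Theorem 6.4]{kihara_marcone_pauly_2020}) with $\UCBaire\strongweireducible\PST[\Baire]\weireducible\PK[\Baire]$ (see \thref{UCBaireReducesToPST} and \thref{Ucbaire_below_pk}). Fix computable maps $\Phi_\wList$ and $\Psi_\wList$ witnessing this reduction. The key observation is that on an input $p$ whose image in $\completion{\negrepr{\Baire}}$ is $\bot$, or more generally lies outside $\dom(\wList[\Baire])$, any element of $\completion{(2\times\Baire)^\nats}$ is acceptable as an output; so I only need to be correct on bona fide $\wList[\Baire]$-inputs and may freely return $\bot$ otherwise.

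For the forward map $p\mapsto T'\in\tree$, I would take a disjoint union of a fixed computable default tree with the tree produced by applying $\Phi_\wList$ to the stream $p-1$, treating any divergence or malformed prefix on the right-hand summand as an empty contribution. This disjoint union ensures that $T'$ is always a valid tree, and in particular makes $p\mapsto T'$ a total computable map. Moreover, whenever $p-1$ is a genuine $\negrepr{\Baire}$ name for a countable closed set $A$, the right-hand summand realizes exactly the tree that the reduction $\wList[\Baire]\weireducible\PK[\Baire]$ would produce, and the perfect kernel of the whole union can be computably separated into the contributions of the two summands.

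The backward map is then essentially a direct invocation of \thref{Lemmacompletion}. I would fix an index $e$ for a partial computable $\Phi_e$ which, on input $p\ast P$ with $P=\PK[\Baire](\body{T'})$, first verifies that $p-1$ is a valid $\negrepr{\Baire}$ name, then extracts the component of $P$ belonging to the right-hand summand of $T'$, and finally applies $\Psi_\wList$ to produce a $\wList[\Baire]$-solution; if any step diverges, $\Phi_e$ diverges. The function $F(e,p\ast P)$ from \thref{Lemmacompletion} is total and computable, and its output, interpreted as a name in $\completion{(2\times\Baire)^\nats}$, yields the correct $\wList[\Baire]$-solution when $p-1$ names a countable closed set and $\bot$ in every other case. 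The main obstacle will be executing the forward construction cleanly: one must ensure $T'$ is a well-formed tree on arbitrary $p\in\Baire$ while still faithfully carrying the $\Phi_\wList$-information on valid inputs, which is precisely what the disjoint-union-with-default trick is designed to handle.
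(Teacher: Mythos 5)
Your route is genuinely different from the paper's: you treat $\wList[\Baire]\weireducible\PK[\Baire]$ as a black box and try to totalize its forward and backward functionals, whereas the paper never invokes that reduction as an opaque object. Instead it rebuilds the $\wList[\Baire]\weireducible\UCBaire$ reduction from \cite{kihara_marcone_pauly_2020} by hand: it uses the multi-retraceability of $\negrepr{\Baire}$ to turn an arbitrary $\completion{\negrepr{\Baire}}$-name into a genuine $\negrepr{\Baire}$-name $r(A)$, codes the set $G$ of global $\mathsf{mCB}$-certificates of $r(A)$ as a tree (this works for \emph{every} closed $A$, countable or not, since $G$ is always $\Sigma_1^{1,A}$), builds auxiliary trees $T^{*m}$ whose paths all extend the first $m$ coordinates of any member of $G$, feeds $\translateCantor(\exploded{T^{*m}})$ to $\parallelization{\PK[\Cantor]}$, and reads off a candidate $x\in\Baire\cup\baire$ to hand to the function $F$ of \thref{Lemmacompletion}. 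Your treatment of the backward map via \thref{Lemmacompletion} is in the same spirit as the paper's, so that half is fine.

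The gap is in your forward map. Writing $\Phi$ for the forward functional of the black-box reduction, you claim that disjoint union with a default tree, plus ``treating any divergence or malformed prefix on the right-hand summand as an empty contribution,'' makes $p\mapsto T'$ a total computable map into valid tree names. But $\Phi$ is only guaranteed to converge on names for \emph{countable} closed sets; on names for uncountable ones it may diverge, and divergence is not semi-decidable, so you cannot detect it and replace the right summand with anything at any finite stage. In the tree representation the bits $T'(\str{1}\sigma)$ must be emitted in a fixed order: commit to $0$ too eagerly and you corrupt $T'$ on a valid but slowly converging input; wait indefinitely and $T'$ is not a name. Disjoint union with a default tree does nothing to address this. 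To repair the argument you need precisely the ingredient the paper supplies: either invoke the multi-retraceability of $\negrepr{\Baire}$ (\cite[Lemma 5.1]{CompletionOfChoice}), or switch to the \emph{total} standard negative representation of $\negrepr{\Baire}$ so that the forward functional can stall while still emitting a valid name, or open the black box and observe that the forward map built from global $\mathsf{mCB}$-certificates can be chosen total because the set of certificates is $\Sigma_1^1$ for every $A$. As written, the claim that $p\mapsto T'$ is total and correct on genuine inputs is unjustified.
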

\begin{proof}
Since $\PK[\Baire]\weiequiv \PK[\Cantor]$ is parallelizable (\thref{Cantor_and_baire_same2,Pkbaire_parallelizable}), it suffices to show that $\completion{\wList[\Baire]}\weireducible \parallelization{\PK[\Cantor]}$.

By \cite[Lemma 5.1]{CompletionOfChoice}, $\negrepr{\Baire}$ is multi-retraceable, i.e.\ there is a computable multi-valued function $\multifunction{r}{\completion{\negrepr{\Baire}}}{{\negrepr{\Baire}}}$ such that its restriction to $\negrepr{\Baire}$ is the identity. Given $A\in \completion{\negrepr{\Baire}}$, let $G$ be the set of global $\mathsf{mCB}$-certificates of $r(A)\in \negrepr{\Baire}$. By \cite[Lemma 6.7]{kihara_marcone_pauly_2020},  $G$ is a $\Sigma_1^{1,A}$ subset of $\Baire$ and, in case $\length{r(A)}\leq \aleph_0$, by  \cite[Corollary 6.9]{kihara_marcone_pauly_2020}, $G$ is a singleton. 

Let $T$ be a name for $G$ as described in \S \ref{Background}, i.e.\  $T\in \tree$ is such that $G=\{x:(\exists y)(\forall n)(x[n]*y[n]\in T)\}$. Then, for every $m$, compute the tree
  $$T^{*m}:=\nats^m \cup \{\sigma:\str{\sigma(m),\dots,\sigma({\length{\sigma}-1})}\in T \land (\forall i<m)(m+2i<\length{\sigma}\implies \sigma(i)=\sigma(m+2i))\}.$$
Notice that $\body{T^{*m}}=\{\str{p(0),p(2),\dots,p(2m-2)}p:p \in \body{T}\}$;  therefore, every path in $\body{T^{*m}}$ begins with the first $m$ coordinates of some element of the analytic set $G$. In particular, if $G=\{p_0\}$ then every path in $\body{T^{*m}}$ extends $p_0[m]$. Let $U^m$ be a name for $\PK[\Cantor](\translateCantor(\exploded{T^{*m}}))$. If $G=\{p_0\}$ then every path in $\body{U^m}$ extends $\translateCantor(p_0[m]*\sigma)$ for some $\sigma \in 2^m$.
We now describe how to compute an element $x \in \Baire \cup \baire$ from the sequence $(U^m)_{m \in \nats}$ such that $x=p_0$ when $G=\{p_0\}$.

The procedure is similar to the one used in the proof of \thref{UCBaireReducesToPST}. Looking first at $U^1$, we search for $n_0$ such that
\begin{equation*}
    (\forall \tau \in 2^{n_0+1})(U_\tau^1 \in \illfounded_2 \implies \tau=0^{n_0}1).
\end{equation*}
Since $\illfounded_2$ is $\Sigma_1^0$ (\thref{Complexityresults}(i)), the above condition is $\Pi_1^{0}$. If we find such an $n_0$, we set $x(0)=n_0$ and we move to the next step. Notice that, in case $G=\{p_0\}$, the unique $n_0$ satisfying the above condition is $p_0(0)$.

Suppose we have computed $x[m-1]\defas n_0n_1\dots n_{m-1}$. We generalize the previous strategy to compute $n_m$. Let $$A_m=\{0^{n_0} 1 \xi_0 0^{n_1} 1 \xi_1 \dots \xi_{m-2} 0^{n_{m-1}} 1 \xi_{m-1} \in U^m: (\forall j<m) (\xi_j \in \{1,01\})\}.$$
We search for $n_m$ satisfying the $\Sigma_1^{0}$ property
  $$(\forall \sigma \in A_m)(\forall \tau \in 2^{n_m+1})(U_{\sigma\tau}^m \in \illfounded_2 \implies \tau=0^{n_m}1   )$$
As before, if we find such an $n_m$ we let $x(m)\defas n_m$ and move to the next step. Again, if $G=\{p_0\}$, the unique $n_m$ satisfying the above condition is $p_0(m)$.

The proof of \cite[Theorem 6.4]{kihara_marcone_pauly_2020} gives us a computable function $\Phi_e$ such that, in case $\length{r(A)}\leq \aleph_0$, $\Phi_e(x)$ is a name for a member of $\wList[\Baire](r(A))$. If $F$ is the function of \thref{Lemmacompletion} then, identifying the completion of the codomain of $\wList[\Baire]$ with $\completion{\Baire}$, we obtain $F(e,x) \in \completion{\wList[\Baire]}(r(A))$.

Summing up, if $A \in \negrepr{\Baire}$ is countable then $F(e,x)$ is a name in $\completion{\Baire}$ for an element of $\wList[\Baire](A)$. If instead $A \in \completion{\negrepr{\Baire}}$ does not belong to $\dom(\wList[\Baire])$, then $F(e,x)$ is anyway a name for some member of $\completion{\Baire}$.
\end{proof}

\begin{lemma}
\thlabel{wsclistreduciblepkbaire} 
$\wScList[\Baire] \weireducible \parallelization{\wfsierpinski} \times \parallelization{\completion{\wList[\Baire]}} .$
\end{lemma}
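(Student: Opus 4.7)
The plan is to exploit the following equivalence: writing $A = \body{T}$, a path $p \in A$ lies in the scattered part iff there exists $n$ with $\body{T_{p[n]}}$ countable. Indeed, if $p$ is in the perfect kernel then every basic neighborhood $\body{T_{p[n]}}$ of $p$ in $A$ is uncountable (its perfect subkernel gives uncountably many paths through $p[n]$), whereas if $p$ is scattered some such neighborhood avoids the perfect kernel and is therefore countable. So to list the scattered part it suffices to know, for each $\sigma \in \baire$, whether $T_\sigma \in \countable$, and, when it is, a list of $\body{T_\sigma}$.

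Concretely, given $T \in \tree$ I would first compute, uniformly in $\sigma \in \baire$, a tree $S_\sigma \in \tree$ such that $S_\sigma \in \wellfounded$ iff $T_\sigma \in \countable$; this is possible because $\countable$ is $\Pi^1_1$ by \thref{Complexityresults}(ii). Feeding $(S_\sigma)_{\sigma \in \baire}$ to $\parallelization{\wfsierpinski}$ then yields a sequence $(v_\sigma)_{\sigma \in \baire}$ with $v_\sigma = 1_\sierpinski$ exactly when $T_\sigma$ has countable body. In parallel, I would feed $(\body{T_\sigma})_{\sigma \in \baire}$, presented via the tree representation of $\negrepr{\Baire}$, to $\parallelization{\completion{\wList[\Baire]}}$, obtaining sequences $L_\sigma$. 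When $T_\sigma \in \countable$ we have $L_\sigma \in \wList[\Baire](\body{T_\sigma})$; otherwise $L_\sigma$ may be $\bot$ or contain junk.

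To assemble an element of $\wScList[\Baire](A)$, I fix a bijection $\nats \leftrightarrow \baire \times \nats$ and dovetail. Whenever the $\Sigma^0_1$ event $v_\sigma = 1_\sierpinski$ is witnessed at some stage, I know that $L_\sigma$ is a genuine element of $\wList[\Baire](\body{T_\sigma})$, and for every $j$ such that the $j$-th entry of $L_\sigma$ is of the form $(1,p)$ I enqueue $(1,p)$ for emission. The output at position $i$ is then either the next enqueued pair or the default pad $(0, 0^\nats)$ if the queue is currently empty. Correctness follows from the opening equivalence: every scattered path $p$ appears in $L_{p[n]}$ for the least $n$ with $T_{p[n]} \in \countable$ and is thus emitted with flag $1$; conversely, any path emitted with flag $1$ comes from some $L_\sigma$ with $T_\sigma$ countable, so it lies in $\body{T_\sigma} \subseteq A$ and is scattered. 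The main obstacle I expect is the bookkeeping: the streams $(v_\sigma)$ and $(L_\sigma)$ carry only semi-decidable and possibly $\bot$-valued information respectively, but the safe pad $(0, 0^\nats)$ lets the output stream be written monotonically without ever fabricating a spurious path.
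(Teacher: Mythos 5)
Your proof is correct and follows essentially the same route as the paper: compute, uniformly in $\sigma$, a tree $S(\sigma)$ well-founded iff $T_\sigma \in \countable$, feed $(S(\sigma))_\sigma$ to $\parallelization{\wfsierpinski}$ and $(\body{T_\sigma})_\sigma$ to $\parallelization{\completion{\wList[\Baire]}}$, and merge the lists $L_\sigma$ for which $\wfsierpinski(S(\sigma)) = 1_\sierpinski$ has been observed. The paper states the merging step tersely; your dovetailing argument with the $(0,0^\nats)$ padding spells out the bookkeeping it leaves implicit, and your opening characterization of the scattered part in terms of countability of $\body{T_{p[n]}}$ is exactly the fact the paper is implicitly invoking.
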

\begin{proof}

Let $T \in \tree$ be a name for an element of $\negrepr{\Baire}$ and recall that, by \thref{Complexityresults}(ii), $\{\sigma: T_\sigma \in \countable\}$ is $\Pi_1^{1,T}$. Hence, using \thref{Complexityresults}(i), we can compute $(S(\sigma))_{\sigma \in \baire} \in \tree^\nats$ such that $ S(\sigma) \in \wellfounded$ iff $T_\sigma \in \countable$. For any $\sigma \in T$, let $S(\sigma)$ and $\body{T_{\sigma}}$  be the inputs for the $\sigma$-th instance of $\wfsierpinski$ and  $\completion{\wList[\Baire]}$ respectively. Let $L_\sigma \in \completion{\wList[\Baire]}(\body{T_\sigma})$.

For any $\sigma$, when we see that $\wfsierpinski(S(\sigma))=1_{\sierpinski}$ then we know that $S(\sigma) \in \wellfounded$ and hence $T_\sigma \in \countable$: we need to include a list of $\body{T_\sigma}$ in $\wScList[\Baire](\body{T})$. We  compute a name for $L \in \wScList[\Baire](\body{T})$ combining all $L_\sigma$ such that $\wfsierpinski(S(\sigma))=1_{\sierpinski}$.
\end{proof}

\begin{theorem}
\thlabel{Pkbaire_equiv_wsclistbaire}
$\wScList[\Baire] \weiequiv \PK[\Baire]$.
\end{theorem}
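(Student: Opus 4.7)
The plan is to prove the equivalence by combining the four preceding lemmas; essentially no new construction is required. First, the direction $\PK[\Baire] \weireducible \wScList[\Baire]$ is exactly \thref{Pkreduciblewsclist}, so I would cite it and move on.

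For the nontrivial direction $\wScList[\Baire] \weireducible \PK[\Baire]$, I would start from \thref{wsclistreduciblepkbaire}, which gives
\[\wScList[\Baire] \weireducible \parallelization{\wfsierpinski} \times \parallelization{\completion{\wList[\Baire]}}.\]
The strategy is then to bound each of the two factors on the right by $\PK[\Baire]$ and use parallelizability to merge them. By \thref{Pkcantor_idPiSigma} we have $\parallelization{\wfsierpinski} \weiequiv \PK[\Baire]$, which takes care of the first factor. For the second, \thref{completionofwlist_below_pk} gives $\completion{\wList[\Baire]} \weireducible \PK[\Baire]$, so taking parallelizations yields $\parallelization{\completion{\wList[\Baire]}} \weireducible \parallelization{\PK[\Baire]} \weiequiv \PK[\Baire]$, where the equivalence comes from the (strong) parallelizability of $\PK[\Baire]$ established in \thref{Pkbaire_parallelizable}.

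Putting these together, we get
\[\wScList[\Baire] \weireducible \PK[\Baire] \times \PK[\Baire],\]
and one more appeal to \thref{Pkbaire_parallelizable} (since parallel products of two copies of a parallelizable problem reduce back to the problem itself) concludes $\wScList[\Baire] \weireducible \PK[\Baire]$.

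There is no real obstacle here: the lemmas \thref{Pkreduciblewsclist}, \thref{wsclistreduciblepkbaire}, \thref{completionofwlist_below_pk}, and \thref{Pkcantor_idPiSigma}, together with the parallelizability of $\PK[\Baire]$, were set up precisely so the equivalence follows by a short chain of reductions. The subtle work was done inside those lemmas, in particular in \thref{completionofwlist_below_pk} where the global $\mathsf{mCB}$-certificate machinery is used to list the countable fibers $\body{T_\sigma}$ and in \thref{wsclistreduciblepkbaire} where these lists are glued together via $\wfsierpinski$ applied to the trees witnessing that $T_\sigma \in \countable$.
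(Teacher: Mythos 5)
Your proof is correct and follows essentially the same route as the paper: both directions are dispatched by citing \thref{Pkreduciblewsclist} and then combining \thref{wsclistreduciblepkbaire}, \thref{Pkcantor_idPiSigma}, \thref{completionofwlist_below_pk}, and the parallelizability of $\PK[\Baire]$ from \thref{Pkbaire_parallelizable}. Nothing is missing; this is exactly the intended short chain of reductions.
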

\begin{proof}
The right-to-left direction is \thref{Pkreduciblewsclist}. For the opposite direction,
notice that, by \thref{Pkcantor_idPiSigma} and \thref{completionofwlist_below_pk}, we have that $\wfsierpinski,\completion{\wList[\Baire]}\weireducible \PK[\Baire]$, By \thref{wsclistreduciblepkbaire} and the fact that  $\PK[\Baire]$ is parallelizable  (\thref{Pkbaire_parallelizable}), we conclude that $\wScList[\Baire] \weireducible \PK[\Baire]$.
\end{proof}

We now study $\ScList[\Baire]$ and show that it lies strictly between $\PK[\Baire]$ and $\parallelization{\wf}$.
 
\begin{lemma}
\thlabel{wfustarsccountbaire}
$\wf^*\weireducible \ScCount[\Baire]$.
\end{lemma}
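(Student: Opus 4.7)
The plan is to reduce $\wf^*$ to $\ScCount[\Baire]$ by encoding the characteristic vector $(\wf(T_0),\ldots,\wf(T_{n-1}))$ of an input of $\wf^*$ as the binary expansion of the cardinality of the scattered part of a carefully built closed subset of $\Baire$. The key idea is that for each index $i<n$ I will build $2^i$ disjoint ``gadgets'', each of which contributes one isolated path precisely when $T_i$ is well-founded and contributes a perfect chunk (hence no isolated point) otherwise. Thus the total scattered part will have size $\sum_{i\st T_i\in\wellfounded}2^i$, from which the answer can be read off bit by bit.

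Concretely, for each $i<n$ and $k<2^i$, I would pick pairwise distinct positive natural numbers $m_{i,k}$ (say $m_{i,k}:=\pairing{i,k}+1$) as ``root labels'', and let $R_{i,k}\in\tree$ be the tree of all $\sigma\in\baire$ which are a prefix of $\str{m_{i,k}}\concat 0^\nats$ or a prefix of some $\str{m_{i,k}}\concat 0^j\concat \str{1}\concat \mu$ with $j\geq 0$ and $\mu\in\exploded{T_i}$. This $R_{i,k}$ is uniformly computable from $T_i$ since $\exploded{T_i}=T_i*\cantor$ is. Using that $\body{\exploded{T_i}}$ is empty when $T_i\in\wellfounded$ and perfect and nonempty when $T_i\in\illfounded$, one verifies that $\body{R_{i,k}}$ equals the singleton $\{\str{m_{i,k}}\concat 0^\nats\}$ in the well-founded case (a single isolated point), whereas in the ill-founded case $\body{R_{i,k}}$ is perfect: any path $\str{m_{i,k}}\concat 0^j\concat \str{1}\concat\mu$ with $\mu\in\body{\exploded{T_i}}$ is the limit of similar paths obtained by perturbing $\mu$ within the perfect set $\body{\exploded{T_i}}$, and $\str{m_{i,k}}\concat 0^\nats$ itself is approximated by $\str{m_{i,k}}\concat 0^j\concat \str{1}\concat\mu$ as $j\to\infty$.

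Next I would assemble the tree $R:=\bigcup_{i<n,\,k<2^i}R_{i,k}$. Because the labels $m_{i,k}$ are pairwise distinct, the bodies $\body{R_{i,k}}$ lie in pairwise disjoint clopen cylinders of $\Baire$, and hence the Cantor-Bendixson decomposition of $\body{R}$ is just the disjoint union of the decompositions of the pieces. The scattered part of $\body{R}$ is therefore the finite set $\{\str{m_{i,k}}\concat 0^\nats\st i<n,\ T_i\in\wellfounded,\ k<2^i\}$, of cardinality $s:=\sum_{i\st T_i\in\wellfounded}2^i\leq 2^n-1$. Consequently $\ScCount[\Baire](\body{R})=s+1$, and the binary expansion of $s$ recovers $(\wf(T_0),\ldots,\wf(T_{n-1}))$; since both the construction of $R$ and this decoding are uniformly computable in $n$ and the input sequence, we obtain $\wf^*\weireducible\ScCount[\Baire]$.

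The only delicate point is verifying that $\str{m_{i,k}}\concat 0^\nats$ genuinely fails to be isolated in $\body{R_{i,k}}$ when $T_i\in\illfounded$; this boils down to the fact that $\body{\exploded{T_i}}$ is nonempty whenever $\body{T_i}$ is, which supplies the witnesses $\mu$ needed to form the approximating paths $\str{m_{i,k}}\concat 0^j\concat\str{1}\concat\mu$ tending to $\str{m_{i,k}}\concat 0^\nats$.
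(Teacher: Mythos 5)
Your proof is correct and takes essentially the same approach as the paper's: you attach copies of $\exploded{T_i}$ along spine paths so that each spine is isolated exactly when $T_i$ is well-founded, and you use $2^i$ spines for index $i$ so that the cardinality of the scattered part encodes the bitvector $(\wf(T_0),\dots,\wf(T_{n-1}))$ in binary, read off via $\ScCount[\Baire]$. The only difference is cosmetic: the paper packs the $2^i$ spines for index $i$ into a single tree $S^i$ (the constant paths $j^\nats$ for $j<2^i$, each with $\exploded{T^i}$ grafted on) and then takes a disjoint union over $i$, whereas you spread them over $2^i$ separate gadgets $R_{i,k}$ rooted at distinct labels $m_{i,k}$.
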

\begin{proof}
Let $(T^m)_{m\leq n} \in \tree^n$ be an input for $\wf^*$. For every $T^m$ compute the tree $S^m \defas \{j^n\tau:j<2^m \land n \in \nats \land \tau \in \exploded{T^m}\} $.
Notice that if $T^m\in \wellfounded$ then $\exploded{T^m} \in \wellfounded$, and, in this case, $\length{\body{S^m}}=2^m$. On the other hand, if $T^m \in \illfounded$ then $\body{S^m}$ is perfect. Now let $k \defas \ScCount[\Baire]\left(\body{\disjointunion{m\leq n}{S^m}}\right)$. Notice that $k>0$ (as the scattered part of $\body{\disjointunion{m\leq n}{S^m}}$ is always finite) and
\[
k-1 = \sum_{T^m \in \wellfounded} 2^m.
\]
Hence, the binary expansion of $k-1$ contains the information about which $T^m$'s are well-founded.
\end{proof}

We do not know more about the Weihrauch degree of $\ScCount[\Baire]$ (see \thref{question:sccount}).

\begin{theorem}
\thlabel{Pkbaire_below_sclistbaire}
$\PK[\Baire]\strictlyweireducible\ScList[\Baire]$.
\end{theorem}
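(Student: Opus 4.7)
My plan is to handle the reduction $\PK[\Baire]\weireducible \ScList[\Baire]$ by invoking the characterization already in hand, and then to establish strictness by passing to first-order parts.

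For the reduction itself, I will simply observe that by \thref{Pkbaire_equiv_wsclistbaire} we have $\PK[\Baire]\weiequiv \wScList[\Baire]$, while $\wScList[\Baire]$ is manifestly reducible to $\ScList[\Baire]$ by forgetting the $\ScCount[\Baire]$ component: given any $(b_i,x_i)_{i\in\nats}\times k \in \ScList[\Baire](A)$, the first coordinate is already a valid answer for $\wScList[\Baire](A)$.

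For strictness I would argue by contradiction. Suppose $\ScList[\Baire]\weireducible \PK[\Baire]$. Since $\ScCount[\Baire]\weireducible \ScList[\Baire]$ trivially (project onto the second component) and $\wf^*\weireducible \ScCount[\Baire]$ by \thref{wfustarsccountbaire}, we get $\wf^*\weireducible \PK[\Baire]$. Because $\wf^*$ is first-order, this yields $\wf^*\weireducible \firstOrderPart{\PK[\Baire]}$, and by \thref{fop_pi11} the latter is $\weiequiv \codedChoice{\boldfacePi_1^1}{}{\nats}$. I will then parallelize: since parallelization is monotone under Weihrauch reducibility, $\parallelization{\wf^*}\weireducible \parallelization{\codedChoice{\boldfacePi_1^1}{}{\nats}}$, which by \thref{UCbaireisparallelization} is $\weiequiv \UCBaire$. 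On the other hand, from $\wf\weireducible \wf^*\weireducible \parallelization{\wf}$ one easily sees $\parallelization{\wf^*}\weiequiv \parallelization{\wf}$. Combining, $\parallelization{\wf}\weireducible \UCBaire$, and since $\CBaire \strongweireducible \parallelization{\wf}$ by \thref{cbairepica}, this yields $\CBaire\weireducible \UCBaire$, contradicting the strict reduction $\UCBaire\strictlyweireducible \CBaire$ from \thref{UCbaireisparallelization}.

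The only non-routine step is the contradiction chain at the end; all ingredients (the characterization of $\firstOrderPart{\PK[\Baire]}$, the behaviour of $\parallelization{\codedChoice{\boldfacePi_1^1}{}{\nats}}$, and the separation of $\UCBaire$ from $\CBaire$) are already available from earlier propositions, so the proof is essentially assembling them. The main subtlety to be careful with is to invoke the first-order part on the correct side: we need $\wf^*$ to be first-order (which it is, since its output is a finite binary string and hence codable as a natural number), so that the reduction $\wf^*\weireducible \PK[\Baire]$ can be routed through $\firstOrderPart{\PK[\Baire]}$.
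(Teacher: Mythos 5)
Your argument is correct, and the reduction direction coincides with the paper's (both invoke $\PK[\Baire]\weiequiv\wScList[\Baire]\weireducible\ScList[\Baire]$). For strictness, though, you take a detour the paper avoids. The paper argues directly: since $\wf$ is first-order, $\wf\weireducible\PK[\Baire]$ would give $\wf\weireducible\firstOrderPart{\PK[\Baire]}\weireducible\firstOrderPart{\CBaire}$, contradicting $\wf\not\weireducible\CBaire$ from \thref{cbairepica}; on the other hand $\wf\weireducible\wf^*\weireducible\ScCount[\Baire]\weireducible\ScList[\Baire]$, so $\wf$ witnesses the separation. You instead route $\wf^*$ through $\firstOrderPart{\PK[\Baire]}\weiequiv\codedChoice{\boldfacePi_1^1}{}{\nats}$, then parallelize to obtain $\parallelization{\wf}\weiequiv\parallelization{\wf^*}\weireducible\parallelization{\codedChoice{\boldfacePi_1^1}{}{\nats}}\weiequiv\UCBaire$, and derive a contradiction against $\UCBaire\strictlyweireducible\CBaire\strictlyweireducible\parallelization{\wf}$. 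Both routes are sound and rest on the same underlying obstruction (that $\firstOrderPart{\PK[\Baire]}$ is too weak to compute $\wf$); the paper's version is a little leaner because it never needs to parallelize or to invoke \thref{UCbaireisparallelization}, and it pinpoints a single first-order problem, namely $\wf$, that separates the two degrees, whereas your version establishes the separation only indirectly via a contradiction. One small remark: once you have $\parallelization{\wf}\weireducible\UCBaire$, you already have a contradiction with $\UCBaire\strictlyweireducible\parallelization{\wf}$, so the final step through $\CBaire$ is unnecessary (though harmless).
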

\begin{proof}
Since $\wScList[\Baire]\weireducible\ScList[\Baire]$ is trivial, \thref{Pkbaire_equiv_wsclistbaire} immediately implies the reduction.

For strictness, first notice that $\wf \not\weireducible \firstOrderPart{\PK[\Baire]}$: indeed, by \thref{cbairepica} and the fact that $\wf$ is first-order, we obtain that  $\wf \not\weireducible \firstOrderPart{\CBaire}$ and by \thref{fop_pi11}, we get that $\firstOrderPart{\PK[\Baire]}\weireducible \firstOrderPart{\CBaire}$. Hence,$\wf\not\weireducible \PK[\Baire]$. On the other hand, clearly $\ScCount[\Baire] \weireducible \ScList[\Baire]$ and  $\wf^* \weireducible \ScCount[\Baire]$ by \thref{wfustarsccountbaire}, so that $\wf\weireducible \ScList[\Baire]$. 
\end{proof}

Combining this with the fact that $\PK[\Baire]\weiincomparable \CBaire$ (\thref{Ucbaire_below_pk}), we immediately obtain that $\ScList[\Baire]\not\weireducible \CBaire$. On the other hand, we do not know if $\CBaire \strictlyweireducible \ScList[\Baire]$ or $\CBaire \weiincomparable \ScList[\Baire]$ (see \thref{questioncbaire}).

\begin{remark}
\thlabel{Pi11sets2}
By \thref{Complexityresults}(iii), given a tree $T$, $\{\sigma : \length{\body{T_\sigma}}=1\}$ is $\Pi_1^{1,T}$ if $T \in \tree$ and $\Pi_2^{0,T}$ if $T \in \tree_2$.  Let $\varphi(n,T)\defas(\exists \sigma_0,\dots,\sigma_{n-1})(\forall i \neq j< n)(\sigma_i \incomparable \sigma_j \land \length{\body{T_{\sigma_i}}}=1)$: this formula, asserting that $T$ has at least $n$ isolated points, is $\Pi_1^{1}$ if $T \in \tree$, and $\Sigma_3^0$ if $T \in \tree_2$. Notice that, by \thref{Isolated_paths}, the scattered part of $\body{T}$ has at least $n$ elements iff $\varphi(n,T)$ holds. Therefore, the scattered part of $\body{T}$ is infinite iff $(\forall n)(\varphi(n,T))$, which is $\Pi_1^{1}$ if $T \in \tree$ and $\Pi_4^0$ if $T \in \tree_2$.
\end{remark}

\begin{theorem}
\thlabel{sclist_below_pica}
$\ScList[\Baire] \strictlyweireducible \parallelization{{\wf}}$.
\end{theorem}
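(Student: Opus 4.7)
My plan is to decompose $\ScList[\Baire](A) = \wScList[\Baire](A) \times \ScCount[\Baire](A)$, reduce each factor to $\parallelization{\wf}$ and recombine by parallelizability for the forward direction; and to prove strictness by sandwiching $\ScList[\Baire]$ between the finer product $\PK[\Baire]\times\ustar{\wf}$ and $\parallelization{\wf}$. For the reduction, $\wScList[\Baire]\weiequiv\PK[\Baire]\weireducible\parallelization{\wf}$ holds by \thref{Pkbaire_equiv_wsclistbaire} and \thref{Pk_below__chipi}. For $\ScCount[\Baire]\weireducible\parallelization{\wf}$, I would use \thref{Pi11sets2}: both ``the scattered part of $\body{T}$ has cardinality at least $n$'' and ``the scattered part is infinite'' are $\Pi_1^1$ in $T\in\tree$, so one uniformly computes trees $S^\infty$ and $(S^n)_{n\geq1}$ whose well-foundedness captures each such property, and a single $\parallelization{\wf}$-call on the combined sequence decodes $\ScCount[\Baire](\body{T})$ (return $0$ if $\wf(S^\infty)=1$, otherwise the least $n$ with $\wf(S^n)=0$). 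Parallelizability of $\parallelization{\wf}$ then combines both factors.

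For the strictness, the first step is to observe $\ScCount[\Baire]\weireducible\ustar{\wf}$ by designing a Turing machine that first queries $\wf(S^\infty)$ and halts on answer $1$ (corresponding to $\ScCount=0$), otherwise reads $\wf(S^1),\wf(S^2),\dots$ until the first negative answer and then halts; on the true sequence of $\wf$-answers this always halts in finitely many steps, so the input is a legitimate instance of $\ustar{\wf}$, and the finitely many returned bits determine $\ScCount[\Baire]$. Hence $\ScList[\Baire]\weireducible \PK[\Baire]\times\ustar{\wf}$ via the same decomposition, and it suffices to prove $\parallelization{\wf}\not\weireducible\PK[\Baire]\times\ustar{\wf}$. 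The guiding intuition is that, by \thref{Pkcantor_idPiSigma}, $\PK[\Baire]\weiequiv\parallelization{\wfsierpinski}$, whose output representation only semi-decides each individual well-foundedness claim (positive $\sierpinski$-names require witnessing a nonzero digit, while negative ones are the default), while $\ustar{\wf}$ produces only finitely many decided bits. Given a putative reduction $(\Phi,\Psi)$, I would feed in a computable sequence $(T^i)_{i\in\nats}$ whose ill-founded indices are cofinite beyond the finite range covered by the $\ustar{\wf}$-output; for some coordinate $i_0$ in the uncovered cofinite set, $\Psi$ must commit to $\wf(T^{i_0})$ based on finite prefixes of $(T^i)$, of the negative representation of $\PK[\Baire]$'s output, and of $\ustar{\wf}$'s output, and I would construct by continuity two sequences $(T^i),(T^i)'$ agreeing on all such consulted prefixes but extending $T^{i_0}$ differently (one well-founded, one ill-founded), contradicting the correctness of $\Psi$ on coordinate $i_0$.

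The main obstacle will be the rigorous execution of this continuity/extension step: one must arrange that modifying $T^{i_0}$ beyond the prefix consulted by $\Phi$ leaves $\Phi((T^i))$ unchanged, so that any valid $\PK[\Baire]\times\ustar{\wf}$-output for $(T^i)$ remains valid for $(T^i)'$ and $\Psi$'s subsequent computation proceeds identically up to its halting step. A useful conceptual anchor is that $\Psi$, being Turing-continuous, can only extract $\boldfaceSigma_1^1$-style enumerable information from the negative representation of $\PK[\Baire]$'s output, which together with the finitely many decided bits coming from $\ustar{\wf}$ cannot suffice to uniformly decide the $\boldfacePi_1^1$-complete well-foundedness for infinitely many components of $(T^i)$.
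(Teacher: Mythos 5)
Your forward reduction is essentially the same as the paper's: decompose $\ScList[\Baire]\weireducible \wScList[\Baire]\times\ScCount[\Baire]$, use $\wScList[\Baire]\weiequiv\PK[\Baire]\weireducible\parallelization{\wf}$, and handle $\ScCount[\Baire]$ via the $\Pi_1^1$-complexity of the cardinality predicates from \thref{Pi11sets2}. Your observation that $\ScCount[\Baire]\weireducible\ustar{\wf}$ is also correct (and indeed the paper records this in the discussion of \thref{question:sccount}), and the resulting bound $\ScList[\Baire]\weireducible\PK[\Baire]\times\ustar{\wf}$ is valid.

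The strictness argument, however, has a genuine gap. Your diagonalization plan runs into the multi-valuedness of the $\PK[\Baire]$-codomain: a closed set has many negative names, and the backward functional $\Psi$ must produce a correct answer from \emph{every} such name. You cannot simply "fix the finite prefix consulted by $\Psi$ and modify $T^{i_0}$ beyond what $\Phi$ looked at," because $\Phi$ produces an infinite name (it eventually reads all of the input) and, more importantly, after the modification the old name need not remain a valid name for the new perfect kernel; you would have to control how the $\Pi_1^1$ set $\{\sigma : T'_\sigma \in \countable\}$ (which the name enumerates) changes, and there is no continuity control on that. Similarly, the "cofinite set of uncovered indices" is not well-defined: the finitely many $\wf$-queries issued by $\ustar{\wf}$ are trees chosen by $\Phi$ and can each encode information about the whole sequence $(T^i)$, not a bounded set of coordinates. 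Your concluding intuition — that a $\Sigma_1^1$-style enumeration plus finitely many bits cannot decide a $\Pi_1^1$-complete family — is correct in spirit, but turning it into a proof is exactly what the deterministic-part machinery is for. The paper's argument does this cleanly: it writes $\ScList[\Baire]\weireducible\wScList[\Baire]*\ScCount[\Baire]$, applies $\deterministicPart(f*g)\weireducible\deterministicPart(f)*g$ together with $\deterministicPart(\wScList[\Baire])\weiequiv\UCBaire$ (\thref{Detpart_pkbaire}), and then observes that $\UCBaire*\ScCount[\Baire]$ always has hyperarithmetical solutions while $\parallelization{\wf}$ (being single-valued, so equal to its own deterministic part) does not. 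I would recommend replacing your continuity step with this deterministic-part argument, since your route does not appear to close without it.
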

\begin{proof}
To prove the reduction notice that $\ScList[\Baire] \weireducible \ScCount[\Baire] \times \wScList[\Baire]$. By \thref{Pkbaire_equiv_wsclistbaire} and  \thref{Pk_below__chipi}, $\wScList[\Baire]\weiequiv\PK[\Baire]\strictlyweireducible\widehat{\wf}$. As $\widehat{\wf}$ is  clearly parallelizable, it suffices to prove that $\ScCount[\Baire]\weireducible\widehat{\wf}$. Let $T \in \tree$ be a name for an input of $\ScCount[\Baire]$: by \thref{Pi11sets2} we can compute $(S^n)_{n \in \nats} \in \tree^\nats$ such that $S^0 \in \wellfounded$ iff the scattered part of $\body{T}$ has infinitely many elements and,for $n>0$, $S^n \in \wellfounded$ iff the scattered part of $\body{T}$ has at least $n$ elements. Then,
$$\ScCount[\Baire](\body{T})=\begin{cases}
0 & \text{if } \wf(S^0)=1,\\
\min\{n>0:\wf(S^{n})=0\} & \text{if } \wf(S^0)=0.
\end{cases}$$

For strictness, notice that $\ScList[\Baire]\weireducible \wScList[\Baire]*\ScCount[\Baire]$. By Theorem 3.9 of \cite{goh_pauly_valenti_2021}, $\deterministicPart(f*g)\weireducible\deterministicPart(f)*g$ and so 
$$\deterministicPart(\wScList[\Baire]*\ScCount[\Baire])\weireducible\deterministicPart(\wScList[\Baire])*\ScCount[\Baire]\weiequiv \UCBaire*\ScCount[\Baire],$$
where the equivalence follows from \thref{Pkbaire_equiv_wsclistbaire} and \thref{Detpart_pkbaire}. Since the output of $\ScCount[\Baire]$ is a natural number and the solution of $\UCBaire$ is always hyperarithmetical relative to the input (\cite[Corollary 3.4]{kihara_marcone_pauly_2020}), while $\parallelization{\wf}$ has instances with no hyperarithmetical solutions in the input, we conclude that $\deterministicPart(\parallelization{\wf})\weiequiv\parallelization{\wf}\not\weireducible\UCBaire*\ScCount[\Baire] $ (the equivalence is immediate as $\parallelization{\wf}$ is single-valued). Therefore, $ \parallelization{\wf} \not\weireducible\wScList[\Baire]*\ScCount[\Baire]$ and, a fortiori, $\parallelization{\wf} \not \weireducible \ScList[\Baire]$.
\end{proof}

We now move our attention to listing problems of the scattered part of closed subsets of Cantor space.  From \thref{Pk_below__chipi} and \thref{Pkbaire_equiv_wsclistbaire} notice that $\wf\weireducible\lpo*\wScList[\Baire]$. As the next lemma shows, to compute $\wf$ it suffices to compose $\wScList[\Cantor]$ with a function slightly stronger than $\lpo$.

\begin{lemma}
    \thlabel{Wsclistreacheswf}
    $\wf \weireducible \lpo'*\wScList[\Cantor]$
\end{lemma}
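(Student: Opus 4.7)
The approach is to compute from $T$ a binary tree $S \in \tree_2$ such that the scattered part of $\body{S}$ is infinite if and only if $T \in \wellfounded$, and then use $\lpo'$ to decide this $\Pi_2^0$ property of the $\wScList[\Cantor]$ output.

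For the construction I would start from the recipe of \thref{Pkreduciblewsclist}: in that lemma, $\wfsierpinski \weireducible \wScList[\Baire]$ is witnessed by $\binarydisjointunion{i \in \nats}{\exploded{T}}$, with ill-foundedness of $T$ yielding a perfect body (empty scattered part) and well-foundedness yielding a countable body (nonempty scattered part). To land in $\tree_2$, I would translate via $\translateCantor$ and take $S = \binarydisjointunion{i \in \nats}{\translateCantor(\exploded{T})}$, which is binary by \thref{Disjoint_union}. Using \thref{AllPropertiesOfTranslation}(4), if $T \in \wellfounded$ then $\body{\translateCantor(\exploded{T})}$ is countable (all paths eventually zero), so the disjoint union contributes infinitely many isolated paths; if $T \in \illfounded$ then $\body{\translateCantor(\exploded{T})}$ contains a perfect set (the $\translateCantor$-images of paths through $\exploded T$), and after suitable preprocessing of $T$ — for instance first replacing $T$ by $\translateCantor(T)$ so that branching is uniformly binary and one can precisely control how many nodes of $\exploded T$ have only finitely many children — the residual eventually-zero paths give rise to only finitely many isolated points.

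Given the list $L = (b_i, x_i)_{i \in \nats} \in (2 \times \Cantor)^\nats$ returned by $\wScList[\Cantor]$, the predicate ``$\{x_i : b_i = 1\}$ is infinite'' expands as
\[
(\forall n)(\exists i_1 < \cdots < i_n)\bigl(\bigwedge\nolimits_k b_{i_k}=1 \,\land\, \bigwedge\nolimits_{k<l} x_{i_k} \neq x_{i_l}\bigr).
\]
Since distinctness in $\Cantor$ is $\Sigma_1^0$, the matrix is $\Sigma_1^0$ in the list, and the full predicate is $\Pi_2^0$. By the analysis of $\lpo^{(n)}$ recalled around \thref{limdoesnotreachucbaire}, $\lpo'$ exactly decides $\Pi_2^0$ properties of its input, so a single invocation suffices; from the output bit, $\wf(T)$ is read off.

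The main obstacle is the fine construction of $S$: in the Baire-space argument of \thref{Pkreduciblewsclist}, ill-foundedness of $T$ directly forces $\body{\exploded{T}}$ to be perfect and hence empties the scattered part, whereas in the Cantor-space translation via $\translateCantor$, nodes $\tau \in \exploded{T}$ with only finitely many children always contribute an isolated path $\translateCantor(\tau)0^\nats$ (cf.\ \thref{Translation}). Arranging that in the ill-founded case there are only finitely many such surviving isolated points, while in the well-founded case there remain infinitely many, is what drives the reduction into $\lpo' * \wScList[\Cantor]$ rather than $\lpo * \wScList[\Cantor]$: discriminating ``finitely many'' from ``infinitely many'' isolated paths is one quantifier more complex than discriminating ``none'' from ``some''.
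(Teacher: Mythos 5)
Your overall strategy — compute a binary tree $S$ whose scattered part is infinite iff $T \in \wellfounded$, then decide that $\Pi_2^0$ property via $\lpo'$ — is not the paper's, and I do not believe it can be made to work with the tree $S = \binarydisjointunion{n \in \nats}{\translateCantor(\exploded{T})}$ that you (and the paper) build. The problem is that when $T$ is \emph{ill-founded} the scattered part of $\body{S}$ can already be infinite. Take $T = \{0^n : n \in \nats\}$, which is ill-founded, and note that $\exploded{T}$ is pruned — so the ``nodes with only finitely many children'' heuristic you flag is not the actual obstacle. Nevertheless, for every $\sigma \in \exploded{T}$ the eventually-zero path $\translateCantor(\sigma)0^\nats$ is \emph{isolated} in $\body{\translateCantor(\exploded{T})}$: any extension $\translateCantor(\sigma)0^a1\cdots$ with $a \geq 1$ would decode under $\translateBaire$ to a node of $\exploded{T}$ whose next $T$-coordinate digit equals $a \geq 1$, impossible since $T$ has only $0$'s. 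So there are infinitely many isolated paths, they survive into $\body{S}$, and your $\Pi_2^0$ test would declare $T$ well-founded. Your proposed ``preprocessing'' is left unspecified, and it does not address the real cause: the surplus of isolated eventually-zero paths is an artefact of the $\baire \to \cantor$ coding itself, not of insufficient branching in $T$.

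The paper's proof uses the same $S$ but asks a different, local question: not ``is the scattered part infinite?'' but ``is the specific path $0^\nats$ in the scattered part?'' For this $S$, the path $0^\nats$ lies in the perfect kernel iff every cone above $0^m$ has uncountable body, which by \thref{Disjoint_union} holds iff $\body{\translateCantor(\exploded{T})}$ is uncountable iff $T \in \illfounded$. Hence $T \in \wellfounded \iff (\exists i)(b_i = 1 \land p_i = 0^\nats)$, a $\Sigma_2^0$ condition on the output of $\wScList[\Cantor]$, and one $\lpo'$ query suffices. The move you are missing is localizing the test to a single, explicitly known path rather than trying to control the global cardinality of the scattered part, which this construction genuinely does not control.
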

\begin{proof}
   Given an input $T\in \tree$ for $\wf$, we can compute $S\defas\binarydisjointunion{n \in \nats}{(\translateCantor(\exploded{T}))}$. Let $(b_i,p_i)_{i \in \nats} \in \wScList[\Cantor](S)$. Then
\begin{align*}
    T \in \wellfounded  & \iff \exploded{T}\in \wellfounded\\
    & \iff \translateCantor(\exploded{T}) \in \countable_2\\
    & \iff (\exists i)(b_i=1 \land p_i=0^\nats).
\end{align*}  
The last condition is $\Sigma_2^{0}$ and so $\lpo'$ suffices to establish from $(b_i,p_i)_{i \in \nats}$ whether $T \in \wellfounded$.
\end{proof}

\begin{lemma}
\thlabel{Fop_wsclistcantor}
$\codedChoice{\boldfacePi_2^0}{}{\nats}\weiequiv \firstOrderPart{\wScList[\Cantor]}$.
\end{lemma}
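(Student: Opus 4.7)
The plan is to prove the two directions of the equivalence separately.

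For the reduction $\codedChoice{\boldfacePi_2^0}{}{\nats} \weireducible \firstOrderPart{\wScList[\Cantor]}$, the strategy is to exploit the $\Pi_2^0$-completeness of $\uniquebranch_2$ from \thref{Complexityresults}(iii). Given a nonempty $\Pi_2^0$ set $A\subseteq\nats$, I build computably (in $n$ and the $\Pi_2^0$-description of $A$) a binary tree $T^n\in\tree_2$ that follows the main path $0^\nats$ while searching for a failure of $A$-membership at each level, and branches into the full binary subtree once such a failure is detected; this makes $T^n\in\uniquebranch_2$ precisely when $n\in A$, and forces $\body{T^n}$ to be perfect whenever $n\notin A$. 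Taking $T\defas\binarydisjointunion{n\in\nats}{T^n}\in\tree_2$, the points $0^n 1 0^\nats$ for $n\in A$ become isolated paths of $\body{T}$ and hence belong to the scattered part, while the paths arising from $n\notin A$ assemble into perfect subsets lying inside the perfect kernel. Feeding $T$ to $\wScList[\Cantor]$ and scanning its output $(b_i,p_i)_{i\in\nats}$ for a pair with $b_i=1$ and some $1$ appearing in $p_i$ (which must exist because $A\neq\emptyset$), I output the position of the first $1$ in $p_i$ as the required element of $A$; the extraction is first-order.

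For the opposite direction $\firstOrderPart{\wScList[\Cantor]} \weireducible \codedChoice{\boldfacePi_2^0}{}{\nats}$, I adapt the template of \thref{fop_wlist}. Fix a first-order $f$ with $f\weireducible\wScList[\Cantor]$ via $\Phi,\Psi$, a name $p$ for an input $x$, and let $T\defas\Phi(p)\in\tree_2$. I define $\mathbf{Prefixes}$ as the set of pairs $(\tau,\sigma)\in\cantor\times\cantor$ (coded as natural numbers) such that $\length{\tau}=\ell_\sigma$, $\Psi(p[\sigma],\pairing{\tau(i),\pi_i(\sigma)}_{i<\ell_\sigma})(0){\downarrow}$ in $\length{\sigma}$ steps, and $T_{\pi_i(\sigma)}\in\uniquebranch_2$ for every $i<\ell_\sigma$ with $\tau(i)=1$. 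Since $\uniquebranch_2$ is $\Pi_2^0$ by \thref{Complexityresults}(iii) and the remaining clauses are decidable, $\mathbf{Prefixes}$ is a $\Pi_2^{0,T}$ subset of $\nats$ and is therefore a valid instance of $\codedChoice{\boldfacePi_2^0}{}{\nats}$. Soundness is clear: if $(\tau,\sigma)\in\mathbf{Prefixes}$, then every committed prefix $\pi_i(\sigma)$ extends to a unique isolated path of $\body{T}$, which by \thref{Isolated_paths} lies in the scattered part; so $(\tau(i),\pi_i(\sigma))_{i<\ell_\sigma}$ can be completed to a valid $L\in\wScList[\Cantor](\body{T})$, and since $f$ is first-order, $\Psi(p[\sigma],\cdot)(0)\in f(x)$.

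The main obstacle is proving that $\mathbf{Prefixes}$ is nonempty. The plan is to exploit the density of isolated paths in the scattered part of $\body{T}$ (\thref{Isolated_paths}) to construct a distinguished valid list $L\in\wScList[\Cantor](\body{T})$ whose initial segment commits only to isolated paths: I enumerate the isolated scattered paths repeatedly at the early positions with $b_i=1$, pad the remaining positions with $(0,0^\nats)$, and defer any non-isolated scattered paths to very late positions. By inserting enough padding one guarantees that the halting stage of $\Psi$ on this $L$ falls inside the ``clean'' initial segment and that every revealed prefix $\pi_i(\sigma)$ with $\tau(i)=1$ has already reached the depth at which $T_{\pi_i(\sigma)}\in\uniquebranch_2$; the resulting $(\tau,\sigma)$ then witnesses $\mathbf{Prefixes}\neq\emptyset$. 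Invoking $\codedChoice{\boldfacePi_2^0}{}{\nats}$ to pick such a $(\tau,\sigma)$ and applying $\Psi$ to the corresponding encoded prefix yields an element of $f(x)$.
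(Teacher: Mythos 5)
Your decomposition into the two directions, the use of $\binarydisjointunion{n\in\nats}{T^n}$, the definition of $\mathbf{Prefixes}$ via $\ell_\sigma$, $\pi_i(\sigma)$ and $\uniquebranch_2$, and the soundness argument all match the paper's proof. In the direction $\codedChoice{\boldfacePi_2^0}{}{\nats}\weireducible\firstOrderPart{\wScList[\Cantor]}$, the informal description of $T^n$ (\lq\lq branches into the full binary subtree once such a failure is detected\rq\rq) is imprecise, since failure of a $\Pi_2^0$ condition is a $\Sigma_2^0$, not $\Sigma_1^0$, event and cannot be detected at a finite stage; nonetheless trees with the required dichotomy ($\body{T^n}=\{0^\nats\}$ iff $n\in A$, and $\body{T^n}$ perfect otherwise) do exist and can be built uniformly, for instance as in the paper from a sequence $(p_n)$ with $n\in A\iff(\exists^\infty i)(p_n(i)=0)$, so this direction goes through once the construction is made precise.

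The genuine gap is in your nonemptiness argument. You try to pick a distinguished $L\in\wScList[\Cantor](\body{T})$ and claim that inserting enough padding guarantees that when $\Psi$ halts, every revealed prefix $\pi_i(\sigma)$ with $\tau(i)=1$ already satisfies $T_{\pi_i(\sigma)}\in\uniquebranch_2$. This cannot be guaranteed: the halting stage of $\Psi$, and hence the depth to which each column of $L$ has been revealed, is determined by $\Psi$ and not by your choice of $L$. A realizer may legitimately halt as soon as it sees a single bit $b_{i_0}=1$, at which point $\pi_{i_0}(\sigma)$ has length $0$ or $1$ and $T_{\pi_{i_0}(\sigma)}$ is in general not in $\uniquebranch_2$; rearranging or padding $L$ only moves which column triggers halting, it does not deepen the committed prefix at that column. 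What is actually needed (and what the paper does) is an extension step: start from an arbitrary $L$, let $\Psi$ halt producing some pair $(\tau,\sigma)$, for each $i<\ell_\sigma$ with $\tau(i)=1$ use \thref{Isolated_paths} to find $\xi_i\sqsupseteq\pi_i(\sigma)$ with $T_{\xi_i}\in\uniquebranch_2$, and then extend $(\tau,\sigma)$ to a longer $(\tau',\sigma')$ in which $\pi_i(\sigma')\sqsupseteq\xi_i$ for those $i$ and $\tau'(i)=0$ for the newly started columns. Since $\Psi$ has already halted on a prefix of the corresponding input, $(\tau',\sigma')$ lies in $\mathbf{Prefixes}$. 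Without this extension, nonemptiness is not established and the reduction to $\codedChoice{\boldfacePi_2^0}{}{\nats}$ is not proved.
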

\begin{proof}

For the left-to-right direction, observe that  $\wScList[\Cantor]$ is parallelizable (\thref{Wsclist_parallelizable}), hence we show that $\codedChoice{\boldfacePi_2^0}{}{\nats} \weireducible\parallelization{\wScList[\Cantor]}$. An input for $\codedChoice{\boldfacePi_2^0}{}{\nats}$ is a nonempty set $A \in \boldfacePi_2^0(\nats)$. We can uniformly find a sequence $(p_n)_{n \in \nats}$ of elements of $\Cantor$  such that $n \in A \iff (\exists^{\infty} i)(p_n(i)=0)$.

For every $n$, let 
\[
T^n\defas \{\sigma \in \cantor: (\forall i <\length{\sigma})(p_n(i)=0\implies (\forall j<i)( \sigma(j)=0 )\}.
\]
Notice that if $(\exists^{\infty} i)(p_n(i)=0)$ then $\body{T^n}=\{0^\nats\}$, while $\body{T^n}$ is perfect otherwise.

Given $((b_{i,n},p_{i,n})_{i \in \nats})_{n \in \nats}\in \parallelization{\wScList[\Cantor]}((T^n)_{n \in \nats})$ notice that, for every $n$, $n \in A$ iff there exists $i$ such that $b_{i,n}=1$. Hence, we can find $n \in A$ simply by searching for a pair $i,n$ such that $b_{i,n}=1$.

For the other direction we use \thref{prop:prefixes}. Given given $T \in \tree_2$ let $U(T)$ be the set of $\rho \in \baire$ that, when read as the dove-tailing of the strings $(b_i\sigma_i)_{i< m}$ for some $m \in \nats$ satisfy $(\forall i < m)(b_i=1 \implies T_{\sigma_i} \in \uniquebranch_2)$.
Then $U(T) \subseteq \mathbf{Prefixes}(T)$ and $U(T)$ is uniformly $\Pi_2^{0,T}$ because $\uniquebranch_2$ is a $\Pi_2^0$ set (\thref{Complexityresults}(iii)). 

We now check that $U(T)$ satisfy the last hypothesis of  \thref{prop:prefixes}.
For any $q\in\wScList[\Cantor](\body{T})$ and any $n$ we read $q[n]$ as the dove-tailing of $(b_i\sigma_i)_{i< m}$ for some $m \in \nats$. 
If $b_i=1$ then $\sigma_i$ is a prefix of a member of the scattered part of $\body{T}$ and, by \thref{Isolated_paths}, there exists $\xi_i \sqsupseteq \sigma_i$ such that $T_{\xi_i} \in \uniquebranch_2$. 
Let $\rho \sqsupseteq q[n]$ be the dove-tailing of $(b'_i\sigma'_i)_{i<m'}$ (for some $m' \geq m$) such that $(\forall i<m)(b_i=1 \implies \sigma'_i \sqsupseteq \xi_i)$ and $(\forall i<m')(i \geq m \implies b'_i=0)$. Clearly $\rho \in U(T)$ and this concludes the proof.
\end{proof}



We collect in the next theorem our results about the problems of listing the scattered part of a closed set; these results are also summarized in Figure \ref{Figureslist}.

\begin{theorem}
\thlabel{sclistcantor_summary}
The following relations hold:
\begin{enumerate}[(i)]
    \item $ \List[\Cantor,< \omega],\wList[\Cantor] \strictlyweireducible \wScList[\Cantor]$, while $\List[\Cantor]$ and $\UCBaire$ are both Weihrauch incomparable with $\wScList[\Cantor]$;
    \item $\List[\Cantor],\wScList[\Cantor]\strictlyweireducible\ScList[\Cantor]$ and $\UCBaire\weiincomparable\ScList[\Cantor]$;
    \item  $\UCBaire,\ScList[\Cantor]\strictlyweireducible\wScList[\Baire]\strictlyweireducible\ScList[\Baire]$ while $\CBaire\weiincomparable \wScList[\Baire]$ and $\ScList[\Baire]\not\weireducible \CBaire$.
\end{enumerate}
\end{theorem}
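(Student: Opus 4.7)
My plan organizes the proof around three tools: (a) countable closed sets equal their scattered parts, driving the positive reductions; (b) first-order parts, since Cantor-space problems produce arithmetical first-order parts while Baire-space ones reach $\codedChoice{\boldfacePi_1^1}{}{\nats}$; and (c) a parallelization/compositional-product argument establishing $\wScList[\Cantor] \not\weireducible \UCBaire$. The positive reductions $\List[\Cantor,<\omega], \wList[\Cantor] \weireducible \wScList[\Cantor]$ and $\List[\Cantor] \weireducible \ScList[\Cantor]$ are immediate because the scattered part of a countable closed set is the whole set; $\wScList \weireducible \ScList$ in both spaces is a projection. For (iii), $\UCBaire \weireducible \wScList[\Baire]$ factors as $\UCBaire \weireducible \PST[\Baire] \weireducible \PK[\Baire] \weiequiv \wScList[\Baire]$ by \thref{UCBaireReducesToPST} and \thref{Pkbaire_equiv_wsclistbaire}. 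For $\ScList[\Cantor] \weireducible \wScList[\Baire]$, I decompose $\ScList[\Cantor] \weireducible \wScList[\Cantor] \times \ScCount[\Cantor]$: the first factor embeds into $\wScList[\Baire]$ via $\Cantor \subseteq \Baire$, and $\ScCount[\Cantor]$ is first-order with arithmetical complexity (by \thref{Pi11sets2}), so it reduces to $\firstOrderPart{\PK[\Baire]} \weiequiv \codedChoice{\boldfacePi_1^1}{}{\nats}$ (\thref{fop_pi11}); parallelizability of $\wScList[\Baire]$ (\thref{Pkbaire_parallelizable}) combines the factors.

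\textbf{Strictness from first-order parts.} By \thref{Fop_wsclistcantor} and \thref{fop_wlist} we have $\firstOrderPart{\wScList[\Cantor]} \weiequiv \codedChoice{\boldfacePi_2^0}{}{\nats}$ and $\firstOrderPart{\wList[\Cantor]} \weiequiv \codedChoice{}{}{\nats}$; a $\mathbf{Prefixes}$-style analysis as in the proof of \thref{Summary_list_cantor} places $\firstOrderPart{\List[\Cantor]}$ at the $\codedChoice{\boldfacePi_3^0}{}{\nats}$ level via the counting subproblem. The separations from \thref{limdoesnotreachucbaire} then deliver the strictness claims in (i) and (ii) concerning $\wList[\Cantor], \List[\Cantor,<\omega], \List[\Cantor]$ as well as $\List[\Cantor] \not\weireducible \wScList[\Cantor]$. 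The non-reductions $\UCBaire \not\weireducible \wScList[\Cantor]$ and $\UCBaire \not\weireducible \ScList[\Cantor]$ follow from $\firstOrderPart{\UCBaire} \weiequiv \codedChoice{\boldfacePi_1^1}{}{\nats}$ (\thref{Fopcbaire}) being properly analytical while the Cantor first-order parts are arithmetical; the same complexity gap gives $\wScList[\Baire] \not\weireducible \ScList[\Cantor]$, needed for strictness of $\ScList[\Cantor] \strictlyweireducible \wScList[\Baire]$ in (iii).

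\textbf{The key non-reduction $\wScList[\Cantor] \not\weireducible \UCBaire$.} From \thref{Wsclistreacheswf}, $\wf \weireducible \lpo' * \wScList[\Cantor]$, so there exist $g_1 \weireducible \lpo'$ and $h_1 \weireducible \wScList[\Cantor]$ with $\wf$ reducible to their composition (modulo a computable glue). Parallelizing, using $\parallelization{\lpo'} \weiequiv \mflim'$ and parallelizability of $\wScList[\Cantor]$ (\thref{Wsclist_parallelizable}), gives $\parallelization{\wf} \weireducible \mflim' * \wScList[\Cantor]$. If $\wScList[\Cantor] \weireducible \UCBaire$, then since $\mflim' \weireducible \UCBaire$ (via $\lpo' \weireducible \UCBaire$ plus parallelizability of $\UCBaire$) and $\UCBaire$ is closed under compositional product, we would obtain $\parallelization{\wf} \weireducible \mflim' * \UCBaire \weireducible \UCBaire$, contradicting $\UCBaire \strictlyweireducible \parallelization{\wf}$ (from \thref{UCbaireisparallelization} and \thref{cbairepica}). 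Chaining with $\List[\Cantor] \weireducible \UCBaire$ (\thref{Summary_list_cantor}) yields $\wScList[\Cantor] \not\weireducible \List[\Cantor]$, and from $\wScList[\Cantor] \weireducible \ScList[\Cantor]$ also $\ScList[\Cantor] \not\weireducible \UCBaire$.

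\textbf{Remaining items in (iii).} $\UCBaire \strictlyweireducible \wScList[\Baire]$ uses $\wScList[\Baire] \weiequiv \PK[\Baire] \weiincomparable \CBaire$ (\thref{Ucbaire_below_pk}) together with $\UCBaire \weireducible \CBaire$; $\wScList[\Baire] \strictlyweireducible \ScList[\Baire]$ is \thref{Pkbaire_below_sclistbaire} transported through $\wScList[\Baire] \weiequiv \PK[\Baire]$; $\CBaire \weiincomparable \wScList[\Baire]$ is again \thref{Ucbaire_below_pk} through the same equivalence; and $\ScList[\Baire] \not\weireducible \CBaire$ follows by contraposition from $\wScList[\Baire] \weireducible \ScList[\Baire]$ and $\wScList[\Baire] \not\weireducible \CBaire$. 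The main obstacle is the compositional-product argument in the previous paragraph: it requires carefully coordinating parallelization with the compositional product and invoking closure of $\UCBaire$ under $*$, rather than any direct complexity or diagonalization bound.
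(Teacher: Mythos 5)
Your plan follows the same architecture as the paper's proof: positive reductions from the observation that a countable closed set equals its scattered part, first-order parts as the main separating invariant, the compositional-product argument via \thref{Wsclistreacheswf} for $\wScList[\Cantor] \not\weireducible \UCBaire$, and the decomposition $\ScList[\Cantor] \weireducible \wScList[\Cantor] \times \ScCount[\Cantor]$ with $\ScCount[\Cantor]$ pushed into $\firstOrderPart{\PK[\Baire]}$. Your parallelization detour in the compositional-product argument (going to $\parallelization{\wf} \weireducible \mflim' * \wScList[\Cantor]$) is valid but unnecessary: since $\UCBaire$ is closed under $*$ and $\lpo' \weireducible \UCBaire$, one already has $\wf \not\weireducible \lpo' * \UCBaire$ directly from $\wf \not\weireducible \UCBaire$, which is what the paper does.

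There is, however, a genuine gap around $\List[\Cantor] \not\weireducible \wScList[\Cantor]$, which is load-bearing for the incomparability in (i) and for both strictness claims in (ii). You write that a $\mathbf{Prefixes}$-style analysis ``places $\firstOrderPart{\List[\Cantor]}$ at the $\codedChoice{\boldfacePi_3^0}{}{\nats}$ level,'' but that analysis only yields the upper bound $\firstOrderPart{\List[\Cantor]} \weireducible \codedChoice{\boldfacePi_3^0}{}{\nats}$ — which is useless here, since you need something strictly above $\codedChoice{\boldfacePi_2^0}{}{\nats} \weiequiv \firstOrderPart{\wScList[\Cantor]}$ to reduce \emph{to} $\List[\Cantor]$. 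What is actually needed is a \emph{lower bound}: the paper explicitly constructs a reduction $\lpo'' \weireducible \List[\Cantor]$ (encoding a $\boldfaceSigma^0_3$ question into the finite/infinite cardinality bit of $\List[\Cantor]$'s output), and then uses that $\lpo''$ does not reduce to $\codedChoice{\boldfacePi_2^0}{}{\nats}$. Without exhibiting such a lower bound, the non-reduction is not established.

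A smaller point: ``the separations from \thref{limdoesnotreachucbaire}'' cannot by themselves give the strictness $\List[\Cantor,<\omega] \strictlyweireducible \wScList[\Cantor]$, because $\List[\Cantor,<\omega] \weiequiv \codedChoice{\boldfacePi_2^0}{}{\nats} \weiequiv \firstOrderPart{\wScList[\Cantor]}$ — the first-order parts coincide, so no first-order invariant separates them. The paper instead uses the incomparability $\List[\Cantor,<\omega] \weiincomparable \wList[\Cantor]$ from \thref{Summary_list_cantor}: since $\wList[\Cantor] \weireducible \wScList[\Cantor]$ and $\wList[\Cantor] \not\weireducible \List[\Cantor,<\omega]$, one gets $\wScList[\Cantor] \not\weireducible \List[\Cantor,<\omega]$. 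You should invoke that incomparability explicitly for this piece.
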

\begin{proof}
\begin{enumerate}[(i)]
    \item By \cite[Proposition 6.11]{kihara_marcone_pauly_2020} and \thref{Fop_wsclistcantor}, $\List[\Cantor,< \omega]\weiequiv\codedChoice{\boldfacePi_2^0}{}{\nats}\weiequiv \firstOrderPart{\wScList[\Cantor]}$. The reduction $\wList[\Cantor]\weireducible\wScList[\Cantor]$ is obvious. From \thref{Summary_list_cantor} we know that $\List[\Cantor,< \omega] \weiincomparable \wList[\Cantor]$ and hence $\wScList[\Cantor] \not\weireducible\List[\Cantor,< \omega]$ and $\wScList[\Cantor] \not\weireducible \wList[\Cantor]$. 

   For the incomparabilities, recall that, by \thref{Summary_list_cantor}, $\List[\Cantor]\weireducible\UCBaire$: therefore it suffices to show that $\wScList[\Cantor]\not\weireducible \UCBaire$ and $\List[\Cantor]\not\weireducible\wScList[\Cantor]$.
   
   By \thref{Wsclistreacheswf}, $\wf\weireducible\lpo'* \wScList[\Cantor]$ while, since $\wf\not\weireducible\UCBaire$ (\thref{cbairepica}) and $\UCBaire$ is closed under compositional product, we obtain that $\wf\not\weireducible\lpo'*\UCBaire$. Hence, $\wScList[\Cantor]\not\weireducible \UCBaire$.
   
   To show that $\List[\Cantor]\not\weireducible \wScList[\Cantor]$ we prove that $\firstOrderPart{\List[\Cantor]}\not\weireducible \firstOrderPart{\wScList[\Cantor]}$. Since, by \thref{limdoesnotreachucbaire}(ii) and \thref{Fop_wsclistcantor}, $\lpo''\not\weireducible \codedChoice{\boldfacePi_2^0}{}{\nats}\weiequiv\firstOrderPart{\wScList[\Cantor]}$, it suffices to show $\lpo'' \weireducible \List[\Cantor]$. We can view an input for $\lpo''$ as a sequence $(q_n)_{n \in \nats}$ of elements of $\Cantor$ so that
   $$\lpo''((q_n)_{n \in \nats})=1 \iff (\exists^\infty n)(\forall i)(q_n(i)=0).$$
   Given $(q_n)_{n \in \nats}$, we compute $(T^n)_{n \in \nats} \in \tree^\nats$ defined as $T^n\defas \{0^s:(\forall i<s)(q_n(i)=0)\}$. Notice that $T^n \in \illfounded_2 \iff q_n=0^\nats$ and given $T'\defas \binarydisjointunion{n \in \nats}{T^n}$ it is easy to check that $\length{\body{T'}}=\aleph_0 \iff \lpo''((q_n)_{n \in \nats})=1$. Since the information about the cardinality of $\body{T'}$ is included in $\List[\Cantor](\body{T'})$, this concludes the reduction $\lpo''\weireducible \List[\Cantor]$.
 
 \item  The reductions $\List[\Cantor],\wScList[\Cantor]\weireducible\ScList[\Cantor]$ are immediate and, since we just showed that  $\List[\Cantor]\weiincomparable\wScList[\Cantor]$, they are strict. 
 
 Combining the facts that $\wScList[\Cantor]\strictlyweireducible \ScList[\Cantor]$ and $\wScList[\Cantor]\not\weireducible\UCBaire$, we conclude that $\ScList[\Cantor]\not\weireducible\UCBaire$.
 
To show that $\UCBaire\not\weireducible\ScList[\Cantor]$, we first prove that $\ScCount[\Cantor]\weireducible \codedChoice{\boldfacePi_4^0}{}{\nats}$. Given $T \in \tree_2$, let 
$$A\defas\{n: (n>0 \implies \varphi(n-1,T) \land \lnot \varphi(n,T)) \land (n=0\implies (\forall k)(\varphi(k,T))\}$$
where $\varphi(n,T)\defas(\exists \sigma_0,\dots,\sigma_{n-1})(\forall i \neq j< n)(\sigma_i \incomparable \sigma_j \land T_{\sigma_i} \in \uniquebranch_2)$. Using \thref{Pi11sets2}, it is easy to check that $\varphi(n,T)$ is $\Sigma_3^0$ and hence $A$ is a $\Pi_4^{0,T}$ subset of $\nats$. Notice that $A$ is a singleton and, by \thref{Isolated_paths}, the unique  $n \in A$ is the correct answer for $\ScCount[\Cantor]$. 

As $\ScList[\Cantor]\weireducible \wScList[\Cantor] \times \ScCount[\Cantor] \weireducible  \wScList[\Cantor]*\ScCount[\Cantor]$ we have that 
\[\firstOrderPart{\ScList[\Cantor]} \weireducible  \firstOrderPart{(\wScList[\Cantor]*\ScCount[\Cantor])} \weireducible \firstOrderPart{\wScList[\Cantor]}*\ScCount[\Cantor],\]
where the second reduction follows from \cite[Proposition 4.1(4)]{valentisolda} which states that $\firstOrderPart{(f*g)}\weireducible\firstOrderPart{f}*g$ for any $f$ and $g$. By \thref{Fop_wsclistcantor} and the fact that $\ScCount[\Cantor]\weireducible \codedChoice{\boldfacePi_4^0}{}{\nats}$ we get that $\firstOrderPart{\ScList[\Cantor]}\weireducible \codedChoice{\boldfacePi_2^0}{}{\nats}*\codedChoice{\boldfacePi_4^0}{}{\nats} \weiequiv \codedChoice{\boldfacePi_4^0}{}{\nats}$ (the last equivalence follows from \cite[Theorem 7.2]{valentisolda}).
By \thref{limdoesnotreachucbaire}(i) and (iii) and \thref{Fopcbaire},  we know that $\codedChoice{\boldfacePi_4^0}{}{\nats}\strictlyweireducible\codedChoice{\boldfacePi_1^1}{}{\nats}\weiequiv\firstOrderPart{\UCBaire}$, hence $\UCBaire\not\weireducible\ScList[\Cantor]$.

\item 
By \thref{Ucbaire_below_pk} and \thref{Pkbaire_equiv_wsclistbaire}, we have that $\UCBaire\strictlyweireducible\PK[\Baire]\weiequiv\wScList[\Baire]$. Moreover, in the proof of (ii), we showed $\ScCount[\Cantor]\weireducible \codedChoice{\boldfacePi_4^0}{}{\nats}$, which by \thref{fop_pi11} implies $\ScCount[\Cantor] \weireducible \wScList[\Baire]$. Since $\ScList[\Cantor] \weireducible \ScCount[\Cantor] \times \wScList[\Cantor]$ and $\wScList[\Cantor] \weireducible \wScList[\Baire]$ is immediate, we have that $\ScList[\Cantor] \weireducible \wScList[\Baire] \times \wScList[\Baire]$. Recalling that by \thref{Wsclist_parallelizable} $\wScList[\Baire]$ is parallelizable, we obtain that $\ScList[\Cantor] \weireducible \wScList[\Baire]$. Since $\UCBaire\weiincomparable\ScList[\Cantor]$ by the previous item, we also deduce that the reduction is strict.

By the fact that $\wScList[\Baire]\weiequiv \PK[\Baire]$ (\thref{Pkbaire_equiv_wsclistbaire}), we have that \thref{Pkbaire_below_sclistbaire} and \thref{Ucbaire_below_pk} imply respectively $\wScList[\Baire]\strictlyweireducible \ScList[\Baire]$ and $\CBaire\weiincomparable\wScList[\Baire]$. These two relationships imply that $\ScList[\Baire]\not\weireducible \CBaire$.\qedhere
\end{enumerate}
\end{proof}

\subsection{The full Cantor-Bendixson theorem}
\label{Fullcantorbendixson}

The following functions formulate the Cantor-Bendixson theorem as a problem.

\begin{definition}
Let $\X$ be a computable Polish space. We define two multi-valued functions $\multifunction{\wCB[\X]}{\negrepr{\X}}{\negrepr{\X} \times (2\times \X)^\nats}$ and $\multifunction{\CB[\X]}{\negrepr{\X}}{\negrepr{\X} \times (\nats \times (2\times \X)^\nats)}$ by
$$\wCB[\X](A) \defas \PK[\X](A) \times \wScList[\X](A) \text{ and } \CB[\X](A)\defas \PK[\X](A) \times \ScList[\X](A).$$
The multi-valued functions $\multifunction{\wCB}{\tree}{\tree \times(2\times \Baire)^\nats}$ and $\multifunction{\CB}{\tree}{\tree \times (\nats\times(2\times \Baire)^\nats)}$ are defined similarly, substituting $\PK[\X]$ with $\PK$ and $(\mathsf{w})\List[\X]$ with $(\mathsf{w})\List[\Baire]$ in the definitions above.
   \end{definition}

\begin{proposition}
\thlabel{CBtree}
$\wCB\weiequiv\CB\weiequiv\parallelization{\wf}$.
\end{proposition}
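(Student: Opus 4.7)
My plan is to assemble the proposition directly from prior results, since the work of identifying the Weihrauch degrees of the individual components has already been done. The structure is to prove a cycle of three reductions, $\parallelization{\wf} \weireducible \wCB \weireducible \CB \weireducible \parallelization{\wf}$.

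For the first reduction, I would note that $\PK \weireducible \wCB$ holds trivially: the forward functional is the identity on $\tree$, and from a solution $(S,L)$ for $\wCB(T)$ one simply projects onto the first component $S = \PK(T)$. Combined with $\parallelization{\wf} \strongweiequiv \PK$ from \thref{Chipistrongpktree}, this gives $\parallelization{\wf} \weireducible \wCB$. The reduction $\wCB \weireducible \CB$ is immediate, since every solution of $\CB(T)$ contains as a subtuple a solution of $\wCB(T)$.

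For the harder direction $\CB \weireducible \parallelization{\wf}$, I would argue by reducing the two components of $\CB(T) = \PK(T) \times \ScList[\Baire](\body{T})$ in parallel to $\parallelization{\wf}$. The first component reduces by \thref{Chipistrongpktree}, and the second by \thref{sclist_below_pica}. Both subreductions start from the same input $T$. Because $\parallelization{\wf}$ is parallelizable (it is already a parallelization, so $\parallelization{(\parallelization{\wf})} \weiequiv \parallelization{\wf}$), the two parallel calls can be combined into a single instance of $\parallelization{\wf}$, yielding $\CB \weireducible \parallelization{\wf}$.

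There is essentially no obstacle beyond bookkeeping: all the genuine content is packaged into \thref{Chipistrongpktree} and \thref{sclist_below_pica}, and the parallelizability of $\parallelization{\wf}$ is a general fact. The only point worth being careful about is that the reduction $\ScList[\Baire] \weireducible \parallelization{\wf}$ reads off the closed set $\body{T}$ from the tree $T$ computably, which is immediate from the tree representation of $\negrepr{\Baire}$ discussed in \S\ref{representedspaces}.
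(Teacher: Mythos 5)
Your proof is correct and follows essentially the same route as the paper: decompose $\CB$ as $\PK \times \ScList[\Baire]$, invoke \thref{Chipistrongpktree} and \thref{sclist_below_pica} for the two components, and combine them using parallelizability of $\parallelization{\wf}$. The only (cosmetic) difference is that you close a three-step cycle $\parallelization{\wf} \weireducible \wCB \weireducible \CB \weireducible \parallelization{\wf}$, whereas the paper reduces both $\wCB$ and $\CB$ directly to $\parallelization{\wf}$ (using in addition $\wScList[\Baire] \weireducible \ScList[\Baire]$ via \thref{Pkbaire_equiv_wsclistbaire}); your version is, if anything, slightly leaner since it avoids the extra lemma.
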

\begin{proof}

Clearly $\PK\weireducible\wCB[]\weireducible\CB[]$ and since, by \thref{Chipistrongpktree}, $\parallelization{\wf}\weiequiv\PK$ we have that $\parallelization{\wf}\weireducible\wCB[]\weireducible\CB[]$. For the opposite directions notice that $\wCB[]\weireducible\PK\times\wScList[\Baire]$ and $\CB[]\weireducible\PK\times\ScList[\Baire]$. By \thref{Pkbaire_equiv_wsclistbaire,sclist_below_pica}, we have that $\wScList[\Baire]\strictlyweireducible\ScList[\Baire]\strictlyweireducible \parallelization{\wf}$. As $\parallelization{\wf}$ is clearly parallelizable this concludes the proof. 
\end{proof}

\begin{theorem}
\thlabel{equivalencesCBbairecantor}
$\wCB[\Cantor]\weiequiv \CB[\Cantor] \weiequiv \PK[\Cantor] \weiequiv \wScList[\Baire] \weiequiv \wCB[\Baire] \weiequiv \PK[\Baire] \strictlyweireducible \ScList[\Baire]\weireducible \CB[\Baire] \strictlyweireducible \CB$.
\end{theorem}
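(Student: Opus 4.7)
The chain splits into three blocks: a Cantor-space cluster $\wCB[\Cantor]\weiequiv\CB[\Cantor]\weiequiv\PK[\Cantor]$ that collapses to $\PK[\Baire]$ via the already-established bridge $\PK[\Cantor]\weiequiv\wScList[\Baire]\weiequiv\wCB[\Baire]\weiequiv\PK[\Baire]$, a strict step up to $\ScList[\Baire]$, and the tail $\ScList[\Baire]\weireducible\CB[\Baire]\strictlyweireducible\CB$. Most pieces are quick consequences of earlier results combined with parallelizability; the only genuinely new work is strictness of the very last reduction.

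For the left end, the reductions $\PK[\Cantor]\weireducible\wCB[\Cantor]\weireducible\CB[\Cantor]$ are immediate since $\PK[\Cantor]$ is a component of both. For $\CB[\Cantor]\weireducible\PK[\Cantor]$, write $\CB[\Cantor](A)=\PK[\Cantor](A)\times\ScList[\Cantor](A)$, apply $\ScList[\Cantor]\weireducible\wScList[\Baire]$ from \thref{sclistcantor_summary}(iii), and use the chain $\wScList[\Baire]\weiequiv\PK[\Baire]\weiequiv\PK[\Cantor]$ (\thref{Pkbaire_equiv_wsclistbaire}, \thref{Cantor_and_baire_same2}) together with parallelizability of $\PK[\Cantor]$ (\thref{Pkbaire_parallelizable}) to absorb the product. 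The equivalence $\wCB[\Baire]\weiequiv\PK[\Baire]$ is analogous, using $\wScList[\Baire]\weiequiv\PK[\Baire]$ and parallelizability.

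The strict reduction $\PK[\Baire]\strictlyweireducible\ScList[\Baire]$ is just \thref{Pkbaire_below_sclistbaire}, and $\ScList[\Baire]\weireducible\CB[\Baire]$ is immediate from the definition of $\CB[\Baire]$. For $\CB[\Baire]\weireducible\CB$: by \thref{CBtree} $\CB\weiequiv\parallelization{\wf}$, and by \thref{Pk_below__chipi} and \thref{sclist_below_pica} both factors $\PK[\Baire]$ and $\ScList[\Baire]$ reduce to $\parallelization{\wf}$, which is parallelizable, so $\CB[\Baire]\weireducible\parallelization{\wf}\weiequiv\CB$.

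The main obstacle is strictness of $\CB[\Baire]\strictlyweireducible\CB$, i.e., showing $\parallelization{\wf}\not\weireducible\CB[\Baire]$. The plan is to mimic the deterministic-part argument used in the proof of \thref{sclist_below_pica}. Since $\ScList[\Baire]\weireducible\wScList[\Baire]*\ScCount[\Baire]$, passing through the product we get $\CB[\Baire]\weireducible(\PK[\Baire]\times\wScList[\Baire])*\ScCount[\Baire]\weiequiv\PK[\Baire]*\ScCount[\Baire]$ by $\wScList[\Baire]\weiequiv\PK[\Baire]$ and parallelizability. Then
\[\deterministicPart(\CB[\Baire])\weireducible\deterministicPart(\PK[\Baire])*\ScCount[\Baire]\weiequiv\UCBaire*\ScCount[\Baire],\]
using \thref{Detpart_pkbaire} and the inequality $\deterministicPart(f*g)\weireducible\deterministicPart(f)*g$ from \cite{goh_pauly_valenti_2021}. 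Since $\ScCount[\Baire]$ outputs a natural number and $\UCBaire$-solutions are always hyperarithmetical in the input, every realizer of $\deterministicPart(\CB[\Baire])$ produces outputs hyperarithmetical in its input; on the other hand, $\parallelization{\wf}$ is single-valued (hence equal to its deterministic part) and admits inputs whose solution, encoding the hyperjump, is not hyperarithmetical. This forces $\parallelization{\wf}\not\weireducible\CB[\Baire]$ and completes the proof.
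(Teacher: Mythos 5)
Your argument is correct and follows essentially the same route as the paper: establish the equivalences by combining $\wScList[\Baire]\weiequiv\PK[\Baire]\weiequiv\PK[\Cantor]$ with $\ScList[\Cantor]\weireducible\wScList[\Baire]$ and parallelizability, cite \thref{Pkbaire_below_sclistbaire} for the middle strictness, and prove $\parallelization{\wf}\not\weireducible\CB[\Baire]$ via the deterministic-part bound $\deterministicPart(\CB[\Baire])\weireducible\UCBaire*\ScCount[\Baire]$ and hyperarithmeticity of solutions. The only cosmetic differences are that you route $\CB[\Baire]\weireducible\CB$ through $\parallelization{\wf}$ rather than the straightforward direct reduction, and that you spell out the hyperarithmeticity argument where the paper simply points back to the proof of \thref{sclist_below_pica}.
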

\begin{proof}
By \thref{Pkbaire_equiv_wsclistbaire,Cantor_and_baire_same2}, $\wScList[\Baire] \weiequiv \PK[\Baire]\weiequiv \PK[\Cantor]$. By \thref{sclistcantor_summary}, $\wScList[\Cantor]\strictlyweireducible \ScList[\Cantor]\strictlyweireducible \wScList[\Baire]$. Since,  by \thref{Pkbaire_parallelizable}, $\PK[\Baire]$ is parallelizable, we obtain all the equivalences. Also, $\ScList[\Baire]\weireducible \CB[\Baire]$ is immediate, and $\PK[\Baire]\strictlyweireducible \ScList[\Baire]$ was already proven in \thref{Pkbaire_below_sclistbaire}. To prove $\CB[\Baire] \strictlyweireducible \CB$,  notice that the reduction is straightforward and $\CB[\Baire]\weireducible \wCB[\Baire]* \ScCount[\Baire] \weiequiv \wScList[\Baire]*\ScCount[\Baire]$. To conclude the proof, observe that $\parallelization{\wf}\weiequiv \CB[]$ (\thref{CBtree}) and  that  $\parallelization{\wf} \not\weireducible \wScList[\Baire]*\ScCount[\Baire]$ (see the  proof of \thref{sclist_below_pica}), hence $\CB\not \weireducible \CB[\Baire] $.
\end{proof}

The situation here is similar to the one discussed above for $\ScList[\Baire]$: we do not know if $\CBaire\strictlyweireducible \CB[\Baire]$ or $\CBaire\weiincomparable \CB[\Baire]$. It is also open whether $\ScList[\Baire]\weiequiv \CB[\Baire]$ (see \thref{questioncbaire,question:cbbaireandlist}).

In the next theorem, we explore what can be added to $\PK[\Baire]$ in order to compute $\CB[\Baire]$.
\begin{theorem}
\thlabel{AttemptCBaireCN}
$\CB[\Baire]\weireducible \ustar{\wf}\times \PK[\Baire]\weireducible \codedChoice{}{}{\nats}*\PK[\Baire]$.
\end{theorem}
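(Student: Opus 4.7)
The plan is to establish the two reductions separately. For $\CB[\Baire]\weireducible \ustar{\wf}\times \PK[\Baire]$, feed the input $A$ directly to the $\PK[\Baire]$ component: since $\wScList[\Baire]\weiequiv \PK[\Baire]$ by \thref{Pkbaire_equiv_wsclistbaire} and $\PK[\Baire]$ is parallelizable by \thref{Pkbaire_parallelizable}, a single call produces both $\PK[\Baire](A)$ and $\wScList[\Baire](A)$. What remains is $\ScCount[\Baire](A)$: using \thref{Pi11sets2}, compute from a tree naming $A$ a sequence $(S^n)_{n\in\nats}\in\tree^\nats$ with $\wf(S^0)=1$ iff the scattered part is infinite and $\wf(S^n)=1$ iff it has at least $n$ elements for $n\geq 1$. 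Design a Turing functional $\Phi_e$ whose halting after $k$ steps is triggered by ``$k=1$ and $\wf(S^0)=1$'' or by ``$k\geq 2$, $\wf(S^0)=0$, $\wf(S^i)=1$ for $1\leq i\leq k-2$, and $\wf(S^{k-1})=0$''; feeding $(e,\emptyset,(S^n)_{n\in\nats})$ to $\ustar{\wf}$ returns such a $k$, from which $\ScCount[\Baire](A)$ is $0$ if $k=1$ and $k-1$ otherwise.

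For $\ustar{\wf}\times \PK[\Baire]\weireducible \codedChoice{}{}{\nats}*\PK[\Baire]$, the idea is to pack both inputs into one $\PK[\Baire]$ call and extract the $\ustar{\wf}$ answer via a single $\codedChoice{}{}{\nats}$. Given $A$ (with tree name $T$) and $(e,w,(T^n)_{n\in\nats})$, working with $\PK[\Cantor]$ via \thref{Cantor_and_baire_same2}, feed $\PK[\Baire]$ the closed subset of $\Cantor$ whose tree is $\translateCantor(T)\sqcup\binarydisjointunion{n\in\nats}{\translateCantor(\exploded{T^n})}$. For any tree $V$ naming the resulting perfect kernel, the $\{0\}$-branch (after applying $\translateBaire$) names $\PK[\Baire](A)$ as in \thref{Cantor_and_baire_same2}, while on the $\{1\}$-branch the condition ``$V$ has a path extending $1\cdot 0^n 1$'' is $\Pi_1^{0,V}$ (by K\"onig's lemma applied to the binary tree $V$) and equivalent to $T^n\in\illfounded$. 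Introduce the monotone approximation $\hat{\wf}(T^n)_s\in\{0,1\}$, set to $1$ as soon as stage $s$ witnesses that the subtree of $V$ below $1\cdot 0^n 1$ misses an entire level; it is uniformly decidable from $V$ and $s$, satisfies $\hat{\wf}(T^n)_s\leq \wf(T^n)$, and converges to $\wf(T^n)$ as $s\to\infty$.

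The $\codedChoice{}{}{\nats}$-instance is the set
\[
C\defas \{\pairing{k,s}: \Phi_e(w,\hat{\wf}(T^0)_s 0^\nats*\dots*\hat{\wf}(T^{k-1})_s 0^\nats)(0){\downarrow}\text{ in }k\text{ steps}\,\land\,\forall i<k\,(\hat{\wf}(T^i)_s=1\lor T^i\in\illfounded)\}.
\]
Each conjunct inside the universal is a disjunction of a decidable condition with a $\Pi_1^{0,V}$ one, hence $\Pi_1^{0,V}$, so $C$ is $\Pi_1^{0,V}$; it is nonempty because the true $\ustar{\wf}$ answer $k^*$ together with any sufficiently large $s$ lies in $C$. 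Given $\pairing{k,s}\in C$ obtained from $\codedChoice{}{}{\nats}$, output $q_i\defas \hat{\wf}(T^i)_s 0^\nats$ for $i<k$: the two disjuncts in the universal clause force $q_i(0)=\wf(T^i)$ (if $\hat{\wf}(T^i)_s=1$ then by monotonicity $\wf(T^i)=1$; otherwise $T^i\in\illfounded$ forces $\wf(T^i)=0$), and the halting clause ensures $(q_0,\dots,q_{k-1})$ is a valid $\ustar{\wf}$ solution; the $\PK[\Baire](A)$ component is read from the $\{0\}$-branch of $V$. The main obstacle this design sidesteps is that the true $\wf(T^i)$'s are only $\Delta_2^{0,V}$ in $V$, which would prevent directly expressing the equality $\vec{b}=(\wf(T^i))_{i<k}$ as a $\Pi_1^{0,V}$-condition on $\pairing{k,\vec{b}}$; coupling $k$ with a witness stage $s$ and weakening equality to the $\Pi_1^{0,V}$-expressible disjunction above is what keeps $C$ accessible to $\codedChoice{}{}{\nats}$.
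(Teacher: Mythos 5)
Your first reduction is essentially the paper's, unpacked: the paper writes $\CB[\Baire]\weireducible\ScCount[\Baire]\times\wCB[\Baire]$, absorbs $\wCB[\Baire]$ into $\PK[\Baire]$ (via \thref{equivalencesCBbairecantor}; you instead use \thref{Pkbaire_equiv_wsclistbaire} and \thref{Pkbaire_parallelizable}, which amounts to the same thing), and deduces $\ScCount[\Baire]\weireducible\ustar{\wf}$ from \thref{sclist_below_pica} together with \thref{Summaryfopustar}, whereas you make the Turing functional for the $\ustar{\wf}$-instance explicit. The second reduction, however, takes a genuinely different route. The paper proves $\ustar{\wf}\weireducible\ustar{\Choice{\nats}}*\parallelization{\wfsierpinski}$ — invoking both $\codedChoice{}{}{\nats}\weiequiv\ustar{\Choice{\nats}}$ from \cite{valentisolda} and $\PK[\Baire]\weiequiv\parallelization{\wfsierpinski}$ (\thref{Pkcantor_idPiSigma}) — by constructing, for each $i$, a separate $\Pi^0_1$ set $A_i\subseteq\nats$ from the Sierpi\'nski name $p_i$ whose members encode $\wf(T^i)$. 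You instead directly pack both the $\ustar{\wf}$-input $(T^n)_{n\in\nats}$ and the actual $\PK[\Baire]$-input $A$ into a single binary tree whose perfect-kernel tree $V$ makes $T^n\in\illfounded$ a $\Pi_1^{0,V}$ (hence $T^n\in\wellfounded$ a $\Sigma_1^{0,V}$) predicate, and you collapse the entire $\ustar{\wf}$-extraction into one $\codedChoice{}{}{\nats}$-call on the $\Pi_1^{0,V}$ set $C$ of pairs $\pairing{k,s}$ where $s$ is a witnessing stage for the monotone approximation $\hat{\wf}(T^i)_s$. The key trick — pairing the halting index $k$ with a stage $s$ and replacing the $\Delta_2^{0,V}$ condition "$\hat{\wf}(T^i)_s=\wf(T^i)$" by the $\Pi_1^{0,V}$ disjunction "$\hat{\wf}(T^i)_s=1\lor T^i\in\illfounded$" — is exactly what lets a single closed choice over $\nats$ suffice. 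Your argument is more self-contained (it bypasses $\ustar{\Choice{\nats}}$ and $\parallelization{\wfsierpinski}$ entirely, and in effect re-derives the relevant special case of $\ustar{\Choice{\nats}}\weiequiv\codedChoice{}{}{\nats}$ by hand), at the cost of a more intricate single $\codedChoice{}{}{\nats}$-instance; the paper's is shorter as a text because it treats these equivalences as black boxes.
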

\begin{proof}
For the first reduction, by \thref{equivalencesCBbairecantor}, $\PK[\Baire]\weiequiv\wCB[\Baire]$ and clearly $\CB[\Baire]\weireducible\ScCount[\Baire]\times \wCB[\Baire]$: by \thref{sclist_below_pica,Summaryfopustar}, we obtain that $\ScCount[\Baire]\weireducible \ustar{\wf}$.

For the second reduction, notice that ${\ustar{\Choice{\nats}}}\weiequiv \codedChoice{}{}{\nats}$ (\cite[Theorem 7.2]{valentisolda}). Furthermore, since $\PK[\Baire]\weiequiv\wCB[\Baire]$, $\PK[\Baire]$ is parallelizable (\thref{Pkbaire_parallelizable}) and $\PK[\Baire]\weiequiv\parallelization{\wfsierpinski}$ (\thref{Pkcantor_idPiSigma}), it suffices to show that $ \ustar{\wf} \weireducible {\ustar{\Choice{\nats}}} *\parallelization{\wfsierpinski}$.
The input for $\ustar{\wf}$ is a sequence $(T^i)_{i \in \nats} \in \tree^\nats$. Let $(p_i)_{i \in \nats}$ be a name for a solution of $\parallelization{\wfsierpinski}((T^i)_{i \in \nats})$ (recall that the only name for $0_\sierpinski$ is $0^\nats$). For every $i\in \nats$, the input for the $i$-th instance of $\codedChoice{}{}{\nats}$
\[A_i \defas \{n:(n=0\land p_i=0^\nats) \lor (n>0 \land (\exists m<n)(p_i(m)=1))\}.\]
To conclude the proof it suffices to notice that for any $i$ and $n_i \in \codedChoice{}{}{\nats}(A_i)$, $T^i \in \wellfounded$ iff $n_i\neq0$.
\end{proof}

\section{What happens in arbitrary computable metric spaces}
In this section we study the functions connected to the perfect set and Cantor-Bendixson theorems in arbitrary computable metric spaces. We start by collecting some facts about maps between spaces of closed sets. 

\label{otherspaces}
 \begin{proposition}[{\cite[proof of Proposition 3.7]{closedChoice}}]
 \thlabel{surjections}
 Let $\X$ and $\Y$ be computable metric spaces and $\partialfunction{s}{\X}{\Y}$ be a computable function with $\dom(s) \in \Pi_1^0(\X)$: then the function $\function{S}{\negrepr{\Y}}{\negrepr{\X}}$, $M\mapsto s^{-1}(M)$ is computable as well.
 \end{proposition}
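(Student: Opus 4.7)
The plan is to write $\X\setminus s^{-1}(M)$ as a computable union of two effectively open sets: the complement of $\dom(s)$ and the preimage under $s$ of the complement of $M$.

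First, I would unpack the data. A $\negrepr{\Y}$-name for $M$ is (equivalent to) an enumeration $(B^{\Y}_{n})_{n\in\nats}$ of basic open balls of $\Y$ whose union equals $\Y\setminus M$. The hypothesis that $\dom(s)\in\Pi^{0}_{1}(\X)$ supplies, uniformly and computably in nothing at all, an enumeration $(D_{m})_{m\in\nats}$ of basic open balls of $\X$ whose union equals $\X\setminus\dom(s)$. Finally, from the fact that $\partialfunction{s}{\X}{\Y}$ is computable I can extract, uniformly in an index of a basic open ball $B\subseteq\Y$, a c.e.\ sequence $(U_{B,k})_{k\in\nats}$ of basic open balls of $\X$ with
\[
s^{-1}(B) = \Bigl(\bigcup_{k\in\nats} U_{B,k}\Bigr)\cap\dom(s).
\]
This is the standard fact that a computable partial function pulls back effectively open sets to sets that are effectively open relatively to $\dom(s)$, and it follows from the TTE definition of computability recalled in \S\ref{representedspaces}.

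Next I would assemble these pieces. Setting
\[
V \defas \bigcup_{n\in\nats}\bigcup_{k\in\nats} U_{B^{\Y}_{n},k},
\]
the set $V$ is effectively open in $\X$, with an index computable from the given $\negrepr{\Y}$-name of $M$, and it satisfies $V\cap\dom(s) = s^{-1}(\Y\setminus M)$. Since
\[
s^{-1}(M) = \{x\in\dom(s)\st s(x)\in M\} = \dom(s)\setminus s^{-1}(\Y\setminus M) = \dom(s)\setminus V,
\]
taking complements in $\X$ gives
\[
\X\setminus s^{-1}(M) = (\X\setminus\dom(s))\cup V = \Bigl(\bigcup_{m} D_{m}\Bigr) \cup \Bigl(\bigcup_{n,k} U_{B^{\Y}_{n},k}\Bigr).
\]
The right-hand side is the union of two effectively open sets whose enumerations are uniformly computable from the input, hence this is a $\negrepr{\X}$-name for $s^{-1}(M)$. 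This is exactly the data $S(M)$ must output.

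I do not expect any real obstacle here: the statement is essentially a reformulation of \textquotedblleft computable functions pull effectively open sets back effectively,\textquotedblright\ plus the observation that the partiality is absorbed by the complement $\X\setminus\dom(s)$, which is itself effectively open by hypothesis. The only mild care needed is to make sure the preimage computation is uniform in the $\negrepr{\Y}$-name of $M$, but this follows because an index for the c.e.\ family $(U_{B,k})_k$ depends computably on an index of $B$, and hence on the stream of balls produced by the name of $M$.
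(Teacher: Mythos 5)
Your proof is correct and follows what is essentially the standard argument (the paper itself gives no proof, only a citation to the proof of \cite[Proposition 3.7]{closedChoice}, which proceeds along the same lines). You correctly identify the two ingredients: (1) the complement of $\dom(s)$ has a fixed computable $\Sigma^0_1$-name because $\dom(s)\in\Pi^0_1(\X)$, and (2) computability of $s$ gives a uniformly c.e.\ family of balls whose union meets $\dom(s)$ exactly in $s^{-1}(B)$, for each basic ball $B$ of $\Y$. The set identity $\X\setminus s^{-1}(M)=(\X\setminus\dom(s))\cup V$ is right since $(\X\setminus\dom(s))\cup(V\cap\dom(s))=(\X\setminus\dom(s))\cup V$, and uniformity in the name of $M$ is clear because the index of $(U_{B,k})_k$ is computable from an index of $B$.
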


\begin{definition}[{\cite{BorelComplexity}}]
 \thlabel{richness}
Given two represented spaces $\X$ and $\Y$, we say  that $\function{\iota}{\X}{\Y}$ is a \emph{computable embedding} if $\iota$ is injective and $\iota$ as well as its partial inverse $\iota^{-1}$ are computable.
If $\X$ is a computable metric space we say that $\X$ is \emph{rich} if there exists a computable embedding of $\Cantor$ into $\X$.
\end{definition}

As observed in \cite{BorelComplexity}, any  computable embedding $\function{\iota}{\Cantor}{\X}$ is such that $\range(\iota) \in \Pi_1^0(\X)$. Moreover, by \cite[Theorem 6.2]{BorelComplexity}, any perfect computable metric space $\X$ is rich.

\begin{theorem}[{\cite[Theorem 3.7]{BorelComplexity}}]
 \thlabel{embeddingtheorem}
    Let $\X$ and $\Y$ be computable metric spaces and $\function{\iota}{\X}{\Y}$ be a computable embedding with $\range(\iota) \in \Pi_1^0(\Y)$. Then the map  $\function{J}{\negrepr{\X}}{\negrepr{\Y}}, A \mapsto \iota(A)$ is computable and admits a partial computable right inverse.
\end{theorem}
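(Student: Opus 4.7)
The statement has two parts: the computability of $J$ and the existence of a partial computable right inverse; I address them separately.

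For the computability of $J$, given a $\negrepr{\X}$-name $p$ of $A$ (so $\X\setminus A=\bigcup_i B^{\X}_{p(i)}$, where $B^{\X}_i$ denotes the $i$-th basic open ball of $\X$), the task is to enumerate basic open balls of $\Y$ whose union is
\[
\Y\setminus \iota(A)=(\Y\setminus \range(\iota))\cup \bigcup_i \iota(B^{\X}_{p(i)}),
\]
the equality using injectivity of $\iota$. The first term is handled by the hypothesis $\range(\iota)\in\Pi_1^0(\Y)$, which directly provides a computable enumeration of basic open balls of $\Y$ whose union is $\Y\setminus \range(\iota)$. For the second term I invoke the effective continuity of $\iota^{-1}\colon \range(\iota)\to \X$: uniformly in $m$, I enumerate basic open balls $B^{\Y}_j$ of $\Y$ satisfying $B^{\Y}_j\cap \range(\iota)\subseteq \iota(B^{\X}_m)$ in such a way that the union of their traces on $\range(\iota)$ equals $\iota(B^{\X}_m)$. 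Merging the enumeration for $\Y\setminus \range(\iota)$ with those for $m=p(i)$, $i\in\nats$, yields a $\negrepr{\Y}$-name for $\iota(A)$: every enumerated ball is contained in $\Y\setminus \iota(A)$ (either it misses $\range(\iota)$, hence also $\iota(A)$, or its trace on $\range(\iota)$ lies in some $\iota(B^{\X}_{p(i)})\subseteq \iota(\X\setminus A)$), and conversely every $y\in \Y\setminus \iota(A)$ is captured (either $y\notin \range(\iota)$ and it lies in a ball from the first family, or $y=\iota(x)$ with $x\in B^{\X}_{p(i)}$ for some $i$ and then $y$ lies in a ball from the corresponding family).

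For the partial right inverse, I apply \thref{surjections} to $s=\iota\colon \X\to \Y$ (whose domain is the $\Pi_1^0$ set $\X$), which produces a computable map $K\colon \negrepr{\Y}\to \negrepr{\X}$ given by $N\mapsto \iota^{-1}(N)$. Since $\iota$ is injective and $\range(\iota)\in\Pi_1^0(\Y)$, the image of $J$ is exactly $\{N\in \negrepr{\Y}:N\subseteq \range(\iota)\}$, and for any such $N$ we have $J(K(N))=\iota(\iota^{-1}(N))=N\cap \range(\iota)=N$, so the restriction of $K$ to this set is a partial computable right inverse of $J$. The main technical point in the whole proof is the uniform enumeration of balls witnessing the openness of $\iota(B^{\X}_m)$ inside $\range(\iota)$; this is the effective continuity of $\iota^{-1}$ at every point of $\range(\iota)$, a routine but slightly delicate consequence of the computability of $\iota^{-1}$ once unpacked in terms of Cauchy names. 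The right-inverse part itself requires no further ideas.
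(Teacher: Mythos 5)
This theorem is cited from Brattka--Gherardi \cite{BorelComplexity} without proof in the paper, so there is nothing internal to compare against; your argument is correct and reconstructs the standard proof. For the forward map, the decomposition $\Y\setminus\iota(A)=(\Y\setminus\range(\iota))\cup\bigcup_i\iota(B^{\X}_{p(i)})$ together with effective continuity of the computable partial function $\iota^{-1}$ (which lets one compute, uniformly in $m$, an open $V_m\in\boldfaceSigma_1^0(\Y)$ with $V_m\cap\range(\iota)=\iota(B^{\X}_m)$) is exactly the right ingredient, and you verify both inclusions carefully. For the right inverse, applying \thref{surjections} to $s=\iota$ (whose domain $\X$ is trivially $\Pi_1^0$) to get the computable preimage map $N\mapsto\iota^{-1}(N)$, and restricting it to $\{N\in\negrepr{\Y}:N\subseteq\range(\iota)\}$, which you correctly identify as $\mathrm{range}(J)$ using that $\iota$ is a homeomorphism onto its closed image, is clean. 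No gaps.
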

 
The following is an analog of \cite[Corollaries 4.3 and 4.4]{closedChoice}.

\begin{lemma}
\thlabel{richspacesPKGeneral}
Let $\X$ and $\Y$ be computable Polish spaces and $\function{\iota}{\X}{\Y}$ be a computable embedding with $\range(\iota) \in \Pi_1^0(\Y)$. Let $f$ be any of the following: $\PST$, $(\mathsf{w})\List$, $\PK$, $(\mathsf{w})\ScList$, $(\mathsf{w})\CB$. Then $f_{\X}\weireducible f_{\Y}$. In particular, $f_{\Cantor} \weireducible f_{\Y}$ for every rich computable metric space $\Y$.
\end{lemma}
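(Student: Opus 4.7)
The plan is to prove all six reductions uniformly via the same two-step Weihrauch reduction: push forward the instance via $\iota$, solve the problem in $\Y$, then pull back the solution via $\iota^{-1}$.

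\emph{Forward functional.} Given an instance $A \in \negrepr{\X}$ of $f_\X$, apply the map $J$ of \thref{embeddingtheorem} to compute $\iota(A) \in \negrepr{\Y}$. Since $\iota$ is injective, $\length{\iota(A)} = \length{A}$, so $\iota(A)$ lies in the domain of $f_\Y$ whenever $A$ lies in the domain of $f_\X$ (this covers the uncountability requirement for $\PST$, the countability requirement for $(\mathsf{w})\List$, and is vacuous for $\PK, (\mathsf{w})\ScList, (\mathsf{w})\CB$).

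\emph{Topological preservation.} Because $\iota$ is a computable embedding with $\range(\iota) \in \Pi_1^0(\Y)$, it is a homeomorphism of $\X$ onto the subspace $\range(\iota) \subseteq \Y$, and this subspace is closed in $\Y$. Consequently a point $x \in A$ is isolated in $A$ iff $\iota(x)$ is isolated in $\iota(A)$ (computed inside $\Y$), so $\PK[\Y](\iota(A)) = \iota(\PK[\X](A))$ and $\iota(A) \setminus \PK[\Y](\iota(A)) = \iota(A \setminus \PK[\X](A))$; moreover $P \subseteq A$ is a perfect closed subset of $\X$ iff $\iota(P)$ is a perfect closed subset of $\iota(A)$. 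This is the key correspondence that makes every solution for $f_\Y$ on $\iota(A)$ translate to a solution for $f_\X$ on $A$.

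\emph{Backward functional.} There are two kinds of output components to translate. Closed-set outputs (the perfect subset for $\PST$, the perfect kernel for $\PK$, $\wCB$, $\CB$) are sets $C \subseteq \iota(A) \subseteq \range(\iota)$. Applying \thref{surjections} with $s = \iota$ (which is total computable, hence has the closed domain $\X$), the map $C \mapsto \iota^{-1}(C)$ is a computable function $\negrepr{\Y} \to \negrepr{\X}$, and by the previous paragraph $\iota^{-1}(C)$ is the desired output in $\X$. List outputs $(b_i, y_i)_{i \in \nats}$ and $(n, (b_i, y_i)_{i \in \nats})$ are translated pointwise: whenever $b_i = 1$ we have $y_i \in \iota(A) \subseteq \range(\iota)$, so we may computably set $x_i \defas \iota^{-1}(y_i)$, while for indices with $b_i = 0$ we output an arbitrary fixed element of $\X$ (e.g.\ $\alpha(0)$); the natural number component $n$ is copied verbatim. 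The topological correspondence ensures that the resulting sequence is a valid solution for $(\mathsf{w})\List[\X]$, $(\mathsf{w})\ScList[\X]$, $\wCB[\X]$, or $\CB[\X]$ respectively.

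\emph{The ``in particular'' clause.} If $\Y$ is rich then by definition there is a computable embedding $\function{\iota}{\Cantor}{\Y}$, and as observed right after \thref{richness}, its range is automatically $\Pi_1^0$ in $\Y$. Thus the hypotheses are met with $\X = \Cantor$. The only potential pitfall in the above argument is the topological preservation step, but it is routine once one recalls that a computable embedding with closed range is a homeomorphism onto a closed subspace, so the intrinsic topology on $\X$ and the relative topology inherited from $\Y$ via $\iota$ agree on all features relevant to the Cantor–Bendixson analysis.
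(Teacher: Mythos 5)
Your proof is correct and follows essentially the same approach as the paper's: push forward through the computable map $J$ of \thref{embeddingtheorem}, solve in $\Y$, and pull back closed-set outputs and individual points via the computable (partial) inverses, using the fact that $\iota$ is a homeomorphism onto a closed subspace of $\Y$ and hence preserves isolation, perfectness, the perfect kernel and the scattered part. The paper's proof is more terse, merely noting that $J(A)$ is homeomorphic to $A$ and giving one worked example for $f = \CB$; you additionally spell out the topological preservation argument and the observation that one should output a dummy point when $b_i=0$, and you invoke \thref{surjections} with $s=\iota$ to pull back closed sets where the paper uses the right inverse $J^{-1}$ from \thref{embeddingtheorem} --- these two mechanisms agree on closed subsets of $\range(\iota)$, which is all that is ever needed.
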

\begin{proof}
By \thref{embeddingtheorem}, the map $\function{J}{\negrepr{\X}}{\negrepr{\Y}}$ and its partial inverse are computable.  Given $A \in \dom(f_\X)$, we have that $J(A) \in \dom(f_\Y)$, as cardinality is preserved by $J$. Moreover, $J(A)$ is homeomorphic to $A$.  Depending on $f$, we use combinations of copies of $J^{-1}$ and $\iota^{-1}$ to compute from a solution for $f_\Y(J(A))$ a solution for $f_\X(A)$. For example, considering the case $f = \CB$, we have that if $(B, (n,(b_i,y_i)_{i \in \nats})) \in \CB[\Y](J(A))$ then $(J^{-1}(B), (n, (b_i,\iota^{-1}(y_i))_{i \in \nats})) \in \CB[\X](A)$.
\end{proof}
The following lemma is immediate using \thref{richspacesPKGeneral} and either \thref{cantor_and_baire_same} or \thref{Cantor_and_baire_same2} or \thref{equivalencesCBbairecantor}.
 \begin{lemma}
\thlabel{richspacesPK}
Let $f$ be any of the following: $\PST$, $\PK$, $\wCB$. For every rich computable Polish space $\X$, $f_{\Baire} \weireducible f_\X$. If moreover there exists a computable embedding $\function{\iota}{\X}{\Baire}$ with $\range(\iota)\in \Pi_1^0(\Baire)$, $f_\X\weiequiv f_{\Baire}$.
\end{lemma}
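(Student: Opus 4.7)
The plan is to derive both halves of the statement by a single invocation of \thref{richspacesPKGeneral} combined with the Cantor/Baire equivalences already established for each of $\PST$, $\PK$, and $\wCB$.

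For the first assertion, fix $f \in \{\PST, \PK, \wCB\}$ and a rich computable Polish space $\X$. Richness supplies a computable embedding $\function{\iota}{\Cantor}{\X}$, and as noted in the excerpt (following \thref{richness}) any such embedding has $\range(\iota) \in \Pi_1^0(\X)$. Applying \thref{richspacesPKGeneral} with this $\iota$ yields $f_\Cantor \weireducible f_\X$. Now I invoke the appropriate Cantor-to-Baire equivalence depending on which $f$ we are dealing with: $\PST[\Cantor] \strongweiequiv \PST[\Baire]$ from \thref{cantor_and_baire_same}, $\PK[\Cantor] \strongweiequiv \PK[\Baire]$ from \thref{Cantor_and_baire_same2}, and $\wCB[\Cantor] \weiequiv \wCB[\Baire]$ from \thref{equivalencesCBbairecantor} (which actually records the chain $\wCB[\Cantor] \weiequiv \PK[\Cantor] \weiequiv \PK[\Baire] \weiequiv \wCB[\Baire]$). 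Composing with $f_\Cantor \weireducible f_\X$ delivers $f_\Baire \weireducible f_\X$, as required.

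For the second assertion, assume in addition that a computable embedding $\function{\iota'}{\X}{\Baire}$ with $\range(\iota') \in \Pi_1^0(\Baire)$ is given. Then \thref{richspacesPKGeneral}, applied to $\iota'$ with $\Y = \Baire$, directly produces $f_\X \weireducible f_\Baire$. Combined with the reduction $f_\Baire \weireducible f_\X$ established in the previous paragraph, we obtain the equivalence $f_\X \weiequiv f_\Baire$.

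There is essentially no obstacle here: \thref{richspacesPKGeneral} has already done the work of transferring the reduction through the maps $J$ and $\iota^{-1}$ on representatives, and the Cantor-Baire equivalences for the three listed functions are recorded earlier in the paper. The only care needed is bookkeeping to match the correct equivalence to the correct $f$, and to observe that computable embeddings out of $\Cantor$ automatically have $\Pi_1^0$ range so that the hypothesis of \thref{richspacesPKGeneral} is satisfied in the rich case without any extra assumption.
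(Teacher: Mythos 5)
Your proof is correct and matches the paper's own approach exactly: the paper's proof of this lemma simply says it is immediate from \thref{richspacesPKGeneral} together with the Cantor-to-Baire equivalences in \thref{cantor_and_baire_same}, \thref{Cantor_and_baire_same2}, and \thref{equivalencesCBbairecantor}, which is precisely the chain of reductions you spell out.
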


\subsection{Perfect sets}

\thref{richspacesPK} implies that $\PST[\X]\weiequiv \PST[\Baire]$ whenever $\X$ is $0$-dimensional. We can however obtain this result also for the unit interval.

\begin{theorem}
\thlabel{PST01}
$\PST[{[0,1]}]\weiequiv \PST[\Baire]$.
\end{theorem}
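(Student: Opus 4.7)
The plan is to prove the two directions separately; the forward reduction is immediate while the backward reduction requires a short but nontrivial argument.

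First, for $\PST[\Baire] \weireducible \PST[{[0,1]}]$, I would simply invoke \thref{richspacesPK}, observing that $[0,1]$ is rich: the Cantor middle-thirds embedding $\function{\iota}{\Cantor}{[0,1]}$ defined by $\iota(p)\defas\sum_{n \in \nats} 2p(n)\cdot 3^{-(n+1)}$ is a computable embedding whose range (the classical Cantor ternary set) is closed in $[0,1]$.

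For the reverse direction $\PST[{[0,1]}] \weireducible \PST[\Baire]$, which by \thref{cantor_and_baire_same} reduces to proving $\PST[{[0,1]}] \weireducible \PST[\Cantor]$, my strategy would be to push an uncountable $A \in \negrepr{[0,1]}$ up to $\Cantor$ along the binary-expansion surjection $\function{s}{\Cantor}{[0,1]}$, $s(p)\defas\sum_{n \in \nats} p(n)2^{-(n+1)}$. By \thref{surjections}, $B\defas s^{-1}(A)$ is computable in $\negrepr{\Cantor}$ from $A$, and surjectivity of $s$ gives $s(B)=A$, which forces $B$ to be uncountable as well. Feeding $B$ to $\PST[\Cantor]$ yields a perfect $P\subseteq B$, and I would then return $s(P)\subseteq A$; this is a compact subset of $[0,1]$ and hence computable in $\negrepr{[0,1]}$ from the tree representing $P$ by standard arguments on continuous images of computable compact sets.

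The main (and essentially only) point to check is that $s(P)$ is perfect. Closedness is automatic since $s$ is continuous and $P$ compact, so the task reduces to excluding isolated points. The key property of $s$ I plan to exploit is that it is at most two-to-one, with $|s^{-1}(x)|=2$ exactly when $x$ is a dyadic rational in $(0,1)$, in which case the two preimages are of the form $\sigma 01^\nats$ and $\sigma 10^\nats$. Given any $x=s(p)$ with $p\in P$, perfectness of $P$ supplies a sequence $(p_n)_{n \in \nats}$ in $P\setminus\{p\}$ with $p_n\to p$, and hence $s(p_n)\to x$; any occurrence $s(p_n)=x$ would force $p_n$ to equal the unique alternative preimage $p'$ of $x$, which is incompatible with $p_n\to p$ (as $p'\neq p$) for all but finitely many $n$. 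Thus $x$ is a limit of $s(P)\setminus\{x\}$, so $s(P)$ is perfect, completing the reduction.
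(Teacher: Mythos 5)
Your proof is correct and follows essentially the same route as the paper's: forward direction via richness of $[0,1]$ (\thref{richspacesPK}), backward direction via the binary-expansion surjection (the paper's $\sbin$) combined with \thref{surjections}, and perfectness of the image exploiting the fact that $\sbin$ is at most two-to-one. The one place you diverge is the computability of the forward image $P \mapsto s(P)$ as a map $\negrepr{\Cantor}\to\negrepr{[0,1]}$. You invoke the general principle that continuous images of computably compact sets are computable in the compact representation, which for the computably compact spaces $\Cantor$ and $[0,1]$ coincides with the negative representation $\negrepr{\cdot}$. The paper instead gives a hands-on construction tailored to $\sbin$, enumerating the complement of $s(P)$ via the intervals $I^\sigma$ (when $S_\sigma$ is seen to be well-founded) together with the extra intervals $(\sbin(\sigma01^i0^\nats),\sbin(\sigma10^i1^\nats))$ needed to evict dyadic rationals whose two preimages are both ruled out. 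Your appeal to computable compactness is shorter and immediately generalizes; the paper's explicit argument is self-contained and makes visible exactly which $\Pi_1^0$-information drives the computation. Your perfectness argument (sequential, counting how often $s(p_n)=x$ can occur) and the paper's (suppose $x$ isolated in $s(P)$, then $\sbin^{-1}(I)\cap P$ is a finite nonempty open subset of $P$) are two phrasings of the same at-most-two-to-one observation. Both are sound.
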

\begin{proof}
The right-to-left direction follows from \thref{richspacesPK}. 

For the opposite direction, since $\PST[\Baire]\weiequiv \PST[\Cantor]$ by \thref{cantor_and_baire_same}, we show instead that $\PST[{[0,1]}]\weireducible\PST[\Cantor]$.

Let $\function{\sbin}{\Cantor}{[0,1]}$ be the computable function that computes a real from its binary expansion:
\[
s_{ \mathsf{b} } (p) = \underset{ i \in \nats }{ \sum }   \frac { p(i) } { 2^{i+1} }.
\]
Notice that $\sbin$ is not injective (and hence not an embedding) as $\sbin(\sigma01^\nats) = \sbin(\sigma10^\nats)$ for any $\sigma \in \cantor$; however these are the only counterexamples to injectivity. In particular, for every $x \in [0,1]$, $\length{\sbin^{-1}(x)}\leq 2$.
For $\sigma \in \cantor$ we let $I^\sigma\defas\{x \in [0,1]:(\forall p \in \Cantor)(\sbin(p)=x \implies \sigma\sqsubset p)\}$. Notice that if $\sigma$ is not constant then $I^\sigma=(\sbin(\sigma0^\nats), \sbin(\sigma1^\nats))$, while $I^{0^n}=[0, \sbin(0^n1^\nats))$ and $I^{1^n}=(\sbin(1^n0^\nats), 1]$: all these intervals are open subsets of $[0,1]$. 

By \thref{surjections} given $A \in \negrepr{[0,1]}$ we can compute $\sbin^{-1}(A)$.

Although \thref{embeddingtheorem} does not apply, we claim that $\function{J}{\negrepr{\Cantor}}{\negrepr{[0,1]}}$, $C\mapsto \sbin(C)$, is computable (notice that, as $\Cantor$ is compact and $\sbin$ is continuous, the image of a closed set is closed). To prove that $J$ is computable, we proceed as follows: let $S \in \tree_2$ be a name for $C \in \negrepr{\Cantor}$, i.e.\ a tree such that $\body{S}=C$. Recall that, by \thref{Complexityresults}(i), $\wellfounded_2$ is a $\Sigma_1^0$ set. We compute $B \in \negrepr{[0,1]}$ as follows:   
\begin{enumerate}[(i)]
    \item whenever we witness that ${S_\sigma} \in \wellfounded_2$,  we list $I^\sigma$ in the complement of $B$;
   \item whenever we witness that ${S_{\sigma01^i}}$ and ${ S_{\sigma10^i}}$ are in $\wellfounded_2$ for some $i \in \nats$, we list in the complement of $B$ the open interval $(\sbin(\sigma01^i0^\nats),\sbin(\sigma10^i1^\nats))$ which coincides with $I^{\sigma01^i} \cup I^{\sigma10^i} \cup \{\sbin(\sigma01^\nats)\}$.
\end{enumerate}
We need to check that $B=J(C)$, i.e.\ for every $x \in [0,1]$, $x \notin B$ iff $\sbin^{-1}(x)\cap C = \emptyset$. 

If $x \notin B$ because $x \in I^\sigma$ for some $\sigma$ with ${S_\sigma} \in \wellfounded_2$, then $\sigma$ is a prefix of every element of $\sbin^{-1}(x)$ and hence $\sbin^{-1}(x)\cap C = \emptyset$. If $x \notin B$ because $x \in (\sbin(\sigma01^i0^\nats),\sbin(\sigma10^i1^\nats))$ for some $\sigma$ and $i$, then either $x \in I^{\sigma01^i} \cup I^{\sigma10^i}$, in which case we can apply the previous argument to one of $\sigma01^i$ and $\sigma10^i$, or $x = \sbin(\sigma01^\nats) = \sbin(\sigma10^\nats)$; in this case we know that both $\sigma01^\nats$ and $\sigma10^\nats$ do not belong to $C$.

For the converse, consider first the case where $\sbin^{-1}(x) = \{q\}$ and $x \notin \{0,1\}$: then $q$ is not eventually constant. Since $q \notin C$, there exists $\sigma\sqsubset q$ such that $\sigma \notin S$ and hence $I^\sigma$ is listed in the complement of $B$. As $q \notin \{\sigma0^\nats, \sigma1^\nats\}$, we obtain that $x \in I^\sigma$ and hence $x \notin B$. The case in which $x \in \{0,1\}$ is analogous. 
If  ${\sbin^{-1}(x)} = \{q_0,q_1\}$ then, as noticed above, there exists $\tau$ such that $q_0 = \tau01^\nats$ and $q_1=\tau10^\nats$. Since $q_0,q_1 \notin C$ we have $\tau01^i, \tau10^i \notin S$ for some $i$. Then $x \in (\sbin(\tau01^i0^\nats),\sbin(\tau10^i1^\nats))$, and this interval is listed in the complement of $B$ by condition (ii). Therefore, $x \notin B$.

We now describe the reduction. Given an uncountable $A\in \negrepr{[0,1]}$, we can compute $\sbin^{-1}(A) \in \negrepr{\Cantor}$  which is uncountable as well. Let $P \in \PST[\Cantor](\sbin^{-1}(A))$ and $B = J(P)$. It suffices to show that $B \subseteq A$ and that $B$ is perfect. 

If $x \in B$ then there exists $q \in \sbin^{-1}(x) \cap P$. Since $P \subseteq \sbin^{-1}(A)$ we get that $\sbin(q)=x \in A$. This shows that $B \subseteq A$.

 It remains to show that $B$ is perfect. Suppose not: then there exists $x \in B$ and some open interval $I \subseteq [0,1]$ such that $I \cap B=\{x\}$. By continuity of $\sbin$, $\sbin^{-1}(I \cap B)$ is an open set in $P$ which has at most two members; these points are isolated in $P$, contradicting the perfectness of $P$. 
\end{proof}

\begin{remark}
Following the ideas of the previous proof and using some extra care it is possible to prove that $\PST[\Baire]\weiequiv \PST[\mathbb{R}]$: replace $s_{ \mathsf{b} }$ with $\function{s_{ \mathsf{b} }'}{\nats\times\Cantor}{\mathbb{R}}$ defined by
\[
s_{ \mathsf{b} }' (n,p) = (-1)^{n} \cdot \left\lceil\frac{n}{2}\right\rceil +  s_{ \mathsf{b} }(p).
\]
\end{remark}

We do not know whether there exist rich computable Polish spaces $\X$ such that $\PST[\Baire] \strictlyweireducible \PST[\X]$ (see \thref{question:richspacesBaire}).

\subsection{(Weak) lists}
The following classical fact helps in the next proofs.
\begin{theorem}[{\cite[Theorem 3E.6]{Moschovakis}}]
	\thlabel{theorem3e6moschovakis}
	For every computable metric space $\X$ there is a computable surjection $\function{s}{\Baire}{\X}$ and $A \in \Pi_1^0(\Baire)$ such that $s$ is one-to-one on $A$ and $s(A)=\X$.
\end{theorem}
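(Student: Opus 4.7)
The plan is to build a recursive Lusin scheme on $\X$ and extract $s$ and $A$ from it. Concretely, I aim to construct a computably-indexed family $(F_\sigma)_{\sigma \in \baire}$ of subsets of $\X$ with $F_{\str{}} = \X$ and such that, for every $\sigma$, the family $(F_{\sigma n})_{n \in \nats}$ is a pairwise-disjoint cover of $F_\sigma$ with $\text{diam}(F_{\sigma n}) < 2^{-\length{\sigma}-1}$; in addition a basic open ball $B_\sigma \supseteq F_\sigma$ with computable center is attached to each $\sigma$ of positive length.

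From such a scheme, set $T \defas \{\sigma \in \baire : F_\sigma \ne \emptyset\}$ and $A \defas \body{T}$. For $p \in A$, define $s(p)$ as the unique element of $\bigcap_n \overline{F_{p[n]}}$, which is a singleton by the diameter condition and completeness of $\X$. For $p \notin A$, let $k$ be the largest integer with $p[k] \in T$ (well-defined since $\str{} \in T$) and set $s(p)$ to the center of $B_{p[k]}$ (or to a fixed computable default point if $k=0$). Then $\function{s}{\Baire}{\X}$ is total computable: injectivity of $s \restriction A$ follows from disjointness of siblings, and $s(A) = \X$ follows from the covering property, as each $x \in \X$ determines a unique infinite branch via $x \in F_{p[n]}$ for all $n$.

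To build the scheme, enumerate effectively the basic open balls $(B_i)_i$ of $\X$. Recursively, given $F_\sigma$ and $B_\sigma$, effectively enumerate the basic balls $B$ with $\overline{B} \subseteq B_\sigma$ (a $\Sigma^0_1$ condition on indices, since distances and radii are computable rationals) and radius $< 2^{-\length{\sigma}-2}$ as $(B_{j_n})_n$; density of $\alpha$ guarantees that these balls cover $B_\sigma$. Set $B_{\sigma n} \defas B_{j_n}$ and $F_{\sigma n} \defas (F_\sigma \cap B_{j_n}) \setminus \bigcup_{m<n} B_{j_m}$. The initial level $\length{\sigma} = 1$ is handled analogously by covering $\X$ directly with small basic balls.

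The main obstacle is making $T$ a \emph{computable} tree, since a priori $F_\sigma \ne \emptyset$ is only a $\Sigma^1_1$ condition. The resolution --- which is the technical heart of the classical proof --- is that each $F_\sigma$ is, by construction, an explicit finite Boolean combination of basic balls drawn from the ancestor chain of $\sigma$; with care in the enumeration of sibling balls one can reduce nonemptiness of $F_\sigma$ to a decidable predicate on the finitely many basic balls associated with the ancestors of $\sigma$, giving $A = \body{T} \in \Pi^0_1(\Baire)$ as required.
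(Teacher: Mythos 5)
The paper does not give a proof of this statement: it is cited from Moschovakis \cite[Theorem 3E.6]{Moschovakis}, so there is no in-paper argument to compare against. Taking your proposal on its own terms: the overall architecture---a Lusin scheme of pairwise-disjoint pieces $F_\sigma$ of shrinking diameter, the tree $T = \{\sigma \in \baire : F_\sigma \neq \emptyset\}$, and $s$ defined via nested balls on $\body{T}$ with a fallback value off it---is a reasonable and standard plan, and your injectivity and surjectivity arguments are fine \emph{given} the construction. The problem lies exactly where you place it, in your last paragraph, and the resolution you sketch there does not work.

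The claim that ``with care in the enumeration of sibling balls one can reduce nonemptiness of $F_\sigma$ to a decidable predicate'' is unjustified and, as stated, false. Each $F_\sigma$ in your construction is a locally closed set (a finite intersection of open balls minus a finite union of open balls), and deciding whether such a set is empty amounts to deciding inclusions of the shape $B(\alpha(n),q_m)\subseteq\bigcup_j B(\alpha(n_j),q_{m_j})$; the atomic comparisons $d(\alpha(n),\alpha(n'))<q$ are only $\Sigma_1^0$ in a general computable Polish space, not decidable. It is not even clear how to secure the weaker property that would still suffice for $A=\body{T}\in\Pi_1^0(\Baire)$, namely that $T$ be co-c.e., since ``$F_\sigma=\emptyset$'' is itself naturally on the $\Pi_1^0$ side rather than $\Sigma_1^0$. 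There is also a secondary computability gap: your definition of $s(p)$ for $p\notin A$ invokes ``the largest $k$ with $p[k]\in T$'', so the totality and computability of $s$ already presuppose decidability of $T$, the very point at issue. The classical argument does not decide emptiness of the pieces at all: one instead defines $A$ by a purely local canonical-choice condition on consecutive coordinates of $p$, written so that every clause is $\Pi_1^0$ and so that $\Sigma_1^0$ membership tests such as ``$s(p)\in B_{p(n)}$'' never appear. You have located the technical heart correctly, but the specific idea you offer to close it needs to be replaced, not merely elaborated.
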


\begin{lemma}
\thlabel{listupperbound}
Let $\X$ be a computable metric space.  Then $(\mathsf{w})\List[\X]\weireducible( \mathsf{w})\List[\Baire]$.
\end{lemma}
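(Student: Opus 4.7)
The plan is to invoke \thref{theorem3e6moschovakis} to obtain a computable surjection $\function{s}{\Baire}{\X}$ together with a $\Pi_1^0$-set $A \subseteq \Baire$ on which $s$ is injective and such that $s(A) = \X$. By \thref{surjections} applied to the total computable function $s$, the induced map $M \mapsto s^{-1}(M)$ from $\negrepr{\X}$ to $\negrepr{\Baire}$ is computable.

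Given a countable closed set $C \in \negrepr{\X}$, consider the closed set $B \defas s^{-1}(C) \cap A \subseteq \Baire$. Since $A$ is a fixed $\Pi_1^0$-subset of $\Baire$ and intersection is uniformly computable in the negative representation (the basic open balls covering the complement of $B$ are just the union of those covering the complements of $s^{-1}(C)$ and of $A$), $B$ can be uniformly computed from $C$ as an element of $\negrepr{\Baire}$. Because $s$ restricts to a bijection between $A$ and $\X$, it further restricts to a bijection between $B$ and $C$; in particular, $B$ is countable.

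Hence $B$ lies in $\dom((\mathsf{w})\List[\Baire])$ and is a valid input to the right-hand side of the reduction. Given a solution $(b_i,p_i)_{i \in \nats}$ of $\wList[\Baire](B)$ (resp.\ $(n,(b_i,p_i)_{i \in \nats})$ of $\List[\Baire](B)$), the backward functional outputs $(b_i,s(p_i))_{i \in \nats}$ (resp.\ $(n,(b_i,s(p_i))_{i \in \nats})$), which is computable since $s$ is computable. Using the bijection $s\restriction B : B \to C$, we get $\{s(p_i) : b_i=1\} = s(B) = C$, so this is a valid solution to $\wList[\X](C)$; and for the stronger problem $\List[\X]$ the cardinality indicator $n$ transfers from $B$ to $C$ unchanged because $|B|=|C|$.

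The only real point of care is the computability of the intersection $s^{-1}(C)\cap A$ in $\negrepr{\Baire}$ and the observation that restricting to $A$ is precisely what makes $B$ countable when $C$ is; both issues are handled by \thref{theorem3e6moschovakis} and the standard facts about the negative representation of closed sets. No parallelization or diagonalization is needed, so there is no serious obstacle beyond setting up the appropriate intersection.
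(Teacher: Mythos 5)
Your proof is correct and follows essentially the same route as the paper. The only cosmetic difference is that you first apply \thref{surjections} to the total function $s$ and then intersect with $A$ by hand, whereas the paper applies \thref{surjections} directly to the restriction $s_A = s{\restriction}A$ (a partial computable function with $\Pi_1^0$ domain), which yields $s_A^{-1}(C) = s^{-1}(C)\cap A$ in one step; both are valid instantiations of the same proposition and lead to the identical forward and backward functionals.
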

\begin{proof}
We prove only $\List[\X]\weireducible\List[\Baire]$ as the other reduction is similar. Let $s$ and $A$ be as in \thref{theorem3e6moschovakis} and $s_A$ be the restriction of $s$ to $A$. By \thref{surjections}, the function $\function{S_A}{\negrepr{\X}}{\negrepr{\Baire}}$ such that $S_A(M)=s_A^{-1}(M)$ is computable: hence given $C \in \negrepr{\X}$ and $(n,(b_i,p_i)_{i \in \nats}) \in \List[\Baire](S_A(C))$ we have that $(n,(b_i,s(p_i))_{i \in \nats}) \in \List[\X](C)$.
\end{proof}

\begin{lemma}
\thlabel{richspaceslist}
Let $\X,\Y$ be computable metric spaces and $\function{\iota}{\X}{\Y}$ be a computable embedding with $\range(\iota) \in \Pi_1^0(\Y)$. Then $(\mathsf{w})\List[\X] \weireducible (\mathsf{w})\List[\Y]$. In particular, $(\mathsf{w})\List[\Cantor] \weireducible (\mathsf{w})\List[\Y]$ for every rich computable metric space $\Y$.
\end{lemma}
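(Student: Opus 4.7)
The plan is to transport the instance along $\iota$ using the computable map $J$ from Theorem \thref{embeddingtheorem}, and then pull a solution back via the partial inverse $\iota^{-1}$. Concretely, given $A \in \dom((\mathsf{w})\List[\X])$, the forward functional produces $J(A)=\iota(A) \in \negrepr{\Y}$. Since $\iota$ is injective, $\length{\iota(A)}=\length{A}$, so $\iota(A)$ is countable and therefore $J(A) \in \dom((\mathsf{w})\List[\Y])$.

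For the backward functional, I fix once and for all a computable point $x_0 \in \X$ (e.g., $x_0 = \alpha(0)$ from the dense sequence). Given a $\wList[\Y]$-solution $(b_i,y_i)_{i \in \nats}$ of $\iota(A)$, I output $(b_i,x_i')_{i \in \nats}$ defined by
\[
x_i' \defas \begin{cases} \iota^{-1}(y_i) & \text{if } b_i = 1, \\ x_0 & \text{if } b_i = 0.\end{cases}
\]
This is a well-defined computable construction, because whenever $b_i=1$ we have $y_i \in \iota(A) \subseteq \range(\iota)$, and $\iota^{-1}$ is computable on $\range(\iota)$ by assumption. Correctness is then immediate: $\{x_i' \st b_i=1\} = \iota^{-1}(\{y_i \st b_i=1\}) = \iota^{-1}(\iota(A)) = A$. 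For $\List[\X]$, I additionally copy the natural number $n$ appearing in the output of $\List[\Y](\iota(A))$ verbatim; this is correct because $n$ encodes the cardinality of $\iota(A)$, which equals the cardinality of $A$.

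For the second assertion: if $\Y$ is rich then by \thref{richness} there is a computable embedding $\function{\iota}{\Cantor}{\Y}$, and as recalled in the paragraph immediately after that definition, any such embedding automatically satisfies $\range(\iota) \in \Pi_1^0(\Y)$. Hence the first part of the lemma applies with $\X = \Cantor$.

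The proof has no real obstacle: the only slightly delicate point is to avoid invoking the partial function $\iota^{-1}$ on the $y_i$'s with $b_i = 0$, since these may fall outside $\range(\iota)$. Replacing them with the fixed computable point $x_0$ is harmless, because the specification of $(\mathsf{w})\List[\X]$ only constrains the $x_i$ with $b_i = 1$.
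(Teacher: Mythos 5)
Your proof is correct and follows essentially the same route as the paper's: transport the instance with the map $J$ from \thref{embeddingtheorem}, then pull the listed points back via $\iota^{-1}$ (and, for $\List$, copy $n$ verbatim, justified by injectivity of $\iota$). One small point in your favour: the paper's proof simply writes the backward map as $(n,(b_i,\iota^{-1}(p_i))_{i\in\nats})$ without addressing that when $b_i=0$ the point $p_i$ may lie outside $\range(\iota)$, where the partial map $\iota^{-1}$ need not converge; your substitution of a fixed computable point $x_0$ for those indices is exactly the right fix and makes the realizer genuinely total on the relevant names.
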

\begin{proof}
We only prove that $\List[\X]\weireducible\List[\Y]$, the other reduction is similar.  By \thref{embeddingtheorem} the map $\function{J}{\negrepr{\X}}{\negrepr{\Y}}$ is computable. Given $A \in \dom(\List[\X])$ we have that $J(A) \in \dom(\List[\Y])$: moreover, given $(n,(b_i,p_i))_{i \in \nats} \in \List[\Y](J(A))$ we have that  $(n,(b_i,\iota^{-1}(p_i)))_{i \in \nats} \in \List[\X](A)$. 
\end{proof}

\begin{lemma}
\thlabel{wlistequivelenceotherspaces}
$\wList[\mathbb{R}]\weiequiv \wList[{[0,1]}]\weiequiv \wList[\Cantor]$.
\end{lemma}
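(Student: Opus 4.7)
The plan is to show $\wList[{[0,1]}]\weiequiv\wList[\Cantor]$ and $\wList[\mathbb{R}]\weiequiv\wList[\Cantor]$, which together yield the stated chain of equivalences. For the easy directions $\wList[\Cantor]\weireducible\wList[{[0,1]}]$ and $\wList[\Cantor]\weireducible\wList[\mathbb{R}]$, I would apply \thref{richspaceslist} to the middle-thirds embedding $\iota\colon\Cantor\to[0,1]$, $\iota(p)\defas\sum_{i\in\nats}2p(i)/3^{i+1}$: its image (the classical Cantor set) is computably $\Pi_1^0$ in $[0,1]$, and composing with the inclusion $[0,1]\hookrightarrow\mathbb{R}$ gives the analogous embedding into $\mathbb{R}$.

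For $\wList[{[0,1]}]\weireducible\wList[\Cantor]$ I would use the binary-expansion map $\sbin\colon\Cantor\to[0,1]$ defined in the proof of \thref{PST01}. Since $\sbin$ is at most $2$-to-$1$, the preimage $\sbin^{-1}(A)$ of any countable closed $A\subseteq[0,1]$ is again countable, and by \thref{surjections} the map $A\mapsto\sbin^{-1}(A)$ from $\negrepr{[0,1]}$ to $\negrepr{\Cantor}$ is computable. Given a weak list $(b_i,p_i)_{i\in\nats}\in\wList[\Cantor](\sbin^{-1}(A))$ one has $A=\sbin(\sbin^{-1}(A))=\{\sbin(p_i)\st b_i=1\}$, so $(b_i,\sbin(p_i))_{i\in\nats}$ is a name for a member of $\wList[{[0,1]}](A)$.

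For the remaining reduction $\wList[\mathbb{R}]\weireducible\wList[\Cantor]$, I would exploit the parallelizability of $\wList[\Cantor]$ (\thref{wsclistcantorparallelizable}). Given $A\in\negrepr{\mathbb{R}}$ countable, uniformly compute $A_n\defas A\cap[-n,n]\in\negrepr{[-n,n]}$, view each as a countable closed subset of $[0,1]$ via the affine homeomorphism $[-n,n]\cong[0,1]$, and apply the reduction of the previous paragraph to each $A_n$ in parallel to obtain weak lists $L_n$. Translating the $L_n$ back through the affine maps and interleaving them produces a sequence whose set of elements with indicator bit $1$ is $\bigcup_n A_n=A$, hence a weak list of $A$. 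The main point to verify is that the combined output is a weak list of $A$ rather than of some strict superset: this holds because every element of $A$ lies in all but finitely many $A_n$ (so it appears with bit $1$ in the combined sequence), while the repetitions arising from the overlaps $A_n\subseteq A_{n+1}$ are harmless in the weak-list format.
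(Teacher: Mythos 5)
Your proof is correct and follows essentially the same route as the paper: the $\sbin$ (binary expansion) reduction for $\wList[{[0,1]}]\weireducible\wList[\Cantor]$, the rich-space embedding for the easy directions, and decomposition of $\mathbb{R}$ into bounded pieces combined with the parallelizability of $\wList[\Cantor]$ for the remaining reduction. The only cosmetic difference is that the paper phrases the $\mathbb{R}$-argument as $\wList[\mathbb{R}]\weireducible\parallelization{\wList[{[0,1]}]}$ and then composes with the earlier steps, while you go directly to $\parallelization{\wList[\Cantor]}$; this is the same argument.
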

\begin{proof}
The fact that $\wList[{[0,1]}]\weireducible\wList[\mathbb{R}]$ is immediate and $\wList[\Cantor]\weireducible \wList[{[0,1]}]$ follows from \thref{richspaceslist}. 

Recall that  $\wList[\Cantor]$ is parallelizable (\thref{wsclistcantorparallelizable}) and notice that it is straightforward to check that $\wList[\mathbb{R}] \weireducible \parallelization{\wList[{[0,1]}]}$. Hence, it suffices to show that $\wList[{[0,1]}]\weireducible\wList[\Cantor]$. The function $\sbin$ of the proof of \thref{PST01} is useful also here. Consider an input $A \in \negrepr{[0,1]}$ and let $(b_i,p_i)_{i \in \nats}\in \wList[\Cantor](\sbin^{-1}(A))$: it is straightforward to check that $(b_i,\sbin(p_i))_{i \in \nats}$ is a solution of $\wList[{[0,1]}](A)$.
\end{proof}

Notice that the argument above shows that $\wList[\X]\weireducible\wList[\Cantor]$ for every computable metric space $\X$ such that there exists an admissible representation $\partialfunction{\delta}{\Cantor}{\X}$ with $\dom (\delta) \in \Pi_1^0(\Cantor)$ and such that $\length{\delta^{-1}(x)}\leq \aleph_0$ for every $x \in \X$. In particular $\wList[ {[0,1]^d}] \weiequiv \wList[\Cantor]$ for any $d \in \nats$.

Notice that the situation for $\List[\X]$ is less clear: for example, we do not know if, in contrast to what happens for $\wList[\X]$, $\List[\Cantor] \strictlyweireducible \List[\mathbb{R}]$ (see \thref{question:list}).

We now consider listing problems on countable spaces. Let us start from finite spaces: for $n>0$, we denote by $\mathbf{n}$ the space consisting of $\{0,\dots,n-1\}$ with the discrete topology and an arbitrary computable metric, which is obviously a computable metric space. 
\begin{proposition}
\thlabel{FiniteListSame}
For every $n >0$, $\wList[\mathbf{n}]\weiequiv \List[\mathbf{n}] \weiequiv \lpo^{n}$ and therefore $\List[\mathbf{n}]\strictlyweireducible \List[\mathbf{n}+1]$.
\end{proposition}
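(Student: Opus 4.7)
The plan is to prove $\wList[\mathbf{n}] \weiequiv \List[\mathbf{n}] \weiequiv \lpo^n$ by a short chain of reductions, and then to derive the strict inequality $\List[\mathbf{n}] \strictlyweireducible \List[\mathbf{n}+1]$ from the well-known strict Weihrauch hierarchy $\lpo^n \strictlyweireducible \lpo^{n+1}$.

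First I would observe that $\wList[\mathbf{n}] \weireducible \List[\mathbf{n}]$ is immediate, since any list solution yields a weak list solution by discarding the cardinality component. For $\List[\mathbf{n}] \weireducible \lpo^n$, recall that because $\mathbf{n}$ is a finite discrete computable metric space, a name for $A \in \negrepr{\mathbf{n}}$ is essentially an enumeration of the complement $A^c \subseteq \mathbf{n}$. For each $j<n$ I would form the sequence $\chi_j \in \Cantor$ whose $s$-th bit records whether $j$ has been enumerated in $A^c$ by stage $s$; then $\lpo(\chi_j)=1$ iff $j \in A$. A single call to $\lpo^n$ applied to $(\chi_0, \ldots, \chi_{n-1})$ thus returns $A$ exactly, from which both the cardinality and a list are computable.

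The main step, and the place where the argument will require most care, is $\lpo^n \weireducible \wList[\mathbf{n}]$. Given $(p_0,\ldots,p_{n-1})$ as input for $\lpo^n$, the forward map builds a name for $A \subseteq \mathbf{n}$ by enumerating $j$ in $A^c$ at the first stage $s$ with $p_j(s) \neq 0$, so that $j \in A$ iff $\lpo(p_j)=1$. The delicate part is the backward map: given a weak list $(b_i,x_i)_{i \in \nats}$ of $A$, one must produce the bit $\lpo(p_j)$ for every $j<n$. I would run two searches in parallel for each $j$: one scans $p_j$ for a nonzero digit (witnessing $\lpo(p_j)=0$), the other scans the weak list for an index $i$ with $b_i=1$ and $x_i=j$ (witnessing $\lpo(p_j)=1$). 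By construction exactly one of the two terminates, so the bit can be output in finite time; iterating over the $n$ values of $j$ produces the full answer. The main obstacle here is conceptual rather than technical: a weak list supplies only \emph{positive} membership information, so the backward map is forced to consult both the oracle's response and the original input, and finiteness of $\mathbf{n}$ is essential to bound the parallel search.

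The strict reduction $\List[\mathbf{n}] \strictlyweireducible \List[\mathbf{n}+1]$ is then immediate: the equivalence with $\lpo^n$ just established, combined with the standard fact that $\lpo^n$ form a strict chain in the Weihrauch lattice (the non-trivial direction $\lpo^{n+1}\not\weireducible\lpo^n$ being a classical information-theoretic/continuity argument in the style of \cite{BG09}), gives the strictness at once.
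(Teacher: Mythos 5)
Your proof is correct and takes essentially the same approach as the paper: both routes go through $\lpo^n$, use the observation that in a finite discrete space the conditions ``$m$ appears in the weak list'' and ``$m \notin A$'' are complementary $\Sigma^0_1$ events, and invoke the strictness of the $\lpo^n$ chain from \cite{BG09} to conclude. The only cosmetic difference is that you close a three-step cycle $\wList[\mathbf{n}] \weireducible \List[\mathbf{n}] \weireducible \lpo^n \weireducible \wList[\mathbf{n}]$, whereas the paper proves $\wList[\mathbf{n}] \weiequiv \List[\mathbf{n}]$ directly and then separately shows $\wList[\mathbf{n}] \weiequiv \lpo^n$; the content of the reductions is the same.
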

\begin{proof}
 The fact that $\wList[\mathbf{n}]\weireducible \List[\mathbf{n}]$ is trivial. For the converse let $A\in \negrepr{\mathbf{n}}$ and let $(b_i,m_i)_{i \in \nats} \in \wList[\mathbf{n}](A)$. Notice that for every $m<n$  exactly one of $m \notin A$ and $(\exists i) (b_i=1 \land m_i=m)$ holds: since both conditions are $\Sigma_1^{0}$ we can compute whether $m \in A$ or not. This allows us to compute $\length{A}$ and, together with $(b_i,m_i)_{i \in \nats}$ we obtain a name for $\List[\mathbf{n}]$ (see \thref{equivalentcersionslists}).

To show that $\wList[\mathbf{n}]\weireducible \lpo^{n}$, let $A \in \negrepr{\mathbf{n}}$ and fix a computable formula $\varphi$ such that $i \in A$ iff $(\forall k) \varphi(i,k,A)$. The input $p_i \in \Cantor$ for the $i$-th instance of $\lpo$ is defined by $p_i(k)=1$ iff $\lnot \varphi(i,k,A)$, so that $\lpo(p_i)=1 \iff i \in A$. For all $i \in \nats$ define 
\[
b_i \defas \begin{cases}
1 & \text{if } i<n \text{ and } \lpo(p_i)=1;\\
0 & \text{otherwise.}
\end{cases}
\qquad 
x_i \defas \begin{cases}
i & \text{if } i<n;\\
0 & \text{otherwise.}
\end{cases}
\]
Then, $(b_i,x_i)_{i \in \nats} \in \wList[\mathbf{n}](A)$.

For the opposite direction, we show that $\lpo^{n}\weireducible \wList[\mathbf{n}]$. Let $(p_j)_{j < n}$ be an input for $\lpo^{n}$. Consider $A \defas \{j < n: p_j=0^\nats\} \in \negrepr{\mathbf{n}}$ and let $(b_i,m_i)_{i \in \nats} \in \List[\mathbf{n}](A)$. Notice that, for every $j<n$, $p_j = 0^\nats$ iff $(\exists i) (b_i=1 \land m_i = j)$. We thus can compute $\lpo(p_j)$ by searching for $i$ such that either $p_j(i)=i$ or $b_i=1$ and $m_i=j$. 

The fact that $\List[\mathbf{n}]\strictlyweireducible \List[\mathbf{n}+1]$ follows from  $\lpo^{n}\strictlyweireducible \lpo^{n+1}$ (\cite[Corollary 6.7]{BG09}).
\end{proof}

We say that a computable metric space is \emph{effectively countable} if there exists a computable surjection $\function{f}{\nats}{\X}$.
\begin{lemma}
\thlabel{effectivelycountable}
For any computable metric space $\X$ which is effectively countable, $\wList[\X]\weireducible\mflim$.
\end{lemma}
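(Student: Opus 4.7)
My plan is to exploit that for an effectively countable computable metric space $\X$, every element of $\X$ has a computable name (via the computable surjection $\function{f}{\nats}{\X}$), and hence listing the members of a closed $A \in \negrepr{\X}$ reduces to deciding, for each $n \in \nats$, whether $f(n) \in A$.

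First, I would recall the well-known equivalence $\mflim \strongweiequiv \parallelization{\lpo}$ mentioned in \S\ref{representedspaces}. It therefore suffices to show that $\wList[\X] \weireducible \parallelization{\lpo}$.

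Given a computable surjection $\function{f}{\nats}{\X}$ and a name for $A \in \negrepr{\X}$ with $\length{A}\leq \aleph_0$, I observe that for each $n\in\nats$ the condition $f(n) \in A$ is $\Pi_1^{0}$ uniformly in $n$ and the name of $A$. Indeed, the negative representation of $A$ enumerates basic open balls whose union is $\X \setminus A$, and from the name of $f(n)$ (computed via $f$) we can effectively check, for each enumerated ball $B$, whether $f(n) \in B$; so $f(n) \notin A$ is $\Sigma_1^{0}$. Thus we can uniformly compute a sequence $(p_n)_{n \in \nats} \in (\Cantor)^\nats$ such that $\lpo(p_n) = 1$ iff $f(n) \in A$.

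The forward functional of the reduction maps the name of $A$ to the sequence $(p_n)_{n\in\nats}$, which is a valid instance of $\parallelization{\lpo}$. Given a solution $(b_n)_{n\in\nats}\in \parallelization{\lpo}((p_n)_{n\in\nats})$, the backward functional outputs $(b_n, f(n))_{n \in \nats}$. Since $f$ is surjective, $\{f(n) : b_n = 1\} = \{f(n) : f(n) \in A\} = A$, so this is a valid element of $\wList[\X](A)$. There is no real obstacle here: the only point to check is that $f(n) \in A$ is genuinely $\Pi_1^0$ in the input, which follows directly from the fact that $f$ is computable and $A$ is given via the negative representation.
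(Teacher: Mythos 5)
Your proof is correct and follows essentially the same approach as the paper, which generalizes the argument of $\wList[\mathbf{n}]\weireducible \lpo^n$ from \thref{FiniteListSame}: for each $n$, use one instance of $\lpo$ to decide the $\Pi_1^0$ condition $f(n) \in A$ and output $(b_n, f(n))_{n\in\nats}$. One small imprecision worth noting: membership of $f(n)$ in a basic open ball is only semi-decidable (a $\Sigma_1^0$ condition, not a decidable one), but since you only need $f(n)\notin A$ to be $\Sigma_1^0$ — an existential over $\Sigma_1^0$ conditions — the conclusion stands.
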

\begin{proof}
Fix a computable surjection $\function{f}{\nats}{\X}$. Recalling that $\mflim \weiequiv \parallelization{\lpo}$, the proof is a straightforward generalization of the proof of $\wList[\mathbf{n}]\weireducible \lpo^{n}$ in \thref{FiniteListSame}.
\end{proof}

We say that a computable metric space $\X$ is \emph{effectively infinite} if there exists a computable sequence $(U_i)_{i \in \nats}$ of open sets in $\X$ such that $(\forall i)(U_i \not\subseteq \underset{j\neq i}{\bigcup}  U_j)$.

\begin{lemma}
\thlabel{effectivelyinfinite}
For every countable computable metric space $\X$ which is effectively infinite, $\mflim \weireducible \wList[\X]$.
\end{lemma}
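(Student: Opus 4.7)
The plan is to use the equivalence $\mflim \weiequiv \parallelization{\lpo}$ recalled in \S \ref{representedspaces} and show instead that $\parallelization{\lpo} \weireducible \wList[\X]$. Let $(U_i)_{i \in \nats}$ be the computable sequence of open sets witnessing the effective infiniteness of $\X$; by hypothesis, for each $i$ there exists a point $x_i \in U_i \setminus \bigcup_{j \neq i} U_j$. Given $(q_i)_{i \in \nats} \in \Cantor^\nats$ as input for $\parallelization{\lpo}$, the forward functional computes a $\negrepr{\X}$-name for the closed set
\[ A \defas \X \setminus \bigcup\{ U_i \st q_i \neq 0^\nats \}, \]
by enumerating basic open balls contained in $U_i$ as soon as a $1$ appears in $q_i$. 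Since $\X$ is countable, $A$ is automatically a valid input for $\wList[\X]$.

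The crucial property to verify is that $A \cap U_i \neq \emptyset$ if and only if $q_i = 0^\nats$. The ``only if'' direction is immediate, since $q_i \neq 0^\nats$ forces $U_i \subseteq A^c$. For the ``if'' direction, when $q_i = 0^\nats$ the index $i$ is excluded from the union defining $A^c$, and so the witness $x_i$---which avoids every other $U_j$---survives in $A \cap U_i$. This step is where effective infiniteness enters in an essential way.

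Given $(b_j, y_j)_{j \in \nats} \in \wList[\X](A)$, the backward functional recovers each $\lpo(q_i)$ by racing two $\Sigma_1^0$ searches in parallel: on the one hand, a search for some $n$ with $q_i(n) = 1$ (which would witness $\lpo(q_i) = 0$); on the other, a search for some $j$ with $b_j = 1$ and $y_j \in U_i$ (which would witness $A \cap U_i \neq \emptyset$, hence $\lpo(q_i) = 1$). The second condition is $\Sigma_1^0$ because $U_i$ is open and $y_j$ comes with a Cauchy name. By the equivalence just established, the two alternatives are mutually exclusive and exhaustive, so exactly one race terminates and the correct bit is output uniformly in $i$. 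The main point to get right is precisely this dichotomy, whose $q_i = 0^\nats \Rightarrow A \cap U_i \neq \emptyset$ half is the only nontrivial ingredient and is guaranteed by effective infiniteness rather than mere uncountability of the family $(U_i)$; the remaining verifications (computability of the $\negrepr{\X}$-name of $A$ uniformly in $(q_i)$, and that membership in $U_i$ is $\Sigma_1^0$ in a Cauchy name) are routine.
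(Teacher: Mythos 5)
Your proof is correct and follows essentially the same route as the paper: reduce via $\mflim \weiequiv \parallelization{\lpo}$, form the closed set $A = \{x : (\forall i)(x \in U_i \Rightarrow q_i = 0^\nats)\}$, use the witness $x_i \in U_i \setminus \bigcup_{j\neq i}U_j$ to get the equivalence $q_i = 0^\nats \iff A \cap U_i \neq \emptyset$, and then decide $\lpo(q_i)$ by the resulting $\Pi_1^0$/$\Sigma_1^0$ dichotomy read off the weak list. No gaps.
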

\begin{proof}
Fix a sequence $(U_i)_{i \in \nats}$ witnessing that $\X$ is effectively infinite. Recall that $\mflim \weiequiv \parallelization{\lpo}$ so that it suffices to show $\parallelization{\lpo}\weireducible\wList[\X]$. 

Given an input $(p_i)_{i \in \nats}$ for $\parallelization{\lpo}$, let $A\defas\{x \in \X: (\forall i)(x \in U_i \implies p_i=0^\nats)\} \in \negrepr{\X}$ and notice  that $\length{A} \leq \aleph_0$ because $\X$ is countable. Notice that $p_i = 0^\nats$ iff $A \cap U_i \neq \emptyset$ (for the forward direction use the existence of $y_i \in U_i$ such that $y_i \notin \underset{j\neq i}{\bigcup} U_j$ by definition of effectively infinite).

Fix $(b_i,x_i)_{i \in \nats} \in \wList[\X](A)$. By the above observation, for every $i \in \nats$ we get
$$p_i=0^\nats \iff  (\exists k)(b_k=1\land x_k \in U_i).$$
Since we showed the equivalence of the $\Pi_1^0$ condition $p_i=0^\nats$ with a $\Sigma_1^0$ condition, we can compute $\lpo(p_i)$ for every $i \in \nats$.
\end{proof}

Many natural countable computable metric spaces, not necessarily Polish, are easily seen to be both effectively countable and effectively infinite. We thus can combine \thref{effectivelycountable,effectivelyinfinite} to obtain $\wList[\X] \weiequiv \mflim$ for several countable spaces, both compact and non-compact:

\begin{corollary}
\thlabel{corollarywListnats}
$\wList[\nats] \weiequiv \wList[\mathcal{K}] \weiequiv \wList[\mathbb{Q}] \weiequiv \mflim$, where $\mathcal{K} = \{0\} \cup \{2^{-n}: n \in \nats\} $.
\end{corollary}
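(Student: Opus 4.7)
The plan is to verify that each of the three spaces $\nats$, $\mathcal{K}$ and $\mathbb{Q}$ satisfies the hypotheses of both \thref{effectivelycountable} and \thref{effectivelyinfinite}, so that the equivalence with $\mflim$ follows at once.

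First I would observe that all three spaces are countable and trivially effectively countable: for $\nats$ take the identity, for $\mathcal{K}$ take the computable surjection $0 \mapsto 0$ and $n+1 \mapsto 2^{-n}$, and for $\mathbb{Q}$ take the fixed computable enumeration $(q_i)_{i \in \nats}$ already available from the representation of $\mathbb{R}$. Hence by \thref{effectivelycountable} we get $\wList[\X] \weireducible \mflim$ for $\X \in \{\nats, \mathcal{K}, \mathbb{Q}\}$.

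Next I would exhibit, for each space, a computable sequence $(U_i)_{i \in \nats}$ of open sets witnessing that the space is effectively infinite. For $\nats$ set $U_i \defas \{i\}$, which is open in the discrete topology and clearly satisfies $U_i \not\subseteq \bigcup_{j \neq i} U_j$. For $\mathcal{K}$, every point $2^{-n}$ is isolated in $\mathcal{K}$, so one may take $U_n \defas \{2^{-n}\}$ (computably represented as, e.g., the intersection of $\mathcal{K}$ with the ball of radius $2^{-n-2}$ around $2^{-n}$); again $2^{-n} \in U_n$ and $2^{-n} \notin U_m$ for $m \neq n$. For $\mathbb{Q}$ take $U_i \defas (i - \tfrac14,\, i+\tfrac14) \cap \mathbb{Q}$, which is a computable open set containing $i$ but no other natural number, hence $U_i \not\subseteq \bigcup_{j \neq i} U_j$. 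Applying \thref{effectivelyinfinite} we get $\mflim \weireducible \wList[\X]$ in each case.

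Combining the two directions yields $\wList[\nats] \weiequiv \wList[\mathcal{K}] \weiequiv \wList[\mathbb{Q}] \weiequiv \mflim$. There is no real obstacle here; the only point requiring a moment of thought is checking that the chosen sets $U_i$ are genuinely computable open subsets in the sense of each space's representation, which is immediate since in all three cases they are basic open balls (or finite unions thereof) with computable centers and rational radii.
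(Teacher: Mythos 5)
Your proof is correct and follows exactly the approach the paper intends: the corollary is stated immediately after the observation that $\wList[\X] \weiequiv \mflim$ follows from combining \thref{effectivelycountable} and \thref{effectivelyinfinite} whenever $\X$ is both effectively countable and effectively infinite, and you simply verify these two hypotheses for $\nats$, $\mathcal{K}$ and $\mathbb{Q}$ with explicit computable surjections and sequences of open sets. The chosen witnesses (discrete singletons for $\nats$, isolated points $\{2^{-n}\}$ for $\mathcal{K}$, and rational intervals around integers for $\mathbb{Q}$) are all legitimate computable open sets in the respective computable metric spaces, so there is no gap.
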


\begin{proposition}
\thlabel{wlistNparallelizable}
$\wList[\nats]\strictlyweireducible \List[\nats]$.
\end{proposition}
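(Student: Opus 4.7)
The plan is to establish the easy direction by forgetting the cardinality coordinate, and to derive strictness from a classical index-set complexity obstruction. The reduction $\wList[\nats]\weireducible \List[\nats]$ is immediate: any name $(n,(b_i,x_i)_{i\in\nats}) \in \List[\nats](A)$ already contains a name $(b_i,x_i)_{i\in\nats} \in \wList[\nats](A)$, so the identity on names works.

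For strictness, recall from \thref{corollarywListnats} that $\wList[\nats]\weiequiv\mflim$, hence it suffices to prove $\List[\nats]\not\weireducible\mflim$. Introduce the first-order function $\function{f}{\negrepr{\nats}}{\nats}$ defined by $f(A)=0$ if $|A|=\aleph_0$ and $f(A)=|A|+1$ otherwise; since $f(A)$ is precisely the first coordinate of any $\List[\nats](A)$-output, we have $f\weireducible\List[\nats]$. I would then prove $f\not\weireducible \mflim$ by a uniform Turing-reducibility argument: assuming $f\weireducible\mflim$ via computable $\Phi,\Psi$, the sequence $\Phi(A)$ is $A$-computable and convergent, so $\mflim(\Phi(A))$ is $\Delta_2^0(A)$ and in particular $\leq_T A'$ uniformly in $A$; composing with $\Psi$ then gives $f(A)\leq_T A'$ uniformly.

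The main step is to instantiate this bound with the standard uniformly-$e$-computable family of names $A_e$ for the closed set $\nats\setminus W_e\in\negrepr{\nats}$. Since $A_e$ is uniformly $e$-computable, $(A_e)'\leq_T e'$ uniformly, and therefore $f(A_e)\leq_T e'$ uniformly in $e$. Hence $\{e:f(A_e)=0\}$ is c.e.\ in $e'$ uniformly in $e$, so it lies in $\Sigma_2^0$. But this set equals $\{e: W_e \text{ is co-infinite}\}$, the complement of the classical $\Sigma_3^0$-complete index set of co-finite c.e.\ sets, and is therefore $\Pi_3^0$-complete --- contradicting $\Sigma_2^0$-membership. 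The only delicate point is bookkeeping the uniformity in $e$ through each reduction in the chain; everything else is a routine application of standard results.
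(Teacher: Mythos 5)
Your proposal is correct, but it takes a genuinely different route for the strictness half. The paper's proof keeps the easy direction the same, then shows $\lpo' \weireducible \List[\nats]$ directly (given $p\in\Cantor$, take $A\defas\{i:p(i)=0\}$ and read off whether the cardinality marker $n$ is $0$) and invokes the known incomparability $\lpo' \weiincomparable \mflim$ together with $\wList[\nats]\weiequiv\mflim$ from \thref{corollarywListnats}. You instead isolate the first-order cardinality function $f$ (which is essentially the same observation -- indeed $\lpo'\weireducible f$ via the same input transformation) and then re-derive the non-reduction $f\not\weireducible\mflim$ from scratch via an index-set argument: if $f\weireducible\mflim$ then $f(A)\leq_T p'$ uniformly in a name $p$ for $A$, and evaluating along the computable family $A_e = \nats\setminus W_e$ shows $\{e: W_e \text{ co-infinite}\}$ would be arithmetically too simple, contradicting its $\Pi_3^0$-completeness. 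Both arguments are valid; the paper's is shorter because it outsources the hard non-reduction to a known fact, while yours is self-contained and makes the underlying degree-theoretic obstruction explicit. Two small imprecisions worth tidying: since $A_e$ is a uniformly \emph{computable} name, "$(A_e)'\leq_T e'$'' should simply be "$(A_e)'\equiv_T \emptyset'$ uniformly''; and the resulting bound on $\{e:f(A_e)=0\}$ is actually $\Delta_2^0$ (decidable from $\emptyset'$), which is slightly tighter than the $\Sigma_2^0$ you state -- either estimate suffices against $\Pi_3^0$-completeness.
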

\begin{proof}

The fact that $\wList[\nats]\weireducible\List[\nats]$ is trivial. For strictness, we show that $\lpo'\weireducible \List[\nats]$: since $\lpo'\weiincomparable\mflim$ and $\wList[\nats]\weiequiv\mflim$ by \thref{corollarywListnats} this suffices to conclude the proof.

We can think of $\lpo'$ as the function that, given in input $p \in \Cantor$, is such that $\lpo'(p)=1 \iff (\exists^\infty i)(p(i)=0)$. For any $p \in \Cantor$, let $A\defas \{i:p(i)= 0\}$: given $(n,(b_i,p_i)_{i \in \nats}) \in \List[\nats](A)$ it is clear that $\lpo'(p)=1$ iff $n=0$. 
\end{proof}

\subsection{The Cantor-Bendixson theorem}
Notice that it makes sense to study $\PK[\X]$ only when $\X$ is an uncountable effectively Polish space: indeed, if $\X$ is countable, $\PK[\X]$ is the function with constant value $\emptyset$.
\begin{lemma}
	\thlabel{pi11countabilityinotherspaces}
	For any computable Polish space $\X$ the set $\{C \in \negrepr{\X}:\length{C} \leq \aleph_0\}$ is $\Pi_1^1$.
\end{lemma}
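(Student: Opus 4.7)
The plan is to reduce the general case to the known fact (\thref{Complexityresults}(ii)) that $\countable \subseteq \tree$ is $\Pi_1^1$, by means of a computable bijective parametrization of $\X$ due to Moschovakis (\thref{theorem3e6moschovakis}).

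First, I would fix a computable surjection $\function{s}{\Baire}{\X}$ together with $A \in \Pi_1^0(\Baire)$ such that $s$ restricted to $A$ is a bijection onto $\X$, as provided by \thref{theorem3e6moschovakis}. Given $C \in \negrepr{\X}$, \thref{surjections} shows that $s^{-1}(C) \in \negrepr{\Baire}$ can be computed uniformly from $C$. Since $A \in \Pi_1^0(\Baire)$, the intersection $B_C \defas s^{-1}(C) \cap A$ is also a $\Pi_1^0$ subset of $\Baire$, and a name for $B_C$ as a closed set can be computed uniformly from (a name for) $C$.

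The key observation is that $s$ restricts to a bijection between $B_C$ and $C$, so $\length{C} \leq \aleph_0$ if and only if $\length{B_C}\leq \aleph_0$. Passing from the negative representation of $B_C$ to its tree representation (which is computable, as recalled in \S\ref{representedspaces}), I obtain a tree $T_C \in \tree$ with $\body{T_C}=B_C$, and again $T_C$ is computed uniformly from $C$.

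Finally, we have $\length{C} \leq \aleph_0 \iff T_C \in \countable$. By \thref{Complexityresults}(ii), $\countable$ is $\Pi_1^1$, and since the map $C \mapsto T_C$ is computable, the preimage $\{C \in \negrepr{\X}:\length{C}\leq \aleph_0\}$ is $\Pi_1^1$ as well. No step is really an obstacle here: the argument is essentially a transport of the known complexity result on $\tree$ through Moschovakis' uniformization, and the uniform computability at each stage is exactly what the lemmas quoted earlier in the paper deliver.
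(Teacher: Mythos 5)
Your proof is correct and follows essentially the same route as the paper: both invoke Moschovakis' parametrization (\thref{theorem3e6moschovakis}), use \thref{surjections} to transport $C$ computably to a closed subset of $\Baire$ of the same cardinality, pass to the tree representation, and conclude via the $\Pi^1_1$-ness of $\countable$ from \thref{Complexityresults}(ii). The only cosmetic difference is that you take $s^{-1}(C)\cap A$ while the paper applies \thref{surjections} directly to the restriction $s_A$, but these produce the identical set.
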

\begin{proof}
	Let $s$ and $A$ be as in \thref{theorem3e6moschovakis} and denote by $s_A$ be the restriction of $s$ to $A$. By \thref{surjections}, the function $\function{S}{\negrepr{\X}}{\negrepr{\Baire}}$ defined by $S(C)=s_A^{-1}(C)$ is computable. Since $s_A$ is a bijection, we obtain that  $\length{C}=\length{S(C)}$. Recall from \S \ref{representedspaces} that we can represent $S(C)$ via some $T \in \tree$ such that $S(C)=\body{T}$. To conclude the proof notice that $\length{S(C)} \leq \aleph_0$ iff $T \in \countable$ and, by \thref{Complexityresults}(ii), $\countable$ is a $\Pi_1^1$ set. 
\end{proof}

Recall  that in  \S \ref{representedspaces} we fixed an enumeration $(B_i)_{i \in \nats}$ of all basic open sets of $\X$,  where the ball $B_{\pairing{n,m}}$ is centered in $\alpha(n)$ and has radius $q_m$.
\begin{theorem}
	\thlabel{perfectkernelforallx}
	For every rich computable Polish space $\X$, $\PK[\X]\weiequiv \PK[\Baire]$.
\end{theorem}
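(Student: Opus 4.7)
The plan is to prove the two reductions separately. The direction $\PK[\Baire] \weireducible \PK[\X]$ is immediate from \thref{richspacesPK}, using the richness of $\X$.

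For the converse $\PK[\X] \weireducible \PK[\Baire]$, I plan to invoke the characterization $\PK[\Baire] \weiequiv \parallelization{\wfsierpinski}$ from \thref{Pkcantor_idPiSigma} and produce instead a reduction to $\parallelization{\wfsierpinski}$. The underlying topological fact is that, by the Cantor-Bendixson decomposition $C = \PK[\X](C) \sqcup S$ with $S$ countable, a basic open ball $B_i$ is disjoint from $\PK[\X](C)$ if and only if $B_i \cap C$ is countable (the forward implication uses $B_i \cap C = B_i \cap S$; the backward one uses that if $B_i$ met $\PK[\X](C)$, it would meet it in an uncountable set). Hence, given a name for $C \in \negrepr{\X}$, it suffices to compute a sequence of trees $(T_i)_{i \in \nats}$ such that $T_i \in \wellfounded$ iff $B_i \cap C$ is countable: from any name for $\parallelization{\wfsierpinski}((T_i)_{i \in \nats})$ one can enumerate those $i$'s whose Sierpi\'nski coordinate evaluates to $1_\sierpinski$, and since $\X \setminus \PK[\X](C)$ is open, the balls so enumerated cover the complement of $\PK[\X](C)$, thereby supplying a negative name for $\PK[\X](C)$.

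To construct the $T_i$'s, I would decompose each $B_i$ as $\bigcup_k F_{i,k}$, where $F_{i,k} := \{x \in \X : d(x,\alpha(n_i)) \leq r_k\}$ is the closed ball with the same center as $B_i$ and $r_k$ ranges over the rationals strictly less than the radius of $B_i$. Each $F_{i,k}$ admits a uniformly computable name in $\negrepr{\X}$ (its complement is the effectively open set $\{x : d(x,\alpha(n_i)) > r_k\}$), and since the negative representation is computably closed under finite intersection (the complement of $F \cap C$ is the union of the complements of $F$ and of $C$), the closed set $D_{i,k} := F_{i,k} \cap C$ is uniformly computable as well. Applying the effective content of \thref{pi11countabilityinotherspaces}, which factors through the $\Pi_1^1$-completeness of $\countable$ from \thref{Complexityresults}(ii), yields for each $D_{i,k}$ a tree $T_{i,k}$ with $T_{i,k} \in \wellfounded$ iff $D_{i,k}$ is countable. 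Setting $T_i := \disjointunion{k}{T_{i,k}}$ and invoking \thref{Disjoint_union}, I obtain $T_i \in \wellfounded$ iff every $F_{i,k} \cap C$ is countable iff $B_i \cap C$ is countable, as required. The main technical point to verify is the uniformity of the $\Pi_1^1$ step assigning a tree to each $D_{i,k}$; the computability of the $F_{i,k}$ and of their intersections with $C$ in the negative representation is routine.
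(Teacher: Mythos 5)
Your proposal is correct and follows essentially the same route as the paper's proof: the forward direction via \thref{richspacesPK}, and the converse by reducing $\PK[\X]$ to $\parallelization{\wfsierpinski}$ using the observation that a basic ball is disjoint from the perfect kernel iff its intersection with $C$ is countable, approximating each open ball from inside by closed balls with smaller rational radii, and invoking \thref{pi11countabilityinotherspaces} to convert the countability condition into well-foundedness of uniformly computed trees. The only cosmetic difference is that you assemble the tree $T_i$ as a disjoint union over rational radii, whereas the paper first forms the $\Pi^1_1$ conjunction over $\epsilon$ and then computes a single tree per ball via $\Pi^1_1$-completeness of $\wellfounded$; these are equivalent.
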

\begin{proof}
The right-to-left direction is \thref{richspacesPK}. For the converse reduction, by  \thref{Pkcantor_idPiSigma} we have that $\PK[\Baire]\weiequiv \parallelization{\wfsierpinski}$, hence it suffices to show that $\PK[\X]\weireducible\parallelization{\wfsierpinski}$. Let $C \in \negrepr{\X}$ be an input for $\PK[\X]$. 

Notice that $\length{B_{\str{n,m}} \cap C}\leq \aleph_0$ iff $(\forall \epsilon>0)(\length{ \{x \in \X:d(x,\alpha(n))\leq q_m-\epsilon\}\cap C}\leq \aleph_0)$: as $\{x \in \X:d(x,\alpha(n))\leq q_m - \epsilon\}\cap C$ is a closed set that can be uniformly computed from $C$, $n$ and $m$, by \thref{pi11countabilityinotherspaces} we get that $\length{B_{\str{n,m}}\cap C}\leq \aleph_0$ is $\Pi_1^1$.

We can therefore compute a sequence $(T^{\pairing{n,m}})_{n,m\in \nats}$ of trees  such that $T^{\pairing{n,m}} \in \wellfounded$ iff $\length{B_{\str{n,m}} \cap C}\leq \aleph_0$. Hence, searching the output of $\parallelization{\wfsierpinski}((T^{\pairing{n,m}})_{n,m \in \nats})$ for the $\pairing{n,m}$'s such that  $\wfsierpinski(T^{\str{n,m}})=1$, we eventually enumerate all the $B_{\str{n,m}}$ such that $B_{\str{n,m}} \cap \PK[\X](C)=\emptyset$, thus obtaining a name for $\PK[\X](C)\in \negrepr{\X}$.
\end{proof}

The proof of the next Lemma combines ideas from the proof of \thref{wsclistreduciblepkbaire} and \thref{listupperbound}.

\begin{lemma}
\thlabel{sclistupperbound}
For every computable Polish space $\X$, $\wScList[\X]\weireducible \wScList[\Baire]$.
\end{lemma}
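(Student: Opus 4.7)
The approach is to imitate the structure of \thref{wsclistreduciblepkbaire}: I will first reduce $\wScList[\X]$ to the product $\parallelization{\wfsierpinski} \times \parallelization{\completion{\wList[\X]}}$ and then collapse that product to $\wScList[\Baire]$ using earlier results. The key new ingredient, replacing the role of ``$\{\sigma:T_\sigma\in\countable\}$ is $\Pi_1^{1,T}$'' in the Baire-space proof, is the $\Pi_1^1$-description of countability of closed subsets of $\X$ provided by \thref{pi11countabilityinotherspaces}; the other new ingredient is the upper bound $\wList[\X]\weireducible\wList[\Baire]$ of \thref{listupperbound}, which will let us collapse the $\X$-list problem back to a Baire-list problem at the end.

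Concretely, given $C\in\negrepr{\X}$, I enumerate all pairs $(i,j)\in\nats^2$ satisfying the semi-decidable condition $d(\alpha(n_i),\alpha(n_j))+q_{m_i}<q_{m_j}$, where $B_k=B(\alpha(n_k),q_{m_k})$; this condition forces $\overline{B_i}\subseteq B_j$. For each such pair I uniformly compute the two closed sets $D_i\defas\overline{B_i}\cap C$ and $E_j\defas\overline{B_j}\cap C$ in $\negrepr{\X}$. Using \thref{pi11countabilityinotherspaces} together with \thref{Complexityresults}(i), I produce a tree $T_{(i,j)}\in\tree$ with $T_{(i,j)}\in\wellfounded$ iff $E_j$ is countable, which I feed to the $(i,j)$-th copy of $\parallelization{\wfsierpinski}$; in parallel I feed $D_i$ to the $(i,j)$-th copy of $\parallelization{\completion{\wList[\X]}}$, obtaining a completion list $L_{(i,j)}$. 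Whenever $\wfsierpinski(T_{(i,j)})=1_\sierpinski$ I learn that $E_j$, and hence $D_i\subseteq E_j$, is countable, so $L_{(i,j)}$ is a genuine listing of $D_i$; moreover every point of $D_i$ lies in $B_j$ and has $B_j$ (with $B_j\cap C\subseteq E_j$ countable) as an open witness that it belongs to the scattered part of $C$. Conversely, every $x$ in the scattered part of $C$ admits an open $U\ni x$ with $U\cap C$ countable, and by choosing centers in $\alpha$ and rational radii small enough one can find $(i,j)$ verifying the semi-decidable condition with $x\in\overline{B_i}\subseteq B_j\subseteq\overline{B_j}\subseteq U$. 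Interleaving the $L_{(i,j)}$ over all successful pairs thus yields an element of $\wScList[\X](C)$.

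This proves $\wScList[\X]\weireducible \parallelization{\wfsierpinski}\times\parallelization{\completion{\wList[\X]}}$. To finish, \thref{Pkcantor_idPiSigma} and \thref{Pkbaire_equiv_wsclistbaire} give $\parallelization{\wfsierpinski}\weiequiv\wScList[\Baire]$; \thref{listupperbound} together with \thref{Lemmacompletion} (which lets us lift a Weihrauch reduction to a reduction between completions by outputting $\bot$ whenever the lifted computation fails) gives $\completion{\wList[\X]}\weireducible\completion{\wList[\Baire]}$; and \thref{completionofwlist_below_pk} gives $\completion{\wList[\Baire]}\weireducible\PK[\Baire]\weiequiv\wScList[\Baire]$. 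Parallelizing and using the parallelizability of $\wScList[\Baire]$ (\thref{Wsclist_parallelizable}) collapses the right-hand product into $\wScList[\Baire]$, as desired. The main subtlety I foresee is arranging the geometric cushion between the inner closed ball $\overline{B_i}$ and the outer open ball $B_j$ so that the inclusion $\overline{B_i}\subseteq B_j$ is computably certified while still covering every scattered point of $C$; once this is set up, the rest is a direct analogue of \thref{wsclistreduciblepkbaire}.
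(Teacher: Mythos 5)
Your decomposition matches the paper's exactly: $\wScList[\X]\weireducible \parallelization{\wfsierpinski}\times\parallelization{\completion{\wList[\X]}}$ via \thref{pi11countabilityinotherspaces}, followed by the collapse through \thref{Pkcantor_idPiSigma}, \thref{Pkbaire_equiv_wsclistbaire}, \thref{completionofwlist_below_pk} and \thref{Wsclist_parallelizable}. The paper realizes the first step by feeding, for each $B_{\pairing{n,m}}$, a tree encoding the $\Pi^1_1$ statement that every shrunken closed ball $\{x: d(x,\alpha(n))\le q_m-\epsilon\}\cap C$ is countable; your certified nested pairs with $d(\alpha(n_i),\alpha(n_j))+q_{m_i}<q_{m_j}$ are an equivalent bookkeeping choice (just read $\overline{B_k}$ as the computable closed ball $\{x: d(x,\alpha(n_k))\le q_{m_k}\}$ rather than the topological closure, which is not uniformly computable from a name for $C$; this changes nothing in the argument). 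One small caution: completion does not lift Weihrauch reductions in general, so \thref{Lemmacompletion} is not a blanket lifting principle as your parenthetical suggests. The reason $\completion{\wList[\X]}\weireducible\completion{\wList[\Baire]}$ holds, as the paper says, is that the particular reduction witnessing \thref{listupperbound} has a total forward map (the computable $S_A$) and a total backward map (postcomposition with the surjection $s$), so it passes to completions by direct inspection. Your conclusion is right; only the cited justification for this one step is loose.
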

\begin{proof}
We show that $\wScList[\X]\weireducible \parallelization{\wfsierpinski} \times \parallelization{\completion{\wList[\X]}}\weireducible \wScList[\Baire]$. 

The first reduction is obtained by generalizing the proof of \thref{wsclistreduciblepkbaire} (which is the case $\X= \Baire$): given $C \in \negrepr{\X}$ it suffices to use as input for the $\pairing{n,m}$-th instances of $\wfsierpinski$ and $\completion{\wList[\X]}$ respectively a tree $T^{\str{n,m}}$ such that $T^{\str{n,m}} \in \wellfounded$ iff $(\forall \epsilon>0) (\length{ \{x \in \X : d(x,\alpha(n))\leq q_m - \epsilon\}\cap C}\leq \aleph_0)$ (see the proof \thref{perfectkernelforallx}) and $\{x \in \X:d(x,\alpha(n))\leq q_m-\epsilon\}\cap C$.

For the second reduction, notice that $\completion{\wList[\X]}\weireducible \completion{\wList[\Baire]}$ by essentially the same proof of \thref{listupperbound}. As $\wScList[\Baire]$ is parallelizable (\thref{Wsclist_parallelizable}) and $\completion{\wList[\Baire]}\weireducible\wScList[\Baire]\weiequiv \parallelization{\wfsierpinski}$ (\thref{Pkcantor_idPiSigma,Pkbaire_equiv_wsclistbaire} and \thref{completionofwlist_below_pk}) we obtain the reduction.
\end{proof}

The same proof of \thref{richspaceslist} yields the following Lemma.

\begin{lemma}
\thlabel{richspacessclist}
Let $\X$ and $\Y$ be computable metric spaces and $\function{\iota}{\X}{\Y}$ be a computable embedding with $\range(\iota) \in \Pi_1^0(\Y)$. Then $(\mathsf{w})\ScList[\X] \weireducible (\mathsf{w})\ScList[\Y]$. In particular, $(\mathsf{w})\ScList[\Cantor] \weireducible (\mathsf{w})\ScList[\Y]$ for every rich computable metric space $\Y$.
\end{lemma}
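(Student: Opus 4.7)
The plan is to mirror the argument for \thref{richspaceslist} almost verbatim, leveraging the observation that a computable embedding $\iota\colon \X\to\Y$ whose range is closed in $\Y$ is a homeomorphism onto $\range(\iota)$, so that scattered parts (which are defined purely from isolated points, a topological invariant) transport along $\iota$ in a computable way.

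First I would invoke \thref{embeddingtheorem} to obtain the computable map $\function{J}{\negrepr{\X}}{\negrepr{\Y}}$ sending $A$ to $\iota(A)$, which is indeed a closed subset of $\Y$ since $\iota(A)\subseteq \range(\iota)\in\Pi_1^0(\Y)$. Next I would verify that $\iota$ restricts to a bijection between the scattered part of $A$ and the scattered part of $\iota(A)$: a point $x\in A$ is isolated in $A$ iff $\iota(x)$ is isolated in $\iota(A)$, because $\iota$ restricted to $A$ is a homeomorphism onto $\iota(A)$ (using that $\range(\iota)$ is closed and that $\iota$ and $\iota^{-1}$ are both continuous). By iterating this equivalence with the Cantor--Bendixson derivative, $\iota(\PK[\X](A))=\PK[\Y](\iota(A))$, so $\iota$ maps $A\setminus \PK[\X](A)$ bijectively onto $\iota(A)\setminus \PK[\Y](\iota(A))$.

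With this preservation in hand, the reduction is immediate. For $\wScList$: on input $A\in\negrepr{\X}$ compute $\iota(A)\in\negrepr{\Y}$ via $J$, and from any $(b_i,y_i)_{i\in\nats}\in \wScList[\Y](\iota(A))$ return $(b_i,z_i)_{i\in\nats}$, where $z_i\defas \iota^{-1}(y_i)$ whenever $b_i=1$ (which is then defined because $y_i\in \iota(A)\subseteq\range(\iota)$) and $z_i$ is any fixed default value otherwise; the preservation of scattered parts ensures this is in $\wScList[\X](A)$. The same recipe, adjoining the natural-number counting output unchanged, gives $\ScList[\X]\weireducible \ScList[\Y]$.

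The ``in particular'' clause follows because, by \thref{richness} and the subsequent observation, whenever $\Y$ is rich there is a computable embedding $\function{\iota}{\Cantor}{\Y}$ with $\range(\iota)\in\Pi_1^0(\Y)$, so the first part applies with $\X=\Cantor$. No step is a genuine obstacle: the only thing worth being careful about is the first verification, namely that isolatedness is faithfully transferred by $\iota$, which hinges crucially on the hypothesis that $\range(\iota)$ is closed in $\Y$ (so that isolatedness computed relative to $\iota(A)$ in $\Y$ coincides with isolatedness relative to $\iota(A)$ in $\range(\iota)$, and hence with isolatedness in $A$).
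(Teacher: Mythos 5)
Your proof is correct and takes essentially the same approach as the paper, which simply notes that the argument for \thref{richspaceslist} carries over: push $A$ forward via the computable map $J$ from \thref{embeddingtheorem}, observe that the Cantor--Bendixson decomposition transports along the homeomorphism $\iota\restriction A$, and pull the listed scattered-part points back with $\iota^{-1}$ (your careful handling of the $b_i=0$ entries via a default value is a sensible tidying-up of a detail the paper glosses over). One tiny imprecision worth noting: closedness of $\range(\iota)$ is not actually what makes isolatedness transfer --- that is automatic from $\iota\restriction A$ being a homeomorphism onto $\iota(A)$, since isolatedness of a point in a set does not depend on the ambient space --- rather, closedness is needed so that $\iota(A)$ is closed in $\Y$ (hence a legitimate input) and so that ``perfect relative to $\range(\iota)$'' agrees with ``perfect relative to $\Y$''.
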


Recall that the problems $\PK[\X]$, where $\X$ is a rich computable Polish space,  are all Weihrauch equivalent (\thref{perfectkernelforallx}). Combining \thref{richspacessclist,sclistupperbound}, we obtain $\wScList[\Cantor] \weireducible \wScList[\X] \weireducible \wScList[\Baire]$, but we do not know whether for some rich computable Polish space $\X$ both reductions are strict (see \thref{question:sclistcantor}).

\begin{theorem}
\thlabel{wsclistxequivwsclistbaire}
For any rich computable Polish space $\X$, $\wCB[\X]\weiequiv\PK[\Baire]$.
\end{theorem}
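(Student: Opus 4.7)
The plan is to sandwich $\wCB[\X]$ between $\PK[\Baire]$ and itself using results already established in the paper, with parallelizability of $\PK[\Baire]$ doing the final work.

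For the right-to-left reduction $\PK[\Baire] \weireducible \wCB[\X]$, I would observe that since $\X$ is rich, \thref{perfectkernelforallx} gives $\PK[\Baire] \weiequiv \PK[\X]$, and $\PK[\X] \weireducible \wCB[\X]$ is trivial since $\wCB[\X](A) = \PK[\X](A) \times \wScList[\X](A)$ (we simply project onto the first coordinate of a solution).

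For the harder direction $\wCB[\X] \weireducible \PK[\Baire]$, I would note that by definition $\wCB[\X] \weireducible \PK[\X] \times \wScList[\X]$. The first factor reduces to $\PK[\Baire]$ by \thref{perfectkernelforallx}. The second factor reduces to $\wScList[\Baire]$ by \thref{sclistupperbound}, which in turn is equivalent to $\PK[\Baire]$ by \thref{Pkbaire_equiv_wsclistbaire}. Hence $\wCB[\X] \weireducible \PK[\Baire] \times \PK[\Baire]$. Since $\PK[\Baire]$ is parallelizable by \thref{Pkbaire_parallelizable}, the parallel product collapses and we obtain $\wCB[\X] \weireducible \PK[\Baire]$.

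There is no real obstacle here: all the components have already been assembled in the preceding lemmas, and the only ingredient we rely on about $\X$ itself is richness (used in \thref{perfectkernelforallx} for the $\PK$-factor; the reduction for the $\wScList$-factor via \thref{sclistupperbound} does not need richness, so the statement is actually tight at the richness hypothesis inherited from the perfect kernel part).
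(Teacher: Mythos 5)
Your proposal is correct and follows essentially the same route as the paper's proof: decompose $\wCB[\X]$ as $\PK[\X]\times\wScList[\X]$, reduce each factor to $\PK[\Baire]$ via \thref{perfectkernelforallx}, \thref{sclistupperbound}, and \thref{Pkbaire_equiv_wsclistbaire}, and collapse the product using parallelizability (\thref{Pkbaire_parallelizable}); the converse direction via $\PK[\Baire]\weiequiv\PK[\X]\weireducible\wCB[\X]$ is also the same. Your closing remark about where richness is actually used is a correct and worthwhile observation.
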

\begin{proof}
For the left-to-right reduction notice that $\wCB[\X]\weireducible \PK[\X] \times \wScList[\X]$. By \thref{perfectkernelforallx} and \thref{sclistupperbound} we know that $\PK[\X]\weiequiv \PK[\Baire]$ and $\wScList[\X]\weireducible\wScList[\Baire]$. Since  $\wScList[\Baire]\weiequiv \PK[\Baire]$ (\thref{Pkbaire_equiv_wsclistbaire}) and  $\PK[\Baire]$ is parallelizable (\thref{Pkbaire_parallelizable}), this concludes the reduction. The other direction follows from the combination of \thref{perfectkernelforallx} and the fact that $\PK[\X]\weireducible \wCB[\X]$.
\end{proof}

In the literature, there are many equivalent definitions of  computably compact represented spaces. The following is the most convenient for our purposes.

\begin{definition}[{\cite[\S 5]{topaspects}}]
\thlabel{compactDefinition}
A subset $K$ of a represented space $\X$ is computably compact if $\{A \in \negrepr{\X} : A \cap K = \emptyset\}$ is $\Sigma_1^0$.
\end{definition}

\begin{definition}
A computable metric space $\X$ is computably $K_\sigma$ if there exists a computable sequence $(K_i)_{i \in \nats}$ of nonempty computably compact sets with $\X=\bigcup_{i \in \nats} K_i$.
\end{definition}

The following remark extends \thref{Pi11sets2} to computably $K_\sigma$ spaces.

\begin{remark}
\thlabel{ComplexityresultsKsigmaX}
Let $\X$ be a computably $K_\sigma$ space and let $(K_i)_{i \in \nats}$ witness this property.

Notice that for $C\in \negrepr{\X}$, $C=\emptyset$ iff $(\forall i)(K_i \cap C=\emptyset)$, i.e.\ a  $\Pi_2^0$ condition. Moreover, $C\cap B_{\pairing{n,m}}=\emptyset $ iff
$(\forall k) (\{x \in \X:d(x,\alpha(n))\leq q_m-2^{-k}\}\cap C=\emptyset)$, so that this condition is $\Pi_2^0$ as well. 

Now, $\length{C}=1$ iff $C \neq \emptyset$ and 
\[(\forall n,n',m,m')(d(\alpha(n),\alpha(n'))\geq q_m+q_{m'} \implies B_{\str{n,m}} \cap C=\emptyset \lor B_{\str{n',m'}} \cap C =\emptyset)\]
is $\Pi_2^0$. Now $\length{C\cap B_{\str{n,m}}}=1$ is the conjunction of a $\Sigma_2^0$ and a $\Pi_2^0$ formula because it is equivalent to $C\cap B_{\pairing{n,m}} \neq \emptyset$ and
\begin{align*}
(\forall n',n'',m',m'')&(d(\alpha(n),\alpha(n')) \leq q_m-q_{m'} \land  d(\alpha(n),\alpha(n'')) \leq q_m-q_{m''} \land  \\    
 & \land d(\alpha(n'),\alpha(n''))\geq q_{m'}+q_{m''} \implies B_{\str{n',m'}} \cap C=\emptyset \lor B_{\str{n'',m''}} \cap C =\emptyset)\}.
\end{align*}

\end{remark}

\begin{lemma}
\thlabel{Ksigma}
	For every rich computable Polish computably $K_\sigma$ space $\X$, $\CB[\X]\weiequiv \PK[\Baire]$.
\end{lemma}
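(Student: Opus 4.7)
The plan is to decompose $\CB[\X]$ into its three constituent pieces and show each reduces to $\PK[\Baire]$, using the $K_\sigma$ hypothesis to control the complexity of $\ScCount[\X]$. In one direction, since $\wCB[\X] \weireducible \CB[\X]$ and $\wCB[\X] \weiequiv \PK[\Baire]$ by \thref{wsclistxequivwsclistbaire}, the reduction $\PK[\Baire] \weireducible \CB[\X]$ is free. So the real content is the forward reduction $\CB[\X] \weireducible \PK[\Baire]$, and by definition and parallelizability of $\PK[\Baire]$ (\thref{Pkbaire_parallelizable}), it suffices to establish the three reductions
\[
\PK[\X] \weireducible \PK[\Baire], \quad \wScList[\X] \weireducible \PK[\Baire], \quad \ScCount[\X] \weireducible \PK[\Baire].
\]

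The first two are immediate from work already done: $\PK[\X] \weiequiv \PK[\Baire]$ by \thref{perfectkernelforallx} since $\X$ is rich, and $\wScList[\X] \weireducible \wScList[\Baire] \weiequiv \PK[\Baire]$ by combining \thref{sclistupperbound} with \thref{Pkbaire_equiv_wsclistbaire}. The core of the proof is therefore the third reduction, and the $K_\sigma$ hypothesis is exactly what makes it go through.

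For $\ScCount[\X] \weireducible \PK[\Baire]$, I will follow the template used in the proof of \thref{sclistcantor_summary}(ii) for $\ScCount[\Cantor]$, but working with basic balls rather than tree nodes. Given $C \in \negrepr{\X}$, define
\[
\varphi(k, C) \defas (\exists (n_0,m_0), \dots, (n_{k-1}, m_{k-1}))\bigl(\text{the } B_{\str{n_i, m_i}} \text{ are pairwise disjoint} \land (\forall i < k)(\length{C \cap B_{\str{n_i, m_i}}} = 1)\bigr).
\]
By \thref{Isolated_paths}, $\varphi(k, C)$ is equivalent to saying that the scattered part of $C$ has at least $k$ elements (isolated points of $C$ are precisely points $x$ such that some basic ball meets $C$ only at $x$). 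Using the $K_\sigma$ hypothesis via \thref{ComplexityresultsKsigmaX}, the predicate $\length{C \cap B_{\str{n, m}}} = 1$ is a conjunction of a $\Sigma_2^0$ and a $\Pi_2^0$ formula (hence $\boldsymbol{\Delta}_3^0$), and pairwise disjointness of basic balls is a computable condition on the indices; therefore $\varphi(k, C)$ is $\Sigma_3^0$ in $C$. Consequently, the set
\[
A \defas \{n \in \nats : (n = 0 \land (\forall k)\varphi(k, C)) \lor (n > 0 \land \varphi(n-1, C) \land \lnot \varphi(n, C))\}
\]
is a $\boldsymbol{\Pi}_4^0$ singleton, and its unique element is $\ScCount[\X](C)$. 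This yields $\ScCount[\X] \weireducible \codedChoice{\boldfacePi_4^0}{}{\nats}$, and then \thref{limdoesnotreachucbaire}(i),(iii) together with \thref{fop_pi11} give $\codedChoice{\boldfacePi_4^0}{}{\nats} \weireducible \codedChoice{\boldfacePi_1^1}{}{\nats} \weiequiv \firstOrderPart{\PK[\Baire]} \weireducible \PK[\Baire]$.

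The main obstacle is verifying that the $K_\sigma$ hypothesis really pins down the arithmetical complexity of the cardinality predicates uniformly in $C$; all the rest is routine bookkeeping (tupling the three reductions and using parallelizability of $\PK[\Baire]$). Once that is in hand, combining everything yields $\CB[\X] \weireducible \PK[\Baire] \times \PK[\Baire] \times \PK[\Baire] \weiequiv \PK[\Baire]$, and together with the reverse direction established at the outset, this gives the desired equivalence $\CB[\X] \weiequiv \PK[\Baire]$.
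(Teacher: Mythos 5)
Your proof is essentially correct and follows the same approach as the paper: reduce the reverse direction via \thref{wsclistxequivwsclistbaire}/\thref{perfectkernelforallx}, decompose $\CB[\X]$ and use parallelizability of $\PK[\Baire]$, and adapt the $\ScCount[\Cantor]\weireducible\codedChoice{\boldfacePi_4^0}{}{\nats}$ argument from \thref{sclistcantor_summary}(ii) with $\varphi(k,C)$ expressing \lq\lq there exist $k$ disjoint basic balls each meeting $C$ in one point\rq\rq. One small imprecision: pairwise disjointness of basic balls is not a computable condition on the indices; it is $\Pi^0_1$ (the paper instead uses the explicit $\Pi^0_1$ sufficient condition $d(\alpha(n_i),\alpha(n_j))\geq q_i+q_j$). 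This does not affect your complexity count since $\Pi^0_1\subseteq\Sigma^0_3$, so $\varphi$ remains $\Sigma^0_3$ and $A$ remains $\Pi^0_4$, but the justification should be corrected.
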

\begin{proof}
The right to left direction follows from the facts that $\PK[\X]\weireducible\CB[\X]$ and that, by \thref{perfectkernelforallx}, $\PK[\X]\weiequiv \PK[\Baire]$.

For the opposite direction, notice that $\CB[\X]\weireducible \wCB[\X]\times \ScCount[\X]$. Since $\PK[\Baire]$ is parallelizable (\thref{Pkbaire_parallelizable}) and $\PK[\Baire]\weiequiv \wCB[\X]$ (\thref{wsclistxequivwsclistbaire}), it suffices to show that $\ScCount[\X]\weireducible\PK[\Baire]$. To do so, we now adapt the proof of \thref{sclistcantor_summary}(ii) to show that $\ScCount[\X]\weireducible \codedChoice{\boldfacePi_4^0}{}{\nats}$: this concludes the proof as $\codedChoice{\boldfacePi_4^0}{}{\nats} \weireducible \codedChoice{\boldfacePi_1^1}{}{\nats} \weiequiv \firstOrderPart{\PK[\Baire]}$ (\thref{fop_pi11}). Given in input $C \in \negrepr{\X}$, let 
\[ A\defas \{k: (k>0 \implies \varphi(k-1,C) \land \lnot \varphi(k,C)) \land (k=0\implies (\forall m)(\varphi(m,C)))\},\]
 where $\varphi(k,C)$ says that there exists a finite string $\sigma = (\pairing{n_0,q_0},\hdots,\pairing{n_{k-1},q_{k-1}})\in \nats^k$ such that for every $i\neq j <k$,
\[d(\alpha(n_i),\alpha(n_j))\geq q_{i}+q_{j} \land \length{C\cap B_{\pairing{n_i,q_i}}}=1. \]

By \thref{ComplexityresultsKsigmaX}, it is easy to check that each $\varphi$ is $\Sigma_3^0$ and hence $A$ is $\Pi_4^{0}$. By \thref{Isolated_paths} the unique $k \in A$ is the correct answer for $\ScCount[\X](C)$. 
\end{proof}

The final part of this section is devoted to spaces that are not $K_\sigma$.

Recall the following consequence of Hurewicz's theorem from classical descriptive set theory.

\begin{theorem}[{\cite[Theorem 7.10]{kechris2012classical}}]
\thlabel{hurewicz}
Let $\X$ be a Polish space. Then there is an embedding $\function{\iota}{\Baire}{\X}$ such that $\range(\iota)$ is closed iff $X$ is not $K_\sigma$.
\end{theorem}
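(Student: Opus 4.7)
The plan is to handle each direction separately. For the reverse direction (a closed embedding of $\Baire$ precludes $\X$ being $K_\sigma$), note that any closed subspace of a $K_\sigma$ Polish space is itself $K_\sigma$ (intersect with each compact piece), so it suffices to recall that $\Baire$ is not $K_\sigma$. This is classical: every compact $K\subseteq\Baire$ is pointwise dominated by $f_K(n)\defas\max\{g(n):g\in K\}$, so any countable union $\bigcup_k K_k$ is dominated by $g(n)\defas \max_{k\leq n}f_{K_k}(n)$, and then $g+1$ does not lie in the union.

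For the forward direction, assume $\X$ is not $K_\sigma$. First I would reduce to the case where every nonempty open subset of $\X$ is not $K_\sigma$: the union $U$ of all open $K_\sigma$ subsets is itself $K_\sigma$ (by second countability it equals a countable union of such sets), so $\X=U$ would contradict the hypothesis; otherwise replace $\X$ by the nonempty closed subspace $\X\setminus U$, in which no nonempty relatively open set is $K_\sigma$.

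Next I would build a Cantor-Bendixson-style scheme $(F_\sigma)_{\sigma\in\baire}$ of nonempty closed subsets of $\X$ with nonempty interior, satisfying: (i) $F_{\sigma\concat n}\subseteq\textup{int}(F_\sigma)$; (ii) for each $\sigma$, the family $\{F_{\sigma\concat n}\}_{n\in\nats}$ consists of pairwise disjoint sets of diameter $\leq 2^{-\length{\sigma}-1}$; (iii) for every compact $K\subseteq\X$ and every $\sigma$, only finitely many $n$ satisfy $F_{\sigma\concat n}\cap K\neq\emptyset$. The key observation is that if $V$ is an open non-$K_\sigma$ subset of $\X$ then $\overline{V}$ cannot be compact (open subsets of Polish spaces are $F_\sigma$, so a compact closure would make $V$ itself $K_\sigma$), so $V$ contains a sequence $(x_n)$ with no convergent subsequence in $\X$; around infinitely many such $x_n$ I place small pairwise disjoint closed balls whose interiors are still non-$K_\sigma$ by the reduction step, producing the children $F_{\sigma\concat n}$.

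Finally define $\iota(p)$ as the unique element of $\bigcap_m F_{p[m]}$ (existence and uniqueness from completeness of $\X$ together with the shrinking diameters); continuity of $\iota$ and of $\iota^{-1}$ on its image follow from (i) and (ii). For closedness of the range, suppose $\iota(p_k)\to x$: the compact set $K\defas\{\iota(p_k):k\in\nats\}\cup\{x\}$ forces, via (iii) applied at $\sigma=\str{}$, the values $p_k(0)$ to lie in a finite set; pass to a subsequence on which $p_k(0)$ is constant, note $x\in F_{\str{p_k(0)}}$ by closedness, and iterate with a diagonal argument to extract $p\in\Baire$ with $x\in F_{p[m]}$ for every $m$, hence $x=\iota(p)$. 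The main obstacle is the scheme construction—guaranteeing condition (iii) while maintaining nonempty non-$K_\sigma$ interior at every node so that the recursion continues—and this is precisely where the reduction to spaces without open $K_\sigma$ subsets is essential.
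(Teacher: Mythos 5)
The paper does not prove this statement; it is imported verbatim as a citation of Kechris, Theorem 7.10, so there is no in-paper proof to compare against. Your argument is a correct self-contained proof via a Luzin/Cantor scheme, which is essentially the textbook route.

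A few points to tighten, none of them fatal. In the easy direction, you use that a closed subspace of a $K_\sigma$ Polish space is $K_\sigma$ and that $\Baire$ itself is not $K_\sigma$; both are fine. In the reduction, you should also note that any nonempty relatively open subset of $\X\setminus U$ is non-$K_\sigma$ because otherwise its union with the (relatively $K_\sigma$) portion of the corresponding basic open set inside $U$ would be an open $K_\sigma$ set meeting $\X\setminus U$, contradicting maximality of $U$; and that it suffices to build the closed embedding into the Polish subspace $\X\setminus U$, since $\X\setminus U$ is closed in $\X$. In the scheme construction, ``$\overline{V}$ not compact'' together with completeness gives a sequence $(x_n)$ in $\overline V$ with no accumulation point; you can push it into $V$ by density, and since such a set is closed and discrete each $x_n$ has positive distance from the other $x_m$'s, so you can choose radii $r_n$ with $r_n<\tfrac12 d(x_n,\{x_m:m\neq n\})$, $r_n<d(x_n,\X\setminus V)$, $r_n\leq 2^{-\length{\sigma}-2}$, and (this you should state explicitly, since it is what makes condition (iii) go through) $r_n\to 0$. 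With $r_n\to 0$, if some compact $K$ met infinitely many of the $\overline B(x_n,r_n)$ then a convergent subsequence of witnesses would force a convergent subsequence of the $x_n$, a contradiction. The final closedness argument by iterated pigeonhole through (iii) and nestedness of the cells is correct. With the explicit $r_n\to 0$ requirement and the small remark about the reduction, the proposal is a complete proof.
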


\begin{definition}
We say that a computable Polish space is computably non-$K_\sigma$ if there exists a computable embedding $\function{\iota}{\Baire}{\X}$ such that $\range(\iota) \in \Pi_1^0(\X)$.
\end{definition}

The following theorem is a corollary  of \thref{richspacesPKGeneral}.
\begin{theorem}
For any rich computable Polish  space $\X$ which is computably non-$K_\sigma$, $\CB[\Baire]\weireducible \CB[\X]$.
\end{theorem}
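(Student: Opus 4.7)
The plan is to observe that this is essentially immediate from \thref{richspacesPKGeneral}. That lemma provides a reduction $f_{\X'} \weireducible f_{\Y'}$ whenever we have a computable embedding $\function{\iota}{\X'}{\Y'}$ with $\range(\iota) \in \Pi_1^0(\Y')$, and $\CB$ is among the functions $f$ listed there (it is included in $(\mathsf{w})\CB$).

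First I would unpack the hypothesis that $\X$ is computably non-$K_\sigma$: by definition this means that there exists a computable embedding $\function{\iota}{\Baire}{\X}$ whose range is a $\Pi_1^0$ subset of $\X$. Then I would apply \thref{richspacesPKGeneral} to this embedding, taking $\Baire$ in the role of the first space and $\X$ in the role of the second; this directly yields $\CB[\Baire] \weireducible \CB[\X]$.

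There is no real obstacle here: the richness hypothesis is not actually used in the argument (it is inherited from the overall setting of the section, and in fact any non-$K_\sigma$ Polish space is uncountable and hence, by the results of \S\ref{otherspaces}, typically rich in the computable setting). The content of the theorem is simply that the computably non-$K_\sigma$ hypothesis gives a $\Baire$-shaped ``skeleton'' inside $\X$ on which the Cantor-Bendixson structure can be pulled back via the computable inverse of $\iota$, exactly as in the proof scheme of \thref{richspacesPKGeneral}.
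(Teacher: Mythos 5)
Your proposal is correct and matches the paper's approach exactly: the paper states this theorem with the one-line justification that it is a corollary of \thref{richspacesPKGeneral}, applied to the computable embedding $\iota\colon\Baire\to\X$ with $\range(\iota)\in\Pi_1^0(\X)$ supplied by the computably non-$K_\sigma$ hypothesis. Your side remark about richness being redundant is also sound, and in fact admits a cleaner argument than the one you gesture at: composing the computable inclusion $\Cantor\hookrightarrow\Baire$ with $\iota$ yields a computable embedding of $\Cantor$ into $\X$, so computably non-$K_\sigma$ already implies rich.
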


We leave open the question of whether there is a rich computable Polish space $\X$ such that $\CB[\Baire] \strictlyweireducible \CB[\X]$ (see \thref{question:sclistcbbaire}).

\section{Open problems}
\label{Openquestions}
In this paper, we studied problems related to the Cantor-Bendixson theorem. In contrast with reverse mathematics, we showed that many such problems lie in different Weihrauch degrees; some of these problems still lack a complete classification.

In \thref{Pk_below__chipi}, we showed that $\PK[\Baire] \strictlyweireducible \PK \weireducible \mflim * \PK[\Baire]$. Upon hearing about this result, Linda Westrick asked the following question.

\begin{open}
\thlabel{PKandLimQuestion}
Is it true that $\PK \weiequiv \mflim * \PK[\Baire]$?
\end{open}

By \thref{Pkbaire_below_sclistbaire,sclist_below_pica,Summaryfopustar}, and the fact that $\ScCount[\Baire]$ is a first-order problem, we obtain $\wf^* \weireducible \ScCount[\Baire] \weireducible  \firstOrderPart{\parallelization{\wf}} \weiequiv \ustar{\wf}$. By \cite[Corollary 7.8]{valentisolda}, $\wf^* \strictlyweireducible \ustar{\wf}$ and therefore at least one of the inequalities is strict.

\begin{open}
\thlabel{question:sccount}
Characterize the Weihrauch degree of $\ScCount[\Baire]$. 
\end{open}
 In particular, if $\ScCount[\Baire] \weiequiv \firstOrderPart{\parallelization{\wf}}$ we would obtain a nice characterization of the first-order part of $\parallelization{\wf}$.
  
A related question is the following.

\begin{open}
\thlabel{question:cbbaireandlist}
Is it true that $\CB[\Baire] \weireducible \ScList[\Baire]$ (and hence  $\CB[\Baire] \weiequiv \ScList[\Baire]$)?
\end{open}
Notice that we proved that $\ScList[\Cantor] \strictlyweireducible \CB[\Cantor]$ and $\ScList[\Baire]\strictlyweireducible \CB$ (\thref{equivalencesCBbairecantor,sclistcantor_summary}(iii)). A negative answer to \thref{question:cbbaireandlist} would confirm this pattern. However, we have $\PK[\Baire] \strictlyweireducible \ScList[\Baire]$, while $\ScList[\Cantor] \strictlyweireducible \PK[\Cantor]\weiequiv \CB[\Cantor]$ and $\ScList[\Baire]\strictlyweireducible \PK \weiequiv \CB$ (\thref{equivalencesCBbairecantor,CBtree,sclistcantor_summary}(iii)): therefore the situation in $\Baire$ differs from those in $\Cantor$ and $\tree$ and a positive answer is possible. In this case, we would obtain an unexpected result: namely, that the gap between $\PK[\Baire]$ and $\CB[\Baire]$ is due entirely to the scattered part and its cardinality, rather than to the perfect kernel. If this is the case, the cardinality of the scattered part (i.e.\ $\ScCount[\Baire]$) would be of crucial importance because the scattered part on its own is not enough as witnessed by the fact that $\wScList[\Baire] \weiequiv \PK[\Baire] \strictlyweireducible \CB[\Baire]$ (\thref{Pkbaire_equiv_wsclistbaire,equivalencesCBbairecantor}). 

The following questions are strictly related and concern the relationship of two of our problems with $\CBaire$, which plays a major role in the Weihrauch lattice. Choice principles have a convenient definition and hence, it is quite natural to compare any problem with them. In particular, $\CBaire$ plays a pivotal role among  the problems that, from the point of view of reverse mathematics, are equivalent to $\mathsf{ATR}_0$.

\begin{open}
\thlabel{questioncbaire}
Is it true that $\CBaire \weireducible\CB[\Baire]$? Even more, does $\CBaire \weireducible \ScList[\Baire]$ hold?
\end{open}

By \thref{Ucbaire_below_pk,CBtree,AttemptCBaireCN} we obtain $\CBaire \not\weireducible \PK[\Baire] \strictlyweireducible \CB[\Baire] \weireducible \codedChoice{}{}{\nats}* \PK[\Baire]$. Since $\ScList[\Baire] \weireducible \CB[\Baire]$ this implies that to answer negatively both questions it suffices to show that $\CBaire \not\weireducible \codedChoice{}{}{\nats} * \PK[\Baire]$.  By \cite[Theorem 7.11]{closedChoice} we know that $f \weireducible \codedChoice{}{}{\nats}$ iff $f$ is computable with finitely many mind-changes. In other words, $\CBaire \weireducible \codedChoice{}{}{\nats}* \PK[\Baire]$ iff  $\CBaire$ can be reduced to $\PK[\Baire]$ employing a backward functional which is computable with finitely many mind-changes: intuitively, this seems unlikely to hold.

The last section left open some interesting questions. First of all, by \thref{richspacesPK} we have that $\PST[\Baire]$ is a lower bound for $\PST[\X]$ whenever $\X$ is a rich computable Polish space. In \thref{PST01} we showed that equivalence holds when $\X = [0,1]$ or $\X = \mathbb{R}$. 
\begin{open}
\thlabel{question:richspacesBaire}
Is there a rich computable Polish space $\X$ such that $\PST[\Baire] \strictlyweireducible \PST[\X]$?
\end{open}

Concerning the listing problems for countable closed sets, the situation for the so-called weak lists is quite clear, while we do not have a satisfactory result for problems requiring also the cardinality of the set. An open question is the following:

\begin{open}
\thlabel{question:list}
Does $\List[\Cantor] \strictlyweireducible \List[\mathbb{R}]$?
\end{open}

By \thref{perfectkernelforallx,wsclistxequivwsclistbaire} all problems of the form $\PK[\X]$ and $\wCB[\X]$ belong to the same Weihrauch degree as long as $\X$ is a rich computable Polish space. In contrast, we do not know if the same happens with $\CB[\X]$.

\begin{open}
\thlabel{question:sclistcbbaire}
Is there a rich computable Polish space $\X$ such that $\CB[\Baire] \strictlyweireducible \CB[\X]$? By \thref{Ksigma} if such $\X$ exists must be computably non-$K_\sigma$. This problem is strictly related to the existence of a rich computable Polish space $\X$ such that $\ScList[\Baire] \strictlyweireducible \ScList[\X]$.
\end{open}

The last problem concerns the weak form of listing the scattered part of a set.

\begin{open}
\thlabel{question:sclistcantor}
Is there a rich computable Polish space $\X$ such that $\wScList[\Cantor] \strictlyweireducible \wScList[\X] \strictlyweireducible \wScList[\Baire]$?
\end{open}

\providecommand{\bysame}{\leavevmode\hbox to3em{\hrulefill}\thinspace}
\providecommand{\MR}{\relax\ifhmode\unskip\space\fi MR }
\providecommand{\MRhref}[2]{%
  \href{http://www.ams.org/mathscinet-getitem?mr=#1}{#2}
}
\providecommand{\href}[2]{#2}

\end{document}